\DeclareFontFamily{U}{rsfs}{%
\skewchar\font127}
\DeclareFontShape{U}{rsfs}{m}{n}{%
<-6>rsfs5<6-8.5>rsfs7<8.5->rsfs10}{}
\DeclareSymbolFont{rsfs}{U}{rsfs}{m}{n}
\DeclareRobustCommand*\rsfs{%
\@fontswitch\relax\mathrsfs}
\theoremstyle{plain}
\newtheorem{thm}{Theorem}[section]
\newtheorem{prop}[thm]{Proposition}
\newtheorem{lem}[thm]{Lemma}
\newtheorem{defi}[thm]{Definition}
\newtheorem{rmk}[thm]{Remark}
\newtheorem{cor}[thm]{Corollary}
\newtheorem{ccase}{Case}
\newtheorem{prop-defi}[thm]{Proposition-Definition}
\newtheorem{thm-defi}[thm]{Theorem-Definition}
\newtheorem{lem-defi}[thm]{Lemma-Definition}
\newtheorem{conj}[thm]{Conjecture}
\newtheorem{exam}[thm]{Example}
\newtheorem{set}[thm]{Setting}
\newdimen\argwidth
\def\db[#1\db]{
 \setbox0=\hbox{$#1$}\argwidth=\wd0
 \setbox0=\hbox{$\left[\box0\right]$}
  \advance\argwidth by -\wd0
 \left[\kern.3\argwidth\box0 \kern.3\argwidth\right]}
\newcommand{\aA}{\mathcal{A}}
\newcommand{\eE}{\mathcal{E}}
\newcommand{\hH}{\mathcal{H}}
\newcommand{\oO}{\mathcal{O}}
\newcommand{\pP}{\mathcal{P}}
\newcommand{\Supp}{\mathop{\rm Supp}\nolimits}
\newcommand{\Hom}{\mathop{\rm Hom}\nolimits}
\newcommand{\dotimes}{\stackrel{\textbf{L}}{\otimes}}
\newcommand{\dR}{\mathbf{R}}
\newcommand{\ch}{\mathop{\rm ch}\nolimits}
\newcommand{\Ext}{\mathop{\rm Ext}\nolimits}
\newcommand{\Spec}{\mathop{\rm Spec}\nolimits}
\newcommand{\rank}{\mathop{\rm rank}\nolimits}
\newcommand{\Coh}{\mathop{\rm Coh}\nolimits}
\newcommand{\Per}{\mathop{\rm Per}\nolimits}
\newcommand{\cneq}{\mathrel{\raise.095ex\hbox{:}\mkern-4.2mu=}}
\newcommand{\eqcn}{\mathrel{=\mkern-4.5mu\raise.095ex\hbox{:}}}
\newcommand{\oPPer}{\mathop{\rm ^{0}Per}\nolimits}
\newcommand{\iPPer}{\mathop{\rm ^{-1}Per}\nolimits}
\newcommand{\ppPPer}{\mathop{^{{p}}\rm{Per}}\nolimits}
\newcommand{\modu}{\mathop{\rm mod}\nolimits}
\newcommand{\End}{\mathop{\rm End}\nolimits}
\newcommand{\Imm}{\mathop{\rm Im}\nolimits}
\newcommand{\Ker}{\mathop{\rm Ker}\nolimits}
\newcommand{\RHom}{\mathop{\dR\mathrm{Hom}}\nolimits}
\renewcommand{\labelenumi}{(\roman{enumi})}
\newcommand{\lkakko}{[\![}
\newcommand{\rkakko}{]\!]}
\title[{Counting perverse coherent systems on CY 4-folds}]
{Counting perverse coherent systems \\ on Calabi-Yau 4-folds}
\date{}
\author{Yalong Cao}
\address{RIKEN Interdisciplinary Theoretical and Mathematical Sciences Program (iTHEMS), 2-1, Hirosawa, Wako-shi, Saitama, 351-0198, Japan}
\email{yalong.cao@riken.jp}
\author{Yukinobu Toda}
\address{Kavli Institute for the Physics and Mathematics of the Universe (WPI),The University of Tokyo Institutes for Advanced Study, The University of Tokyo, Kashiwa, Chiba 277-8583, Japan}
\email{yukinobu.toda@ipmu.jp}
\begin{document}
%\maketitle
\begin{abstract}
Nagao-Nakajima introduced counting invariants of stable perverse coherent systems on small resolutions of Calabi-Yau 3-folds and 
determined them on the resolved conifold. Their invariants recover DT/PT invariants and Szendr\"oi's non-commutative invariants in some chambers of stability conditions. In this paper, we study an analogue of their work on Calabi-Yau 4-folds. We define counting invariants for 
stable perverse coherent systems using primary insertions and compute them in all chambers of stability conditions.  
We also study counting invariants of local resolved conifold $\mathcal{O}_{\mathbb{P}^1}(-1,-1,0)$ defined using torus localization and tautological insertions. 
We conjecture a wall-crossing formula for them, which upon dimensional reduction recovers Nagao-Nakajima's wall-crossing formula on resolved conifold.

\end{abstract}
\maketitle

%${}$ \\ \textbf{Keywords}:  Perverse coherent systems, Calabi-Yau 4-folds

%${}$ \\ \textbf{MSC 2010}: 14N35, 14J32, 16G20, 18E30

\tableofcontents

\section{Introduction}

\subsection{Background on CY 3-folds}
For a contractible rational curve on a Calabi-Yau 3-fold $Z$, 
we have the flop
\begin{align*}
\xymatrix{
Z \ar[dr]_-{f} \ar@{.>}[rr] &  & Z^{+} \ar[ld]^-{f^{+}} \\
& W &
}
\end{align*}
where $f$ contracts the rational curve 
 to a Gorenstein singularity and 
$f^{+}$ is a blow-up of $W$ in another way
so that $Z \dashrightarrow Z^{+}$ is not an isomorphism. 
In \cite{Bri1}, Bridgeland introduced perverse coherent sheaves
associated with 3-fold flopping contractions and used them to prove the equivalence of derived categories of $Z$ and $Z^+$ which
was conjectured earlier by Bondal and Orlov \cite{BO}.
Shortly after that, Van den Bergh \cite{VB} constructed non-commutative resolution of $W$ and realized Bridgeland's equivalence through it. 
 
Nagao-Nakajima \cite{NN} introduced counting invariants for stable perverse coherent systems associated with flopping contractions of Calabi-Yau 3-folds. 
They determined their invariants for any chamber of stability conditions on resolved conifold $\mathcal{O}_{\mathbb{P}^1}(-1,-1)$, which recover DT/PT invariants \cite{DT, PT} 
and Szendr\"oi's non-commutative invariants \cite{Sze, Young} (see also Mozgovoy-Reineke \cite{MR}) in some special chambers. 
As a corollary of their wall-crossing formula, they recovered DT/PT correspondence \cite{Bri3, Toda1} and flop formula \cite{Cala, Toda2} in this case.

\subsection{Perverse coherent systems on projective CY 4-folds}
In this paper, we are interested in extending their work to Calabi-Yau 4-folds. Our setting is the following: 
\begin{set}\label{intro setting}
	Let $X$ be a smooth projective Calabi-Yau 4-fold 
	and $f \colon X \to Y$ be a projective birational contraction 
	which contracts an irreducible surface $E \subset X$ to a 
	curve $C \subset Y$. 
	We assume that formal neighborhood at 
	each point $p \in C \subset Y$ is of the form 
	\begin{align*}
		\widehat{\oO}_{Y, p} \cong 
\mathbb{C}\lkakko x, y, z, w, u \rkakko/(xy-zw). \end{align*}
\end{set}
In the above setting, 
one can show that 
$\dR f_{\ast}\oO_X=\oO_Y$, 
the singular locus $C \subset Y$ is a smooth connected curve, 
and the morphism $f|_{E} \colon E\to C$ is a ruled surface whose 
fibers have normal bundle
$\oO_{\mathbb{P}^1}(-1,-1,0)$ in $X$.
%\begin{set}\label{intro setting}
%Let $X$ be a smooth projective Calabi-Yau 4-fold and $f \colon X\to Y$ be a proper morphism to a projective 4-fold $Y$ such that 
%$f$ is an isomorphism outside a surface $E\subset X$, contracting $E$ to a curve $C\subset Y$ and $\dR f_*\oO_X=\oO_Y$.
%The morphism $f|_{E} \colon E\to C$ is a ruled surface whose fibers have normal bundle $\oO_{\mathbb{P}^1}(-1,-1,0)$ in $X$. 
%We denote the class of the fiber by $[\mathbb{P}^1]\in H_2(X,\mathbb{Z})$.
%\end{set}
In this case, the abelian category $\Per(X/Y)$ of perverse coherent sheaves \eqref{per heart} still makes sense and 
Van den Bergh's work  \cite{VB} applies. Following Nagao-Nakajima, we consider a pair (called \textit{perverse coherent system})
\begin{align}\label{intro:pair}
	(F, s), \,\,
	F \in \Per(X/Y), \,\, 
	s \colon \oO_X \to F. 
\end{align}
For $\Theta=(\theta_0, \theta_1) \in \mathbb{R}^2$, 
we will define $\Theta$-(semi)stability 
for perverse coherent systems (see Definition \ref{pair stab}), 
and construct the coarse moduli space 
\begin{align*}
	P_n^{\Theta}(X, \beta)=\big\{(F, s) : \Theta\mbox{-semistable 
		pairs (\ref{intro:pair}) with }[F]=\beta,\, \chi(F)=n\big\}/\sim
\end{align*}
of $S$-equivalence classes of $\Theta$-semistable perverse coherent systems
(see Theorem~\ref{cons of moduli}).

We are only interested in curve classes $\beta$ such that $f_*\beta=0$, i.e.~classes in fibers\,\footnote{These are analogy of curve classes of 
resolved conifold considered by Nagao-Nakajima \cite{NN}.} of $f$. 
We will classify
walls for $\Theta$-stability, which turn out to 
consist of six types
denoted by 
$L_{\pm}^{\pm}(k)$, 
$L_{\pm}^{\mp}(k)$
for $k\in \mathbb{Z}_{\geqslant 0}$ and $L_{\pm}(\infty)$
(see~Proposition~\ref{prop:wall}). 
This 
 wall-chamber structure for 
 $\Theta$-stability is described in Figure~\ref{figure1}, 
 which is 
 the same as that
  of the resolved conifold in~\cite{NN} (see Lemma~\ref{lem identify wall on cpt and local}). 
\begin{figure}
 \centering
\begin{tikzpicture}[node distance=1cm]
%\draw[thick](-3,0)--(3,0) node[right]{$\theta_0$};
%\draw[thick](0,-3)--(0,3) node[above]{$\theta_1$};
\draw[thick] (-4.6,0)--(4.6,0)  node [pos=0, anchor=east]{\tiny{$\theta_1=0$}} ;\draw[thick](0,4.6)--(0,-4.6)  node [pos=0, anchor=east]{\tiny{$\theta_0=0$}} ;
\draw[thick] (-4,4)--(4,-4)  node [pos=0, anchor=east]{\tiny{$\theta_0+\theta_1=0$}} ;
\draw[thick] (-3.5,4.3)--(3.5,-4.3)  node[pos=0, anchor=east]{\tiny{$m\theta_0+(m-1)\theta_1=0$} } ;
\draw[thick] (-2.3,4.6)--(2.3,-4.6)  node[pos=0, anchor=east]{\tiny{$2\theta_0+\theta_1=0$} } ;
\draw[thick] (-4.3,3.5)--(4.3,-3.5)  node[pos=0, anchor=east]{\tiny{$m\theta_0+(m+1)\theta_1=0$} } ;
\draw[thick] (-4.6,2.3)--(4.6,-2.3)  node[pos=0, anchor=east]{\tiny{$\theta_0+2\theta_1=0$} } ;
\draw[fill] (-3.5,3.6) circle [radius=0.025];
\node [thick, right] at (-3.5,3.6) {\tiny{PT}};
\draw[fill] (-3.6,3.5) circle [radius=0.025];
\node [thick, below] at (-3.6,3.5) {\tiny{DT}};
\draw[fill] (3.8,-3.7) circle [radius=0.025];
\node [thick, above] at (3.8,-3.7) {\tiny{PT}};

\draw[fill] (3.7,-3.8) circle [radius=0.025];
\node [thick, left] at (3.7,-3.8) {\tiny{DT}};

\node at (4.2,-3.7) {\small{$X^+$}};
\node at (-4,3.7) {\small{$X$}};

\draw[fill] (-3.46,2.5) circle [radius=0.015];
\draw[fill] (-3.51,2.4) circle [radius=0.015];
\draw[fill] (-3.56,2.3) circle [radius=0.015];

\draw[fill] (-2.5,3.46) circle [radius=0.015];
\draw[fill] (-2.4,3.51) circle [radius=0.015];
\draw[fill] (-2.3,3.56) circle [radius=0.015];

\draw[fill] (-3.15, 2.97) circle [radius=0.015];
\draw[fill] (-3.2, 2.9) circle [radius=0.015];
\draw[fill] (-3.25, 2.83) circle [radius=0.015];

\draw[fill] (-2.97, 3.15) circle [radius=0.015];
\draw[fill] (-2.9, 3.22) circle [radius=0.015];
\draw[fill] (-2.83, 3.28 ) circle [radius=0.015];

\node at (2.5,2) {$\begin{subarray}{c}\mathrm{empty\,\, chamber} \end{subarray}$ } ;
\node at (-3,-2.3) {$\begin{subarray}{c}\mathrm{non-commutative}  \\ \mathrm{chamber} \end{subarray}$ } ;
\end{tikzpicture} 
\caption{Wall-chamber structures of $\oO_{\mathbb{P}^1}(-1,-1,0)$}
\label{figure1}
\end{figure}
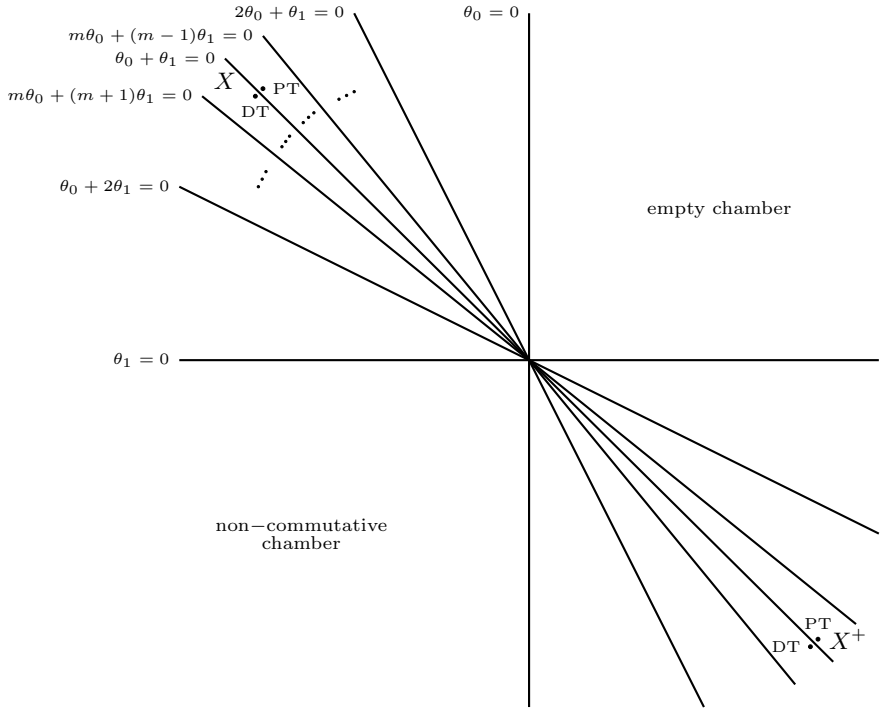
In special chambers, our moduli spaces recover moduli spaces of $Z_t$-stable pairs introduced in \cite{CT1} (therefore also recover PT stable pairs
~\cite{PT, CMT2}),
Hilbert schemes of curves, and perverse Hilbert schemes (see Proposition \ref{Z_t chamber}, \ref{dt/pt chamber}, \ref{nc chamber} respectively).

When $\Theta$ lies outside walls, $P^{\Theta}_n(X,\beta)$ consists of 
only stable objects and admits a 
$(-2)$-shifted symplectic derived scheme structure in the sense of Pantev-To\"en-Vaquie-Vezzosi \cite{PTVV}. Therefore
there exists a virtual class\,\footnote{See \cite{CL1} for construction in some special cases and \cite{OT} for a recent algebro-geometric approach.}
$$[P^{\Theta}_n(X,\beta)]^{\mathrm{vir}}\in H_{2n}(P^{\Theta}_n(X,\beta),\mathbb{Z}), $$ 
in the sense of Borisov-Joyce \cite{BJ}, which depends on the choice of orientation \cite{CGJ} (see Proposition \ref{prop on exist of virt class}).
In order to define counting invariants, we consider
 \textit{primary insertions}:  
\begin{align*}
\tau \colon H^{4}(X,\mathbb{Z})\to H^{2}(P^{\Theta}_n(X,\beta),\mathbb{Z}), \quad 
\tau(\gamma) \cneq (\pi_{P})_{\ast}(\pi_X^{\ast}\gamma \cup\ch_3(\mathbb{F}) ),
\end{align*}
where $\pi_X$, $\pi_P$ are projections from $X \times P^{\Theta}_n(X,\beta)$
onto corresponding factors, $\mathbb{I}=(\pi_X^*\oO_X\to \mathbb{F})$ is the universal pair and $\ch_3(\mathbb{F})$ is the
Poincar\'e dual to the fundamental cycle of $\mathbb{F}$.

The primary counting invariants of $\Theta$-stable perverse 
coherent systems are defined by 
\begin{align*}P^{\Theta}_{n,\beta}(\gamma):=\int_{[P^{\Theta}_n(X,\beta)]^{\rm{vir}}} \tau(\gamma)^n\in \mathbb{Z}. \end{align*}
The first purpose of this paper is to 
completely determine these invariants 
 for all chambers of stability conditions: 
\begin{thm}\emph{(Theorem \ref{cpt main thm})}\label{intro cpt main thm}
Let $f\colon X\to Y$ be as in Setting \ref{intro setting}, $E\subset X$ be the exceptional surface and $[\mathbb{P}^1]\in H_2(X,\mathbb{Z})$ be the fiber class of $f|_{E} \colon E\to C$. Let $\Theta=(\theta_0,\theta_1)\in \mathbb{R}^2$ be outside walls defined in \eqref{all walls}. 
Then for certain choice of orientation, we have 
%\begin{align*}
%	\sum_{(n, d) \in \mathbb{Z}^2}
%	\frac{P^{\Theta}_{n, d[\mathbb{P}^1]}(\gamma)}{n!}\,q^nt^{\beta}=
%	\begin{cases}
%		\exp(qt)^{n_{0, 1}(\gamma)} & 
%		\mbox{ if }\theta_0<0,\, \theta_0+2\theta_1>0, \\
%		\exp(qt-qt^{-1})^{n_{0, 1}(\gamma)} & 
%		\mbox{ if }\theta_0<0,\, \theta_0+2\theta_1<0, \\
%		\exp(-qt^{-1})^{n_{0, 1}(\gamma)} & 
%%		1 & \mbox{ otherwise}. 
%%end{align*}
%Here $n_{0, 1}(\gamma) =\gamma \cdot [E]$ is the genus 
%zero Gopakumar-Vafa invariant~\cite{KP} with curve class $[\mathbb{P}^1]$. 
\begin{align*}
	\sum_{n\in \mathbb{Z}, f_{\ast}\beta=0}
	\frac{P^{\Theta}_{n,\beta}(\gamma)}{n!}\,q^nt^{\beta}=\left\{
	\begin{array}{cl}
		\exp\left(qt^{[\mathbb{P}^1]}\right)^{\int_X\gamma\cup [E] }           &\mbox{ if } \theta_0<0,\, \theta_0+2\theta_1>0, \\
		&  \\
		\exp\left(qt^{[\mathbb{P}^1]}-qt^{-[\mathbb{P}^1]}\right)^{\int_X\gamma\cup [E] }          &\mbox{ if } \theta_0<0,\, \theta_0+2\theta_1<0,  \\
		&  \\
		\exp\left(-qt^{-[\mathbb{P}^1]}\right)^{\int_X\gamma\cup [E] }         &\mbox{ if }  \theta_0>0,\, \theta_0+2\theta_1<0,  \\
		&  \\ 
		1  \quad \quad \quad           &   \mbox{ otherwise}. 
	\end{array} \right. 
\end{align*} 
%\begin{align*}
%	\sum_{n\in \mathbb{Z}, f_{\ast}\beta=0}
%		\frac{P^{\Theta}_{n,\beta}(\gamma)}{n!}\,q^nt^{\beta}=\left\{
%\begin{array}{rcl}
%\exp\left(qt^{[\mathbb{P}^1]}\right)^{\int_X\gamma\cup [E] }  \quad \quad   \quad    &      &\mbox{if } \theta_0<0,\, %\theta_0+2\theta_1>0, \\
%& & \\
%\exp\left(qt^{[\mathbb{P}^1]}-qt^{-[\mathbb{P}^1]}\right)^{\int_X\gamma\cup [E] }    &      &\mbox{if } \theta_0<0,\, %\theta_0+2\theta_1<0,  \\
%& & \\
%\exp\left(-qt^{-[\mathbb{P}^1]}\right)^{\int_X\gamma\cup [E] } \quad \quad  \quad    &      &\mbox{if }  \theta_0>0,\, %\theta_0+2\theta_1<0,  \\
%& & \\ 
%1   \quad \quad  \quad \quad  \quad \quad   \quad  \quad    &      &   \mbox{otherwise}. 
%\end{array} \right. 
%\end{align*} 
\end{thm}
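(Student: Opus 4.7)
My plan is to compute the invariants in three anchor chambers where the moduli space admits an explicit geometric description, then propagate to all remaining chambers via wall-crossing across the six wall types classified in Proposition~\ref{prop:wall}. The unifying geometric picture is that when $f_*\beta = 0$, every semistable pair is supported on the ruled surface $E \to C$, whose fibers are $\oO(-1,-1,0)$-rigid $\mathbb{P}^1$'s. Generic configurations of pairs decompose as disjoint unions over distinct points of $C$, and this factorization along the base, combined with the $\tau(\gamma)^n/n!$ normalization of primary insertions, is what forces the exponential form of the generating series with exponent $\int_X \gamma \cup [E]$ (geometrically: the count, weighted by $\gamma$, of fibers of $f|_E$).

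In the chamber $\theta_0<0,\theta_0+2\theta_1>0$, Proposition~\ref{dt/pt chamber} identifies $P_n^{\Theta}(X,\beta)$ with a Hilbert scheme of curves. The key local model is $\oO_{\mathbb{P}^1}(-1,-1,0)$: fibers over $d$ distinct points of $C$ form a component birational to $\mathrm{Sym}^d(C)$, and integrating $\tau(\gamma)^n$ against the Borisov--Joyce virtual class reduces, by the projection formula, to integrating a power of the pullback of (the Poincar\'e dual of) $\gamma \cup [E]$ to $\mathrm{Sym}^n(C)$. This yields $\frac{1}{n!}(\int_X \gamma \cup [E])^n$, which sums to $\exp(qt^{[\mathbb{P}^1]})^{\int_X \gamma\cup [E]}$. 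Symmetrically, in the chamber $\theta_0>0,\theta_0+2\theta_1<0$ the analogous analysis on the flop $X^+$ (where the fiber class flips sign) produces $\exp(-qt^{-[\mathbb{P}^1]})^{\int_X \gamma\cup [E]}$; the minus sign reflects the orientation choice of \cite{CGJ}. For the middle chamber $\theta_0<0,\theta_0+2\theta_1<0$, the perverse Hilbert scheme description of Proposition~\ref{nc chamber} serves as anchor, and the combined exponent $qt^{[\mathbb{P}^1]} - qt^{-[\mathbb{P}^1]}$ arises because both positive- and negative-type rigid $\mathbb{P}^1$'s now contribute.

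For the remaining chambers where the formula equals $1$, the argument splits into two cases. For the "empty chamber" in the upper-right of Figure~\ref{figure1}, one shows that $\Theta$-stability combined with $f_*\beta = 0$ forces $F = 0$ and $n = 0$, so only the trivial pair contributes. For the two PT-type chambers, the standard PT-stability analysis combined with the rigidity of the fiber $\mathbb{P}^1$'s shows that primary insertions vanish for dimensional reasons. To traverse between chambers that share a common closed-form answer but are separated by further walls $L^{\pm}_{\pm}(k), L^{\mp}_{\pm}(k), L_{\pm}(\infty)$, I would invoke a Borisov--Joyce style wall-crossing identity modeled on Nagao--Nakajima's Hall-algebra setup and verify that the wall contributions either cancel or reproduce the multiplicative change predicted by the formula.

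The principal obstacle is the wall-crossing machinery itself in the $(-2)$-shifted symplectic setting. Unlike the CY 3-fold case where motivic Hall algebra identities at Euler characteristic level govern wall-crossing, the Borisov--Joyce virtual class on a CY 4-fold depends on an orientation in the sense of \cite{CGJ}, so one must choose a single consistent orientation that simultaneously produces the correct sign for all four chamber-wise formulas, in particular matching the $-qt^{-[\mathbb{P}^1]}$ on the flop side with the DT-side convention. Setting up the wall-crossing formula for the six wall types in a form compatible with primary insertions, and verifying that each wall contributes the precisely-predicted exponential factor, is the technical heart of the argument.
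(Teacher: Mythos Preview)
Your proposal takes a genuinely different route from the paper, and the route you chose has a real gap.

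The paper does \emph{not} use wall-crossing at all. Instead, it computes $P^{\Theta}_{n,\beta}(\gamma)$ directly and uniformly in every chamber by the following mechanism. For a $\Theta$-stable pair $(\oO_X\to F)$ with $f_*\beta=0$, one decomposes $F=\bigoplus_{i=1}^k F_i$ into connected components and observes $\chi(F_i)\geqslant 1$, hence $n\geqslant k$. The key point is that the cycle $\tau(\gamma)^n$ is represented (by choosing $n$ generic cycles $S_1,\ldots,S_n$ for $\gamma$) by a codimension-$n$ locus of pairs for which some $F_{j(i)}$ meets $S_i$; genericity forces the $j(i)$ to be distinct, so $n\leqslant k$. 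Thus $\tau(\gamma)^n$ is supported on pairs with $n=k$, $\chi(F_i)=1$, $d(F_i)\neq 0$ for all $i$. Each such $F_i$ is then a $\Theta$-stable framed representation with $\dim V_0=1$, $V_1\neq\mathbb{C}$, and Proposition~\ref{prop:classify} classifies these completely in each of the five regions of Figure~\ref{figure3}. The resulting zero-cycle is counted combinatorially, and a short lemma (Lemma~\ref{ass on virt class}, using the Oh--Thomas lift) shows that the virtual class agrees with the fundamental class on the smooth open stratum containing this zero-cycle. No wall-crossing identity is ever invoked.

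Your approach would instead compute in a few anchor chambers and propagate by wall-crossing. The principal obstacle you yourself identify is fatal here: there is at present no wall-crossing formula for Borisov--Joyce virtual classes on Calabi--Yau 4-folds compatible with primary insertions, and you would need it across infinitely many walls (the $L_-^-(k)$, $k\geqslant 2$, all lie inside the region $\theta_0<0,\,\theta_0+2\theta_1>0$, yet the invariant is constant there). A secondary issue is that the region ``$\theta_0<0,\,\theta_0+2\theta_1>0$'' is not a single chamber for the moduli spaces, so Proposition~\ref{dt/pt chamber} (which applies only to the two chambers adjacent to $L_-(\infty)$) does not furnish a Hilbert-scheme description throughout; your anchor computations are therefore incomplete even before wall-crossing begins. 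The paper's insight---that primary insertions already pin down the contributing pairs to a finite, classifiable list in \emph{every} chamber---sidesteps both difficulties entirely.
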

%In the following cases, our results do not depend on the assumption. 
%\begin{thm}\emph{(Proposition \ref{verify tech assu}, Corollary \ref{cor on verify prev conj})}\label{intro verify tech assu}
%Let $\Theta=(\theta_0,\theta_1)$ be outside walls and either $\theta_0>0$ or $\theta_0+2\theta_1>0$ holds. 
%Then Theorem \ref{intro cpt main thm} holds without Assumption \ref{ass on virt class}.
We remark that the wall-chamber structure of primary counting invariants 
(see Figure~\ref{figure2}) is different from that of corresponding moduli spaces as in Figure~\ref{figure1}. 
Indeed most walls in Figure~\ref{figure1} (except those in Figure~\ref{figure2}) 
do not contribute to the wall-crossing formula of primary 
invariants\footnote{A heuristic explanation for this is that the primary insertion picks up incident perverse coherent systems which can be thought
as supporting on local resolved conifold. Then one can argue using quiver model that only walls in Figure~\ref{figure2} will contribute to wall-crossing. See the proof of 
Theorem \ref{cpt main thm} for more details.}. 

The result of this theorem in particular proves some of our 
previous conjectures: 
%for the fiber classes in Setting~\ref{intro setting}.
\begin{cor}\emph{(Corollary \ref{cor on verify prev conj})}
The LePotier-pair/GV conjecture \cite[Conjecture~0.2]{CT1}, 
PT/GV conjecture \cite[\S 0.7]{CMT2} and 
DT/PT conjecture \cite[Conjecture~0.3]{CK2} hold for fiber classes in Setting \ref{intro setting}.
\end{cor}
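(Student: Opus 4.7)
The plan is to derive each of the three conjectures directly from Theorem \ref{intro cpt main thm} by reading it off in the appropriate chamber and matching the result against the known shape of the Gopakumar--Vafa prediction on fiber classes.

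First I set up the dictionary between chambers and the invariants appearing in the three conjectures. By Proposition \ref{Z_t chamber}, Proposition \ref{dt/pt chamber}, and Proposition \ref{nc chamber}, the chamber $\theta_0<0,\ \theta_0+2\theta_1>0$ simultaneously encodes Hilbert schemes of curves on $X$ together with the $Z_t$-stable pair moduli of \cite{CT1} and the PT moduli of \cite{PT, CMT2} for fiber-class data; the symmetric chamber $\theta_0>0,\ \theta_0+2\theta_1<0$ plays the same role on the flop side. Consequently the specialization of Theorem \ref{intro cpt main thm} to these two chambers simultaneously computes $I_{n,\beta}(\gamma)$, the LePotier-pair invariants of \cite{CT1}, and $P_{n,\beta}(\gamma)$ on fiber classes.

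Second, I compute the genus-$0$ GV contribution on fiber classes. Since $f|_E\colon E\to C$ is a $\mathbb{P}^1$-bundle whose fibers have normal bundle $\mathcal{O}_{\mathbb{P}^1}(-1,-1,0)$ inside $X$, a local computation on the total space of this bundle (pushing forward insertions along $E\to C$) forces
\begin{align*}
n_{0,[\mathbb{P}^1]}(\gamma) \;=\; \int_X \gamma\cup [E], \qquad n_{0,d[\mathbb{P}^1]}(\gamma)=0 \ \text{ for } d\geq 2.
\end{align*}
Plugging these values into the generating-series formulas of \cite[Conjecture 0.2]{CT1} and \cite[\S 0.7]{CMT2} collapses both right-hand sides to $\exp\bigl(qt^{[\mathbb{P}^1]}\bigr)^{\int_X\gamma\cup [E]}$ on $X$, and to $\exp\bigl(-qt^{-[\mathbb{P}^1]}\bigr)^{\int_X\gamma\cup [E]}$ on $X^+$, which is precisely what Theorem \ref{intro cpt main thm} delivers in the two outer chambers. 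This verifies the LePotier-pair/GV and PT/GV conjectures for fiber classes.

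For the DT/PT conjecture of \cite[Conjecture 0.3]{CK2}, note that Proposition \ref{dt/pt chamber} places the DT and PT moduli in chambers that are separated only by walls of the types not affecting primary invariants (as explained in the wall-crossing analysis entering the proof of Theorem \ref{cpt main thm}). Thus Theorem \ref{intro cpt main thm} outputs the same generating series on both sides of those walls, which is exactly the DT$\,=\,$PT equality predicted in \cite{CK2} for fiber classes. The main obstacle is not computational but bookkeeping: one has to pin down a single orientation on $P^{\Theta}_n(X,\beta)$ compatible with the orientations used in \cite{CT1, CMT2, CK2} so that the three conjectural formulas and the four chamber outputs of Theorem \ref{intro cpt main thm} can be compared term-by-term; once this compatibility is fixed, the corollary is a direct chamber-by-chamber rewriting of the master formula.
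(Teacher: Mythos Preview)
Your proposal is correct and follows essentially the same approach as the paper: both identify the DT, PT, and $Z_t$-stable pair moduli spaces inside the region $\theta_0<0,\ \theta_0+2\theta_1>0$ via Propositions~\ref{Z_t chamber} and~\ref{dt/pt chamber}, apply Theorem~\ref{cpt main thm} to see that all three generating series coincide with $\exp(qt^{[\mathbb{P}^1]})^{\int_X\gamma\cup[E]}$, and recognize the exponent as the unique nonvanishing GV invariant on fiber classes. Your write-up adds a couple of unnecessary ingredients (the citation of Proposition~\ref{nc chamber} is irrelevant here, and the flop-side chamber is not needed for the corollary as stated), but the core argument is the paper's own.
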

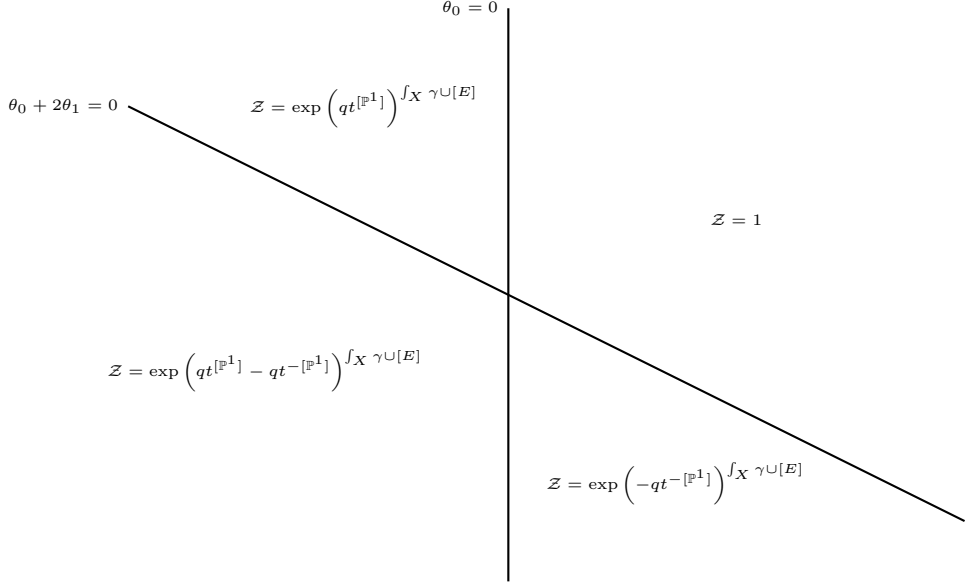
\begin{figure}
\begin{tikzpicture}[node distance=1cm]
\draw[thick](0,3.8)--(0,-3.8)  node [pos=0, anchor=east]{\tiny{$\theta_0=0$}} ;
%\draw[thick](4.2,0)--(0,0)  node [pos=0, anchor=west]{\tiny{$\theta_1=0$}} ;
\draw[thick] (-5,2.5)--(6,-3)  node[pos=0, anchor=east]{\tiny{$\theta_0+2\theta_1=0$} } ;
%\draw[thick] (2,-4)--(0,0)  node[pos=0, anchor=west]{\tiny{$2\theta_0+\theta_1=0$} } ;
\node at (3,1) {\tiny{$\mathcal{Z}=1$} } ;
\node at (-1.9,2.5) {\tiny{$\mathcal{Z}=\exp\left(qt^{[\mathbb{P}^1]}\right)^{\int_X\gamma\cup [E] }$}};
\node at (-3.2,-1) {\tiny{$\mathcal{Z}=\exp\left(qt^{[\mathbb{P}^1]}-qt^{-[\mathbb{P}^1]}\right)^{\int_X\gamma\cup [E] }$} } ;
\node at (2.2,-2.5) {\tiny{$\mathcal{Z}=\exp\left(-qt^{-[\mathbb{P}^1]}\right)^{\int_X\gamma\cup [E] }$}};
\end{tikzpicture} 
\caption{Counting invariants for primary insertions---Theorem \ref{intro cpt main thm}}
\label{figure2}
\end{figure}
 
\subsection{Perverse coherent systems on local resolved conifold}
We also consider similar counting problem for the local resolved conifold \begin{align*}
X=\oO_{\mathbb{P}^1}(-1,-1,0).
\end{align*}
Since moduli spaces of $\Theta$-semistable perverse coherent systems
on it are non-compact,
we define counting invariants using torus localization. As there is no compact 4-cycle in $X$, instead of primary insertions, 
we consider tautological insertions as in \cite{CK1, CKM1, CT3}. 
We take a CY torus  
\begin{align*}T_0=\{t=(t_0,t_1,t_2,t_3)\in(\mathbb{C}^*)^4:\,t_0t_1t_2t_3=1\},
\end{align*}
which acts on $X$ preserving the CY 4-form. 
This action lifts to an action on $P^{\Theta}_n(X,d)$ with finitely many reduced points as torus fixed loci (see Proposition \ref{prop torus fixed loci}). 
Therefore we can define equivariant tautological invariants (see Definition \ref{taut inv}):
\begin{align*}
	P^{\Theta}_{n,d}(e^m)
	\cneq \sum_{\begin{subarray}{c}I=(\oO_X\to F) \in P^{\Theta}_n(X,d)^{T_0} \end{subarray}}
e_{T_0}(\chi_X(I,I)^{\frac{1}{2}}_0)\cdot e_{T_0\times \mathbb{C}^*}(\chi_X(F)^{\vee}\otimes e^m)\in \Lambda. 
\end{align*}
Here we consider a trivial $\mathbb{C}^*$-action on moduli spaces and $e^m$ is a trivial line bundle with $\mathbb{C}^*$-equivariant weight $m$. 
%the equivariant Euler class is taken with respect to $T\times \mathbb{C}^*$-action.
%Here $e^m$ is a $T_0$-equivariant trivial line bundle with weight $m$, 
$\Lambda$ is the field of rational functions 
of equivariant parameters $m$ and 
$\lambda_i=e_{T_0}(t_i)$.  
The above invariants 
depend on choice of sign for each torus fixed point. 

For $\Theta_{\mathrm{PT}}:=(-1+0^+,1)$, the corresponding 
invariants 
\begin{align*}
P_{n, d}(e^m) \cneq P_{n, d}^{\Theta_{\mathrm{PT}}}(e^m) 
\end{align*}
enumerate PT stable pairs, which have a remarkable conjectural formula.  
\begin{conj}\label{intro conj formula in pt chamber}\emph{(Cao-Kool-Monavari \cite{CKM1})}
There exist choices of signs such that 
\begin{align*}
	\sum_{n,d}P_{n,d}(e^m)q^nt^d=\prod_{k\geqslant 1}\left(1-q^k t\right)^{k\cdot \frac{m}{\lambda_3}},
	\end{align*}
where $-\lambda_3$ is the equivariant parameter of $\oO_{\mathbb{P}^1}$ in $X$.
\end{conj}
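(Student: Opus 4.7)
The plan is to apply $T_0$-equivariant localization: by Proposition~\ref{prop torus fixed loci}, $P_n^{\Theta_{\mathrm{PT}}}(X,d)^{T_0}$ is a finite set of reduced points, so the generating series $\sum_{n,d}P_{n,d}(e^m)q^nt^d$ reduces to an explicit finite sum of $T_0$-equivariant weights at the fixed points, and the task is to resum these weights into the conjectured infinite product.

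The first step is a combinatorial classification of $T_0$-fixed PT pairs. At each toric point $0,\infty\in\mathbb{P}^1$ the local chart is a CY4 $\mathbb{C}^4$ with one compact tangent direction and three normal directions $\oO(-1),\oO(-1),\oO$. Adapting the CY4 PT vertex formalism of Cao-Kool-Monavari, a fixed pair $(\oO_X\to F)$ of fiber class $d[\mathbb{P}^1]$ should be encoded by a pair of three-dimensional box configurations at $0$ and $\infty$ sharing a common asymptotic two-dimensional Young diagram $\lambda$ of size $d$ along the edge $\mathbb{P}^1$. The generating series then decomposes schematically as a double sum over $\lambda$ and the excess $n$ of a vertex-vertex-edge product, each factor a rational function of the equivariant parameters $\lambda_i$ and $m$.

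Next, I would compute each factor in $T_0$-equivariant K-theory, extract the square root $\chi_X(I,I)_0^{1/2}$ compatible with the orientation theory of Cao-Gross-Joyce, and pair it with the tautological weight $e_{T_0\times\mathbb{C}^*}(\chi_X(F)^\vee\otimes e^m)$. The key mechanism I would look for is that, after imposing the CY relation $\lambda_0+\lambda_1+\lambda_2+\lambda_3=0$ together with a universal sign choice, the two vertex factors collapse pairwise, as predicted by the CY4 PT vertex conjectures of Cao-Kool-Monavari, leaving only the edge contribution with its tautological decoration. Because the third normal direction $\oO$ carries weight $-\lambda_3$, the exponent $m/\lambda_3$ of the conjecture should appear as the universal equivariant weight of the class $e^m$ contracted against this trivial edge direction.

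The main obstacle is twofold: first, fixing the universal sign choice so that vertex cancellation is coherent across all $\lambda$ and $d$; second, resumming the remaining Young-diagram sum into an infinite product. For the sign problem I would proceed inductively in $d$, matching one low-degree case by direct computation and propagating via the CY4 orientation framework. For the resummation, a Nekrasov-Okounkov hook-length identity should convert $\sum_\lambda q^{|\lambda|}\prod_{\square\in\lambda}(\cdots)$ into $\prod_{k\geqslant 1}(1-q^kt)^{a_k}$, with hooks of length $k$ contributing the exponent $k\cdot m/\lambda_3$. As a stringent internal check I would verify the limit $m\to -\lambda_3$, in which the tautological insertion essentially trivializes; in that limit the formula should reduce to the Pandharipande-Thomas / Nagao-Nakajima partition function $\prod_{k\geqslant 1}(1-q^kt)^k$ on the CY3 resolved conifold $\oO_{\mathbb{P}^1}(-1,-1)$, and reproducing this limit both pins down the residual sign ambiguity and validates the whole localization calculation.
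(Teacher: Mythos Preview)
The statement is a \emph{conjecture}, and the paper does not prove it. There is therefore no ``paper's own proof'' to compare against. What the paper does instead is reinterpret the conjecture in terms of wall-crossing: it formulates a separate wall-crossing conjecture (Conjecture~\ref{wall cross conj}) for the walls $L_{\pm}^{\pm}(k)$, shows that this implies the PT-chamber formula by successively crossing from the empty chamber to the PT chamber (Proposition~\ref{wall crossing formula}, Corollary~\ref{cor on nc}), and then gives partial evidence for the wall-crossing conjecture (Theorem~\ref{thm on js} for $I_0=\oO_X$, and computer checks for various $I_0=I_{l\mathbb{P}^1}$). So the paper's route is indirect and conditional, not a direct attack on the product formula.

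Your proposal takes a genuinely different route: a direct localization computation via a CY4 vertex/edge formalism, followed by a hook-length resummation. This is closer in spirit to the original \cite{CKM1} approach than to anything in the present paper. As you yourself flag, the two serious gaps are (i) establishing a coherent global sign rule so that vertex contributions cancel uniformly, and (ii) proving the resummation identity; neither of these is resolved in your sketch, and neither is resolved in the literature, which is precisely why the statement remains a conjecture. Your plan is a reasonable research programme, but it is not a proof, and you should not present it as one.

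One small correction: the dimensional-reduction check to the resolved conifold occurs at $m=\lambda_3$, not $m\to -\lambda_3$ (see Proposition~\ref{dim red}); substituting $m=\lambda_3$ into the conjectural formula gives exponent $k$, matching the Nagao--Nakajima PT series on $\oO_{\mathbb{P}^1}(-1,-1)$.
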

\begin{rmk}
In fact, the choices of signs such that the above formula holds are conjectured to be unique and checked for small $n,d$ $($ref.~\cite[Conjecture~0.16, Proposition~B.1]{CKM1}$)$.
%We expect the correct choices of signs should come from global choices of orientations on the moduli spaces    
\end{rmk}
The second purpose of this paper is to 
give an interpretation of Conjecture~\ref{intro conj formula in pt chamber}
in terms of wall-crossing of $\Theta$-stable perverse coherent systems. 
Suppose that $\Theta$ lies on 
 one of the walls in Figure~\ref{figure1} except 
 the DT/PT wall, and $\Theta_{\pm}$ lies on 
 its adjacent chambers.
We consider the flip type diagram of $T_0$-fixed loci of good moduli spaces: 
\begin{align}\label{diagram:wall}
\xymatrix{
\bigcup_{n,d} P^{\Theta_{-}}_n(X,d)^{T_0} \ar[rd]_{\pi^-} & & \ar[ld]^{\pi^+} \bigcup_{n,d}P^{\Theta_{+}}_n(X,d)^{T_0} \\
& \bigcup_{n,d} P^{\Theta}_n(X,d)^{T_0}. & }
\end{align}
Here $P^{\Theta}_n(X,d)^{T_0}$ consists of $\Theta$-polystable perverse coherent systems of type 
\begin{align*}
	I_0\oplus S_{k-1}^{\oplus r}[-1], \quad r\geqslant 0,
	\end{align*}
%\begin{YC}
%Here it looks like we only consider $Z_t$-stable pairs? 
%Can it happen that the above $\oO_{\mathbb{P}^1}(k-1)$ replaced by %$\oO_{\mathbb{P}^1}(-k-1)[1]$ and others. \\ 
%\end{YC}
where $I_0$ is a $T_0$-fixed $\Theta$-stable perverse coherent system,
$S_{k-1}$ is a $T_0$-fixed $\Theta$-stable
perverse coherent sheaf with $\Theta(S_{k-1})=0$, 
and $r$ can be computed from Chern character of $I_0$. 
$S_{k-1}$ is determined by the type of wall, e.g. 
$S_{k-1}=\oO_{\mathbb{P}^1}(k-1)$
if $\Theta \in L_-^-(k)$ (see~(\ref{def:Sk}) for details). 

When $m=\lambda_3$, there exists a dimensional reduction which relates our invariants with Nagao-Nakajima's invariants on
the 3-fold $\oO_{\mathbb{P}^1}(-1,-1)$ (see 
Proposition \ref{dim red}). 
In \cite[Theorem~3.12]{NN}\footnote{In this paper, we always use the numbering in the arxiv version of \cite{NN}.}, they proved a wall-crossing formula 
by stratifying $\pi^{\pm}$ into Grassmannian bundles and showed that the
 difference of invariants under wall-crossing is independent 
of the choice of $I_0$. Motivated by the idea of their
 wall-crossing formula, 
we conjecture a similar phenomenon holds 
 for our 4-fold invariants on $\oO_{\mathbb{P}^1}(-1,-1,0)$:
\begin{conj}\label{intro wall cross conj}\emph{(Conjecture \ref{wall cross conj})}
	Let $\Theta$ lie on 
	one of the 
	walls $L_{\pm}^{-}(k)$, $L_{\pm}^{+}(k)$ in \eqref{all walls}. 
	For a $T_0$-fixed $\Theta$-stable perverse coherent system 
	$I_0$, we consider the following 
	sequence of $\Theta$-polystable objects with $r\geqslant 0$: 
	\begin{align*}
		P_{k-1, r}^{I_0}:=\left\{I_0\oplus S_{k-1}^{\oplus r}[-1]
		\right\}\in \bigcup_{n,d}P^{\Theta}_n(X,d)^{T_0}. 
		\end{align*}
\begin{itemize}
\item If $\Theta=(\theta_0,\theta_1)\in L_{-}^-(k)$ or $L_{+}^-(k)$ $($$k\geqslant1$$)$ and 
$\Theta_{\pm}=(\theta_0\mp 0^+,\theta_1)$, then there exist choices of signs such that
$$\frac{\sum_{r}t^r\sum_{I\in \pi_+^{-1}(P^{I_0}_{k-1,r})}e_{T_0}(\chi_X(I,I)^{\frac{1}{2}}_0)\cdot e_{T_0\times \mathbb{C}^*}(\chi_X(F)^{\vee}\otimes e^m)}{ 
\sum_{r}t^r\sum_{I\in \pi_-^{-1}(P^{I_0}_{k-1,r})}e_{T_0}(\chi_X(I,I)^{\frac{1}{2}}_0)\cdot e_{T_0\times \mathbb{C}^*}(\chi_X(F)^{\vee}\otimes e^m)}
=(1-t)^{k\frac{m}{\lambda_3}}. $$
\item If $\Theta=(\theta_0,\theta_1)\in L_{-}^+(k)$ or $L_{+}^+(k)$ $($$k\geqslant 0$$)$ and 
$\Theta_{\pm}=(\theta_0\mp 0^+,\theta_1)$, then there exist choices of signs such that
$$\frac{\sum_{r}t^r\sum_{I\in \pi_+^{-1}(P^{I_0}_{k-1,r})}e_{T_0}(\chi_X(I,I)^{\frac{1}{2}}_0)\cdot e_{T_0\times \mathbb{C}^*}(\chi_X(F)^{\vee}\otimes e^m)}{ 
\sum_{r}t^r\sum_{I\in \pi_-^{-1}(P^{I_0}_{k-1,r})}e_{T_0}(\chi_X(I,I)^{\frac{1}{2}}_0)\cdot e_{T_0\times \mathbb{C}^*}(\chi_X(F)^{\vee}\otimes e^m)}
=(1-t^{-1})^{k\frac{m}{\lambda_3}}.$$
\end{itemize}
%Here $P^{I_0}_{k-1,r}:=\left\{I_0\oplus S_{k-1}^{\oplus r}[-1]
%\right\}\in \bigcup_{n,d}P^{\Theta}_n(X,d)^{T_0}$.
\end{conj}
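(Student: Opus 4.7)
The plan is to mimic Nagao--Nakajima's wall-crossing argument \cite{NN} in the 4-fold (Borisov--Joyce/Oh--Thomas) setting, with the key difference that the Grassmannian bundle contributions come with a square-root twist and an extra $\mathbb{C}^*$-tautological factor. First I would fix a $T_0$-fixed $\Theta$-stable object $I_0$, work formally in a $T_0$-equivariant neighbourhood of each polystable point $I_0\oplus S_{k-1}^{\oplus r}[-1]$, and use Luna-type slice theory together with the $(-2)$-shifted symplectic derived structure of \cite{PTVV} to identify the $T_0$-fixed locus of $\pi^{\pm}$ with a Grassmannian of subspaces inside an equivariant Ext group. Concretely, set
\begin{align*}
V_+\cneq \Ext^1_X(S_{k-1}[-1],I_0),\qquad V_-\cneq \Ext^1_X(I_0,S_{k-1}[-1]),
\end{align*}
viewed as $T_0$-representations. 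A $\Theta_{+}$-stable extension of $S_{k-1}^{\oplus r}[-1]$ by $I_0$ corresponds to a surjection from $V_+^\vee$ onto an $r$-dimensional quotient, and dually for $\Theta_{-}$; so $\pi_{\pm}^{-1}(P^{I_0}_{k-1,r})$ is, as a $T_0$-scheme, $\mathrm{Gr}(r,V_{\pm})$.

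Next I would compute the local contribution of each $T_0$-fixed point $W\subset V_{\pm}$ to the two invariants. Using the Oh--Thomas virtual class together with its compatibility with shifted symplectic reduction, the square-root Euler class $e_{T_0}(\chi_X(I,I)^{1/2}_0)$ at such a $W$ decomposes (up to a sign) into the square-root of the $T_0$-moving part of $\chi_X(I_0,I_0)_0$, a term involving $\chi_X(S_{k-1},S_{k-1})_0^{r^2/2}$, and a mixed contribution that is exactly the square-root of the equivariant Euler class of the tangent space $T_W\mathrm{Gr}(r,V_{\pm})$. The tautological factor $e_{T_0\times\mathbb{C}^*}(\chi_X(F)^\vee\otimes e^m)$ splits analogously into a factor from $I_0$ alone and the factor $e_{T_0\times\mathbb{C}^*}(W\otimes \chi_X(S_{k-1})^\vee\otimes e^m)$. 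After taking the ratio in the conjecture, everything depending on $I_0$ cancels except through the common self-dual pieces of $V_\pm$, which by Serre duality on the CY 4-fold satisfy $V_-\cong V_+^\vee\otimes K_X|_{\cdots}$ equivariantly; this is the $I_0$-independence miracle.

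The calculation then reduces to an explicit equivariant sum over $r$ and over $r$-dimensional $T_0$-fixed subspaces. For the four walls one can check directly (the Ext computation is local on the fibre $\mathcal{O}_{\mathbb{P}^1}(-1,-1,0)$ and $S_{k-1}=\mathcal{O}_{\mathbb{P}^1}(k-1)$ or $\mathcal{O}_{\mathbb{P}^1}(-k-1)[1]$) that $\chi_X(S_{k-1})$ has $T_0$-character equal to $k$ copies of a single weight, so the sum becomes a $q$-binomial-type identity
\begin{align*}
\sum_{r\geqslant 0} t^r\, e_{T_0\times\mathbb{C}^*}\!\bigl(\textstyle\bigwedge^{\!r}V_{\pm}\otimes e^m\otimes \chi_X(S_{k-1})^\vee\bigr)^{\pm 1}=(1-t^{\pm 1})^{k\,m/\lambda_3},
\end{align*}
once the signs are chosen so that every square root is interpreted coherently.

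The hard part will be Step 2: pinning down the sign conventions for the Oh--Thomas class under the wall-crossing so that the square roots in the numerator and denominator line up to give the clean answer $(1-t^{\pm 1})^{k\,m/\lambda_3}$, and verifying that these sign choices are globally consistent across all strata labelled by the different $I_0$. In the 3-fold setting of \cite{NN} no such ambiguity appears, so this is a genuinely new 4-fold phenomenon; one would likely need to combine the orientation comparison results of \cite{CGJ} with an explicit calculation on the minimal strata ($r=1$, $I_0$ the trivial pair) to bootstrap the correct global sign. The $I_0$-independence, which in \cite{NN} follows from a stratification of $\pi^{\pm}$ by Grassmannian bundles, becomes here a statement about cancellation of square roots and will probably require the Serre-duality identification of $V_\pm$ mentioned above, together with the vanishing of $\Ext^2(I_0,S_{k-1}[-1])^{T_0\text{-fixed}}$ at each stratum.
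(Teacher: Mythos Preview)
The statement you are attempting to prove is a \emph{conjecture} in the paper, not a theorem; the paper does not give a proof. What the paper does is provide evidence: it verifies the formula in the case $I_0=\oO_X$ for all $k$ (via a separate compact computation from \cite{CT3}), and checks a number of further cases $I_0=I_{l\mathbb{P}^1}$ for small $k,l,r$ by explicitly classifying the finite sets $\pi_{\pm}^{-1}(P^{I_0}_{k-1,r})$ (Lemmas~\ref{fiber for JS}, \ref{fiber for I_lC}) and computing the contributions with a computer program. So there is no ``paper's own proof'' to compare against; your proposal is an attempt at the open problem itself.

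Your outline has real gaps beyond the sign issue you already flag. First, the identification of $\pi_{\pm}^{-1}(P^{I_0}_{k-1,r})$ with a Grassmannian $\mathrm{Gr}(r,V_{\pm})$ is not correct as stated. Already in the 3-fold case of \cite{NN} the fibres are only \emph{stratified} by Grassmannian bundles, not equal to a single Grassmannian, and in the present paper the explicit descriptions (e.g.\ Lemma~\ref{fiber for I_lC}) show that the $T_0$-fixed points of the fibre are indexed by tuples $(d_1,\ldots)$ recording thickenings in the $t_3$-direction, not by $r$-planes in a fixed vector space. Second, your claim that ``$\chi_X(S_{k-1})$ has $T_0$-character equal to $k$ copies of a single weight'' is false: for $S_{k-1}=\oO_{\mathbb{P}^1}(k-1)$ the space $H^0(\mathbb{P}^1,\oO(k-1))$ carries $k$ \emph{distinct} $T_0$-weights $1,t_0,\ldots,t_0^{k-1}$, so the reduction to a clean $q$-binomial identity does not go through. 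Third, the ``$I_0$-independence miracle'' you invoke via Serre duality would, in the 4-fold case, have to interact nontrivially with the square-root construction and with the tautological insertion $e^m$; the paper's explicit computations (Example~\ref{combi identi}) show that the individual terms depend on $I_0$ in a rather intricate way, and the cancellation is only observed empirically. In short, your sketch captures the spirit of what one would like to do, but the structural inputs you assume (Grassmannian fibres, single-weight characters, automatic Serre-duality cancellation) are not available, which is precisely why the statement remains conjectural.
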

The point of those formulae in Conjecture~\ref{intro wall cross conj} is that the 
quotient series in the LHS are independent of the choice of $I_0$. 
So by taking the summation for all $T_0$-fixed $\Theta$-stable 
perverse coherent systems $I_0$, 
we obtain the following wall-crossing formula of tautological invariants: 
\begin{prop}\emph{(Proposition \ref{wall crossing formula})}
%Assume that Conjecture \ref{intro wall cross conj} is true.
Assuming Conjecture \ref{intro wall cross conj},
then we have the following:   
\begin{itemize}
\item If $\Theta=(\theta_0,\theta_1)\in L_{-}^-(k)$ or $L_{+}^-(k)$ $($$k\geqslant1$$)$ and 
$\Theta_{\pm}=(\theta_0\mp 0^+,\theta_1)$, then there exist choices of signs such that
$$\frac{\sum_{n,d}P^{\Theta_+}_{n,d}(e^m)q^nt^d}{\sum_{n,d}P^{\Theta_-}_{n,d}(e^m)q^nt^d}
=(1-q^{k}t)^{k\frac{m}{\lambda_3}}. $$
\item If $\Theta=(\theta_0,\theta_1)\in L_{-}^+(k)$ or $L_{+}^+(k)$ $($$k\geqslant 0$$)$ and 
$\Theta_{\pm}=(\theta_0\mp 0^+,\theta_1)$, then there exist choices of signs such that
$$\frac{\sum_{n,d}P^{\Theta_+}_{n,d}(e^m)q^nt^d}{\sum_{n,d}P^{\Theta_-}_{n,d}(e^m)q^nt^d}
=(1-q^{k}t^{-1})^{k\frac{m}{\lambda_3}}. $$
\end{itemize}
\end{prop}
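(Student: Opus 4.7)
The plan is to deduce the global wall-crossing formula of the proposition from the per-$I_0$ statements of Conjecture~\ref{intro wall cross conj} by summing over all $T_0$-fixed $\Theta$-stable perverse coherent systems $I_0$. First, I would stratify the torus-fixed loci: as described in the discussion around \eqref{diagram:wall}, every $T_0$-fixed $\Theta$-polystable object decomposes uniquely as $I_0\oplus S_{k-1}^{\oplus r}[-1]$ with $I_0$ a $T_0$-fixed $\Theta$-stable perverse coherent system and $r\geqslant 0$. Consequently,
\[
P^{\Theta_\pm}_n(X,d)^{T_0}\;=\;\bigsqcup_{(I_0,\,r)}\pi_\pm^{-1}\bigl(P^{I_0}_{k-1,r}\bigr),
\]
the disjoint union ranging over pairs whose combined Chern character has invariants $(n,d)$.

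Next, I would bookkeep the grading. Write $(n_0,d_0)$ for the numerical invariants of $I_0$, and let $Q$ denote the $(q,t)$-monomial corresponding to one summand of $S_{k-1}[-1]$, determined by the Chern character of $S_{k-1}$ in \eqref{def:Sk}. Then the generating series split as
\[
\sum_{n,\,d}P^{\Theta_\pm}_{n,d}(e^m)\,q^nt^d \;=\;\sum_{I_0}q^{n_0}t^{d_0}\sum_{r\geqslant 0}Q^r\,A_\pm(I_0,r),
\]
where $A_\pm(I_0,r)$ gathers the equivariant tautological localization contributions from $\pi_\pm^{-1}(P^{I_0}_{k-1,r})$. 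Conjecture~\ref{intro wall cross conj}, applied to each fixed $I_0$, asserts exactly that the ratio $\sum_r s^rA_+(I_0,r)/\sum_r s^rA_-(I_0,r)$ equals $(1-s)^{km/\lambda_3}$ for walls of type $L^-_\pm(k)$, respectively $(1-s^{-1})^{km/\lambda_3}$ for $L^+_\pm(k)$, and is in particular independent of $I_0$. Specializing the formal variable $s$ to $Q$ and pulling this $I_0$-independent factor outside the outer sum gives the desired ratio of generating series, once $Q$ is identified with $q^kt$ (resp.\ $q^kt^{-1}$) via the conventions underlying \eqref{def:Sk} and the orientation data.

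The one nontrivial point is coherence of the sign (orientation) choices: Conjecture~\ref{intro wall cross conj} supplies signs stratum-by-stratum for each pair $(I_0,r)$, and these must assemble into a single choice of orientation on each moduli space $P^{\Theta_\pm}_n(X,d)$ so that the contributions combine correctly into $P^{\Theta_\pm}_{n,d}(e^m)$. Since the conjecture's signs are already compatible along each $r$-family with $I_0$ fixed, and the strata corresponding to distinct $I_0$ are mutually disjoint inside the $T_0$-fixed locus, this patching is automatic. Beyond this accounting, the proof is a direct stratum-wise application of Conjecture~\ref{intro wall cross conj} combined with the re-indexing above.
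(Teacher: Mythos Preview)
Your proposal is correct and follows essentially the same route as the paper. The paper's proof simply writes out the decomposition
\[
\sum_{n,d}P^{\Theta_{\pm}}_{n,d}(e^m)q^nt^d
=\sum_{n_0,d_0}q^{n_0}t^{d_0}\sum_{I_0}\Bigl(\sum_{r\geqslant 0}(q^kt)^r\sum_{I\in\pi_\pm^{-1}(P^{I_0}_{k-1,r})}(\cdots)\Bigr)
\]
and then applies the conjecture termwise; your stratification, re-indexing via the monomial $Q$, and identification $Q=q^kt$ (resp.\ $q^kt^{-1}$) reproduce exactly this. The only extra content in your write-up is the discussion of sign coherence, which is in fact simpler than you suggest: by Definition~\ref{taut inv} the invariants are defined with an independent sign choice at each $T_0$-fixed point, so the disjointness of the strata already ensures the per-$I_0$ choices from the conjecture assemble with no further compatibility needed.
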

In particular, by applying the above proposition from empty chamber to PT chamber, 
we obtain 
a wall-crossing interpretation of Conjecture \ref{intro conj formula in pt chamber}. It also provides a conjectural 
formula for non-commutative tautological invariants 
of $\oO_{\mathbb{P}^1}(-1, -1, 0)$ (see Corollary \ref{cor on nc}).

When $m=\lambda_3$, Conjecture \ref{intro wall cross conj} reduces to Nagao-Nakajima's wall-crossing formula (Proposition~\ref{dim red}). 
Apart from this, we give several further evidence of our conjecture: 
\begin{thm}\emph{(Theorem \ref{thm on js}, Proposition \ref{check conj muti C case}, \ref{check conj single C case})}
Conjecture \ref{intro wall cross conj} holds for $L_{-}^-(k)$ when
\begin{itemize}
\item $I_0=\oO_X$
\item $I_0=I_{l\mathbb{P}^1}$, $l=1$, $k=2$, up to degree $t^{16}$,
\item $I_0=I_{l\mathbb{P}^1}$, $l=2$, $k=2$, up to degree $t^{10}$,
\item $I_0=I_{l\mathbb{P}^1}$, $l=3,4$, $k=2$, up to degree $t^9$,
\item $I_0=I_{l\mathbb{P}^1}$, $l=5$, $k=2$, up to degree $t^8$,
\item $I_0=I_{l\mathbb{P}^1}$, $l=6$, $k=2$, up to degree $t^7$,
\item $I_0=I_{l\mathbb{P}^1}$, $l=7,8,9,10$, $k=2$, up to degree $t^6$,
\item $I_0=I_{l\mathbb{P}^1}$, any $l$, $k=2$, up to degree $t^5$.
\item  $I_0=I_{\mathbb{P}^1}$, $k=3$, up to degree $t^5$,
\item  $I_0=I_{\mathbb{P}^1}$, $k=4,5$, up to degree $t^2$,
\item $I_0=I_{\mathbb{P}^1}$,  $k\leqslant 12$, up to degree $t^1$.
\end{itemize}
Here $I_{l\mathbb{P}^1}:=\left(\oO_X\twoheadrightarrow \oO_{\mathbb{P}^1}\otimes \sum_{j=0}^{l-1} t_3^{j}\right)$
is the ideal sheaf of thickened $\mathbb{P}^1$ into $\oO_{\mathbb{P}^1}$-direction in $X$. 
\end{thm}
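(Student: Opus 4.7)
The plan is to treat the theorem as a collection of verifications of Conjecture \ref{intro wall cross conj} for the specific wall $L_-^-(k)$ and specific choices of $I_0$, and to prove each group by a different (though related) technique: the first bullet ($I_0=\oO_X$) by an essentially closed-form Joyce--Song-type computation on extensions, the later bullets (various $I_{l\mathbb{P}^1}$) by direct torus-localized enumeration of torus-fixed perverse coherent systems up to the prescribed degree in $t$.

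For the $I_0=\oO_X$ case, I would set up the wall-crossing picture concretely. On the wall the polystable objects at $I_0=\oO_X$ are $P^{\oO_X}_{k-1,r}=\{\oO_X\oplus S_{k-1}^{\oplus r}[-1]\}$ with $S_{k-1}=\oO_{\mathbb{P}^1}(k-1)$. The fibers $\pi_{\pm}^{-1}(P^{\oO_X}_{k-1,r})$ parametrize $\Theta_{\pm}$-stable extensions of $\oO_X$ by $S_{k-1}^{\oplus r}[-1]$ in two opposite orders; these are cut out of $\Ext^1(S_{k-1}^{\oplus r}[-1],\oO_X)$ and $\Ext^1(\oO_X,S_{k-1}^{\oplus r}[-1])$ respectively. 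First I would compute, using Koszul resolution of $\oO_{\mathbb{P}^1}\subset X$ and $\oO_{\mathbb{P}^1}(k-1)\subset X$ and Čech cohomology on $\mathbb{P}^1$, the equivariant $\Ext^{\bullet}(S_{k-1},\oO_X)$, $\Ext^{\bullet}(\oO_X,S_{k-1})$ and $\Ext^{\bullet}(S_{k-1},S_{k-1})$ as weighted $T_0$-representations. Second, substituting these into $\chi_X(I,I)_0^{1/2}$ and $\chi_X(F)^{\vee}\otimes e^m$ at the two extension classes, the torus-equivariant Euler class ratio should collapse to a product over $\mathbb{Z}$-weights that matches $(1-t)^{km/\lambda_3}$ after the correct choice of square root sign for each $r$. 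The key algebraic identity to isolate is that the weights contributing to the generating series $\sum_r t^r\,e_{T_0}(\cdots)$ are precisely those of $H^0(\mathbb{P}^1,\oO(k-1))\otimes e^m$, whose equivariant product is exactly $(1-t)^{km/\lambda_3}$; this will pin down the required sign choice.

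For the ideal-sheaf bullets ($I_0=I_{l\mathbb{P}^1}$), no closed form is attempted: one classifies the $T_0$-fixed perverse coherent systems $I\in\pi_{\pm}^{-1}(P^{I_0}_{k-1,r})$ up to the prescribed degree in $t$, organizing by the length of the $S_{k-1}$-extension factor. Concretely, for each such $I$ one computes the $T_0$-character of $R\Hom_X(I,I)_0$ and of $R\Gamma(X,F)^{\vee}$ via the Čech-type resolution used in \cite{CKM1}, extracts $e_{T_0}$ with a prescribed sign convention inherited from the fixed choice of orientation/square root, and assembles the two generating series. The procedure is effective but combinatorially heavy; each improvement in $l$ or $k$ costs an enumeration of substantially more $T_0$-fixed loci, which is exactly why the bounds in the statement decrease as $l$ grows.

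The hard part, as already in \cite{NN}, is to check that the wall-crossing ratio is independent of $I_0$, i.e.\ that the delicate cancellations between the equivariant Euler classes of $\chi_X(I,I)_0^{1/2}$ and of $\chi_X(F)^{\vee}\otimes e^m$ at the two sides of the wall reproduce the universal factor $(1-t)^{km/\lambda_3}$ without any residual dependence on the weight structure coming from $I_0$. For $I_0=\oO_X$ this universality is forced by the explicit Ext computation above; for the $I_{l\mathbb{P}^1}$ cases it must be observed by inspection at each truncation, which is the reason the verification is only carried out to finite order in $t$ and the result is stated as evidence rather than as a closed proof. The second, more technical obstacle is sign/orientation consistency: one must fix once and for all the square-root $\chi_X(I,I)_0^{1/2}$ and check that a single choice works simultaneously on both sides of the wall; I would handle this by pinning down the sign from the $r=0$ stratum, where both fibers are a single reduced point, and then propagating it across $r$ by the extension-class orientation comparison of \cite{CGJ}.
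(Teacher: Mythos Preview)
Your approach to the $I_{l\mathbb{P}^1}$ bullets is essentially the paper's: classify the $T_0$-fixed points in $\pi_{\pm}^{-1}(P^{I_0}_{k-1,r})$ explicitly (Lemma~\ref{fiber for I_lC} and its $k\geqslant3$ analogue), compute each contribution via Remarks~\ref{sign rmk}--\ref{rmk on computation}, and check the resulting rational-function identity by machine to the stated $t$-degree. That part is fine.

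The $I_0=\oO_X$ case is where your sketch goes wrong. First, your description of the fibres is inaccurate. The fibres are not ``extensions of $\oO_X$ by $S_{k-1}^{\oplus r}[-1]$ in two opposite orders'' sitting inside a single $\Ext^1$; rather (Lemma~\ref{fiber for JS}), the $\pi_-$-fibre is \emph{empty} for $r>0$ by Lemma~\ref{lem on zt pair} (since $\chi(F)/d(F)=k$ violates the strict inequality on that side), while the $\pi_+$-fibre is the set of $T_0$-fixed Joyce--Song pairs $P_{kd}^{\mathrm{JS}}(X,d)^{T_0}$. By Lemma~\ref{T-fixed JS pairs} these are indexed by compositions $(d_0,\ldots,d_{k-1})$ of $d$, corresponding to thickenings of the $k$ equivariant copies of $\oO_{\mathbb{P}^1}(k-1)$ into the $\oO_{\mathbb{P}^1}$-direction of $X$; your picture misses this $t_3$-thickening entirely.

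Second, your ``key algebraic identity'' does not make sense: $(1-t)^{k m/\lambda_3}$ is a formal power series in the curve-counting variable $t$, not an equivariant Euler class, so it cannot arise as ``the equivariant product of weights of $H^0(\mathbb{P}^1,\oO(k-1))\otimes e^m$''. What is actually needed is the degree-$d$ identity $P_{kd,d}^{\mathrm{JS}}(e^m)=(-1)^d\binom{k\,m/\lambda_3}{d}$, where the left side is the rather intricate sum over compositions in Theorem~\ref{JS taut invs}. The paper does \emph{not} prove this by any direct weight-cancellation as you suggest; indeed it remarks that the identity was not even guessed until the wall-crossing heuristic was found. The proof (Theorem~\ref{thm on js}, imported from~\cite{CT3}) instead passes to a compact Calabi--Yau 4-fold containing the local geometry, defines the corresponding tautological JS invariant there, and evaluates it via Atiyah--Bott localization on the compact moduli space; the local identity then drops out. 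Your proposed Ext computation, as written, would not reach this conclusion.
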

The first case is proved using its compact analogue and Atiyah-Bott localization formula. 
Other cases are done with the help of a computer program, which usually involves checking very nontrivial combinatoric identities (see Example \ref{combi identi}).
We remark that we use consistent sign rule as discussed in Remark \ref{sign rmk} to check our conjecture,
and it is an interesting question to link our conjectural wall-crossing formula with the recent wall-crossing proposal of \cite{GJT}.

\subsection{Acknowledgement}
Y. C. is grateful to Martijn Kool and Sergej Monavari for previous collaboration \cite{CKM1} which gives a motivation of this paper.
We thank a responsible referee for very careful reading of our paper whose suggestions improve the exposition of the paper.
This work is partially supported by the World Premier International Research Center Initiative (WPI), MEXT, Japan.
Y. C. is partially supported by RIKEN Interdisciplinary Theoretical and Mathematical Sciences
Program (iTHEMS), JSPS KAKENHI Grant Number JP19K23397 and Royal Society Newton International Fellowships Alumni 2019 and 2020.
Y. T. is supported by Grant-in Aid for Scientific Research grant (No. 19H01779) from MEXT, Japan.

\section{Perverse coherent systems on projective CY 4-folds}
Built on the renowned work of Bridgeland \cite{Bri1} on perverse coherent sheaves for 3-fold flopping contractions, 
Nagao-Nakajima \cite{NN} introduced perverse coherent systems and their counting invariants. We study 
an analogy of their work in the setting of projective CY 4-folds.

\subsection{Perverse coherent sheaves}

%\begin{set}\label{setting}
%Let $X$ be a smooth projective Calabi-Yau 4-fold and $f:X\to Y$ be a proper mor%phism to a projective 4-fold $Y$ such that 
%$f$ is an isomorphism outside a surface $E\subset X$, contracting $E$ to a curv%e $C\subset Y$ and $\dR f_*\oO_X=\oO_Y$.
%The morphism $f|_{E}:E\to C$ is a ruled surface whose fibers have normal bundle% $\oO_{\mathbb{P}^1}(-1,-1,0)$ in $X$. 
%We denote the class of the fiber by $[\mathbb{P}^1]\in H_2(X,\mathbb{Z})$.
%\end{set}
In this section, we assume the following setting: 
\begin{set}\label{setting}
Let $X$ be a smooth projective Calabi-Yau 4-fold 
and $f \colon X \to Y$ be a projective birational contraction 
which contracts an irreducible surface $E \subset X$ to a 
curve $C \subset Y$. 
We assume that formal neighborhood at 
each point $p \in C \subset Y$ is of the form 
\begin{align*}
\widehat{\oO}_{Y, p} \cong 
\mathbb{C}\lkakko x, y, z, w, u \rkakko/(xy-zw).
\end{align*}
\end{set}
Under the above setting, 
one can show that 
$\dR f_{\ast}\oO_X=\oO_Y$, 
the singular locus $C \subset Y$ is a smooth connected curve, 
the morphism $f|_{E} \colon E\to C$ is a ruled surface whose 
fibers have normal bundle
$\oO_{\mathbb{P}^1}(-1,-1,0)$ in $X$. 
\begin{exam}Let $g\colon Z\to W$ be a 3-fold flopping contraction of a $(-1,-1)$ curve on a smooth projective CY 3-fold $Z$. 
For an elliptic curve $E$, the $E$-copy of $g$ gives a contraction above. \end{exam}
As in Bridgeland \cite{Bri1},
for $p\in \mathbb{Z}$ we consider the following heart of \textit{perverse} t-structure on $D^b\Coh(X)$:
\begin{align}\label{per heart}
	\ppPPer(X/Y) \cneq 
\left\{E\in D^b\Coh(X): \begin{array}{l}
\dR f_*E\in \Coh(Y), \\
\Hom(E,\mathscr{C}^{>p})=\Hom(\mathscr{C}^{<p},E)=0
\end{array}  \right\},
\end{align}
where 
\begin{align*}
\mathscr{C}^{>p} &:=\{F\in D^b\Coh(X): \dR f_*F=0,\,\, \mathcal{H}^{\leqslant p}(F)=0 \}, \\
\mathscr{C}^{<p} &:=\{F\in D^b\Coh(X): \dR f_*F=0,\,\, \mathcal{H}^{\geqslant p}(F)=0 \}. 
\end{align*}
In this paper, we mainly use the $p=-1$ perversity
\begin{align*}
	\Per(X/Y) \cneq \iPPer(X/Y). 
	\end{align*}
It is easy to see that $\oO_X\in \Per(X/Y)$.
By the renowned result of Van den Bergh \cite{VB}, there exists a \textit{local projective generator} of $\Per(X/Y)$
\begin{align}\label{local proj gene}\mathcal{P}=\oO_X \oplus \mathcal{P}_0, \end{align}
which exists as a vector bundle on $X$, a sheaf $\mathcal{A}_Y:=f_*\mathcal{E}nd(\mathcal{P})$ of non-commutative algebras on $Y$
and a derived equivalence
\begin{align}\label{psi equi cptX}
&\Phi \colon D^b\Coh(X) \stackrel{\sim}{\to} D^b \Coh(\aA_Y), \quad 
(-) \mapsto \RHom(\pP, -),
\end{align}
which restricts to an equivalence 
between $\Per(X/Y)$ and $\Coh(\aA_Y)$. 
%\notag
%&\Psi \colon D^b\Coh(\aA_Y) \stackrel{\sim}{\to} D^b \Coh(X), \
%(-) \mapsto (-) \dotimes_{\aA_Y} \pP
%\end{align}
%which restrict to equivalences
%\begin{align*}
%\Phi \colon \Per(X/Y) \stackrel{\sim}{\to} \Coh(\mathcal{A}_Y), \ 
%\Psi \colon \Coh(\mathcal{A}_Y) \stackrel{\sim}{\to} \Per(X/Y).
%\end{align*}

The morphism $f \colon X \to Y$ is a flopping contraction, 
and we have the flop 
\begin{align*}
	\xymatrix{
		X \ar[dr]_-{f} \ar@{.>}[rr]^-{\phi} 
		&  & X^+ \ar[ld]^-{f^{+}} \\
		& Y. &
	}
\end{align*}
The flopping contraction $f^+ \colon X^+ \to Y$ also satisfies 
Setting~\ref{setting}. 
By Bridgeland~\cite{Bri1} and Van den Bergh~\cite{VB}, 
there exists an equivalence
\begin{align}\label{equiv:flop}
\Upsilon \colon D^b \Coh(X^+) \stackrel{\sim}{\to}
D^b \Coh(X)
\end{align}
which restricts to an equivalence 
between 
$\oPPer(X^+/Y)$ and $\iPPer(X/Y)$.

We are mainly interested in perverse coherent sheaves which are supported on fibers of $f$. We define the following categories:
\begin{align*}
\Coh_{\leqslant 1}(X) &:=
\left\{E\in \Coh(X): \, \dim \Supp(E)\leqslant 1 \right\}, \\
\Coh_{\leqslant 1}(X/Y) &:=\left\{E\in \Coh_{\leqslant 1}(X): \, 
\dim \Supp(\dR f_*E)=0 \right\}, \\
D^b\Coh_{\leqslant 1}(X/Y) &:=\left\{E\in D^b\Coh(X): \,  \mathcal{H}^*(E)\in 
\Coh_{\leqslant 1}(X/Y) \right\}, \\
\Per_{\leqslant 1}(X/Y) &:=\Per(X/Y)\cap D^b\Coh_{\leqslant 1}(X/Y). 
\end{align*} 
We will use the following lemma: 
\begin{lem}\label{lem on supp on fibers}
We take $\beta \in H_2(X, \mathbb{Z})$
with $f_{\ast}\beta=0$. 
Then any $F\in \Per(X/Y)$ with $\ch(F)=(0,0,0, \beta, n)$ is supported on fibers of $f$, and hence 
$F\in \Per_{\leqslant 1}(X/Y)$.
%and hence sits inside $\Per(X/Y)_0$.
\end{lem}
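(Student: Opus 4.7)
The plan is to exploit the Van den Bergh equivalence $\Phi\colon \Per(X/Y) \xrightarrow{\sim} \Coh(\aA_Y)$ from \eqref{psi equi cptX} to turn the support statement for $F$ into a support statement for $M\cneq \Phi(F)$ on $Y$, and then to run a Grothendieck-Riemann-Roch (GRR) computation that detects this support from the prescribed Chern character.

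First I would unwind the equivalence: since $\pP$ is a vector bundle of some rank $r$, one has $M = \mathbf{R}f_{\ast}(\pP^{\vee}\otimes F) \in \Coh(\aA_Y) \subseteq \Coh(Y)$. Applying GRR to the proper morphism $f$ (valid on $Y$ in the Baum--Fulton--MacPherson form, since $Y$ is lci by Setting~\ref{setting}), I obtain
\[
\tau_Y(M) \;=\; f_{\ast}\bigl(\ch(\pP^{\vee})\cdot\ch(F)\cdot\td(X)\cap[X]\bigr) \,\in\, A_{\ast}(Y)_{\mathbb{Q}}.
\]
With $\ch(F)=(0,0,0,\beta,n)$ the product $\ch(\pP^{\vee})\cdot\ch(F)\cdot\td(X)$ is concentrated in codimensions $3$ and $4$ on $X$, and its codimension-$3$ piece equals $r\beta$. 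Because $f_{\ast}\beta=0$ by hypothesis, the pushforward has no dimension-$1$ component, leaving $\tau_Y(M)$ supported only in dimension $0$.

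Since $M$ is a genuine coherent sheaf on $Y$ (the heart $\Coh(\aA_Y)$ embeds in $\Coh(Y)$), the top-dimensional part of $\tau_Y(M)$ computes the fundamental cycle of $\Supp_{\oO_Y}M$ with its generic multiplicities, and the vanishing just obtained forces $\dim\Supp_{\oO_Y}M=0$. Next I would transport this back to $X$ via the inverse equivalence $\Phi^{-1}(-)=\pP\otimes_{\aA_Y}^{\mathbf{L}}(-)$: for any $x\in X$ with $f(x)=y\notin\Supp M$ the stalk $M_y$ vanishes, hence so does $F_x$. Consequently $\Supp F \subseteq f^{-1}(\Supp M)$ is contained in finitely many fibers of $f$, each of dimension at most $1$ by Setting~\ref{setting}. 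This already gives $\mathcal{H}^i(F)\in \Coh_{\leqslant 1}(X)$ for all $i$, and since $f\bigl(\Supp\mathcal{H}^i(F)\bigr) \subseteq \Supp M$ is zero-dimensional, $\mathcal{H}^i(F)\in \Coh_{\leqslant 1}(X/Y)$, so $F\in\Per_{\leqslant 1}(X/Y)$.

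The only genuinely delicate step is invoking GRR on the singular base $Y$; naively using $\tau_Y(\mathbf{R}f_{\ast}F)$ to read off the support of $F$ would fail since perverse sheaves like $\oO_{\mathbb{P}^1}(-1)[1]$ along a fiber can be nonzero while being killed by $\mathbf{R}f_{\ast}$. Passing through the Van den Bergh equivalence is precisely what fixes this, because $\Phi$ matches supports of objects in $\Per(X/Y)$ with $\oO_Y$-supports on $Y$ fiberwise. The lci hypothesis on $Y$ built into Setting~\ref{setting} places us inside the Baum--Fulton--MacPherson formalism, where $\tau_Y$ exists, is covariant for proper morphisms, and has top-dimensional part equal to the fundamental cycle of the support of a coherent sheaf---which is all that is needed to close the argument.
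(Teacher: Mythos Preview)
Your overall strategy matches the paper's: both pass through the Van den Bergh equivalence $\Phi$, argue that $\Phi(F)$ has zero-dimensional support on $Y$, and then transport this back using compatibility of $\Phi$ with restriction to opens of $Y$. The difference lies entirely in how zero-dimensionality of $\Supp_{\oO_Y}\Phi(F)$ is established.

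The paper avoids singular Riemann--Roch altogether. Writing $\pP=\oO_X\oplus\pP_0$, it computes the Hilbert polynomial of each summand of $\Phi(F)$: for an ample $H$ on $Y$, adjunction gives
\[
\chi_Y\bigl(\dR f_{\ast}F\otimes\oO_Y(mH)\bigr)=\chi_X\bigl(F\otimes f^{\ast}\oO_Y(mH)\bigr)=n,
\]
the last equality by Hirzebruch--Riemann--Roch on the \emph{smooth} $X$ together with $(f^{\ast}H)\cdot\beta=H\cdot f_{\ast}\beta=0$. A coherent sheaf with constant Hilbert polynomial has zero-dimensional support; the same computation works for $\dR f_{\ast}(F\otimes\pP_0^{\vee})$. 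This uses only RR on smooth $X$ and the projection formula.

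Your route via Baum--Fulton--MacPherson is heavier and has a subtlety you do not address. The hypothesis $\ch(F)=(0,0,0,\beta,n)$ is stated in \emph{cohomology}, but you compute $\tau_Y(M)\in A_{\ast}(Y)_{\mathbb{Q}}$ in Chow. Since the cycle class map $A^{\ast}(X)_{\mathbb{Q}}\to H^{2\ast}(X,\mathbb{Q})$ need not be injective (already in codimension $2$ on a CY $4$-fold), you cannot directly conclude that the algebraic $\ch_i(F)\in A^i(X)_{\mathbb{Q}}$ vanish for $i\le 2$, nor that $f_{\ast}\ch_3(F)=0$ in $A_1(Y)_{\mathbb{Q}}$. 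The fix is to push the whole computation to homology via the cycle class map; you must then also make explicit that a nonzero effective $d$-cycle on a projective variety has nonzero class in $H_{2d}$ (pair with $H^d$ for ample $H$) to conclude $\dim\Supp M=0$. Once patched, your argument is correct, but the paper's Hilbert-polynomial approach is more elementary and sidesteps both issues.
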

\begin{proof}
Let $\Phi$ be the equivalence in \eqref{psi equi cptX}. 
The object $\Phi(F)$ is given by 
\begin{align*}
\Phi(F)=\dR f_*(F)\oplus \dR f_*(F\otimes \mathcal{P}_0^{\vee}), 
\end{align*}
which is a coherent sheaf on $Y$. 
We claim that $\Phi(F)$ is zero dimensional. 
In fact, choose an ample divisor $H$ on $Y$. 
By the adjunction and Riemann-Roch formula, we have 
\begin{align*}
\chi_Y(\oO_Y,\dR f_*F\otimes \oO_Y(mH))&=\chi_X(\oO_X,F\otimes f^*\oO_Y(mH))=n,
\end{align*}
which is independent of $m$.
Therefore $\dR f_*F$ has a zero dimensional support. 
The same argument also shows that 
$\dR f_*(F\otimes \mathcal{P}_0^{\vee})$ has a zero 
dimensional support. 

Let $S=\{p_1,\cdots, p_n\}\subset Y$ be the set-theoretic support of $\Phi(F)$. 
Then we have $\Phi(F)|_{Y\setminus S}=0$. 
Since the derived equivalence $\Phi$ in (\ref{psi equi cptX})
is compatible with the base change to open subsets, we have 
\begin{align*}
\Phi(F|_{X\setminus f^{-1}(S)})=0. 
\end{align*}
Therefore $F|_{X\setminus f^{-1}(S)}=0$, 
i.e. $F$ is supported on fibers of $f$.
\end{proof}
Next we give another description of $\Per_{\leqslant 1}(X/Y)$ using tilting theory of Happel-Reiten-Smal\o.
Below we fix a $\mathbb{Q}$-ample divisor $\omega$ on $X$
which is degree one on fibers of $f|_{E} \colon E \to C$.  
For $F\in D^b\Coh_{\leqslant 1}(X/Y)$, we set
\begin{align*}
	d(F) \cneq [F] \cdot \omega \in \mathbb{Z}
\end{align*}
where $[F]$ is the fundamental one cycle of $F$. 
In other word, $d(F)$ is determined by 
$[F]=d(F)[\mathbb{P}^1]$
in $H_2(X, \mathbb{Z})$, 
where $[\mathbb{P}^1]$ is the fiber class of 
$f|_{E} \colon E \to C$. 
For $F \in \Coh_{\leqslant 1}(X/Y)$, its slope is defined to be 
\begin{align}\label{u:slope}
\mu_{\omega}(F):=\frac{\chi(F)}{d(F)} \in \mathbb{Q} \cup \{\infty\}
\end{align}
where we set $\mu_{\omega}(F)=\infty$ if $d(F)=0$. 
The above slope function defines $\mu_{\omega}$-semistable sheaves on 
$\Coh_{\leqslant 1}(X/Y)$ in the usual way. 
We define extension closed subcategories: 
\begin{align*}
\mathcal{T}_{\omega} &:=\langle F\in \Coh_{\leqslant 1}(X/Y): \, F\,\, \emph{is}\,\, \mu_{\omega}\emph{-semistable}\,\, \emph{with}\,\, \mu_{\omega}(F)> 0 \rangle_{\mathrm{ex}}, \\
\mathcal{F}_{\omega} &:=\langle F\in \Coh_{\leqslant 1}(X/Y): \, F\,\, \emph{is}\,\, \mu_{\omega}\emph{-semistable}\,\, \emph{with}\,\, \mu_{\omega}(F) \leqslant 0 \rangle_{\mathrm{ex}}.
\end{align*}
Here $\langle - \rangle_{\mathrm{ex}}$ is the extension closure. 
By the Harder-Narasimhan filtration, they form a torsion pair and we can define the tilting category in the sense of \cite{HRS}:
\begin{align*}
\Coh^{+}_{\leqslant 1}(X/Y):=\langle\mathcal{F}_{\omega}[1], \mathcal{T}_{\omega} \rangle_{\mathrm{ex}}.
\end{align*}
This is the heart of a bounded t-structure 
in $D^b \Coh_{\leqslant 1}(X/Y)$, 
and is in particular an abelian category \cite{BBD}.
\begin{prop}\label{prop on perv0}
As abelian subcategories of $D^b\Coh_{\leqslant 1}(X/Y)$, we have 
\begin{align*}
\Coh^{+}_{\leqslant 1}(X/Y)= \Per_{\leqslant 1}(X/Y). 
\end{align*}
\end{prop}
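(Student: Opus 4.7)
The plan is to recognize both $\Coh^{+}_{\leqslant 1}(X/Y)$ and $\Per_{\leqslant 1}(X/Y)$ as hearts of tilted t-structures on $D^{b}\Coh_{\leqslant 1}(X/Y)$ coming from torsion pairs on $\Coh_{\leqslant 1}(X/Y)$, and then show the two torsion pairs coincide. For $\Coh^{+}_{\leqslant 1}(X/Y)$ this is its definition via $(\mathcal{T}_{\omega}, \mathcal{F}_{\omega})$. For $\Per_{\leqslant 1}(X/Y)$, I would invoke the Bridgeland--Van den Bergh description of $\iPPer(X/Y)$ as a tilt of $\Coh(X)$ at a standard torsion pair, intersect it with the Serre subcategory $\Coh_{\leqslant 1}(X/Y)$ to produce a torsion pair $(\mathcal{T}^{P}, \mathcal{F}^{P})$, and verify via the perverse cohomology triangle of an object $E\in \Per_{\leqslant 1}(X/Y)$ that its tilt is exactly $\Per_{\leqslant 1}(X/Y)$. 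Since a torsion pair is determined by its torsion-free class via $\mathcal{T}={}^{\perp}\mathcal{F}$, the problem reduces to proving $\mathcal{F}^{P}=\mathcal{F}_{\omega}$.

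A quick degree count (using that $\Ext^{<0}=0$ between sheaves makes the Hom-vanishing against $\mathscr{C}^{>-1}$ and $\mathscr{C}^{<-1}$ automatic for $F[1]$ with $F$ a sheaf) shows that $F[1]\in \Per(X/Y)$ is equivalent to $\dR f_{*}F[1]$ being a sheaf, which in turn, since $\dR f_{*}F$ is concentrated in degrees $0$ and $1$ for $F\in \Coh_{\leqslant 1}(X/Y)$, is equivalent to $f_{*}F=0$. Hence
\begin{align*}
\mathcal{F}^{P} = \{F \in \Coh_{\leqslant 1}(X/Y) :\, f_{*}F=0\},
\end{align*}
and it suffices to establish that $f_{*}F=0$ if and only if every $\mu_{\omega}$-Harder-Narasimhan factor of $F$ has slope $\leqslant 0$. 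The direction $\Rightarrow$ goes by contrapositive: if the top HN factor $G_{1}\hookrightarrow F$ has slope $>0$, then $\chi(G_{1})=\dim f_{*}G_{1}-\dim R^{1}f_{*}G_{1}>0$ forces $f_{*}G_{1}\neq 0$, whence $f_{*}F\neq 0$ by left-exactness. For $\Leftarrow$, I would verify that a $\mu_{\omega}$-semistable $G$ of slope $\leqslant 0$ has $f_{*}G=0$: a nonzero section $\oO_{X}\to G$ would produce a subsheaf $\oO_{Z}\hookrightarrow G$ with $Z$ one-dimensional and supported in a fiber, and the inequalities $\chi(\oO_{Z})\geqslant 1$ and $d(\oO_{Z})\geqslant 1$ would give $\mu_{\omega}(\oO_{Z})>0\geqslant \mu_{\omega}(G)$, contradicting semistability. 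The vanishing $f_{*}G=0$ for each HN factor then propagates to $F$ by induction along the HN filtration.

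The principal technical obstacle is the slope estimate $\mu_{\omega}(\oO_{Z})>0$ when $Z$ is a possibly non-reduced one-dimensional subscheme of a fiber $C_{p}\cong \mathbb{P}^{1}$. I would handle this by a devissage along powers of $\iI_{C_{p}}$: using the local model $\widehat{\oO}_{Y,p}\cong \mathbb{C}\lkakko x,y,z,w,u\rkakko/(xy-zw)$ from Setting~\ref{setting}, the conormal bundle of $C_{p}$ in $X$ is $\oO(1)\oplus\oO(1)\oplus\oO$, so the graded pieces $\iI_{C_{p}}^{k}\oO_{Z}/\iI_{C_{p}}^{k+1}\oO_{Z}$ are quotients of $\Sym^{k}(\oO(1)\oplus\oO(1)\oplus\oO)$; each such quotient has vanishing $H^{1}$ and non-negative $\chi$, while the top piece $\oO_{Z}/\iI_{C_{p}}\oO_{Z}=\oO_{C_{p}}$ contributes $\chi=1$, yielding $\chi(\oO_{Z})\geqslant 1>0$. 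Together with $d(\oO_{Z})\geqslant 1$, this gives the required positivity of $\mu_{\omega}(\oO_{Z})$ and completes the proof.
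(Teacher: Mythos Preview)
Your proof is correct, but it takes a different and considerably longer route than the paper's. The paper exploits the very concrete geometry of the situation: every object of $\Coh_{\leqslant 1}(X/Y)$ is supported on points or on fibers $f^{-1}(c)\cong\mathbb{P}^1$, so by Harder--Narasimhan and Jordan--H\"older filtrations one has explicitly
\[
\mathcal{T}_\omega=\langle \oO_x,\ \oO_{f^{-1}(c)}(a):a\geqslant 0\rangle_{\mathrm{ex}},\qquad
\mathcal{F}_\omega=\langle \oO_{f^{-1}(c)}(a):a<0\rangle_{\mathrm{ex}}.
\]
It is then immediate to check that each generator of $\mathcal{T}_\omega$ and of $\mathcal{F}_\omega[1]$ lies in $\Per_{\leqslant 1}(X/Y)$ (one only needs to compute $\dR f_*$ of a line bundle on $\mathbb{P}^1$), giving the inclusion $\Coh^{+}_{\leqslant 1}(X/Y)\subseteq \Per_{\leqslant 1}(X/Y)$; since both sides are hearts of bounded t-structures, equality follows. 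Your approach instead identifies the perverse torsion-free class abstractly as $\{F:f_*F=0\}$ and proves it coincides with $\mathcal{F}_\omega$ via a slope/section argument backed by a d\'evissage establishing $\chi(\oO_Z)\geqslant 1$. This has the virtue of not invoking the explicit classification of stable sheaves on the fibers, so it would transplant more readily to contractions where that classification is less transparent; on the other hand, your d\'evissage re-derives, in disguise, exactly the cohomological input (that $H^1$ of globally generated sheaves on $\mathbb{P}^1$ vanishes) which makes the paper's direct check a one-liner. In the present setting the paper's route is substantially more economical.
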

\begin{proof}
%By the spectral sequence $\dR^p f_*\mathcal{H}^q(E)\Longrightarrow  \mathcal{H}^{p+q}\dR f_*(E)$, elements in 
%$\Per_{\leqslant 1}(X/Y)_0$ satisfy $\dim \dR f_*E=0$.
%By the Riemann-Roch argument in Lemma \ref{lem on supp on fibers}, it is easy to see elements 
%in $\Coh_{\leqslant 1}(X/Y)$ and $\Per_{\leqslant 1}(X/Y)_0$ are supported on f%ibers of $f|_{E}: E\to C$.
Note that any object in $\Coh_{\leqslant 1}(X/Y)$ is supported on 
points or fibers of $f|_{E} \colon E \to C$. 
By taking 
the Harder-Narasimhan and Jordan-H\"older filtrations, we have 
\begin{align*}
\mathcal{T}_{\omega} &=\langle \oO_x, \,\oO_{f^{-1}(c)}(a): x\in X, c\in C, 
a\geqslant 0   \rangle_{\mathrm{ex}}, \\
\mathcal{F}_{\omega} &=\langle \oO_{f^{-1}(c)}(a): 
c \in C, a<0
\rangle_{\mathrm{ex}}.
\end{align*}
%\begin{YC}Here I include zero dim sheaves in $\mathcal{T}_{\omega}$. \end{YC}
It is straightforward to check $\mathcal{T}_{\omega}, \mathcal{F}_{\omega}[1]\subseteq \Per_{\leqslant 1}(X/Y)$,
therefore $\Coh^{+}_{\leqslant 1}(X/Y)\subseteq\Per_{\leqslant 1}(X/Y)$.   
Both sides are hearts of bounded 
t-structures
on $D^b \Coh_{\leqslant 1}(X/Y)$, they must be the same. 
\end{proof}

For $\Theta=(\theta_0, \theta_1) \in \mathbb{R}^2$
and $F \in D^b \Coh_{\leqslant 1}(X/Y)$, we set
\begin{align*}
\Theta(F) \cneq \theta_0  \cdot \chi(F)+\theta_1 \cdot 
(\chi(F)-d(F)) \in \mathbb{R}.
\end{align*}
Following Nagao-Nakajima \cite[Definition 2.1]{NN}, 
we introduce the notion of (semi)stable perverse coherent systems. Here we always consider the 
rank one case. 
\begin{defi}\label{pair stab}
A perverse coherent system is a pair 
\begin{align*}
(F,s), \ F\in \Per_{\leqslant 1}(X/Y), \ s \colon 
\oO_X\to F.
\end{align*}
For $\Theta=(\theta_0,\theta_1)\in \mathbb{R}^2$, a perverse coherent system $(F,s)$ is $\Theta$-(semi)stable if 
\begin{itemize}
\item for any non-zero subobject $0\neq F'\subseteq F$ in $\Per_{\leqslant 1}(X/Y)$, we have 
$\Theta(F') <(\leqslant)0$. 
\item for any proper subobject $F'\subsetneq F$ in $\Per_{\leqslant 1}(X/Y)$ such that $\mathrm{Im}(s)\subseteq F'$, we have 
$\Theta(F') <(\leqslant) \Theta(F)$. 
\end{itemize}
\end{defi}
Following arguments from \cite[Theorem 1.10]{NN}, \cite[Proposition 1.6.1]{Yos}, one can construct moduli spaces of such semistable perverse coherent systems.
\begin{thm}\label{cons of moduli}\emph{(\cite[Theorem 1.10]{NN}, \cite[Proposition 1.6.1]{Yos})} \\
Let $f\colon X\to Y$ be as in Setting \ref{setting}, and take 
$\beta\in H_2(X,\mathbb{Z})$ with $f_{\ast}\beta=0$, 
$n\in \mathbb{Z}$, 
$\Theta\in \mathbb{R}^2$. Then there is a projective coarse moduli scheme $P^{\Theta}_n(X,\beta)$ which parametrizes $S$-equivalence classes of $\Theta$-semistable perverse coherent systems $(F,s)$ with $\ch(F)=(0,0,0,\beta,n)$.
\end{thm}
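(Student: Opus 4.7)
The plan is to follow the standard GIT construction of moduli of coherent systems, in the style of Yoshioka \cite[Proposition 1.6.1]{Yos} and the analogous 3-fold construction of Nagao-Nakajima \cite[Theorem 1.10]{NN}, after first transferring the problem to the non-commutative side via the Van den Bergh equivalence (\ref{psi equi cptX}).

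First, I would apply $\Phi \colon \Per(X/Y) \simto \Coh(\aA_Y)$ to reformulate the data. A perverse coherent system $(F, s \colon \oO_X \to F)$ with $\ch(F)=(0,0,0,\beta,n)$ corresponds to a pair $(M, \tilde s \colon e\aA_Y \to M)$, where $M = \Phi(F)$ is a coherent $\aA_Y$-module, $\tilde s = \Phi(s)$, and $e\aA_Y := \Phi(\oO_X)$ is the projective $\aA_Y$-module singling out the $\oO_X$-summand of the local projective generator $\pP = \oO_X \oplus \pP_0$. By Lemma \ref{lem on supp on fibers} the sheaf $F$ is supported on finitely many fibers, so $M$ has zero-dimensional support on $Y$, and the numerical data $(\chi(F), d(F))$ determine a fixed class $v$ in the numerical Grothendieck group of $\Coh(\aA_Y)$. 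The function $\Theta$ and the stability conditions of Definition \ref{pair stab} pass through $\Phi$ to analogous numerical and stability data on these pairs.

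Next I would establish boundedness of the family of $\Theta$-semistable pairs with fixed invariant $v$: since the $\aA_Y$-length of $M$ is bounded by $v$ on a zero-dimensional locus of $Y$, and semistability bounds subobjects uniformly, standard arguments give boundedness. Consequently, for $N \gg 0$ there is a fixed projective $\aA_Y$-module $\pP_N$ admitting a surjection $\pP_N \twoheadrightarrow M$ for every such $M$, and I would then form the parameter space
\begin{align*}
R \subset \Quot_{\aA_Y}(\pP_N, v) \times \Hom_{\aA_Y}(e\aA_Y, \pP_N)
\end{align*}
consisting of those $(\pP_N \twoheadrightarrow M, \sigma)$ whose composite $e\aA_Y \to M$ agrees with $\tilde s$. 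The group $G := \Aut_{\aA_Y}(\pP_N)$ acts on $R$, and following Yoshioka's GIT setup, the categorical $\Theta$-(semi/poly)stability on pairs matches GIT (semi/poly)stability for an explicit linearization built out of $\Theta$ and $N$. The coarse moduli space $P^{\Theta}_n(X,\beta)$ is then obtained as the GIT quotient of the semistable locus of $R$ by $G$.

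The main obstacle is verifying both halves of this translation: matching the categorical stability of Definition \ref{pair stab} with the GIT stability on $R$ (so that $S$-equivalence on the original pairs corresponds to closed orbits, i.e.\ $\Theta$-polystable objects), and verifying the semistable reduction needed for properness, hence projectivity, of the quotient. These are the technical heart of \cite{NN, Yos}; the inputs required in our setting are that $\Per_{\leqslant 1}(X/Y)$ is Noetherian with finite-length objects on fibers (inherited through $\Phi$ from $\Coh(\aA_Y)$), so that Harder-Narasimhan and Jordan-H\"older filtrations exist, and that limits of one-parameter families can be taken inside the perverse heart via a Langton-type argument applied on the non-commutative side. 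With these ingredients, Yoshioka's construction goes through essentially verbatim, replacing $\oO_X$ by $\aA_Y$ throughout, and produces the desired projective coarse moduli scheme.
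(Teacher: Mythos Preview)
Your sketch is correct and follows precisely the route the paper intends: the paper does not give its own proof of this theorem but simply defers to \cite[Theorem~1.10]{NN} and \cite[Proposition~1.6.1]{Yos}, and your outline (transfer through the Van den Bergh equivalence, boundedness via zero-dimensional support, GIT on a Quot-type parameter space, and the matching of categorical with GIT stability) is exactly the content of those references adapted to the present setting. There is nothing to add or correct.
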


\subsection{Perverse coherent sheaves on local model}
Let $X_0=\oO_{\mathbb{P}^1}(-1,-1,0)$ and 
take the affinization $f_0 \colon X_0 \to Y_0$, where $Y_0$ is 
given by 
\begin{align}\label{def:Y0}
	Y_0 =\Spec \mathbb{C}[x, y, z, w, u]/(xy-zw).
\end{align}
By the assumption in Setting~\ref{setting}, 
the morphism $f \colon X \to Y$ is 
identified with $f_0 \colon X_0 \to Y_0$ formally locally on $Y$. 
%Restricting to an affine open chart of $Y$ which covers an affine chart of $C$,% $\mathcal{A}_Y$ 
%is a non-commutative algebra $\mathcal{A}$
%which can be described by the quiver associated with $\oO_{\mathbb{P}^1}(-1,-1,%0)$.
%\begin{YC}
%Mention this is true formally locally. 
%\end{YC}
By~\cite{VB}, there is a projective generator for
$\Per(X_0/Y_0)$
given by 
\begin{align*}
	\pP_0 =\oO_{X_0} \oplus \oO_{X_0}(1),
\end{align*} 
and derived equivalences
\begin{align}\label{psi equi cpt}
	&\Phi_0 \colon D^b\Coh(X_0) \stackrel{\sim}{\to} D^b \modu (A_{Y_0}), \quad 
	(-) \mapsto \RHom(\pP_0, -), \\
	\notag
	&\Psi_0 \colon D^b \modu(A_{Y_0}) \stackrel{\sim}{\to} D^b \Coh(X_0), \quad
	(-) \mapsto (-) \dotimes_{A_{Y_0}} \pP_0,
\end{align}
which restrict to equivalences between 
$\Per(X_0/Y_0)$ and $\modu(A_{Y_0})$. 
Here $A_{Y_0}\cneq \End(\pP_0)$ is a non-commutative algebra.

The morphism $f_0 \colon X_0 \to Y_0$ is a flopping contraction, 
and we have the flop 
\begin{align*}
	\xymatrix{
		X_0 \ar[dr]_-{f_0} \ar@{.>}[rr]^-{\phi_0} 
		&  & X_0^+ \ar[ld]^-{f_0^{+}} \\
		& Y_0. &
	}
\end{align*}
The flop 
$X_0^+$ is also isomorphic to 
$\oO_{\mathbb{P}^1}(-1, -1, 0)$.  
By setting $\pP_0^+=\oO_{X_0^+} \oplus \oO_{X_0^+}(-1)$, we have 
a derived equivalence 
\begin{align*}
	\Phi_0^+=\RHom(\pP_0^+, -)
	\colon D^b \Coh(X_0^+) \stackrel{\sim}{\to}
	D^b \modu(A_{Y_0}). 
\end{align*}
Here we have used the isomorphism induced by the strict transform
\begin{align*}
	(\phi_{0})_{\ast} \colon 
	A_{Y_0}=\End(\pP_0) \stackrel{\cong}{\to} \End(\pP_0^+).
\end{align*}
By composing with the equivalence $\Psi_0$ in 
(\ref{psi equi cpt}), we obtain the flop equivalence
\begin{align}\label{fequiv}
	\Upsilon_0 \cneq \Psi_0 \circ \Phi_0^{+}
	\colon D^b \Coh(X_0^+) \stackrel{\sim}{\to}
	D^b \Coh(X_0),
\end{align}
giving a local model of the equivalence (\ref{equiv:flop}). 

The module category over the non-commutative algebra $A_{Y_0}$ is
described in terms of representations of 
a quiver with relations as follows. 
Let $(Q,I)$ be the following quiver with relations:
\begin{align}\label{cy4 quiver}
	\xymatrix{ \bullet_{\textbf{0}} \ar@(dl,ul)[]^{c}     \ar@/^1.5pc/[rr]^{a_1}  \ar@/^0.5pc/[rr]^{a_2} 
		&& \bullet_{\textbf{1}}\ar@/^0.5pc/[ll]^{b_1}  \ar@/^1.5pc/[ll]^{b_2}  \ar@(dr,ur)[]_{d} } \quad \,\,  \\  \nonumber
	a_2b_ia_1=a_1b_ia_2, \quad b_2a_ib_1=b_1a_ib_2,   \\  \nonumber
	da_i=a_ic, \quad cb_i=b_id, \quad  i=1,2.   \end{align}
\begin{lem}\label{quiver of local resolved coni}
	%We denote $A=\mathbb{C}Q/I$ to be the path algebra modulo relations of quiver \%eqref{cy4 quiver} and 
	%$\mathcal{P}=\pi^*\oO_{\mathbb{P}^1}(1)\oplus \pi^*\oO_{\mathbb{P}^1}$ be the t%ilting bundle, where $\pi: \oO_{\mathbb{P}^1}(-1,-1,0)\to \mathbb{P}^1$ is the %projection.
	%Then the functors $\Psi:=-\otimes^{\mathbb{L}}_{A} \mathcal{P}$, $\Phi:=\RHom(\%mathcal{P},-)$ give equivalences of categories 
	%$$D^b\Mod(A)  \stackrel{\sim}{\to} D^b\Coh(\oO_{\mathbb{P}^1}(-1,-1,0)). $$
	%Moreover, $\Psi$ sends $\Mod(A)$ to the heart of perverse t-structure in the RH%S.
	We have an 
	equivalence 
	\begin{align*}
		\modu(A_{Y_0}) \stackrel{\sim}{\to}
		\modu(\mathbb{C}{Q}/I). 
	\end{align*}
\end{lem}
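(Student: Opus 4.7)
The plan is to establish an explicit $\mathbb{C}$-algebra isomorphism $A_{Y_0}=\End_{X_0}(\pP_0)\cong \mathbb{C}Q/I$; the desired equivalence of module categories then follows. The key geometric input I would exploit is the product structure
\[
X_0=\oO_{\mathbb{P}^1}(-1,-1,0) \cong Z_0\times \mathbb{A}^1_u, \qquad Z_0=\oO_{\mathbb{P}^1}(-1,-1),
\]
which reduces the problem to the known conifold quiver description in the 3-fold case, plus a careful accounting of the extra $\mathbb{A}^1$-factor.

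Writing $\pi\colon X_0\to Z_0$ for the projection, one has $\oO_{X_0}(n)=\pi^{\ast}\oO_{Z_0}(n)$ and $\pi_{\ast}\oO_{X_0}=\oO_{Z_0}\otimes_{\mathbb{C}}\mathbb{C}[u]$. The projection formula then gives isomorphisms
\[
\Hom_{X_0}(\oO_{X_0}(i),\oO_{X_0}(j)) \cong \Hom_{Z_0}(\oO_{Z_0}(i),\oO_{Z_0}(j))\otimes_{\mathbb{C}}\mathbb{C}[u], \quad i,j\in\{0,1\},
\]
which are compatible with composition because $u$ is central in $\pi_{\ast}\oO_{X_0}$. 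Summing over the pairs $(i,j)$ produces an isomorphism of $\mathbb{C}$-algebras
\[
A_{Y_0}\cong A_{Z_0}\otimes_{\mathbb{C}}\mathbb{C}[u], \qquad A_{Z_0}\cneq \End_{Z_0}(\oO_{Z_0}\oplus \oO_{Z_0}(1)),
\]
with matching primitive idempotents at the two summands of $\pP_0$ and $\pP_{Z_0}$.

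Next I would invoke the classical Klebanov-Witten/Van den Bergh/Szendr\"oi description on the 3-fold resolved conifold: $A_{Z_0}\cong \mathbb{C}\widetilde{Q}/\widetilde{I}$, where $\widetilde{Q}$ is the sub-quiver of $Q$ obtained by deleting the loops $c,d$, and $\widetilde{I}$ is generated by the conifold relations $a_2 b_i a_1 =a_1 b_i a_2$ and $b_2 a_i b_1 = b_1 a_i b_2$. Under this description the arrows $a_1,a_2$ correspond to a basis of $\Hom_{Z_0}(\oO_{Z_0},\oO_{Z_0}(1))\cong H^0(\mathbb{P}^1,\oO(1))$, the arrows $b_1,b_2$ to a basis of the appropriate piece of $\Hom_{Z_0}(\oO_{Z_0}(1),\oO_{Z_0})$, and the quadratic relations arise from the defining equation $xy=zw$ of $Y_0$ (equivalently, from the superpotential $W=a_1b_1a_2b_2-a_1b_2a_2b_1$).

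Finally, I would identify the effect of tensoring with $\mathbb{C}[u]$ on the quiver presentation. Under $A_{Y_0}\cong A_{Z_0}\otimes \mathbb{C}[u]$ the central variable $u$ splits along the vertex idempotents as $u=e_0u+e_1u=:c+d$, producing one loop at each vertex. Centrality of $u$ with respect to $a_i$ and $b_i$ translates precisely into the remaining relations $da_i=a_i c$ and $cb_i=b_i d$ of $I$. Collecting everything yields $A_{Y_0}\cong \mathbb{C}Q/I$ and hence the claimed equivalence of module categories. The only step that is not completely routine is the Klebanov-Witten quiver description of $A_{Z_0}$; once the product-structure reduction is in place, this is an entirely 3-fold statement that can be cited from the literature.
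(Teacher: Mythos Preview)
Your proposal is correct and follows essentially the same approach as the paper: both factor $X_0\cong Z_0\times\mathbb{A}^1$ with $Z_0=\oO_{\mathbb{P}^1}(-1,-1)$, use the projection formula to obtain $A_{Y_0}\cong \End_{Z_0}(\oO_{Z_0}\oplus\oO_{Z_0}(1))\otimes_{\mathbb{C}}\mathbb{C}[u]$, cite the known conifold quiver description of the first factor (the paper cites \cite{Sze}), and then observe that the central variable $u$ contributes the two loops $c,d$ with the commutation relations $da_i=a_ic$, $cb_i=b_id$. The only cosmetic difference is that the paper phrases the last step in terms of how an $A_{Y_0}$-module decomposes under the idempotents, whereas you work directly with the algebra isomorphism.
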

\begin{proof}
	We write $\pi \colon \oO_{\mathbb{P}^1}(-1,-1,0)\to \mathbb{P}^1$ as the composition of projections
	$$\oO_{\mathbb{P}^1}(-1,-1,0)\stackrel{\pi_1}{\to} \oO_{\mathbb{P}^1}(-1,-1) \stackrel{\pi_2}{\to}  \mathbb{P}^1. $$
	Since $\pP_0=\pi_1^{\ast}\eE$ for  $\eE=\pi_2^*\oO_{\mathbb{P}^1}\oplus\pi_2^*\oO_{\mathbb{P}^1}(1)$, we have 
	\begin{align*}
		\Hom(\mathcal{P}_0,\mathcal{P}_0)
		&\cong \Hom(\eE,\eE\otimes \pi_{1*}\oO) \\
		&\cong \Hom(\eE,\eE)\otimes \mathbb{C}[t]. \end{align*}
	We write $B:=\Hom(\eE,\eE)$. By the above isomorphism, 
	an $A_{Y_0}$-module $M$ can be viewed as a $B$-module, which is an representation of the following 
	quiver with relations (see~\cite[\S 2.1]{Sze}):
	\begin{align*} 
		\xymatrix{ \bullet_{\textbf{0}}      \ar@/^1.5pc/[rr]^{a_1}  \ar@/^0.5pc/[rr]^{a_2} 
			&& \bullet_{\textbf{1}}\ar@/^0.5pc/[ll]^{b_1}  \ar@/^1.5pc/[ll]^{b_2}    } \quad \quad \quad \quad \\  
		a_2b_ia_1=a_1b_ia_2, \quad b_2a_ib_1=b_1a_ib_2.   \end{align*}
	Based on the action of idempotent elements at vertex $0$ and $1$, we can write 
	\begin{align*}
		M=M_0\oplus M_1. 
	\end{align*}
	The $\mathbb{C}[t]$-module structure on $M$
	gives an action on $M$
	\begin{align*}
		\times t \colon M_i\to M_i, \quad i=0,1, 
	\end{align*}
	which we denote by loops $c,d$ in \eqref{cy4 quiver}.
	This action commutes with $B$-module action, so we have commutative relations $da_i=a_ic$, $cb_i=b_id$.
	%Finally the equivalence of categories follows from Van den Bergh's 
	%theorem \cite{VB}.
\end{proof}

For $\Theta=(\theta_0, \theta_1) \in \mathbb{R}^2$
and a finite dimensional representation $V=(V_0, V_1)$ of the quiver 
(\ref{cy4 quiver}),
we denote
\begin{align*}
	\Theta(V) \cneq 
	\theta_0 \dim V_0+ \theta_1 \dim V_1, \quad
	\mu_{\Theta}(V) \cneq \frac{\theta_0 \dim V_0+\theta_1 \dim V_1}{\dim V_0+\dim V_1}.
\end{align*} 
The $\Theta$-stability for representations of 
the quiver (\ref{cy4 quiver}) is defined as follows: 
\begin{defi}
A finite dimensional representation $V$ of the quiver (\ref{cy4 quiver})
is $\Theta$-(semi)stable if for any 
subrepresentation $0\neq V' \subsetneq V$ we have 
$\mu_{\Theta}(V')<(\leqslant) \mu_{\Theta}(V)$. 
\end{defi}
%Formally locally on $Y$, 
%the morphism $f \colon X \to Y$ is identified with 
%\begin{align*}
%f_0 \colon X_0=\oO_{\mathbb{P}^1}(-1,-1,0)\to 
%Y_0=\big\{(x,y,z,w)\in\mathbb{C}^4: \, xy=zw\big\}\times \mathbb{C}.
%\end{align*}
%When restricted to an affine open chart of $Y$, i.e. considering the contractio%n 
%\begin{YC}
%Mention this is true formally locally. 
%\end{YC}
%\begin{align}\label{local contraction}f: \oO_{\mathbb{P}^1}(-1,-1,0)\to \big\{(%x,y,z,w)\in\mathbb{C}^4: \, xy=zw\big\}\times \mathbb{C}, \end{align}
A simple extension of Lemma~\ref{quiver of local resolved coni} (e.g. \cite[Proposition 3.3]{NN}) shows that 
perverse coherent systems on $X_0$ are in one-to-one correspondence with representations 
of the \textit{framed} quivers $\widetilde{Q}$ with relations $I$:
\begin{align}\label{framed cy4 quiver}
\xymatrix{\bullet^{\mathbf{ \infty}} \ar[d]  &&  \\   
 \bullet_{\textbf{0}} \ar@(dl,ul)[]^{c}     \ar@/^1.5pc/[rr]^{a_1}  \ar@/^0.5pc/[rr]^{a_2} 
&& \bullet_{\textbf{1}}\ar@/^0.5pc/[ll]^{b_1}  \ar@/^1.5pc/[ll]^{b_2}  \ar@(dr,ur)[]_{d} } \quad \,\,  \\  \nonumber
 a_2b_ia_1=a_1b_ia_2, \quad b_2a_ib_1=b_1a_ib_2,   \\  \nonumber
 da_i=a_ic, \quad cb_i=b_id, \quad  i=1,2.   \end{align}
More specifically, for a perverse coherent system $(F,s)$, we have the associated 
vector spaces 
\begin{align*}
V_0=\Hom(\oO_{X_0}, F), \quad V_1=
\Hom(\oO_{X_0}(1), F),
\end{align*}
at the vertex $0$ and $1$.
When $F$ has compact support, we set 
$d(F) \cneq \rank(\pi_{\ast}F)$ where 
$\pi \colon X_0 \to \mathbb{P}^1$ is the projection. 
In this case, we have 
\begin{align*}
(\dim V_0, \dim V_1)=(\chi(F), \chi(F)-d(F)).
\end{align*}
Below when we consider a perverse coherent system $(F, s)$ on $X_0$, we always 
assume that $F$ is compactly supported. 
The stability of perverse coherent systems 
in Definition \ref{pair stab} for 
$X_0$ 
translates into the following King's stability~\cite{King} of representations of $(\widetilde{Q},I)$.
\begin{defi}\label{def:stability:framed}
	A representation $(V_0,V_1, V_{\infty}=\mathbb{C})$ of $(\widetilde{Q},I)$ is $\Theta$-(semi)stable if 
	\begin{itemize}
		\item for any non-zero subrepresentation 
		$(V_0',V_1', 0)$, we have 
		$\Theta(V_0', V_1') <(\leqslant) 0$. 
		\item for any proper subrepresentation
		 $(V_0',V_1',V_{\infty}'=\mathbb{C})$, we have 
		 $\Theta(V_0', V_1') <(\leqslant) \Theta(V_0, V_1)$. 
	\end{itemize}
\end{defi}

%The stability in Definition \ref{pair stab} is equivalent to the stability of (framed) quiver representations as in King \cite{King}, 
%see Section \ref{sect on local resolved coni} for more details.

\subsection{Wall-chamber structures for local resolved conifold}\label{sect on wall on local conifold}
In this section, we study wall-chamber structures for moduli spaces of (compactly supported) stable perverse coherent systems on 
the following local model: 
\begin{align*}
f_0 \colon  X_0=\oO_{\mathbb{P}^1}(-1,-1,0)\to Y_0=\big\{(x,y,z,w)\in\mathbb{C}^4: \, xy=zw\big\}\times \mathbb{C}.
 \end{align*}
As mentioned above, we are reduced to study the wall-chamber structures for stability of finite dimensional representations 
of \eqref{framed cy4 quiver}. 

In order to 
classify all walls, we need to find out $\Theta$ such that there exists a strictly $\Theta$-semistable representation\,\footnote{Here $V_{\infty}=\mathbb{C}$ as we are only interested in rank one perverse coherent systems (see Definition \ref{pair stab}).}
\begin{align*}
\widetilde{V}=(V_0,V_1,V_{\infty}=\mathbb{C})
\end{align*} of the framed quiver
$(\widetilde{Q}, I)$ in \eqref{framed cy4 quiver}. 
We have the Jordan-H\"older filtration:
\begin{align*}
\widetilde{V}=\widetilde{V}^0\supset \widetilde{V}^1\supset \cdots \supset \widetilde{V}^l=0, \quad l\geqslant 2,
\end{align*}
such that $\widetilde{V}^i/\widetilde{V}^{i+1}$'s are $\Theta$-stable representations of $(\widetilde{Q}, I)$.
Since $\dim V_{\infty}=1$, there must be some $\widetilde{V}^i/\widetilde{V}^{i+1}$ which has zero dimension vector
at the vertex $\infty$ (the condition $l\geqslant2$ is crucial here), i.e. $\widetilde{V}^i/\widetilde{V}^{i+1}$ is a  finite dimensional $\Theta$-stable representation of the unframed quiver $(Q, I)$ \eqref{cy4 quiver}, 
satisfying $\Theta(\widetilde{V}^i/\widetilde{V}^{i+1})=0.\,$\footnote{Based on Definition \ref{def:stability:framed}, equivalently Definition \ref{pair stab}, the slope 
function of $\widetilde{V}$ is assumed to be zero (compared with \cite[\S 1.3]{NN}). The equality follows from the slope property of Jordan-H\"older filtration of $\widetilde{V}$.}
Therefore we see that a strictly $\Theta$-semistable representation of  \eqref{framed cy4 quiver} produces a  
$\Theta$-stable representation of the unframed quiver \eqref{cy4 quiver}.
So in order to classify all walls, we are reduced to classify all $\Theta$ such that there exists a non-zero finite dimensional $\Theta$-stable representations of $(Q, I)$.
The following lemma is proved 
along with the similar argument of~\cite[Lemma 3.4]{NN}. Here we recall the key point to make us self-contained. 

\begin{lem}\label{lem on wall}
Let $V=(V_0,V_1)$ be a non-zero finite dimensional $\Theta$-stable representation of quiver \eqref{cy4 quiver}. 
Then one of the following conditions 
hold: 
\begin{enumerate}
	\renewcommand{\labelenumi}{(\arabic{enumi})}
	\item $\dim V_0=\dim V_1=1$, 
	\item $a_1=a_2=0$, 
	\item $b_1=b_2=0$. 
	\end{enumerate}
 %\begin{align*}
%(1) \, \dim V_0=\dim V_1=1; \quad (2) \,\, a_1=a_2=0; \quad  (3) \,\, %b_1=b_2=0.\end{align*}
\end{lem}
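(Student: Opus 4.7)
The plan is to reduce to the loop-free conifold quiver treated by Nagao--Nakajima~\cite[Lemma~3.4]{NN} by showing that the loops $c$ and $d$ must act as a common scalar on $V$. First I would check that the pair $(c,d)$ is an endomorphism of $V$ in $\modu(\mathbb{C}Q/I)$: the relations $da_i = a_ic$ and $cb_i = b_id$ express exactly that $a_i$ and $b_i$ intertwine this pair, and the loops commute with themselves automatically.

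Second, I would invoke Schur's lemma for $\Theta$-stable representations: for any $f \in \End(V)$ that is neither zero nor an isomorphism, $\ker f$ and $\im f$ would be proper nonzero subrepresentations with both slopes strictly smaller than $\mu_{\Theta}(V)$, contradicting the fact that $\mu_{\Theta}(V)$ is a convex combination of these two slopes. Hence $\End(V) = \mathbb{C}$ and we may write $c = \lambda \cdot \mathrm{id}_{V_0}$ and $d = \mu \cdot \mathrm{id}_{V_1}$ for some $\lambda,\mu \in \mathbb{C}$. Plugging into the relations yields $(\mu-\lambda)a_i = 0$ and $(\lambda-\mu)b_i = 0$ for $i = 1, 2$, so unless we are already in case (2) or case (3), we must have $\lambda = \mu$. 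Replacing $c$ and $d$ by $c - \lambda\cdot\mathrm{id}$ and $d - \lambda\cdot\mathrm{id}$ does not affect the subrepresentation lattice of $V$ (any subspace is invariant under a scalar) and preserves all relations, so $\Theta$-stability is preserved; we may therefore assume $c = d = 0$, at which point $V$ becomes a $\Theta$-stable representation of the loop-free conifold quiver with relations $a_2 b_i a_1 = a_1 b_i a_2$ and $b_2 a_i b_1 = b_1 a_i b_2$.

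Finally, I would apply~\cite[Lemma~3.4]{NN} verbatim: in the loop-free conifold setting their argument---picking a generalized eigenspace of a composition such as $b_1 a_1$ and building an explicit proper subrepresentation---shows that unless all $a_i$ or all $b_i$ vanish, one is forced to $\dim V_0 = \dim V_1 = 1$. The main obstacle lies entirely in this last step, where the conifold relations are essential; the preceding reduction is formal, and degenerate cases such as $V_0 = 0$ or $V_1 = 0$ fall automatically into case (2) or (3).
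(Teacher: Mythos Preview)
Your argument is correct and in fact streamlines the paper's own proof. The key observation that $(c,d)$ defines an endomorphism of $V$ is exactly right---the relations $da_i=a_ic$ and $cb_i=b_id$ are precisely the intertwining conditions---and Schur's lemma then forces $(c,d)=\alpha\cdot(\id_{V_0},\id_{V_1})$ for a single scalar $\alpha$. One small redundancy: since $(c,d)$ is one endomorphism of $V$, Schur already gives the \emph{same} scalar on both vertices; you do not need the relations a second time to deduce $\lambda=\mu$. After shifting the loops to zero, the subrepresentation lattice is unchanged (scalars preserve every subspace), so you may cite \cite[Lemma~3.4]{NN} directly.

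The paper takes a different route: rather than killing the loops via Schur, it reruns the Nagao--Nakajima argument with the loops present. Concretely, it shows that $(\Ker b_ja_i,\Ker a_ib_j)$ and $(\Imm b_ja_i,\Imm a_ib_j)$ are subrepresentations, uses stability to force each to be $0$ or $V$, and in the residual case where $a_1,b_1$ are isomorphisms it adds $c$ to the list of pairwise commuting operators $b_ja_i$ and picks a common eigenvector to build a destabilising sub unless $\dim V_0=\dim V_1=1$. Your reduction is more modular---it isolates the new content (the loops are central) from the old content (the conifold lemma)---while the paper's version is self-contained and does not require the reader to look up \cite{NN}. Both are valid; yours is shorter.
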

\begin{proof}
By replacing $\Theta=(\theta_0, \theta_1)$ with 
$(\theta_0-\mu_{\Theta}(V), \theta_1-\mu_{\Theta}(V))$, we may assume  
$\Theta(V)=0$. 
%For $\Theta=(\theta_0,\theta_1)\in \mathbb{R}^2$, to define $\Theta$-stability %of a quiver representation $(V_0,V_1)$, we may 
%assume $\theta_0\dim(V_0)+\theta_1\dim(V_1)=0$ without loss of generality. 
We may also 
assume that
 $V_0,V_1\neq 0$, as otherwise (2) or (3) holds trivially. 
 For a fixed $(i, j)$, we set 
\begin{align*}
S_0=\Ker(b_ja_i), \quad S_1=\Ker(a_ib_j), \quad T_0=\mathrm{Im}(b_ja_i), \quad
T_1=\mathrm{Im}(a_ib_j). 
\end{align*}
It is easy to check that $(S_0,S_1)$ and $(T_0,T_1)$ are subrepresentations of $V$. 
 Therefore the $\Theta$-stability of $V$ implies 
$$\theta_0\dim S_0+\theta_1\dim S_1\leqslant 0, \quad \theta_0\dim T_0+\theta_1\dim T_1\leqslant 0. $$
These two inequalities must be equalities as we also have 
$$\theta_0\dim(V_0)+\theta_1\dim(V_1)=0, \quad \dim S_i+\dim T_i=\dim V_i, \,\,\, (i=0,1). $$
So either $(S_0,S_1)=(0,0)$ or $(S_0,S_1)=(V_0,V_1)$ holds. Note that if the first case happens, then $a_i$ and $b_j$ are injective, hence
they give isomorphisms $V_0 \cong V_1$. 
If the second case happens, then $b_ja_i=a_ib_j=0$.
%
%By symmetry, for any $i,j$, we can also consider 
%\begin{align*}
%S_0=\Ker(b_ja_i), \quad S_1=\Ker(a_ib_j), \quad T_0=\mathrm{Im}(b_ja_i), \quad 
%T_1=\mathrm{Im}(a_ib_j), 
%\end{align*}
%and conclude that either $V_0\cong V_1$, $a_i, b_j$ are isomorphisms or %$a_ib_j=b_ja_i=0$. 

From the above argument, we may assume that either one of the followings holds:
\begin{enumerate}
	\renewcommand{\labelenumi}{(\Alph{enumi})}
	\item $a_i b_j=b_j a_i=0$ for all $i$, $j$, or 
	\item $a_1$, $b_1$ are isomorphisms with 
	$\dim V_0=\dim V_1=1$.
	\end{enumerate}
%$$\mathrm{(A)} \,\,a_1,b_1\,\, \mathrm{are}\,\, \mathrm{isomorphisms}\,\, \mathrm{and}\,\, \dim V_0=\dim V_1, \,\, \mathrm{or}\,\, 
%\mathrm{(B)} \,\, a_ib_j=b_ja_i=0, \,\, \forall \,\,i,j.$$  
In the case of (A), we first assume $\theta_0\geqslant 0$. 
By taking the subrepresentation 
$(\Ker(a_1)\cap\Ker(a_2),0)$ of $V$, 
the $\Theta$-stability yields
 $\Ker(a_1)\cap\Ker(a_2)=0$. Using $a_ib_j=0$, we have 
$$\mathrm{Im}(b_1),\,\, \mathrm{Im}(b_2) \subseteq \Ker(a_1)\cap\Ker(a_2)=0, $$
therefore $b_1=b_2=0$. Similarly when $\theta_0\leqslant  0$, i.e. $\theta_1\geqslant 0$, we conclude that $a_1=a_2=0$.

In the case of (B), note that $b_1a_1, b_2a_1,b_1a_2, b_2a_2, c$ are pairwise commutating.
Let us take a common eigenvector $0\neq v_0\in V_0$. Then 
$$
(S_0', S_1')=(\langle v_0 \rangle, \langle 
 a_1(v_0), a_2(v_0) \rangle ) $$
is a subrepresentation of $V$.
Here $\langle - \rangle$ is the linear span of $-$. 
 By stability, we obtain the inequality 
$$\theta_0\dim S'_0+\theta_1\dim S'_1\leqslant 0. $$
Note that we have $\theta_0+\theta_1=0$. If $\theta_0<0$, the above inequality is equivalent to  
$\dim S'_1\leqslant \dim S'_0=1$. 
Since $a_1$ is an isomorphism,
we have $a_1(v_0)\neq 0$.
Therefore the equality holds and 
$$(S'_0,S'_1)=(V_0,V_1), \quad \dim V_0=\dim V_1=1. $$
If $\theta_0>0$ (i.e. $\theta_1<0$), by symmetry between vertex $0$ and $1$ and considering pairwise commutating paths 
$a_1b_1, a_1b_2, a_2b_1, a_2b_2, d$, we obtain the same conclusion.
\end{proof}

For $a \in \mathbb{C}$, we denote by $j_a$ the 
closed immersion 
\begin{align*}
j_a \colon
\mathbb{P}^1 \hookrightarrow
 \oO_{\mathbb{P}^1}(-1,-1)\times \{a\} \hookrightarrow
 \oO_{\mathbb{P}^1}(-1,-1,0), 
\end{align*}
where the first arrow is the zero section. 
We have the following classification of $\Theta$-stable 
representations. 

\begin{prop}\label{classify:unframed}
A non-zero finite dimensional representation $V$ of quiver
 \eqref{cy4 quiver} is $\Theta$-stable if  
it is either one of the following: 
\begin{enumerate}
\item $\Phi_0(j_{a\ast}\oO_{\mathbb{P}^1}(m-1))$
for $a \in \mathbb{C}$ and $m\geqslant 1$, 
\item $\Phi_0(j_{a\ast}\oO_{\mathbb{P}^1}(-m-1)[1])$
for $a \in \mathbb{C}$ and $m\geqslant 0$,
\item $\Phi_0(\oO_x)$
for $x \in X_0$, 
\item $\Phi_0^+(j_{a\ast}\oO_{\mathbb{P}^1}(-m-1)[1])$
for $a \in \mathbb{C}$ and $m\geqslant 1$, 
\item $\Phi_0^+(j_{a\ast}\oO_{\mathbb{P}^1}(m-1))$
for $a \in \mathbb{C}$ and $m\geqslant 0$, 
\item $\Phi_0^+(\oO_x)$ for $x \in X_0^+$.
\end{enumerate}
%$$\emph{(i)}\, \Psi^{-1}\left(\oO_x\right),\, x\in X, \,\, \emph{(ii)}\, \Psi^{-1}\left((j_a)_*\oO_{\mathbb{P}^1}(m-1)\right) \,\, (m\geqslant 1),\,\, \emph{(iii)}\,  \Psi^{-1}\left((j_a)_*\oO_{\mathbb{P}^1}(-m-1)[1]\right) \,\, (m\geqslant 0), $$
%$$\mathrm{where} \quad  j_a: \mathbb{P}^1\stackrel{\mathrm{0-sect}}{\hookrightarrow} \oO_{\mathbb{P}^1}(-1,-1)\times \{a\}\subset \oO_{\mathbb{P}^1}(-1,-1,0), %\,\,a\in\mathbb{C},$$
%and $\Psi: D^b\Mod(A)  \stackrel{\sim}{\to} D^b\Coh(X)$ is the derived equivalence in Theorem \ref{quiver of local resolved coni}, or
%$$\emph{(iv)}\, \Psi_+^{-1}\left(\oO_x\right),\, x\in X, \,\, \emph{(v)}\, \Psi_+^{-1}\left((j_a)_*\oO_{\mathbb{P}^1}(-m-1)[1]\right) \,\, (m\geqslant 1),\,\, \emph{(vi)}\,  \Psi_+^{-1}\left((j_a)_*\oO_{\mathbb{P}^1}(m-1)\right) \,\, (m\geqslant 0), $$
%$$\mathrm{where} \quad  j_a: \mathbb{P}^1\stackrel{\mathrm{0-sect}}{\hookrightarrow} \oO_{\mathbb{P}^1}(-1,-1)\times \{a\}\subset \oO_{\mathbb{P}^1}(-1,-1,0), \,\,a\in\mathbb{C}, $$
%and $\Psi_+: D^b\Mod(A)  \stackrel{\sim}{\to} D^b\Coh(X^+)$ is the similar deri%ved equivalence 
%for the flop $X^+$ of $X=\oO_{\mathbb{P}^1}(-1,-1,0)$. 
\end{prop}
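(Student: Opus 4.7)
The plan is to verify, case by case, that each of the six families consists of $\Theta$-stable representations for a suitable choice of $\Theta$ lying on an appropriate wall. The first step is to translate each listed object into a representation of the quiver $(Q,I)$ via $V_0=\RHom(\oO_{X_0},F)$ and $V_1=\RHom(\oO_{X_0}(1),F)$ (and similarly for $\Phi_0^+$ using $\pP_0^+=\oO_{X_0^+}\oplus \oO_{X_0^+}(-1)$), and read off the dimension vector. For $F=j_{a\ast}\oO_{\mathbb{P}^1}(m-1)$ with $m\geqslant 1$ one obtains $(\dim V_0,\dim V_1)=(m,m-1)$ from $H^0$ of $\oO_{\mathbb{P}^1}(m-1)$ and $\oO_{\mathbb{P}^1}(m-2)$; for $F=j_{a\ast}\oO_{\mathbb{P}^1}(-m-1)[1]$ with $m\geqslant 0$ the dimension vector is $(m,m+1)$ via the corresponding $H^1$'s; and for $F=\oO_x$ one has $(1,1)$. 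Each such vector lies on a unique wall $m\theta_0+(m-1)\theta_1=0$, $m\theta_0+(m+1)\theta_1=0$, or $\theta_0+\theta_1=0$, and the candidate $\Theta$ is any point of that wall in the half-plane matching the sub-object geometry below.

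The skyscraper cases (3) and (6) are nearly immediate: $V$ has dimension vector $(1,1)$, so any proper nonzero subrepresentation has vector $(1,0)$ or $(0,1)$. A direct analysis of the $A_{Y_0}$-action on $\oO_x$ (which depends on whether $x$ lies on the zero section or in its complement) shows that only one of these actually forms a subrepresentation, and that one contributes $\Theta(V')<0$ on the corresponding open half of the wall $\theta_0+\theta_1=0$, giving $\Theta$-stability.

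For the line-bundle cases (1), (2), (4), (5), the key observation is that each corresponding representation sits in the degenerate branch of Lemma~\ref{lem on wall}: for $V=\Phi_0(j_{a\ast}\oO_{\mathbb{P}^1}(m-1))$ the two arrows $b_1,b_2$ vanish (since $\Hom(\oO_{X_0}(1),j_{a\ast}\oO_{\mathbb{P}^1}(m-1))$ is computed by pulling back the normal bundle $\oO(-1)\oplus\oO(-1)$, whose relevant sections vanish on the zero section), and the loops $c,d$ vanish because the normal $u$-direction acts by the distance-from-$a$ coordinate, which kills $j_{a\ast}\oO_{\mathbb{P}^1}(m-1)$. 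Subrepresentations $V'=(V'_0,V'_1)$ therefore reduce to pairs of subspaces compatible with the two $a_i$, which act as the homogeneous coordinates of $\mathbb{P}^1$ on $V_0=H^0(\oO_{\mathbb{P}^1}(m-1))$, $V_1=H^0(\oO_{\mathbb{P}^1}(m-2))$. Any proper such pair corresponds to a proper sub-line-bundle $\oO_{\mathbb{P}^1}(k)\subset \oO_{\mathbb{P}^1}(m-1)$ with $k<m-1$, giving dimension vector $(k+1,k)$ and thus $\Theta(V')=(k+1)\theta_0+k\theta_1<0$ on the wall. Case (2) is symmetric with $a_i=0$ instead of $b_i=0$, while cases (4) and (5) are transported across the flop equivalence $\Upsilon_0$ of \eqref{fequiv}, which identifies $\Phi_0^+(-)$ with $\Phi_0(\Upsilon_0(-))$ and preserves $\Theta$-stability of the associated modules (as the quiver and stability parameter are the same on both sides).

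The main obstacle is to rigorously rule out "exotic" submodules of $V$ that do not obviously come from sub-line-bundles of $\oO_{\mathbb{P}^1}(m-1)$: in principle the loops $c,d$ or the mixed compositions $a_ib_j$ could contribute further subrepresentations. Once the vanishings $b_i=0$, $c=d=0$ (or their counterparts in the other cases) are established from the explicit Hom-computation on the zero section, the lattice of submodules collapses to that of the restricted representation on the $\mathbb{P}^1$-component, and the stability check becomes a classical statement about subsheaves of $\oO_{\mathbb{P}^1}(m-1)$, which is elementary.
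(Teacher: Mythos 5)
Your proposal proves the wrong direction. Despite the wording ``is $\Theta$-stable if'', the content of Proposition~\ref{classify:unframed} that the paper actually proves, and the content that is needed afterwards (the enumeration of \emph{all} walls in Proposition~\ref{prop:wall}, and hence the statement that outside the lines \eqref{all walls} there are no strictly semistable objects), is the completeness of the list: any non-zero finite dimensional $\Theta$-stable representation of the unframed quiver \eqref{cy4 quiver} must be one of the six objects. The paper's proof starts from an arbitrary stable $V$, invokes the trichotomy of Lemma~\ref{lem on wall} ($\dim V=(1,1)$, or $a_1=a_2=0$, or $b_1=b_2=0$), reduces the middle case to the quiver \eqref{quiver of P1C} of $\mathbb{P}^1\times\mathbb{C}$, uses $\End(V)=\mathbb{C}$ to see that the $\mathbb{C}[t]$-loop acts by a scalar $a$, and then quotes the classification of stable Kronecker modules (\cite[Lemma~2.12]{NY}) to land on $j_{a\ast}\oO_{\mathbb{P}^1}(m-1)$ or $j_{a\ast}\oO_{\mathbb{P}^1}(-m-1)[1]$, with the remaining cases handled by the flop. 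You only verify (and only sketchily) that the listed objects are stable; you never rule out other stable representations, so the wall classification that this proposition is responsible for is not established by your argument.

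There are also concrete errors in the case-by-case checks you do carry out. Cases (1) and (2) are both pushforwards from the same zero-section copy of $\mathbb{P}^1\times\mathbb{C}\subset X_0$, so they have the \emph{same} vanishing pattern of arrows (in the paper's conventions $a_1=a_2=0$, with the opposite pattern occurring for the flop-side cases (4),(5)); your claim that (1) has $b_1=b_2=0$ while (2) has $a_1=a_2=0$ is inconsistent. Moreover $\Hom(\oO_{X_0}(1),j_{a\ast}\oO_{\mathbb{P}^1}(m-1))\cong H^0(\oO_{\mathbb{P}^1}(m-2))\neq 0$ for $m\geqslant 2$, so the justification you give for the vanishing is not correct, and the loop acts by the scalar $a$ (not necessarily $0$). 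Finally, submodules of the Kronecker-type module $(H^0(\oO_{\mathbb{P}^1}(m-1)),H^0(\oO_{\mathbb{P}^1}(m-2)))$ are arbitrary pairs of subspaces compatible with the two multiplication maps, not just those coming from sub-line-bundles, so even the stability check in the ``if'' direction needs the actual Kronecker-module argument (or the citation to \cite{NY}) rather than the reduction you describe.
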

\begin{proof}
Suppose that $V$ satisfies (2) of Lemma \ref{lem on wall}.
Then $V$ is a representation of the following quiver with relation: 
\begin{align}\label{quiver of P1C} 
\xymatrix{ \bullet_{\textbf{0}} \ar@(dl,ul)[]^{c}    
&& \bullet_{\textbf{1}}  \ar@/^0.5pc/[ll]_{b_2}  \ar@/_0.5pc/[ll]_{b_1}  \ar@(dr,ur)[]_{d} }   \\   \nonumber
 cb_i=b_id, \,\,   i=1,2. \quad \quad   \end{align}
Geometrically, this quiver corresponds to $\mathbb{P}^1\times \mathbb{C}$ from the tilting bundle. More precisely, let $\eE:=\oO_{\mathbb{P}^1}\oplus \oO_{\mathbb{P}^1}(1)$ be the tilting bundle of $\mathbb{P}^1$ whose endomorphism algebra 
$K:=\End(\eE) $
gives rise to the Kronecker quiver:  
\begin{align*} 
\xymatrix{ \bullet_{\textbf{0}}  
&& \bullet_{\textbf{1}}  \ar@/^0.5pc/[ll]_{b_2}  \ar@/_0.5pc/[ll]_{b_1}   }    \end{align*}
Then the endomorphism algebra $L:=\End(\pi^*\eE)$ gives rise to the quiver \eqref{quiver of P1C},
where $\pi \colon \mathbb{P}^1\times \mathbb{C}\to \mathbb{P}^1$ denotes the projection. 
By the projection formula, we have 
\begin{align*}
L &\cong \Hom(\eE,\eE\otimes \pi_*\oO_{\mathbb{P}^1\times \mathbb{C}}) \\
&\cong K\otimes \mathbb{C}[t]. \end{align*}
From the natural embedding
$\mathbb{C}[t]\to L$, 
we can treat $L$-module $V$ as a $\mathbb{C}[t]$-module. 
Since $V$ is a stable $L$-module, we have 
$\End(V)\cong \mathbb{C}$. 
Therefore
 as a $\mathbb{C}[t]$-module, we have  
\begin{align*}
\End(V)\cong \mathbb{C}[t]/(t-a),
\end{align*}
for some $a\in \mathbb{C}$. 
In particular the $L$-module structure on $V$
descends to the $L/(t-a)\cong K$-module structure, 
so we can also treat $V$ as a stable $K$-module. 

By \cite[Lemma 2.12]{NY}, we can classify all $\Theta$-stable $K$-modules, which under the equivalence
\begin{align*}
\RHom(\eE, -) \colon 
 D^b\Coh(\mathbb{P}^1)
\stackrel{\sim}{\to} D^b \modu(K)
\end{align*}
correspond to 
$\oO_{\mathbb{P}^1}(m-1)$ for $m\geqslant 1$
or $\oO_{\mathbb{P}^1}(-m-1)[1]$ for $m\geqslant 0$.
%Note in this case, $\theta_0<0$ by \cite[(2.1)]{NY}.
%$$\oO_{\mathbb{P}^1}(m-1),\quad \oO_{\mathbb{P}^1}(-m-1)[1], \quad \oO_{\mathbb{P}^1}(m-1)[1], \quad \oO_{\mathbb{P}^1}(m-1)[1]$$ 
Now we view such $K$-modules as representations of the quiver 
\eqref{cy4 quiver}.
Then similarly to \cite[Remark 3.6]{NN},
via the equivalence (\ref{psi equi cpt})
 they correspond to 
the following objects
in $D^b \Coh(X_0)$
\begin{align}\label{stb obj}j_{a\ast}\oO_{\mathbb{P}^1}(m-1) \,\, (m\geqslant 1), \quad j_{a\ast}\oO_{\mathbb{P}^1}(-m-1)[1] \,\, (m\geqslant 0). 
 \end{align}
%Here 
%$j_a: \mathbb{P}^1\hookrightarrow \oO_{\mathbb{P}^1}(-1,-1)\times \{a\}\subset \%oO_{\mathbb{P}^1}(-1,-1,0)$ ($a\in \mathbb{C}$) is the composition of the zero %section with the natural inclusion of $\{a\}\subset  \mathbb{C}$.
Therefore $V$ is either of type (i) or (ii) in the proposition. 

If $V$ satisfies (3) of Lemma \ref{lem on wall}, as 
in \cite[Remark 3.6]{NN} it corresponds to one of the geometric objects
\eqref{stb obj} in the flop side. 
So $V$ is either of type (iv) or (v) in the proposition. 
%They have dimension vectors 
%$(m,m-1)$, $(m,m+1)$ and they determine walls denoted by $L^{-}_{+}(m)$, %$L^{+}_{+}(m)$ below in \eqref{all walls}.
Finally if $V$ satisfies (1) of Lemma \ref{lem on wall},  
a similar argument as above shows that we can treat 
$V$
as a stable representation of the quiver associated with the resolved conifold $\oO_{\mathbb{P}^1}(-1,-1)$, unique 
up to a choice of $a\in \mathbb{C}$. Combining with \cite[Remark 3.6]{NN}, we know it 
corresponds to a structure sheaf of a point in $X_0$ or that of a
 flop $X_0^+$.
So $V$ is either of type (iii) or (vi) in the proposition. 
%They determine walls denoted by $L_{-}(\infty)$, $L_{+}(\infty)$ below in \eqref{all walls}.
\end{proof}
To sum up, walls for $\Theta$-stability of representations 
of the framed quiver \eqref{framed cy4 quiver} can be classified as follows:
\begin{prop}\label{prop:wall}
For each $\Theta$-stable representation in 
Proposition~\ref{classify:unframed}, 
the corresponding wall is given as follows\,\footnote{Here we use different notations for walls compared with \cite[pp.~18]{NN}. Our $L^{-}_{-}(m), L^{+}_{-}(m), L^{-}_{+}(m), L^{+}_{+}(m)$
are those $L^{-}_{+}(m), L^{-}_{-}(m), L^{+}_{+}(m), L^{+}_{-}(m)$ in \cite{NN}. }: 
\begin{align}\label{all walls}
	L^{-}_{-}(m)&:=\big\{(\theta_0,\theta_1)\in \mathbb{R}^2:\,\theta_0<\theta_1,\,\, m\theta_0+(m-1)\theta_1=0 \big\}, \quad (m\geqslant 1), \\ \nonumber
	L^{+}_{-}(m)&:=\big\{(\theta_0,\theta_1)\in \mathbb{R}^2:\,\theta_0<\theta_1,\,\, m\theta_0+(m+1)\theta_1=0 \big\}, \quad (m\geqslant 0), \\
	\nonumber
	L_{-}(\infty)&:=\big\{(\theta_0,\theta_1)\in \mathbb{R}^2:\,\theta_0<\theta_1,\,\, \theta_0+\theta_1=0 \big\},  \\ \nonumber
	L^{-}_{+}(m)&:=\big\{(\theta_0,\theta_1)\in \mathbb{R}^2:\,\theta_0>\theta_1,\,\, m\theta_0+(m-1)\theta_1=0 \big\}, \quad (m\geqslant 1), \\\nonumber
	L^{+}_{+}(m)&:=\big\{(\theta_0,\theta_1)\in \mathbb{R}^2:\,\theta_0>\theta_1,\,\, m\theta_0+(m+1)\theta_1=0 \big\}, \quad (m\geqslant 0), \\\nonumber
	L_{+}(\infty)&:=\big\{(\theta_0,\theta_1)\in \mathbb{R}^2:\,\theta_0>\theta_1,\,\, \theta_0+\theta_1=0 \big\}. 
\end{align}
\begin{proof}
	Let $V=(V_0, V_1)$ be a 
	$\Theta$-stable representation
	(i) in Proposition~\ref{classify:unframed}.
	Then it has the dimension vector
	$(m,m-1)$, so 
	the condition $\Theta(V)=0$ 
	yields the equation of the wall 
	$m\theta_0+(m-1)\theta_1=0$. 
	Since it has a subrepresentation $(V_0, 0)$, when $m>1$,
	it is $\Theta$-stable 
	only if the inequality $m \theta_0<0$
	holds. So $\theta_0<0$ and $\theta_1> 0$ follows from the defining equation of the wall. This gives rise 
	to the wall $L_{-}^-(m)$ when $m>1$. When $m=1$, the equation of wall is $\theta_0=0$ and we simply put $\theta_1> 0$ to define $L_{-}^-(1)$.
	%Therefore the object (i) gives rise to the wall $L_{-}^-(m)$. 
	Similarly, the other stable representations 
	in Proposition~\ref{classify:unframed} 
	give rise to other walls 
	in (\ref{all walls}). 
	\end{proof}
\end{prop}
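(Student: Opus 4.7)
The plan is to handle each of the six families of $\Theta$-stable representations listed in Proposition~\ref{classify:unframed}, computing for each the dimension vector $(\dim V_0,\dim V_1)$ so that $\Theta(V)=0$ becomes a linear equation in $\theta_0,\theta_1$, and then pinning down the half-plane ($\theta_0<\theta_1$ or $\theta_0>\theta_1$) on which the wall lies.

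First I would compute the dimension vectors using $\dim V_0=\chi(F)$ and $\dim V_1=\chi(F)-d(F)$ on the $X_0$-side, and the analogue with $\pP_0^+=\oO_{X_0^+}\oplus\oO_{X_0^+}(-1)$ on the flop side. A direct Riemann--Roch calculation yields $(m,m-1)$ for types (i) and (v), $(m,m+1)$ for types (ii) and (iv), and $(1,1)$ for types (iii) and (vi), so $\Theta(V)=0$ produces exactly the three wall equations $m\theta_0+(m-1)\theta_1=0$, $m\theta_0+(m+1)\theta_1=0$, and $\theta_0+\theta_1=0$ appearing in (\ref{all walls}).

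For the half-plane restriction I would re-use the mechanism the author writes out for (i) with $m\geqslant 2$: since types (i), (ii), (iii) come from $\Phi_0$ and thus correspond to case (1) or (2) of Lemma~\ref{lem on wall}, the proper nonzero subrepresentation $(V_0,0)$---available whenever $V_1\neq 0$ because $a_1=a_2=0$ in case (2)---combined with $\Theta(V)=0$ forces $\theta_0<0$, hence $\theta_0<\theta_1$. Type (iii) is treated inside Lemma~\ref{lem on wall} itself using the subrepresentation $(\langle v_0\rangle,\langle a_1(v_0),a_2(v_0)\rangle)$. For types (iv), (v), (vi) coming from $\Phi_0^+$, the flop-symmetric argument (case (3) of Lemma~\ref{lem on wall}, in which $b_1=b_2=0$) interchanges the two vertices of the quiver and lands the wall in the opposite half-plane $\theta_0>\theta_1$.

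The only subtlety, which I expect to be the main obstacle, is the degenerate dimension vectors $(1,0)$ (type (i) with $m=1$) and $(0,1)$ (type (ii) with $m=0$), along with their flop counterparts: here the subrepresentation $(V_0,0)$ is either zero or all of $V$, so the sign inequality becomes vacuous. I would resolve this by observing that the same dimension vector arises as a $\Theta$-stable representation both via $\Phi_0$ and via $\Phi_0^+$, producing the two half-walls on either side of the line $\theta_0=\theta_1$ and thus accounting for the pairs $L^{-}_{-}(1)$, $L^{-}_{+}(1)$ and $L^{+}_{-}(0)$, $L^{+}_{+}(0)$ in (\ref{all walls}).
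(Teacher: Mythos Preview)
Your approach matches the paper's—compute the dimension vector to get the wall equation, then use a subrepresentation to pin down the half-plane—and your expansion of the paper's ``similarly'' is largely on track, but there is a concrete computational slip on the flop side that swaps two of the walls.

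On $X_0$ the formula $\dim V_1=\chi(F)-d(F)$ comes from $V_1=\Hom(\oO_{X_0}(1),F)$. On $X_0^+$, however, the second summand of $\pP_0^+$ is $\oO_{X_0^+}(-1)$, so
\[
\dim V_1=\dim\Hom(\oO_{X_0^+}(-1),F)=\chi(F)+d^+(F),
\]
not $\chi(F)-d^+(F)$. Applying this correctly, type~(v), namely $\Phi_0^+(j_{a\ast}\oO_{\mathbb{P}^1}(m-1))$, has dimension vector $(m,m+1)$ and lies on the wall $m\theta_0+(m+1)\theta_1=0$, i.e.\ $L_+^+(m)$; type~(iv), namely $\Phi_0^+(j_{a\ast}\oO_{\mathbb{P}^1}(-m-1)[1])$, has dimension vector $(m,m-1)$ and gives $L_+^-(m)$. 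You have these two swapped. The final list~\eqref{all walls} is unaffected since both equations still appear, but the type--wall correspondence matters later in the paper (see the definition of $S_{k-1}$ in \eqref{def:Sk}), so this should be corrected.

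A smaller imprecision: types (iii) and (vi) both fall under case~(1) of Lemma~\ref{lem on wall}, not cases (2) or (3), so neither the subrepresentation $(V_0,0)$ (which requires $a_1=a_2=0$) nor $(0,V_1)$ (which requires $b_1=b_2=0$) is automatically available for them. In fact a point of $X_0$ lying off the zero section is simultaneously a point of $X_0^+$, and the corresponding representation is at once of types (iii) and (vi); it is $\Theta$-stable along the whole line $\theta_0+\theta_1=0$. So your mechanism for the degenerate cases—``the same object appears via both $\Phi_0$ and $\Phi_0^+$ and thus accounts for both half-walls''—is the right idea, and it applies to $L_\pm(\infty)$ as well.
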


A connected component of the complement of walls in $\mathbb{R}^2$
is called a \textit{chamber}. 
Below we discuss some distinguished chambers. 
When $\theta_0,\theta_1>0$, we call this chamber 
the \textit{empty chamber}:
\begin{prop}\label{empty chamber}
When $\theta_0,\theta_1>0$, there is no non-zero finite dimensional $\Theta$-stable representation of the framed quiver \eqref{framed cy4 quiver}.
\end{prop}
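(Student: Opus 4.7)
The plan is a direct application of the framed stability condition from Definition \ref{def:stability:framed}, specifically the first bullet, which imposes a negativity constraint on subrepresentations with zero at the framing vertex $\infty$. When $\theta_0,\theta_1>0$, this constraint is impossible to satisfy for any nontrivial $(V_0,V_1)$.

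More precisely, suppose for contradiction that $\widetilde{V}=(V_0,V_1,V_\infty=\mathbb{C})$ is a $\Theta$-stable representation of $(\widetilde{Q},I)$ with $(V_0,V_1)\neq (0,0)$. First I would observe that $(V_0,V_1,0)\subset \widetilde{V}$ is a subrepresentation: all arrows in $(\widetilde Q,I)$ other than the framing arrow $V_\infty\to V_0$ act internally on $(V_0,V_1)$, and the framing arrow trivially maps $0$ into $V_0$. This subrepresentation is non-zero by hypothesis and has zero component at $\infty$. By the first bullet of Definition \ref{def:stability:framed}, $\Theta$-stability therefore forces
\[
\Theta(V_0,V_1)=\theta_0\dim V_0+\theta_1\dim V_1<0.
\]
On the other hand, since $\theta_0,\theta_1>0$ and $\dim V_0+\dim V_1>0$, the right-hand side is strictly positive, a contradiction.

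There is no real obstacle here; the argument is essentially a one-line check once one identifies the correct destabilizing subrepresentation. I would present it as a short direct proof rather than a proof by contradiction if preferred, simply stating that the subrepresentation $(V_0,V_1,0)$ witnesses failure of the first stability inequality whenever $(V_0,V_1)\neq(0,0)$ and $\Theta$ lies in the positive quadrant.
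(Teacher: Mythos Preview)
Your proof is correct and follows exactly the same approach as the paper: both identify $(V_0,V_1,0)$ as a subrepresentation and apply the first bullet of Definition~\ref{def:stability:framed} to derive the contradictory inequality $\theta_0\dim V_0+\theta_1\dim V_1<0$. Your version is slightly more detailed in verifying that $(V_0,V_1,0)$ is indeed a subrepresentation, but the argument is identical.
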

\begin{proof}
Assume that 
there is such a
representation $V=(V_0,V_1,V_{\infty}=\mathbb{C})$. 
%As in \cite{King}, let  
%$$\theta_0\dim V_0+\theta_1\dim V_1+\theta_{\infty}=0. $$
By considering the sub-representation $(V_0,V_1,0)$, we obtain 
$
\theta_0\dim V_0+\theta_1\dim V_1<0$, 
which contradicts with the assumption.
\end{proof}

The chambers adjacent to the wall $L_{-}(\infty)$
are the so-called \textit{DT/PT chambers}. Following the proof \cite[Proposition 2.10, 2.11]{NN} in the resolved conifold case, it is 
easy to see: 
\begin{prop}\label{dt/pt chamber}
Let $\Theta^{\pm}=(-1\,\mp \,0^+,1)$. Then under the derived equivalences
 in (\ref{psi equi cpt}), we have the following: 
\begin{itemize}
\item finite dimensional $\Theta^{+}$-stable representations of the framed quiver \eqref{framed cy4 quiver} correspond exactly 
to ideal sheaves of compactly supported subschemes in $X_0$,
\item finite dimensional $\Theta^{-}$-stable representations of the framed quiver \eqref{framed cy4 quiver} correspond exactly 
to PT stable pairs $(\oO_{X_0}\to F)$, i.e. $F$ is compactly supported pure one dimensional sheaf and $\mathrm{Coker}(s)$ is zero dimensional.
\end{itemize}
\end{prop}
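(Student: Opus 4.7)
The plan is to translate the abstract stability of Definition~\ref{pair stab} into concrete geometric conditions on the pair $(F,s)$, using the tilt description $\Per_{\leqslant 1}(X_0/Y_0)=\Coh^{+}_{\leqslant 1}(X_0/Y_0)$ from Proposition~\ref{prop on perv0}. The first step is to show that any $\Theta^{\pm}$-semistable $F$ is an honest coherent sheaf lying in $\mathcal{T}_{\omega}$. Decomposing $F$ via the torsion pair gives a sequence $0\to H^{-1}(F)[1]\to F\to H^{0}(F)\to 0$ in $\Per_{\leqslant 1}$; if $H^{-1}(F)\in\mathcal{F}_{\omega}$ were nonzero, then $d(H^{-1}(F))>0$ while $\chi(H^{-1}(F))\leqslant 0$, and a direct computation gives $\Theta^{\pm}(H^{-1}(F)[1])= \pm 0^{+}\cdot(-\chi(H^{-1}(F)))+d(H^{-1}(F))>0$, contradicting the first semistability clause. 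Hence $F\in\mathcal{T}_{\omega}\subset \Coh_{\leqslant 1}(X_0)$.

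For $\Theta^{+}=(-1-0^{+},1)$ (on the DT side of the wall $\theta_0+\theta_1=0$), the stability function simplifies to $\Theta^{+}(G)=-0^{+}\chi(G)-d(G)$. The first stability clause is then automatic for any nonzero subsheaf $F'$ of $F$, since $F'\in\mathcal{T}_{\omega}$ satisfies $\chi(F')>0$ and $d(F')\geqslant 0$, giving $\Theta^{+}(F')<0$. Applied to $F'=\mathrm{Im}(s)$, the second clause $\Theta^{+}(F')<\Theta^{+}(F)$ rearranges to $0^{+}\chi(F/F')+d(F/F')<0$; but $F/F'\in\mathcal{T}_{\omega}$ has $\chi,d\geqslant 0$, which forces $F=\mathrm{Im}(s)$, i.e.\ $s$ is surjective. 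Hence $F=\mathcal{O}_Z$ for a compactly supported subscheme $Z\subseteq X_0$ of dimension at most one; conversely, any such $(\mathcal{O}_Z,s)$ is $\Theta^{+}$-stable by the same inequalities, where one uses the vanishing $\Ext^{-1}(\mathcal{F}_{\omega},\mathcal{T}_{\omega})=0$ to ensure that every subobject of $\mathcal{O}_Z$ in $\Per_{\leqslant 1}$ is already an ordinary subsheaf.

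For $\Theta^{-}=(-1+0^{+},1)$ (on the PT side), $\Theta^{-}(G)=0^{+}\chi(G)-d(G)$. Applied to a $0$-dimensional subsheaf $F_0\subseteq F$, the first clause yields $\Theta^{-}(F_0)=0^{+}\chi(F_0)$, which is nonpositive only when $F_0=0$; thus $F$ must be pure $1$-dimensional. The second clause at $F'=\mathrm{Im}(s)$ rearranges to $d(F/\mathrm{Im}(s))<0^{+}\chi(F/\mathrm{Im}(s))$, and for $0^{+}$ sufficiently small this is possible only if $d(F/\mathrm{Im}(s))=0$, i.e.\ $\mathrm{Coker}(s)$ is $0$-dimensional. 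So $(F,s)$ is a PT stable pair, and the converse is the same estimate read backwards. Both geometric correspondences are then transported to framed quiver representations via the derived equivalence $\Phi_0$ in~(\ref{psi equi cpt}), yielding the claimed bijection.

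The main subtlety lies in the first step: one must rule out any nontrivial $\mathcal{F}_{\omega}[1]$-contribution to $F$ using only the first semistability clause, since once $F\in\Coh$ is established the remaining DT/PT sign checks follow the familiar pattern of Nagao-Nakajima \cite[Propositions~2.10, 2.11]{NN}.
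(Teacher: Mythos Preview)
Your strategy---reduce to the tilt description $\Per_{\leqslant 1}=\Coh^{+}_{\leqslant 1}$ and read off the DT/PT conditions from the sign of $(\theta_0+\theta_1)$---is exactly the route the paper has in mind when it cites \cite[Propositions~2.10, 2.11]{NN}. The reduction ``$F$ is a sheaf in $\mathcal{T}_\omega$'' and the forward implications are essentially correct (there is a harmless sign slip in the first step: one gets $\Theta^{\pm}(H^{-1}(F)[1])=\pm 0^{+}\cdot\chi(H^{-1}(F))+d(H^{-1}(F))$, not $\pm 0^{+}\cdot(-\chi)$, but $d>0$ still dominates).

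The genuine gap is in the converse direction. Your claim that ``every subobject of $\mathcal{O}_Z$ in $\Per_{\leqslant 1}$ is already an ordinary subsheaf'' is false, and the vanishing $\Ext^{-1}(\mathcal{F}_\omega,\mathcal{T}_\omega)=0$ you cite is trivially true and does not establish it. A concrete counterexample: for $x\in\mathbb{P}^1$ the evaluation map $\mathcal{O}_{\mathbb{P}^1}\to\mathcal{O}_x$ has cone $\mathcal{O}_{\mathbb{P}^1}(-1)[1]\in\mathcal{F}_\omega[1]\subset\Coh^{+}$, so $\mathcal{O}_{\mathbb{P}^1}$ is a proper perverse subobject of $\mathcal{O}_x$ through which $s\colon\mathcal{O}_X\twoheadrightarrow\mathcal{O}_x$ factors. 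Hence clause~(ii) in the DT converse is \emph{not} vacuous, and ``the same inequalities'' do not apply as written.

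The repair is easy once stated correctly. What is actually true is that $(\mathcal{F}_\omega[1],\mathcal{T}_\omega)$ is a torsion pair on $\Coh^{+}$ with $\mathcal{T}_\omega$ the torsion-free part; so every perverse subobject $F'\subseteq F$ of $F\in\mathcal{T}_\omega$ still lies in $\mathcal{T}_\omega$, which already handles clause~(i). For clause~(ii), if $F'\subsetneq\mathcal{O}_Z$ is a proper perverse subobject through which $s$ factors, then the underlying $\Coh$-map $F'\to\mathcal{O}_Z$ is surjective (as $\Imm_{\Coh}(s)=\mathcal{O}_Z$), so the perverse quotient is $\mathcal{O}_Z/F'=A[1]$ with $0\neq A\in\mathcal{F}_\omega$; then $\Theta^{+}(\mathcal{O}_Z)-\Theta^{+}(F')=\Theta^{+}(A[1])=0^{+}\chi(A)+d(A)>0$ since $d(A)>0$. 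An analogous argument (using that a zero-dimensional perverse subobject of a pure sheaf is automatically a genuine subsheaf, hence zero) fixes the PT converse.
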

\begin{rmk}
Similarly for the wall $L_{+}(\infty)$, finite dimensional $(-\Theta^{\pm})$-stable representations correspond to
those objects in the flop $X_0^+$ of $X_0$.  
\end{rmk}
When $\theta_0,\theta_1<0$, we are in the \textit{non-commutative} chambers, where stable representations correspond to \textit{perverse Hilbert schemes} 
in the sense of Bridgeland \cite{Bri1}. 
\begin{prop}\label{nc chamber}
When $\theta_0,\theta_1<0$, finite dimensional 
$\Theta$-stable representations of the framed quiver \eqref{framed cy4 quiver} are exactly 
those cyclic representations, i.e. representations generated by $V_{\infty}=\mathbb{C}$ as $\mathbb{C}\widetilde{Q}/I$-modules.
\end{prop}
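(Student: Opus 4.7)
The plan is to prove the two inclusions separately. The key observation driving both is that, when $\theta_0, \theta_1 < 0$, the functional $(V_0', V_1') \mapsto \theta_0 \dim V_0' + \theta_1 \dim V_1'$ is strictly decreasing in each of $\dim V_0'$, $\dim V_1'$. This monotonicity, combined with the definition of cyclicity (the only subrepresentation of $V$ containing $V_\infty$ is $V$ itself), will do almost all of the work.

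First I would check that every cyclic representation is $\Theta$-stable. For the first condition in Definition~\ref{def:stability:framed}, a nonzero subrepresentation of the form $(V_0', V_1', 0)$ must have $\dim V_0' + \dim V_1' > 0$, and then the assumption $\theta_0, \theta_1 < 0$ immediately gives $\Theta(V_0', V_1') < 0$. For the second condition, suppose $(V_0', V_1', \mathbb{C})$ is a subrepresentation containing the full framing $V_\infty = \mathbb{C}$. By cyclicity, the smallest subrepresentation containing $V_\infty$ is all of $V$, so in fact $(V_0', V_1') = (V_0, V_1)$ and there is no proper such subrepresentation. Hence the second condition is vacuous and $V$ is $\Theta$-stable.

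For the converse, I would argue by contrapositive. Suppose $V$ is not cyclic, and let $V' = (V_0', V_1', \mathbb{C}) \subseteq V$ denote the subrepresentation generated by $V_\infty$, which is by assumption a proper subrepresentation. Then $\dim V_i' \leqslant \dim V_i$ for $i = 0, 1$ with at least one strict inequality. Multiplying by the negative numbers $\theta_i$ reverses the inequalities, giving
\[
\Theta(V_0', V_1') - \Theta(V_0, V_1) = \theta_0(\dim V_0' - \dim V_0) + \theta_1(\dim V_1' - \dim V_1) > 0.
\]
This contradicts the second condition of $\Theta$-stability, so $V$ cannot be $\Theta$-stable.

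There is no serious technical obstacle here; the only subtle point to keep straight is the sign convention. Because the coefficients $\theta_i$ are both negative, the second stability condition (rather than the first) is the binding one, and it is precisely this condition that reads out as cyclicity.
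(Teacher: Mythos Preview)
Your proof is correct and follows essentially the same approach as the paper. The only cosmetic difference is that for the implication ``stable $\Rightarrow$ cyclic'' you argue by contrapositive (exhibiting the subrepresentation generated by $V_\infty$ as a destabilizer), whereas the paper argues directly (any subrepresentation containing $V_\infty$ must equal $V$); these are logically equivalent and use the same sign observation.
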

\begin{proof}
Let $V=(V_0, V_1, V_{\infty}=\mathbb{C})$ 
be a $\Theta$-stable representation of $(\widetilde{Q}, I)$. 
Suppose it has a subrepresentation 
$V'=(V'_0,V'_1, V_{\infty}'=\mathbb{C})$, 
the $\Theta$-stability yields
\begin{align*}
\theta_0\dim V'_0+\theta_1\dim V'_1
\leqslant 
\theta_0 \dim V_0+\theta_1 \dim V_1. 
\end{align*}
However $\theta_0,\theta_1<0$, so the above inequality must be equality and $V'_i=V_i$ for $i=0,1$.

Conversely, let us take a cyclic representation $V=(V_0,V_1, V_{\infty}=\mathbb{C})$. Then a non-zero proper sub-representation of it must be of the form 
$(V'_0,V'_1,0)$, and we have 
$$\theta_0\dim V'_0+\theta_1\dim V'_1<0, $$
by the condition $\theta_0,\theta_1<0$, so $V$ is $\Theta$-stable.
\end{proof}
%To conclude this section, we summarize the wall-chamber structures for $\oO_{\mathbb{P}^1}(-1,-1,0)$ into Figure \ref{figure1}.It is the same picture as the case of resolved conifold \cite[Figure 1]{NN}.

In Section~\ref{subsec:Zt}, we will discuss chambers in the region 
$\theta_0<0$, $\theta_1>0$
in details, 
and show that they are in one-to-one correspondence with  
chambers for \textit{$Z_t$-stable pairs} introduced 
in \cite[Definition 1.5]{CT1}
(see Proposition~\ref{Z_t chamber}). 

\subsection{Classification of stable framed representations with $\dim V_0=1$}
In this subsection, we classify 
$\Theta$-stable representations 
$V=(V_0, V_1, V_{\infty}=\mathbb{C})$ 
of the framed quiver \eqref{framed cy4 quiver}
such that $V_{\infty} \to V_0$ is an isomorphism
and $V_1 \neq \mathbb{C}$. This will be used in the proof of our main theorem \ref{cpt main thm}.
Note that by Proposition~\ref{empty chamber}, we can assume 
$\theta_0<0$ or $\theta_1<0$.
\begin{figure}
	\begin{tikzpicture}[node distance=1cm]
		\draw[thick] (-4.2,0)--(4.2,0)  node [pos=0, anchor=east]{\tiny{$\theta_1=0$}} ;
		\draw[thick](0,3.2)--(0,-3.2)  node [pos=0, anchor=east]{\tiny{$\theta_0=0$}} ;
		%\draw[thick](4.2,0)--(0,0)  node [pos=0, anchor=west]{\tiny{$\theta_1=0$}} ;
		\draw[thick] (-4,2)--(4.2,-2.1)  node[pos=0, anchor=east]{\tiny{$\theta_0+2\theta_1=0$} } ;
		%\draw[thick] (2,-4)--(0,0)  node[pos=0, anchor=west]{\tiny{$2\theta_0+\theta_1=0$} } ;
		\node at (-1.7,2.2) {I};
		\node at (-3,0.7) {I\hspace{-.1em}I};
		\node at (-2,-2) {I\hspace{-.1em}I\hspace{-.1em}I } ;
		\node at (1.7,-2) {I\hspace{-.1em}V};
		\node at (3, -0.7) {V};
	\end{tikzpicture} 
	\caption{Chambers in Proposition~\ref{prop:classify}}
	\label{figure3}
\end{figure}
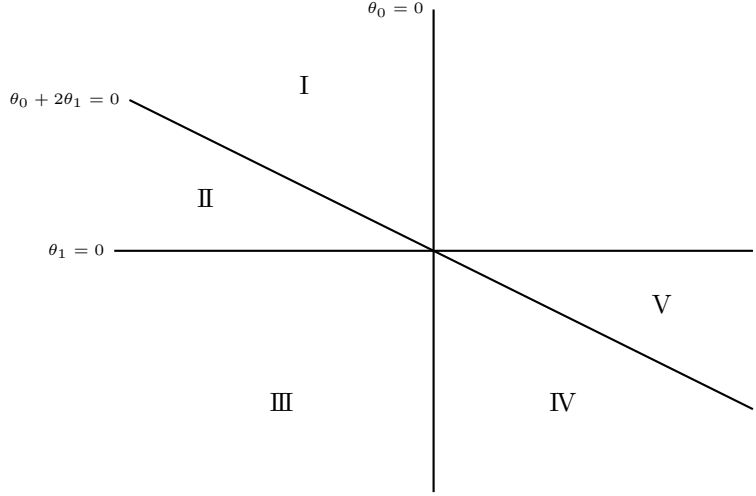

\begin{prop}\label{prop:classify}
\label{prop:onedim}
Let
$V=(V_0, V_1, V_{\infty}=\mathbb{C})$ 
be a representation of the framed quiver \eqref{framed cy4 quiver}
such that
$V_{\infty} \to V_0$ is an isomorphism and $V_1 \neq \mathbb{C}$. 
Let $\Theta$ lie in one of the chambers in Figure~\ref{figure3}. 
Then it is $\Theta$-stable if and only if $(V_0, V_1)$ is the following
for some $a \in \mathbb{C}$:
\begin{align}\label{classify:V}
(V_0, V_1)= 
\begin{cases}
\Phi_0(j_{a\ast}\oO_{\mathbb{P}^1}), & 
	\mbox{ if } \Theta \in \rm{I}, \\
\Phi_0(j_{a\ast}\oO_{\mathbb{P}^1}) \mbox{ or } 
	\Phi_0(j_{a\ast}\oO_{\mathbb{P}^1}(-2)[1]), & 
	\mbox{ if } \Theta \in \rm{I\hspace{-.1em}I} \\
\Phi_0(j_{a\ast}\oO_{\mathbb{P}^1}) \mbox{ or }
	\Phi_0^+(j_{a\ast}\oO_{\mathbb{P}^1}), & 
	\mbox{ if } \Theta \in \rm{I\hspace{-.1em}I\hspace{-.1em}I}, \\
\Phi_0^+(j_{a\ast}\oO_{\mathbb{P}^1}), 
	& 
	\mbox{ if } \Theta \in \rm{I\hspace{-.1em}V}.  	
	\end{cases}
\end{align}
And there is no such $(V_0, V_1)$ if $\Theta \in \rm{V}$. 
\end{prop}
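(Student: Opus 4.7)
The plan is an elementary analysis of representations of the framed quiver $(\widetilde{Q}, I)$ in \eqref{framed cy4 quiver} against the stability criterion of Definition~\ref{def:stability:framed}. Write $\alpha_i \cneq a_i(1) \in V_1$. Since $\dim V_0 = 1$, the loop $c$ acts on $V_0$ as a scalar $a \in \mathbb{C}$, which by reparametrising the affine direction we may assume to be $0$. The relation $d a_i = a_i c$ then gives $\alpha_i \in \ker d$, while $c b_i = b_i d$ becomes $b_i \circ d = 0$; in particular each $\ker b_i$, and hence $K \cneq \ker b_1 \cap \ker b_2$, is $d$-invariant, and on the generalized non-zero eigenspace $V_1^{\neq 0}$ the operator $d$ is invertible, so $V_1^{\neq 0} \subseteq \im d \subseteq K$, making $(0, V_1^{\neq 0}, 0)$ a subrepresentation of $V$.

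The next step is to bound $\dim V_1$ in each sign regime of $\theta_1$. When $\theta_1 > 0$ (Chambers~I, II), stability applied to $(0, V_1^{\neq 0}, 0)$ forces $V_1^{\neq 0} = 0$, so $d$ is nilpotent on $V_1$; stability applied to $(0, K, 0)$ then forces $K = 0$, so $(b_1, b_2) \colon V_1 \to V_0^{\oplus 2}$ is injective and $\dim V_1 \leqslant 2$. When $\theta_1 < 0$ (Chambers~III, IV, V), the framed subrepresentation $(V_0, W_1, V_\infty)$ with $W_1$ the smallest $d$-invariant subspace of $V_1$ containing $\alpha_1, \alpha_2$ would violate stability unless it equals $V$; since $d \alpha_i = 0$, this forces $V_1 = \langle \alpha_1, \alpha_2 \rangle$ and again $\dim V_1 \leqslant 2$. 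Combining with the hypothesis $V_1 \neq \mathbb{C}$, we are left with $\dim V_1 \in \{0, 2\}$.

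When $\dim V_1 = 0$, the representation $V$ is determined by the parameter $a$ alone, and a cohomology count via the equivalence \eqref{psi equi cpt} identifies it with $\Phi_0(j_{a\ast}\oO_{\mathbb{P}^1})$. When $\dim V_1 = 2$, the quiver relation $a_2 b_i a_1 = a_1 b_i a_2$ reads $b_i(\alpha_1)\alpha_2 = b_i(\alpha_2)\alpha_1$ in $V_1$. In the $\theta_1 > 0$ subcase, a short case analysis splitting on whether $\alpha_1, \alpha_2$ are linearly independent or not, combined with the dual relation $b_2 a_i b_1 = b_1 a_i b_2$ and the linear independence of $b_1, b_2$ (inherited from $K = 0$), forces $\alpha_1 = \alpha_2 = 0$; one then has $d = 0$ since $\im d \subseteq K = 0$, and the resulting representation is identified with $\Phi_0(j_{a\ast}\oO_{\mathbb{P}^1}(-2)[1])$. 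In the $\theta_1 < 0$ subcase, linear independence of $\alpha_1, \alpha_2$ (forced by $V_1 = \langle \alpha_1, \alpha_2 \rangle$ and $\dim V_1 = 2$) together with the same relation immediately yields $b_1 = b_2 = 0$ and $d = 0$, so the representation matches $\Phi_0^+(j_{a\ast}\oO_{\mathbb{P}^1})$ via the flop equivalence \eqref{fequiv}.

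Finally, for each of the three candidate representations one enumerates the subrepresentations and reads off from Definition~\ref{def:stability:framed} exactly which of the inequalities $\theta_0 < 0$, $\theta_0 + \theta_1 < 0$, $\theta_0 + 2\theta_1 < 0$, $\theta_1 < 0$ are required; intersecting with the chamber inequalities of Figure~\ref{figure3} yields precisely the assignments in \eqref{classify:V} and excludes all three candidates in Chamber~V. The main obstacle is the case analysis of the third step in the $\theta_1 > 0$, $\dim V_1 = 2$ regime, where one must combine both families of commutator relations to rule out the possibility of $\alpha_1, \alpha_2$ being parallel and non-zero; everything else is essentially bookkeeping against the classification of Proposition~\ref{classify:unframed}.
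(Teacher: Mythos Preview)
Your argument is correct and arrives at the same classification, but it proceeds along a somewhat different route from the paper. The paper, after bounding $\dim V_1\leqslant 2$ in each sign regime (Case~1: via the subrepresentation $(0,\Ker b_1\cap\Ker b_2,0)$; Case~2: via $(V_0,\langle\Imm a_1,\Imm a_2\rangle,V_\infty)$), does not compute the structure maps directly. Instead it verifies that the unframed pair $(V_0,V_1)$ is itself $\Theta$-stable (for an adjusted $\Theta$) and then invokes Proposition~\ref{classify:unframed} together with the observation that $(b_1,b_2)\neq(0,0)$ (resp.\ $(a_1,a_2)\neq(0,0)$) to pin down which entry of that classification applies. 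Your approach is more hands-on: after normalising $c=0$ you use the relations $da_i=a_ic$ and $cb_i=b_id$ to control $d$, and then exploit the commutator relations $a_2b_ia_1=a_1b_ia_2$ and $b_2a_ib_1=b_1a_ib_2$ at the level of scalars (since $\dim V_0=1$) to force $a_i=0$ (in the $\theta_1>0$ subcase) or $b_i=0$ (in the $\theta_1<0$ subcase) directly, only afterwards matching the resulting explicit representation against the list in Proposition~\ref{classify:unframed}.

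Both strategies bound $\dim V_1$ in the same way; the divergence is in the identification step. The paper's route is slightly cleaner in that it avoids the case split on linear independence of $\alpha_1,\alpha_2$ and the explicit use of both families of commutator relations. Your route, on the other hand, is more self-contained for this particular proposition: it pins down the isomorphism type of $(V_0,V_1)$ without appealing to the stability analysis inside the proof of Proposition~\ref{classify:unframed}, and makes transparent exactly why $d$ must act as the scalar $a$ (so that the representation lives over a single point of the affine direction). The normalisation $c=0$ is harmless but inessential---the same arguments go through with $c=a$ upon replacing ``$d$ nilpotent'' by ``$d-a$ nilpotent'' throughout.
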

\begin{proof}
If $(V_0, V_1)$ is of the form described in (\ref{classify:V}), 
then $\dim V_0=1$. 
By taking the $(\widetilde{Q}, I)$-representation 
$V=(V_0, V_1, V_{\infty}=\mathbb{C})$
so that $V_{\infty}\to V_0$ is an isomorphism, 
it 
is straightforward to check that $V$ is $\Theta$-stable. 

In what follows, we show the converse direction, i.e. 
if $V=(V_0, V_1, V_{\infty}=\mathbb{C})$ satisfies 
the assumption of the proposition, then 
$(V_0, V_1)$ must be of the form described in (\ref{classify:V}). 
If $V_1=0$, then $(V_0, V_1)=\Phi_0(j_{a\ast}\oO_{\mathbb{P}^1})$
for some $a \in \mathbb{C}$. 
In this case, $V$ is $\Theta$-stable if and only if 
$\Theta(V_0, V_1)=\theta_0<0$. 
Below we may assume that $\dim V_1>0$. 
Note that $(V_0, V_1, 0)$ is a subrepresentation of $V$,
so  
the $\Theta$-stability yields
\begin{align}\label{ineq:theta}
\theta_0+\theta_1 \dim V_1<0. 	
\end{align}
\begin{ccase}
	$\theta_0<0$, $\theta_1>0$. 
\end{ccase}	
In this case for the sub-representation $(0,\mathrm{Ker}(b_1)\cap \mathrm{Ker}(b_2),0)$ of $V$, 
the $\Theta$-stability yields
$$\dim(\mathrm{Ker}(b_1)\cap \mathrm{Ker}(b_2))\cdot \theta_1\leqslant 0. $$
Since $\theta_1>0$, we obtain $\mathrm{Ker}(b_1)\cap \mathrm{Ker}(b_2)=0$.
If 
	$\mathrm{Ker}(b_1)=0$ or $\mathrm{Ker}(b_2)=0$, 
	then $\dim V_1=1$
	so we can assume that 
	%	As $(b_1, b_2) \neq (0, 0)$, 
	%the representation $(V_0, V_1)$ is 
	%$\Phi_0(\oO_x)$ for some $x \in X_0$. 
	%Suppose that 
	$\mathrm{Ker}(b_1) \neq 0$ and $\mathrm{Ker}(b_2) \neq 0$. 
As 
$\mathrm{Ker}(b_1)\cap \mathrm{Ker}(b_2)=0$, we have  
\begin{align}\label{span by two vec}\mathrm{Ker}(b_1)\oplus \mathrm{Ker}(b_2) \subseteq V_1.  \end{align}
Since $\mathrm{Im}(b_i)=V_0$ and it is one dimensional, 
we have 
$$\dim \mathrm{Ker}(b_i)=\dim V_1-1. $$
Combined with \eqref{span by two vec}, we obtain $\dim V_1=2$. 
If this happens, $\Theta$
must satisfy $\theta_0+2\theta_1<0$ 
by (\ref{ineq:theta}). 

We show that $(V_0, V_1)$ is $\Theta$-stable. 
%Let us take $v_1\in \mathrm{Ker}(b_1)\setminus \mathrm{Ker}(b_2)$, i.e. %$b_1(v_1)=0$, $b_2(v_1)\neq 0$. Using the relation 
%$b_2a_ib_1=b_1a_ib_2$, it follows that $\mathrm{Im}(a_i)\subseteq %\mathrm{Ker}(b_1)$.
%Similarly by taking 
%$v_2\in \mathrm{Ker}(b_2)\setminus \mathrm{Ker}(b_1)$,
%we conclude that $\mathrm{Im}(a_i)\subseteq \mathrm{Ker}(b_2)$.
%Since $\mathrm{Ker}(b_1)\cap \mathrm{Ker}(b_2)=0$, we obtain
%$a_1=a_2=0$. 
Since $\Ker(b_1) \cap \Ker(b_2)=0$, 
the only possible non-zero proper sub-representations of $(V_0,V_1)$ are
either $(V_0,0)$ or $(V_0,\mathbb{C})$. 
By the inequalities
\begin{align*}\theta_0< \frac{\theta_0+2\theta_1}{3}, \quad \frac{\theta_0+\theta_1}{2}<\frac{\theta_0+2\theta_1}{3}, 
	\end{align*}
we conclude that $(V_0, V_1)$ is $\Theta$-stable. 
By Proposition~\ref{classify:unframed}
together with $(b_1, b_2) \neq (0, 0)$, 
we conclude that
$(V_0, V_1)$ is
$\Phi_0(j_{a\ast}\oO_{\mathbb{P}^1}(-2)[1])$
for some $a \in \mathbb{C}$. 
Therefore we proved the case of $\Theta \in \rm{I}$ or $\Theta \in \rm{I\hspace{-.1em}I}$. 

\begin{ccase}
$\theta_1<0$.
\end{ccase}
In this case, we set 
$$V_1':=\langle \Imm a_1, \Imm a_2 \rangle \subseteq V_1,$$
to be the spanned vector subspace.
Using the relation $da_i=a_ic$ 
of the quiver (\ref{cy4 quiver}), 
we have $d(V_1')\subseteq V_1'$.
Therefore $(V_0, V_1', V_{\infty})$
is a subrepresentation of $V$. 
The $\Theta$-stability yields
\begin{align*}
	\theta_0+\theta_1 \dim V_1' \leqslant \theta_0+\theta_1 \dim V_1.
\end{align*}
Therefore we have $V_1'=V_1$, hence 
$\dim V_1 \leqslant 2$. 
So $\dim V_1=2$ and by (\ref{ineq:theta})
this is possible when 
$\theta_0+2\theta_1<0$. 
Note that $V_1'=V_1$ implies that 
$(a_1, a_2) \colon V_0^{\oplus 2} \to V_1$
is an isomorphism. 
So if we also take $\theta_0>0$ then 
it is easy to see that $(V_0, V_1)$ is a
$\Theta$-stable representation of the unframed quiver 
$(Q, I)$ in \eqref{cy4 quiver}.
By Proposition~\ref{classify:unframed} together with
$(a_1, a_2) \neq (0, 0)$, 
we conclude that $(V_0, V_1)$ is $\Phi_0^+(j_{a\ast}\oO_{\mathbb{P}^1})$
for some $a \in \mathbb{C}$. 
Therefore (\ref{classify:V}) holds when 
$\Theta \in \rm{I\hspace{-.1em}I\hspace{-.1em}I}$, 
$\Theta \in \rm{I\hspace{-.1em}V}$, and 
there is no such $(V_0, V_1)$ when $\Theta \in \rm{V}$. 
\end{proof}

\subsection{Counting invariants}
We go back to the compact setting where 
$$f \colon X\to Y$$ 
is the contraction as in Setting \ref{setting}. 
Consider the coarse moduli space $P^{\Theta}_n(X,\beta)$ of $\Theta$-semistable perverse coherent systems $(F,s)$ with 
$\ch(F)=(0,0,0,\beta,n)$ and $f_*\beta=0$. 
The set of walls for this moduli space coincides with that studied in Section \ref{sect on wall on local conifold}.
\begin{lem}\label{lem identify wall on cpt and local}
Suppose that $\Theta$ lies outside all walls defined in \eqref{all walls}. Then $P^{\Theta}_n(X,\beta)$ with $f_*\beta=0$
depends only on the connected components where $\Theta$ locates.
\end{lem}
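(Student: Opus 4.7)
The plan is to show that the set of $\Theta$ at which the moduli space $P^{\Theta}_n(X,\beta)$ can change is contained in the union of walls $L^{\pm}_{\pm}(k)$, $L_{\pm}(\infty)$ of \eqref{all walls}. Once this is established, $P^{\Theta}_n(X,\beta)$ is locally constant in $\Theta$ on the complement of \eqref{all walls} by the standard wall-crossing principle (no strictly semistable objects appear, and GIT gives the same moduli throughout a chamber), and the lemma follows.

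First I would observe that a change in $P^{\Theta}_n(X,\beta)$ can only occur at those $\Theta$ admitting a strictly $\Theta$-semistable pair $(F,s)$ with $\ch(F) = (0,0,0,\beta,n)$. Following the Jordan-H\"older argument used just before Lemma \ref{lem on wall}, one extracts from such a pair a non-zero $\Theta$-stable subquotient $S \in \Per_{\leqslant 1}(X/Y)$ satisfying $\Theta(S) = 0$. So the problem reduces to classifying those $\Theta$ for which such an $S$ exists.

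Next I would localize. Since $f_{\ast}\beta = 0$, Lemma \ref{lem on supp on fibers} forces $F$, and hence its subquotient $S$, to be supported on fibers of $f$. Stability of $S$ implies indecomposability, so its set-theoretic support lies in a single fiber $f^{-1}(p)$ for some $p \in C$ (a disjoint-support decomposition would give a direct sum splitting, contradicting stability). By Setting \ref{setting}, the formal completion of $f \colon X \to Y$ along $f^{-1}(p)$ is isomorphic to the formal completion of $f_0 \colon X_0 \to Y_0$ along $\mathbb{P}^1 \subset X_0$, and the derived equivalence $\Phi$ of \eqref{psi equi cptX} is compatible with its local counterpart $\Phi_0$ of \eqref{psi equi cpt}. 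Hence $S$ is identified with a non-zero finite-dimensional $\Theta$-stable representation of the unframed quiver $(Q,I)$ of \eqref{cy4 quiver} having $\Theta$-value zero. Applying Proposition \ref{classify:unframed} together with Proposition \ref{prop:wall} then pins $\Theta$ down to one of the walls in \eqref{all walls}.

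The main obstacle will be making the localization step rigorous: one needs to verify that the $\Theta$-stability of $S$ in $\Per_{\leqslant 1}(X/Y)$ is equivalent to the $\Theta$-stability of its associated $(Q,I)$-representation on the formal local model. The essential point is that the slope function $\Theta$ depends only on the numerical invariants $\chi(-)$ and $d(-) = [-]\cdot \omega$, both of which are determined by the fiberwise support, and any sub/quotient of $S$ in $\Per_{\leqslant 1}(X/Y)$ is automatically supported inside the same fiber (by applying Lemma \ref{lem on supp on fibers} again to the sub/quotient), so the subobject lattice on $X$ matches the subrepresentation lattice on the quiver side after formal completion. Once this compatibility is in place, the identification of walls -- and hence the lemma -- follows immediately.
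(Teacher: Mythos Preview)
Your proposal is correct and follows essentially the same approach as the paper: extract a $\Theta$-stable subquotient $S$ with $\Theta(S)=0$ from the Jordan--H\"older filtration of a strictly semistable pair, argue that its support is connected (the paper phrases this as $\End(S)=\mathbb{C}$ rather than indecomposability, but the content is identical), invoke Lemma~\ref{lem on supp on fibers} to trap $S$ in a single fiber, and then reduce to the local quiver classification of Proposition~\ref{prop:wall}. The paper's version is terser and does not spell out the stability-compatibility point you flag as the main obstacle, but your more detailed treatment of that step is accurate and does not diverge in strategy.
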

\begin{proof}
The argument is the same as in the beginning of Section \ref{sect on wall on local conifold}. 
A wall appears if there exists a strictly $\Theta$-semistable perverse coherent system. By taking the 
Jordan-H\"older filtration, 
there exists a $\Theta$-stable perverse coherent sheaf $V$ such that $\Theta(V)=0$. 
As $\End(V)=\mathbb{C}$, the support of $V$ is connected.
Therefore
by Lemma \ref{lem on supp on fibers},
the support of $V$ is contained in a fiber of $E \to C$. 
By the assumption 
of Setting~\ref{setting}, it sits inside the 
local resolved conifold $X_0=\oO_{\mathbb{P}^1}(-1,-1,0)$, i.e. reducing to the local case.
%\begin{YC}
%Mention this is true formally locally. 
%\end{YC}
\end{proof}
For a general choice of $\Theta\in \mathbb{R}^2$ such that it does not lie on a wall, the moduli space $P^{\Theta}_n(X,\beta)$
has a universal family and consists of only 
stable objects. 
In this case, we have the following: 

\begin{prop}\label{prop on exist of virt class}
When $\Theta\in \mathbb{R}^2$ lies outside the walls in 
\eqref{all walls}, the moduli space 
$P^{\Theta}_n(X,\beta)$ can be given a $(-2)$-shifted symplectic derived scheme structure in the sense of Pantev-To\"en-Vaquie-Vezzosi \cite{PTVV}.
\end{prop}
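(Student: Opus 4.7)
The plan is to realize $P^{\Theta}_n(X,\beta)$ as a derived open substack of the derived moduli stack of simple perfect complexes on $X$ with fixed trivial determinant, and then pull back the canonical $(-2)$-shifted symplectic form provided by Pantev--To\"en--Vaqui\'e--Vezzosi \cite{PTVV}.

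First I would send each pair $(F,s)$ to the two-term complex $I \cneq \bigl[\oO_X \xrightarrow{s} F \bigr]$, viewed as an object of $D^b\Coh(X)$ with Chern character $(1,0,0,-\beta,-n)$ and canonically trivialized determinant $\det(I)\cong\oO_X$. When $\Theta$ lies outside all walls of \eqref{all walls}, every $\Theta$-semistable pair is $\Theta$-stable, so $\End((F,s))=\mathbb{C}$. Translating this into a property of the complex $I$, along the lines of the analogous argument for Pandharipande--Thomas pairs in \cite{PT}, yields $\Hom(I,I)=\mathbb{C}$ together with the fact that $\Hom(\oO_X,I)$ is one-dimensional, spanned by the tautological section. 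Denote by $\mathbf{M}_X$ the derived moduli stack of simple perfect complexes on $X$ with trivial determinant. Then $(F,s) \mapsto I$ defines a morphism
\begin{align*}
\Psi \colon P^{\Theta}_n(X,\beta) \longrightarrow \mathbf{M}_X.
\end{align*}
By \cite{PTVV}, the target $\mathbf{M}_X$ carries a canonical $(-2)$-shifted symplectic structure: its tangent complex at $I$ is $\RHom(I,I)_0[1]$, and the symplectic pairing arises from CY4 Serre duality $\RHom(I,I) \cong \RHom(I,I)^{\vee}[-4]$ combined with the trivialization $K_X \cong \oO_X$, restricted to the trace-free summand.

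Next, I would show that $\Psi$ is a derived open immersion. Classically, openness follows from semi-continuity of $\dim\Hom(\oO_X,I)$ together with the observation that at a stable pair the complex $I$ determines $(F,s)$ up to the single overall scalar in $\Hom(\oO_X,I)$. To upgrade to the derived level one compares deformation-obstruction theories: the pair deformation complex, governed by the cone of the natural map $\RHom(I,\oO_X) \to \RHom(I,I)_0$, coincides with the tangent complex of $\mathbf{M}_X$ at $I$ precisely when $\Hom(\oO_X,I)$ is one-dimensional. Pulling back the PTVV form along $\Psi$ then equips $P^{\Theta}_n(X,\beta)$ with the desired $(-2)$-shifted symplectic derived scheme structure.

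The main obstacle is the derived-level comparison of deformation theories. I would not rederive this from scratch but rather adapt the analogous construction for Pandharipande--Thomas pairs on Calabi--Yau $4$-folds carried out in \cite{CMT2}. The only new feature here is that $F$ lies in the perverse heart $\Per_{\leqslant 1}(X/Y)$ rather than in $\Coh(X)$; since both embed into $D^b\Coh(X)$ and PTVV applies directly at the level of perfect complexes, this causes no essential difficulty, as off-wall $\Theta$-stability is used only to guarantee simpleness of $I$, after which everything proceeds exactly as in the PT case.
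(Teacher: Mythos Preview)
Your proposal is correct and follows essentially the same approach as the paper. The paper's proof is simply a two-sentence citation---it notes that $\Theta$-stability is an open condition on families of objects in $D^b\Coh(X)$, then defers to \cite[Lemma~1.3]{CMT2} and \cite[Theorem~0.1]{PTVV}---whereas you have unpacked what those citations actually contain (passing from $(F,s)$ to the complex $I$, simpleness from stability, the derived open immersion into the PTVV moduli stack, and the observation that replacing $\Coh(X)$ by $\Per_{\leqslant 1}(X/Y)$ causes no new difficulty).
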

\begin{proof}
The $\Theta$-stability gives an open condition for any family of 
 objects in $D^b\Coh(X)$. Therefore as in \cite[Lemma 1.3]{CMT2}, the existence of $(-2)$-shifted symplectic structure is reduced to \cite[Theorem 0.1]{PTVV}. 
\end{proof}
%\begin{YC}Consider tautological insertions and find connections to primary insertions. Include heuristic argument using Euler class of vect bdls.\end{YC}

In the above case, by~\cite[Corollary 1.17]{CGJ},
 we know $P^{\Theta}_n(X,\beta)$ is orientable, 
 hence it admits a Borisov-Joyce 
 virtual class ~\cite{BJ}:
$$[P^{\Theta}_n(X,\beta)]^{\mathrm{vir}}\in H_{2n}(P^{\Theta}_n(X,\beta),\mathbb{Z}), $$ 
which depends on the choice of  
orientation~\cite{CGJ, CL2}.
The virtual dimension of $P_n^{\Theta}(X, \beta)$ 
is in general non-zero, and
we need to involve some insertions to obtain enumerative invariants. 
As in~\cite{CMT1, CMT2, CT1, CK2}, 
 we consider \textit{primary insertions} 
\begin{align*}
\tau \colon H^{4}(X,\mathbb{Z})\to H^{2}(P^{\Theta}_n(X,\beta),\mathbb{Z}), \quad 
\tau(\gamma) \cneq (\pi_{P})_{\ast}(\pi_X^{\ast}\gamma \cup\ch_3(\mathbb{F}) ),
\end{align*}
where $\pi_X$, $\pi_P$ are projections from $X \times P^{\Theta}_n(X,\beta)$
onto corresponding factors, $\mathbb{I}=(\pi_X^*\oO_X\to \mathbb{F})$ is the universal pair and $\ch_3(\mathbb{F})$ is the
Poincar\'e dual to the fundamental cycle of $\mathbb{F}$.
\begin{defi}
The primary counting invariants of $\Theta$-stable perverse coherent systems are 
\begin{align*}P^{\Theta}_{n,\beta}(\gamma):=\int_{[P^{\Theta}_n(X,\beta)]^{\rm{vir}}} \tau(\gamma)^n\in \mathbb{Z}. \end{align*}
\end{defi}
The following is the main result of this section. 
\begin{thm}\label{cpt main thm}
Let $f \colon X\to Y$ be as in Setting \ref{setting}, $E\subset X$ be the exceptional surface and $[\mathbb{P}^1]\in H_2(X,\mathbb{Z})$ be the fiber class of $f|_{E} \colon E\to C$. Let $\Theta=(\theta_0,\theta_1)\in \mathbb{R}^2$ be outside walls defined in \eqref{all walls}. 
Then for certain choice of orientation, we have  
%\begin{align*}
%\sum_{n\in \mathbb{Z}, f_{\ast}\beta=0}
%\frac{P^{\Theta}_{n,\beta}(\gamma)}{n!}\,q^nt^{\beta}=
%\begin{cases}
	%\exp\left(qt^{[\mathbb{P}^1]}\right)^{\int_X\gamma\cup [E] } & 
	%\mbox{ if }\theta_0<0,\, \theta_0+2\theta_1>0, \\
	%\exp\left(qt^{[\mathbb{P}^1]}-qt^{-[\mathbb{P}^1]}\right)^{\int_X\gamma\cup [E] } & 
	%\mbox{ if }\theta_0<0,\, \theta_0+2\theta_1<0, \\
	%\exp\left(-qt^{-[\mathbb{P}^1]}\right)^{\int_X\gamma\cup [E] } & 
	%\mbox{ if }\theta_0>0,\, \theta_0+2\theta_1<0, \\
	%1 & \mbox{ otherwise}. 
%\end{cases}	
%\end{align*}
\begin{align*}
	\sum_{n\in \mathbb{Z}, f_{\ast}\beta=0}
	\frac{P^{\Theta}_{n,\beta}(\gamma)}{n!}\,q^nt^{\beta}=\left\{
	\begin{array}{cl}
		\exp\left(qt^{[\mathbb{P}^1]}\right)^{\int_X\gamma\cup [E] }           &\mbox{ if } \theta_0<0,\, \theta_0+2\theta_1>0, \\
		&  \\
		\exp\left(qt^{[\mathbb{P}^1]}-qt^{-[\mathbb{P}^1]}\right)^{\int_X\gamma\cup [E] }          &\mbox{ if } \theta_0<0,\, \theta_0+2\theta_1<0,  \\
		&  \\
		\exp\left(-qt^{-[\mathbb{P}^1]}\right)^{\int_X\gamma\cup [E] }       &\mbox{ if }  \theta_0>0,\, \theta_0+2\theta_1<0,  \\
		&  \\ 
		1  \quad \quad \quad           &   \mbox{ otherwise}. 
	\end{array} \right. 
\end{align*} 
Here the first three cases correspond to $\Theta$ lies in the chamber (a) I, (b) II and III, (c) IV in Figure \ref{figure3} respectively.
%\begin{align*}
%		\sum_{n\in \mathbb{Z}, f_{\ast}\beta=0}
%	\frac{P^{\Theta}_{n,\beta}(\gamma)}{n!}\,q^nt^{\beta}=\left\{
%	\begin{array}{rcl}
%		\exp\left(qt^{[\mathbb{P}^1]}\right)^{\int_X\gamma\cup [E] }  \quad \quad   \quad    &      &\mbox{if } \theta_0<0,\, \theta_0+2\theta_1>0, \\
%		& & \\
%		\exp\left(qt^{[\mathbb{P}^1]}-qt^{-[\mathbb{P}^1]}\right)^{\int_X\gamma\cup [E] }    &      &\mbox{if } \theta_0<0,\, \theta_0+2\theta_1<0,  \\
%		& & \\
%		\exp\left(-qt^{-[\mathbb{P}^1]}\right)^{\int_X\gamma\cup [E] } \quad \quad  \quad    &      &\mbox{if }  \theta_0>0,\, \theta_0+2\theta_1<0,  \\
%		& & \\ 
%		1   \quad \quad  \quad \quad  \quad \quad   \quad  \quad    &      &   \mbox{otherwise}. 
%	\end{array} \right. 
%	\end{align*} 
\end{thm}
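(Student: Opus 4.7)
The plan is to compute the generating series chamber by chamber, starting from the empty chamber $\mathrm{V}$ (where $\theta_0>0$ and $\theta_0+2\theta_1>0$) and performing wall-crossing into the remaining chambers of Figure~\ref{figure3}. By Lemma~\ref{lem on supp on fibers} and Lemma~\ref{lem identify wall on cpt and local}, wall-crossing is locally modeled on the resolved conifold $X_0$, so the classification of $\Theta$-stable quiver representations in Propositions~\ref{classify:unframed} and~\ref{prop:classify} applies. The base case is immediate: in chamber $\mathrm{V}$, Proposition~\ref{empty chamber} applied fiberwise via Setting~\ref{setting} implies $P^{\Theta}_n(X,\beta)=\emptyset$ for $(n,\beta)\neq(0,0)$ and is a reduced point otherwise, so $\mathcal{Z}_{\mathrm{V}}=1$.

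The main obstacle is to prove that, among all walls in Proposition~\ref{prop:wall}, only the walls $\theta_0=0$ and $\theta_0+2\theta_1=0$ (on both the $\theta_0>\theta_1$ and $\theta_0<\theta_1$ sides) contribute to the change of primary invariants; this is the coarsening from Figure~\ref{figure1} to Figure~\ref{figure2}. The heuristic is that $\tau(\gamma)^n$, via the factor $\ch_3(\mathbb{F})$, pairs nontrivially only with ``incident'' strictly semistable configurations, i.e.\ those with $\dim V_0=1$ on the framed quiver side. By Proposition~\ref{prop:classify}, such configurations arise only at the four walls above; at any other wall, strictly semistable objects have dimension vector $(m,m\pm 1)$ with $m\geqslant 2$ or come from $\Phi_0(\mathcal{O}_x)$-type objects supported at a point, and a wall-crossing argument applied to the quiver model (in the spirit of the diagrams~\eqref{diagram:wall} appearing later for the local setting) shows that their contribution to $\tau(\gamma)^n$ vanishes.

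For the contributing walls I would compute directly. Crossing $\theta_0=0$ from $\theta_0>0$ to $\theta_0<0$ introduces $n$-tuples of $\Phi_0(j_{a\ast}\mathcal{O}_{\mathbb{P}^1})$-type objects over distinct points of $C$. The resulting moduli component is $\operatorname{Sym}^n C$ with fundamental virtual class, and integrating $\tau(\gamma)^n$ reduces via the \'etale cover $C^n\to\operatorname{Sym}^n C$ (together with the observation that any class in $H^2(C)$ squares to zero) to $(\int_X\gamma\cup [E])^n$; summing with the $1/n!$ normalization gives the wall-crossing factor $\exp(qt^{[\mathbb{P}^1]})^{\int_X\gamma\cup [E]}$. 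The analogous computation at $\theta_0+2\theta_1=0$ uses $\Phi_0(j_{a\ast}\mathcal{O}_{\mathbb{P}^1}(-2)[1])$, which has $\chi=1$ and $\beta=-[\mathbb{P}^1]$, and the $[1]$-shift in the derived category produces a sign yielding $\exp(-qt^{-[\mathbb{P}^1]})^{\int_X\gamma\cup [E]}$.

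Assembling the factors: crossing $\theta_0=0$ from $\mathrm{V}$ into $\mathrm{I}$ gives $\mathcal{Z}_{\mathrm{I}}=\exp(qt^{[\mathbb{P}^1]})^{\int_X\gamma\cup [E]}$; further crossing $\theta_0+2\theta_1=0$ (and the non-contributing wall $\theta_1=0$ between $\mathrm{II}$ and $\mathrm{III}$) gives $\mathcal{Z}_{\mathrm{II}}=\mathcal{Z}_{\mathrm{III}}=\exp(qt^{[\mathbb{P}^1]}-qt^{-[\mathbb{P}^1]})^{\int_X\gamma\cup [E]}$; and from $\mathcal{Z}_{\mathrm{V}}$, crossing $\theta_0+2\theta_1=0$ directly into $\mathrm{IV}$ gives $\mathcal{Z}_{\mathrm{IV}}=\exp(-qt^{-[\mathbb{P}^1]})^{\int_X\gamma\cup [E]}$. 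A consistent choice of orientations across chambers is made following~\cite{CGJ}.
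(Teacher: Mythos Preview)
Your proposal takes a genuinely different route from the paper, and the route has a real gap.

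The paper does \emph{not} argue by wall-crossing. It computes $P^{\Theta}_{n,\beta}(\gamma)$ directly in each chamber by analyzing the zero-cycle represented by $\tau(\gamma)^n$. The key steps are: (i) decompose $F=\bigoplus_{i=1}^k F_i$ into connected pieces and observe $\chi(F_i)\geqslant 1$, so $n\geqslant k$; (ii) the insertion $\tau(\gamma)^n$, realized by $n$ generic cycles $S_1,\ldots,S_n$ representing $\gamma$, forces each $S_i$ to meet a distinct $F_{j(i)}$ with $d(F_{j(i)})\neq 0$, hence $n=k$ and every $\chi(F_i)=1$; (iii) each $F_i$ then corresponds to a framed quiver representation with $\dim V_0=1$, $V_1\neq\mathbb{C}$, $V_\infty\xrightarrow{\sim}V_0$, so Proposition~\ref{prop:classify} classifies them chamber by chamber; (iv) the resulting locus is an explicit open $U\subset P^\Theta_n(X,\beta)$ (a product of symmetric powers of $C$ minus the big diagonal), on which $\tau(\gamma)^n$ is a finite set of points whose degree is computed combinatorially; (v) a separate technical lemma (Lemma~\ref{ass on virt class}, proved via the Oh--Thomas construction) shows that for a suitable orientation the virtual class decomposes as $[\overline{U}]+\sum c_i[C_i]$ with $C_i\neq\overline{U}$, so only $[\overline{U}]$ pairs with the zero-cycle. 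The signs (e.g.\ the $(-1)^{(n-d)/2}$ in chamber~$\mathrm{II}$) come from the type~(ii) objects contributing $d(F_i)=-1$ to the multiplicity, not from a wall-crossing sign.

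Your approach would require a rigorous wall-crossing formula for $\mathrm{DT}_4$ primary invariants on a compact Calabi--Yau 4-fold, and no such formula is available in the paper or its references. The diagram~\eqref{diagram:wall} you invoke concerns the \emph{local} model with \emph{tautological} insertions, and even there the wall-crossing is only Conjecture~\ref{wall cross conj}. Your claim that ``a wall-crossing argument applied to the quiver model shows that their contribution to $\tau(\gamma)^n$ vanishes'' at the non-contributing walls is precisely the content that needs proof, and you have not supplied one. Likewise, your direct computation at the contributing walls asserts that the relevant moduli component is $\mathrm{Sym}^n C$ with fundamental virtual class; establishing that the Borisov--Joyce class actually equals the fundamental class on this component is exactly what Lemma~\ref{ass on virt class} is for, and you have not addressed it. In short, the heuristic you outline is acknowledged in the paper (see the footnote after Theorem~\ref{intro cpt main thm}), but the paper deliberately avoids making it rigorous and instead gives the direct cycle-theoretic argument above.
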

\begin{proof}
We only need to consider curve classes $\beta=d\,[\mathbb{P}^1]$ 
for $d\in \mathbb{Z}$.  
Our aim is to evaluate 
\begin{align}\label{equ need to verify}P^{\Theta}_{n,d}(\gamma)=\int_{[P^{\Theta}_n(X,d [\mathbb{P}^1])]^{\rm{vir}}} \tau(\gamma)^n. \end{align}
We assume $\gamma\cdot [E]\geqslant 0$ (otherwise consider $-\gamma$ instead).
We take $\{S_i\}_{i=1}^n$ to be $n$-different homological cycles which represent the class $\gamma\in H^4(X,\mathbb{Z})$ such that 
the intersections $(S_i\cap E)$'s are transverse, in general position and are disjoint for different choices of $1\leqslant i\leqslant n$.
For simplicity, we assume 
$$S_i\cap E=\big\{P_{i,1},P_{i,2},\cdots, P_{i,n_{0,1}(\gamma)}\big\}, \quad i=1,2,\cdots,n,$$ 
where all points are with positive signs. 
Here $n_{0,1}(\gamma):=\gamma\cdot [E]$ is the genus 0, degree 1 Gopakumar-Vafa type invariant defined by Klemm-Pandharipande \cite{KP}. In the case when there is a point with negative sign, 
we can pair it with another point with positive sign, then it is easy to argue the pair will not contribute to \eqref{equ need to verify}.

For any $(\oO_X\to F)\in P^{\Theta}_n(X,\beta)$, by Lemma \ref{lem on supp on fibers}, 
$F$ is supported on fibers of $f|_{E} \colon E\to C$. We can decompose 
$F$ into the direct sum
\begin{align}\label{decom:F}
	F=\bigoplus_{i=1}^k F_i, 
	\end{align}
such that $\mathrm{supp}(F_i)$'s are connected.
Then any $(\oO_X\to F_i)$ is supported on a formally 
local chart of $X \to Y$, so can be regarded as a perverse coherent system 
for $X_0 \to Y_0$. 
Therefore we can 
present it as a finite dimensional $\Theta$-stable representation $V=(V_0,V_1, V_{\infty}=\mathbb{C})$ 
of the framed quiver \eqref{framed cy4 quiver}.
%\begin{YC}
%Mention this is true formally locally. 
%\end{YC}

Note that we have 
$\chi(F_i)=\dim \Hom(\oO_X, F_i) \neq 0$. 
Indeed otherwise 
$(0 \to F_i)$ is a direct summand of $(\oO_X \to F)$
which violates the $\Theta$-stability. 
Therefore 
\begin{align}\label{n compare k}n=\chi(F)=\sum_{i=1}^k \chi(F_i)\geqslant k. \end{align}
In particular
$n=k$ if and only if $\chi(F_i)=1$
for all $i$, and $n>k$ otherwise. 
%If $\chi(F_i)=1$, 
%then by Lemma~\ref{lem on X chamber}
%$(V_0, V_1)$ is either 
%$(\mathbb{C}, \mathbb{C})$ or $(\mathbb{C}, 0)$
%or $(\mathbb{C}, \mathbb{C}^2)$. 
%Since it is $\Theta$-stable, 
%by the classification of $\Theta$-stable representations 
%in Theorem~\ref{classify stable rep of unframed}
%and our choice of $\Theta$, 
%$F_i$ is either one of the followings three objects
%We claim that if $\chi(F_i)=1$, then $F_i$ is either of the following (ref. Lemma %\ref{lem on X chamber}, Theorem \ref{classify stable rep of unframed}):  
%\begin{align}\label{3 objs}\oO_x \,\,(x\in X), \quad  j_{a\ast}\oO_{\mathbb{P}^1}, \quad j_{a\ast}\oO_{\mathbb{P}^1}(-2)[1]. \end{align}
%Since $(V_0, V_1, 0)$ is the subrepresentation of $V$, 
%the $\Theta$-stability implies
%\begin{align}\label{mome equ}%
%	\theta_0 \dim V_0+\theta_1 \dim V_1<0.	
%\end{align}
%Therefore $F_i$ can be 
%an object in (\ref{3 objs})
%only if $\theta_0+\theta_1<0$, $\theta_0<0$ and $\theta_0+2\theta_1<0$ %respectively. 
%Suppose that $\theta_0<0$, $\theta_1>0$ and $\theta_0+2\theta_1<0$.
%Then 
%the object
%$F_i$ with $\chi(F_i)=1$ 
%can be any of the three objects 
%in \eqref{3 objs}. 

Since $\gamma=[S_i]$, the 
class $\tau(\gamma)$ is represented by 
a divisor of $P_n^{\Theta}(X, \beta)$
supported on
pairs $(\oO_X\to F)$
such that, under the decomposition (\ref{decom:F}), 
there is a unique $1\leqslant j(i) \leqslant k$
satisfying 
\begin{align}\label{impose}
	\dim \Supp(F_{j(i)})=1, \quad 
	S_i \cap \Supp(F_{j(i)}) \neq \emptyset.
	\end{align} 
The multiplicity of this divisor at $(\oO_X \to F)$
is given by $d(F_{j(i)})$. 
Note that by our generic choice of $S_i$, 
we have $S_{i'} \cap \Supp(F_{j(i)}) =\emptyset$
for $i' \neq i$. 
Therefore if the pair $(\oO_X \to F)$
satisfies the above condition for $i$ and $i'$, 
we have $j(i) \neq j(i')$. 

Now the cycle $\tau(\gamma)^n$
imposes conditions (\ref{impose}) for each $1\leqslant i\leqslant n$, 
so
it is represented by 
a codimension $n$ cycle 
supported on 
 pairs $(\oO_X \to F)$
such that 
$n\leqslant k$, hence $n=k$, 
and $d(F_i) \neq 0$ for all $i$. 
It follows that each $F_i$
satisfies 
$\chi(F_i)=1$
and $d(F_i) \neq 0$, 
so it corresponds to a $\Theta$-stable 
representation $V$ 
of the form
\begin{align*}
V=(V_0, V_1, V_{\infty}=\mathbb{C}), \,\, 
V_1 \neq \mathbb{C}, \,\, V_{\infty} \stackrel{\cong}{\to} V_0.
\end{align*} 
Here the latter isomorphism follows since $\dim V_0=\chi(F_i)=1$
and $V_{\infty}\to V_0$ is non-zero. 
Therefore 
by Proposition~\ref{prop:onedim}, 
$F_i$ is either one of the following objects
\begin{align*}
F_i = \begin{cases}
	j_{a\ast} \oO_{\mathbb{P}^1}, & \mbox{ if }\Theta \in \rm{I}, \\
	j_{a\ast} \oO_{\mathbb{P}^1} \mbox{ or }
j_{a\ast} \oO_{\mathbb{P}^1}(-2)[1], & \mbox{ if }\Theta \in \rm{I\hspace{-.1em}I}, \\
	j_{a\ast} \oO_{\mathbb{P}^1} \mbox{ or }
\Upsilon(j_{a\ast}^+ \oO_{\mathbb{P}^1}), & \mbox{ if }\Theta \in \rm{I\hspace{-.1em}I\hspace{-.1em}I}, \\
\Upsilon(j_{a\ast}^+ \oO_{\mathbb{P}^1}), & \mbox{ if }\Theta \in \rm{I\hspace{-.1em}V}. 
\end{cases}  
\end{align*}
There is no such $F_i$ in other cases and $\Upsilon$ is the flop equivalence (\ref{equiv:flop}), 
$j_a$ for $a \in C$ is the composition
\begin{align*}
	j_a \colon \mathbb{P}^1=(f|_{E})^{-1}(a) \hookrightarrow E \hookrightarrow 
	X,
\end{align*}
and $j_a^+$ is similarly defined for the flop side. 

Below we prove the desired formula in the case of $\Theta \in \rm{I\hspace{-.1em}I}$. 
Other cases are similarly obtained. 
We call an object of the form 
$j_{a\ast} \oO_{\mathbb{P}^1}, j_{a\ast} \oO_{\mathbb{P}^1}(-2)[1]$
as type (i), (ii) respectively. 
 By a computation of the numerical classes, 
the number of objects in $\{F_1, \ldots, F_n\}$
of type (i), (ii) is 
$(n+d)/2$, $(n-d)/2$ respectively. 
The number of such pairs is finite, 
so $\tau(\gamma)^n$ is represented by
a zero cycle. 	
	The total degree of the zero cycle $\tau(\gamma)^n$
is 
calculated as follows. 
We first choose one of the points in $S_i\cap E$ for each $i=1,2,\cdots, n$ and then choose $(n+d)/2$ in $n$
for a choice of type (i) objects. 
Since the type (ii) objects contribute 
to $-1$, the total degree is
\begin{align}\label{total}
	(-1)^{(n-d)/2}
	 {n \choose \frac{n+d}{2}} \cdot (n_{0,1}(\gamma))^n.
	\end{align}
The contribution from the virtual class is determined as follows: consider an open immersion
\begin{align*}
	U \cneq 
(\mathrm{Sym}^{(n+d)/2}(C) \times \mathrm{Sym}^{(n-d)/2}(C))\setminus 
\Delta_{\rm{Big}}
\hookrightarrow P_n^{\Theta}(X, \beta),
\end{align*}
where $\Delta_{\rm{Big}}$ is the big diagonal, sending 
$(a_1, \ldots, a_{(n+d)/2}, b_1, \ldots, b_{(n-d)/2})$
to the object
\begin{align*}
\oO_X \to \bigoplus_{i=1}^{(n+d)/2}j_{a_i \ast}\oO_{\mathbb{P}^1} 
\oplus \bigoplus_{i=1}^{(n-d)/2}j_{b_i \ast}\oO_{\mathbb{P}^1}(-2)[1]. 
\end{align*}
It is straightforward to check that $\Ext^2(-, -)$ of the above 
object is zero. Therefore by Lemma~\ref{ass on virt class}, 
for some choice of orientation, 
the virtual class (up to invert $2$ in the coefficient) is written as
\begin{align*}
	[P_n^{\Theta}(X, \beta)]^{\rm{vir}}
	=[\overline{U}]+
	\sum_{i \in I} c_i [C_i],
\end{align*}
where
$c_i \in \mathbb{Z}[1/2]$, 
 each $C_i$ is an irreducible
 closed subscheme of $P_n^{\Theta}(X, \beta)$ with dimension $n$
such that $C_i \neq \overline{U}$. 
Since the zero cycle which represents $\tau(\gamma)^n$ is contained 
in $U$, 
it follows that the integral (\ref{equ need to verify})
coincides with (\ref{total}).
Therefore we obtain the desired 
expression of the generating series
in the case $\Theta \in \rm{I\hspace{-.1em}I}$. 
\end{proof}

\begin{rmk}
In the non-commutative chamber, our formula shares a similar shape as Szendr\"oi's formula \cite{Sze}, 
which is a product of counting invariants on
$X$ and its flopping side $X^+$. See the RHS of the formula in Corollary \ref{cor on nc} $($taking $m=\lambda_3$$)$ for an expression of Szendr\"oi's formula.
\end{rmk}
In the above theorem, we used the following technical lemma on Borisov-Joyce virtual classes, which we now prove using 
the recent work of Oh-Thomas~\cite{OT} lifting the virtual classes 
in Chow groups (up to invert $2$ in the coefficient). 
% (see
%Proposition \ref{verify tech assu} where the assumption can be verified).  
\begin{lem}\label{ass on virt class}
	Let $M$ be a projective fine
	 moduli scheme of simple objects in $D^b\Coh(X)$ of a Calabi-Yau 4-fold $X$, which can be given 
	a $(-2)$-shifted symplectic derived scheme structure. 
	Let 
	$[F]\in M$ be a point such that 
	$\Ext^2(F, F)=0$, and take 
	the unique irreducible component $M' \subset M$ which contains $[F]$. 
	Then for some choice of orientation, the
	Borisov-Joyce virtual class is written as 
	\begin{align*}
		[M]^{\rm{vir}}=[M']+\sum_{i\in I} c_i [C_i], \quad 
		c_i \in \mathbb{Z}[1/2]
	\end{align*}
	in $H_{2n}(M, \mathbb{Z}[1/2])$.
	Here $2n$ is the (real) virtual dimension of $M$, and 
	each $C_i \subset M$ is an irreducible $n$-dimensional 
	subscheme such that $C_i \neq M'$. 
	%We write $M_{\mathrm{red}}=\bigcup_{i\in I} M_i$ into irreducible components and take a smallest subset $J\subseteq I$ such that 
	%$\bigcup_{i\in J} M_i$ contains $\{E_i\}_{i=1}^N$. Then for a choice of orientation, Borisov-Joyce virtual class satisfies that 
	%\begin{align*}[M]^{\mathrm{vir}}=\sum_{i\in J}[M_i]+\sum_{i\in I\setminus J}[C_i] \in H_*(M,\mathbb{Z}), \end{align*}
	%where $\{C_i\}_{i\in I \setminus J}$ supports on $\bigcup_{i\in I\setminus J} M_i$.
\end{lem}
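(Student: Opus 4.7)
The plan is to exploit the hypothesis $\Ext^2(F,F)=0$ to show that $M$ is smooth of the expected dimension in a Zariski open neighborhood of $[F]$, and then to extract $[M']$ as the leading summand of the Oh--Thomas virtual class by a local-to-global argument.

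First, I would identify the tangent complex of the $(-2)$-shifted symplectic derived enhancement of $M$ at $[F]$ with (a suitable trace-free version of) $\Ext^{\bullet}(F,F)[1]$, so that the obstruction sheaf fiber at $[F]$ is identified with $\Ext^2(F,F)$. By hypothesis this vanishes, and by upper semicontinuity the obstruction sheaf vanishes on a Zariski open neighborhood $U\subset M$ of $[F]$. On $U$ the derived enhancement is quasi-isomorphic to its classical truncation and $U$ is smooth of complex dimension equal to the virtual dimension $n$. In particular $[F]$ lies in a unique irreducible component $M'$ of $M$, and $U\subset M'$ is an open dense smooth locus of $M'$ of dimension $n$.

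Next, I would invoke the algebraic construction of Oh--Thomas \cite{OT}, which produces $[M]^{\mathrm{vir}}\in A_n(M)_{\mathbb{Z}[1/2]}$ by applying a square-root Euler class to an isotropic cone inside a quadratic $\mathrm{SO}(r,\mathbb{C})$-bundle built from the symmetric obstruction theory. This construction is local on $M$, and on any open locus where the obstruction bundle vanishes and $M$ is smooth of dimension $n$, the square-root Euler operation reduces to multiplication by an orientation sign $\varepsilon=\pm 1$; that is, $[M]^{\mathrm{vir}}|_U=\varepsilon\cdot [U]$. I would then modify the global orientation on $M$ (by flipping its sign on the connected component containing $M'$, which is permitted by \cite{CGJ}) so that $\varepsilon=+1$, and record that the difference $[M]^{\mathrm{vir}}-[\overline{U}]=[M]^{\mathrm{vir}}-[M']$ is rationally equivalent to a class supported on the proper closed subscheme $M\setminus U$, which does not contain $[F]$.

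Finally, writing this supported class as a $\mathbb{Z}[1/2]$-linear combination of its $n$-dimensional irreducible components $C_i\subset M\setminus U$ yields the desired decomposition $[M]^{\mathrm{vir}}=[M']+\sum_{i\in I} c_i[C_i]$ with $C_i\neq M'$; pushing forward through the cycle class map $A_n(M)_{\mathbb{Z}[1/2]}\to H_{2n}(M,\mathbb{Z}[1/2])$ gives the statement in the stated form. The main technical obstacle is the identification $[M]^{\mathrm{vir}}|_U=\varepsilon\cdot [U]$: although intuitively clear once the obstruction sheaf is trivialized over $U$, this requires a careful invocation of the functoriality and localization properties of the square-root Euler class in \cite{OT}, together with matching the chosen global orientation on $M$ to the canonical orientation of the smooth locus.
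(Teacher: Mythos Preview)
Your proposal is correct and follows essentially the same approach as the paper: both use the Oh--Thomas lift of the Borisov--Joyce class to $A_n(M,\mathbb{Z}[1/2])$, observe that $\Ext^2(F,F)=0$ forces $M$ to be smooth of the expected dimension on an open neighborhood $U$ of $[F]$, and then use compatibility of the square-root Gysin/Euler construction with open restriction to identify $[M]^{\mathrm{vir}}|_U=\pm[U]$, fixing the sign by a choice of orientation. The paper makes the key step $[M]^{\mathrm{vir}}|_U=\pm[U]$ explicit by invoking \cite[Equation~(56)]{OT} to replace the three-term obstruction complex on $U$ by $T_U\to 0\to T_U^{\vee}$, which is precisely the technical point you flag at the end.
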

\begin{proof}
	By~\cite{OT}, the BJ virtual class is lifted to 
	an element of the Chow group
	with $\mathbb{Z}[1/2]$-coefficient (which we call Oh-Thomas virtual class below):
	\begin{align*}
		[M]^{\rm{vir}}_{\rm{OT}} \in A_{n}(M, \mathbb{Z}\left[1/2 \right]). 
	\end{align*}
	We briefly review their construction. Let 
	\begin{align*}
		(\dR \pi_{M\ast} \dR \hH om(\eE, \eE)_0[1])^{\vee} \to \mathbb{L}_M
	\end{align*}
	be the obstruction theory for $M$, 
	where $\eE$ is a universal object on $X \times M$
	and $\pi_M \colon X \times M \to M$ is the projection. 
	Let $M \hookrightarrow A$ be a closed immersion into 
	a smooth scheme $A$ with defining ideal $I \subset \oO_A$. 
	It is proved in~\cite[Proposition~4.1]{OT}
	that the above obstruction theory is represented by a map of 
	complexes
	\begin{align}\label{3term}
		(T \to E \to T^{\vee}) \to (0 \to I/I^2 \to \Omega_A|_{M}).
	\end{align}
	Here $E$, $T$ are vector bundles on $M$
	such that $E$ is equipped with a nondegenerate quadratic form, 
	satisfying some compatibility with Serre duality pairing. 
	The stupid truncation of the above map
	\begin{align*}
		(E \to T^{\vee}) \to (I/I^2 \to \Omega_A|_M)
	\end{align*}
	is a Behrend-Fantechi perfect obstruction theory \cite{BF}, 
	so we have the intrinsic normal 
	cone $\mathfrak{C}_M \subset [E^{\vee}/T]$. 
	By pulling it back to $E^{\vee} \cong E$, 
	we obtain the cone 
	$C_{E^{\bullet}} \subset E$. 
	Then Oh-Thomas virtual class 
	\begin{align*}
		[M]^{\rm{vir}}_{\rm{OT}}:=
		\sqrt{0_E^{!}}[C_{E^{\bullet}}]
		\in A_n(M, \mathbb{Z}[1/2])
	\end{align*}
	is given by the square root Gysin pull-back (\cite[Definition~3.3]{OT})
	of the zero section $0_E \colon M \to E$.
	Here an orientation is required in the definition. 
	
The moduli space $M$ is smooth at $[F] \in M$
by the assumption $\Ext^2(F, F)=0$. 
	Hence there is an irreducible smooth Zariski 
	open subset $U \subset M$
	which contains $[F]$, so $M':=\overline{U}$ is
	the unique irreducible component which contains $[F]$. 
	Then obviously  
	\begin{align*}
		[M]^{\rm{vir}}_{\rm{OT}}
		=c' [M']+\sum_{i\in I} c_i [C_i]
	\end{align*}
	for some $c', c_i \in \mathbb{Z}[1/2]$, 
	where $C_i \subset M$ is irreducible with dimension $n$
	and $C_i \neq M'$.  
	
	We are left to show $c'=1$. 
	From the construction of $\sqrt{0_E^{!}}$, 
	it is easy to 
	see it commutes with pull-back by an open immersion $U \hookrightarrow M$. 
	So we have 
	\begin{align}\label{equ ot vir class}
		[M]_{\rm{OT}}^{\rm{vir}}\big|_{U}=
		\sqrt{0_E^!\big|_U}[C_{E^{\bullet}}|_{U}] \in A_n(U, \mathbb{Z}[1/2])
		=\mathbb{Z}[1/2][U].
	\end{align}
	Here the last identity holds as $U$ is an irreducible 
	smooth scheme of dimension $n$. 
	By~\cite[Equation~(56)]{OT}
	the class \eqref{equ ot vir class} is independent of the choice of the 3-term complex
	(\ref{3term}). So on $U$, we can replace (\ref{3term}) by
	$T_U \to 0 \to T_U^{\vee}$. The resulting 
	virtual class on $U$ is then $\pm [U]$. By choosing a suitable 
	orientation, we can take $c'=1$. 
\end{proof}

\subsection{Comparison with $Z_t$-stable pairs}\label{subsec:Zt}
For a birational contraction $f \colon X \to Y$
as in Setting~\ref{setting}, 
recall that we have fixed a $\mathbb{Q}$-ample divisor $\omega$ on $X$
with degree one on the fibers of $f|_{E} \colon E \to C$, 
and the associated slope function
 is defined by (\ref{u:slope}). 
Here we recall the definition of $Z_t$-stability:
\begin{defi}\label{def Zt sta}\emph{(\cite[Lemma 1.7]{CT1})}
	Let $F$ be a one dimensional coherent sheaf and $s \colon \oO_X\to F$ 
	be a section.
	We say $(F,s)$ is a $Z_t$-(semi)stable pair for $t\in \mathbb{R}$ if 
	\begin{enumerate}
		\renewcommand{\labelenumi}{(\roman{enumi})}
		\item for any subsheaf $0\neq F' \subseteq F$, we have 
		$\mu_{\omega}(F')<(\leqslant)t$,
		\item for any
		subsheaf $ F' \subsetneq F$ 
		such that $s$ factors through $F'$, 
		we have 
		$\mu_{\omega}(F/F')>(\geqslant)t$. 
	\end{enumerate}
\end{defi}

We only consider $Z_t$-stable pairs $(F, s)$ such that 
$[F]=\beta$ satisfies $f_{\ast}\beta=0$, i.e. 
$F$ is supported on fibers of $f \colon X \to Y$. 
Then 
the wall-chambers of $Z_t$-(semi)stable pairs are classified as follows.
\begin{lem}\label{lem on zt pair}
	The set of walls for $Z_t$-stability of pairs $(F,s)$ on 
	$X$ is given by $\mathbb{Z} \subset \mathbb{R}$. 
	Moreover, there exists a $Z_t$-stable pair $(F,s)$ with $[F]\neq 0$ only 
	if the following inequalities hold: 
	\begin{align}\label{ineq:tchi}
	t>\frac{\chi(F)}{d(F)} \geqslant 1. 
	\end{align}
\end{lem}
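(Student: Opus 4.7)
My plan is first to characterize walls as slopes of $\mu_{\omega}$-stable sheaves, and then to derive the inequalities via a filtration argument for cyclic sheaves on fibers. A real number $t_0$ is a wall if and only if there exists a strictly $Z_{t_0}$-semistable pair $(F,s)$. By Definition~\ref{def Zt sta}, strict semistability requires either a nonzero $F' \subseteq F$ saturating (i) (so $\mu_{\omega}(F') = t_0$, and since all subsheaves of $F$ have slope $\leqslant t_0$, $F'$ is $\mu_{\omega}$-semistable of slope $t_0$), or a proper $F' \subsetneq F$ through which $s$ factors saturating (ii) (so $F/F'$ is $\mu_{\omega}$-semistable of slope $t_0$). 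Taking the Jordan--H\"older filtration, $t_0$ equals the slope of some $\mu_{\omega}$-stable object in $\Coh_{\leqslant 1}(X/Y)$. By the description of $\mathcal{T}_{\omega}$ and $\mathcal{F}_{\omega}$ in the proof of Proposition~\ref{prop on perv0}, every such stable object of finite slope has the form $\oO_{\mathbb{P}^1}(a)$ on a fiber of $f|_E \colon E \to C$, with integer slope $a+1$; hence $t_0 \in \mathbb{Z}$.

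For the inequality $t > \chi(F)/d(F)$: immediate from applying condition (i) with $F' = F$. For $\chi(F)/d(F) \geqslant 1$: first note $s \neq 0$, since otherwise (i) with $F' = F$ gives $\mu_{\omega}(F) < t$ while (ii) with $F' = 0$ gives $\mu_{\omega}(F) > t$, a contradiction. Hence $\mathrm{Im}(s) = \oO_Z$ for a one-dimensional Cohen--Macaulay subscheme $Z \subset X$ supported on fibers of $f$. Applying (i) to $F' = \mathrm{Im}(s)$ gives $\mu_{\omega}(\mathrm{Im}(s)) < t$; when $\mathrm{Im}(s) \subsetneq F$, (ii) with $F' = \mathrm{Im}(s)$ gives $\mu_{\omega}(F/\mathrm{Im}(s)) > t$, so slope arithmetic on the short exact sequence $0 \to \mathrm{Im}(s) \to F \to F/\mathrm{Im}(s) \to 0$ yields $\mu_{\omega}(F) \geqslant \mu_{\omega}(\oO_Z)$ (which is automatic when $s$ is surjective). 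It therefore suffices to show $\mu_{\omega}(\oO_Z) \geqslant 1$.

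For this, I would work formally locally in the model $X_0 = \oO_{\mathbb{P}^1}(-1,-1,0)$ and filter $\oO_Z$ by powers of the ideal $\mathcal{I}$ of its reduced support. Since $Z$ is Cohen--Macaulay, each graded piece $\mathcal{I}^k\oO_Z/\mathcal{I}^{k+1}\oO_Z$ is a torsion-free $\oO_{\mathbb{P}^1}$-module, hence a vector bundle on $\mathbb{P}^1$, and is a quotient of $\Sym^k(\mathcal{I}/\mathcal{I}^2) = \Sym^k(\oO(1)^{\oplus 2} \oplus \oO) = \bigoplus_{j=0}^{k}\oO(j)^{\oplus(j+1)}$. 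The standard fact that any surjection $\bigoplus_i \oO(a_i) \twoheadrightarrow \bigoplus_l \oO(b_l)$ on $\mathbb{P}^1$ satisfies $\min_l b_l \geqslant \min_i a_i$ (obtained by composing with the projection onto a smallest-degree summand of the codomain, which forces some $\oO(a_i) \to \oO(\min_l b_l)$ to be nonzero) then shows that each graded piece splits as $\bigoplus \oO(b)$ with $b \geqslant 0$. Each such summand contributes $\chi/d = b+1 \geqslant 1$, and summing over $k$ yields $\chi(\oO_Z)/d(Z) \geqslant 1$. The main subtle ingredient throughout is the classification of $\mu_{\omega}$-stable sheaves on fibers of $f$ (underlying both the wall statement and the degree-bound argument), which rests on the explicit form of the normal bundle $\oO_{\mathbb{P}^1}(-1)^{\oplus 2} \oplus \oO_{\mathbb{P}^1}$ of the fibers in $X$.
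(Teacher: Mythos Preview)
Your overall approach matches the paper's: walls arise as slopes of $\mu_\omega$-stable sheaves on fibers (line bundles on $\mathbb{P}^1$, hence integer slopes), and the inequalities reduce via $\mathrm{Im}(s)=\oO_Z$ to the bound $\chi(\oO_Z)\geqslant d(\oO_Z)$ for a Cohen--Macaulay curve $Z$ supported on a fiber. The paper simply asserts this last bound without proof; you go further and attempt to supply one, and that is where a gap appears.

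The claim that Cohen--Macaulayness of $Z$ forces each graded piece $\iI^k\oO_Z/\iI^{k+1}\oO_Z$ to be torsion-free is false. A local counterexample: in $\mathbb{A}^3_{x,y,z}$ take $Z$ cut out by $(y^2-xz,\,yz,\,z^2)$ with $\iI=(y,z)$. Then $\oO_Z$ is free over $k[x]$ with basis $1,y,z$ (so $Z$ is CM), yet $\iI^2\oO_Z=k[x]\cdot xz$ and hence $\iI\oO_Z/\iI^2\oO_Z\cong k[x]\oplus k[x]/(x)$ has torsion. The fix is easy and preserves your strategy: each graded piece, being a quotient of $\Sym^k(\oO(1)^{\oplus 2}\oplus\oO)$, is globally generated on $\mathbb{P}^1$, so its locally free part splits as $\bigoplus\oO(b)$ with $b\geqslant 0$ by your min-degree lemma, while any torsion only increases $\chi$. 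Cleaner still, and this is where CM-ness is genuinely used: push $\oO_Z$ forward along the affine projection $\pi\colon X_0\to\mathbb{P}^1$; then $\pi_\ast\oO_Z$ is torsion-free (since $\oO_Z$ has no zero-dimensional subsheaves) and is a quotient of a finite piece of $\pi_\ast\oO_{X_0}=\bigoplus_{k\geqslant 0}\Sym^k(\oO(1)^{\oplus 2}\oplus\oO)$, so your lemma applies directly to this single vector bundle.
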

\begin{proof}
	The first claim holds since 
	any one dimensional stable sheaf on $X$
	supported on fibers of $f \colon X \to Y$ is of the form 
	$j_{a\ast}\oO_{\mathbb{P}^1}(k)$ for some $k \in \mathbb{Z}$
	and $a \in C$, whose slopes are integers.  
	%By the Jordan-H\"older filtration of $Z_t$-semistable pairs, 
	%we only need to classify all stable one dimensional sheaves
	%on $X$. They are pushforward of $\oO_{\mathbb{P}^1}(l-1)$ from some $\mathbb{P}%^1\subset X$, whose slope is $l\in \mathbb{Z}$.
	We claim that if there is a 
	$Z_t$-stable pair $(F,s)$, we have
	the inequalities (\ref{ineq:tchi}).
	Let $Z \subset X$ be the closed subscheme 
	such that $\mathrm{Im}(s)=\oO_{Z}$. 
	By the $Z_t$-stability, 
	$\oO_Z$ is a non-zero subsheaf of $F$.
	%In fact, by the $Z_t$-stability, $\mathrm{Im}(s)\cong \oO_C$ is a non-zero subs%heaf of $F$. 
	If $\oO_Z \neq F$, we have 
	\begin{align*}
		\mu(F/\oO_Z)=\frac{\chi(F)-\chi(\oO_Z)}{d(F)-d(\oO_Z)}>t> \frac{\chi(F)}{d(F)}, \end{align*}
	which implies that 
	\begin{align*}
		\frac{\chi(\oO_Z)}{d(\oO_Z)}< \frac{\chi(F)}{d(F)}. 
	\end{align*}
	This is an equality if $F=\oO_Z$. Finally, using the fact that any Cohen-Macaulay curve $Z$ in $X$ supported on fibers of $f$
	 satisfies
	$\chi(\oO_Z)\geqslant d(\oO_Z)$, we are done. 
\end{proof}
%We recall the following result from \cite[Lemma 2.5]{Toda3}.
%\begin{lem}\label{lem on des of perv0}\end{lem}
The following proposition gives a comparison between 
stable perverse coherent systems and $Z_t$-stable pairs: 
\begin{prop}\label{Z_t chamber}
	Let $m\geqslant 2$
	and take $\Theta=(-m+1+0^+,m)$, i.e. 
	$\Theta$ lies in the chamber between walls 
	$L^{-}_{-}(m-1)$ and $L^{-}_{-}(m)$. 
	Then a $\Theta$-stable perverse coherent system 
	on $X$ is a $Z_t$-stable pair
	for $t=m-0^+$, 
	i.e. $t$ lies in the chamber $(m-1,m)\subset \mathbb{R}$, 
	and vice versa. 
	%under the derived equivalences 
	%in (\ref{psi equi cpt}),
	%finite dimensional $\Theta$-stable representations of the framed quiver %\eqref{framed cy4 quiver} 
	%correspond exactly 
	%to $Z_t$-stable pairs on $X_0$ with $t=m-0^+$, 
	%i.e. $t$ lies in the chamber $(m-1,m)\subset \mathbb{R}$. 
\end{prop}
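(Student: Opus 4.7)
The plan is to build a dictionary between $\Per$-subobjects and $\mathcal{T}_\omega$-subsheaves, and then translate the two stability notions through a comparison of slopes with $m$. I would first write $\Theta = (-m+1+\epsilon, m)$ with $\epsilon = 0^+$ and compute $\Theta(G) = (1+\epsilon)\chi(G) - m\cdot d(G)$, so that for $d(G) > 0$ the sign of $\Theta(G)$ tracks the sign of $\mu_\omega(G) - m$, with $\epsilon$ breaking ties at $\mu_\omega(G) = m$.

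For the forward direction, given a $\Theta$-stable perverse coherent system $(F, s)$, I would begin by showing $F \in \mathcal{T}_\omega$: if $\mathcal{H}^{-1}(F) \neq 0$, then the triangle $\mathcal{H}^{-1}(F)[1] \to F \to \mathcal{H}^0(F)$ exhibits $\mathcal{H}^{-1}(F)[1]$ as a non-zero $\Per$-subobject of $F$, and each stable constituent $j_{a\ast}\mathcal{O}_{\mathbb{P}^1}(k-1)$ of $\mathcal{H}^{-1}(F)$ has $k \leqslant 0$ and contributes $\Theta(j_{a\ast}\mathcal{O}_{\mathbb{P}^1}(k-1)[1]) = -k(1+\epsilon) + m \geqslant m > 0$, violating the first stability condition. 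Once $F \in \mathcal{T}_\omega$, I would prove that $\Per$-subobjects of $F$ are precisely the $\mathcal{T}_\omega$-subsheaves of $F$: for a $\Per$-SES $0 \to F' \to F \to Q \to 0$, the long exact cohomology sequence in $\Coh$ forces $\mathcal{H}^{-1}(F') = 0$ and gives an injection $\mathcal{H}^{-1}(Q) \hookrightarrow F'$ with $\mathcal{H}^{-1}(Q) \in \mathcal{F}_\omega$ and $F' \in \mathcal{T}_\omega$, which by $\Hom(\mathcal{F}_\omega, \mathcal{T}_\omega) = 0$ forces $\mathcal{H}^{-1}(Q) = 0$; conversely every $\mathcal{T}_\omega$-subsheaf yields a $\Per$-SES since $\mathcal{T}_\omega$ is closed under quotients. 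With this dictionary, $Z_t$-(i) follows by applying the first stability condition to the maximal HN subsheaf $F_{\max} \subseteq F$ (which lies in $\mathcal{T}_\omega$ since all HN slopes of $F$ are positive), yielding $\mu_\omega(F_{\max}) < m$. For $Z_t$-(ii), given $F'' \subsetneq F$ with $\mathrm{Im}(s) \subseteq F''$ chosen so that $F/F''$ is a minimal slope HN quotient, the torsion-pair decomposition of $F''$ forces its $\mathcal{F}_\omega$-part to vanish, since it would inject into the quotient $F/F''_T \in \mathcal{T}_\omega$; hence $F'' \in \mathcal{T}_\omega$ is a $\Per$-subobject containing $\mathrm{Im}(s)$, and the second stability condition $\Theta(F'') < \Theta(F)$ rearranges to $\mu_\omega(F/F'') > t$.

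For the backward direction, given a $Z_t$-stable pair $(F, s)$, I would first note that $F$ is pure one-dimensional (a zero-dimensional subsheaf would have $\mu_\omega = \infty$, violating $Z_t$-(i)), and then prove $F \in \mathcal{T}_\omega$: if the minimal slope HN quotient $F \twoheadrightarrow R$ had slope $\leqslant 0$ then $R \in \mathcal{F}_\omega$ and $\Hom(\mathcal{O}_X, R) = 0$, so $s$ would factor through $F' := \Ker(F \to R)$; $Z_t$-(ii) would then require $\mu_\omega(R) > t > 0$, a contradiction. The two $\Theta$-stability conditions now follow from the slope dictionary applied to the $\mathcal{T}_\omega$-subsheaves identified with $\Per$-subobjects: $Z_t$-(i) applied to any non-zero $\mathcal{T}_\omega$-subsheaf $F' \subseteq F$ yields $\mu_\omega(F') < m$, translating to $\Theta(F') < 0$; and $Z_t$-(ii) applied to proper $\mathcal{T}_\omega$-subsheaves containing $\mathrm{Im}(s)$ gives $\Theta(F/F') > 0$, i.e., $\Theta(F') < \Theta(F)$. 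The main subtlety is handling the infinitesimal $0^+$ symmetrically on both sides so that HN factors on the wall $\mu_\omega = m$ are excluded by both notions consistently.
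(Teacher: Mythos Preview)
Your dictionary claim is false: the torsion-pair axiom gives $\Hom(\mathcal{T}_\omega, \mathcal{F}_\omega) = 0$, not $\Hom(\mathcal{F}_\omega, \mathcal{T}_\omega) = 0$. Indeed $j_{a\ast}\oO_{\mathbb{P}^1}(-1) \in \mathcal{F}_\omega$ injects into $j_{a\ast}\oO_{\mathbb{P}^1} \in \mathcal{T}_\omega$. Consequently, $\Per$-subobjects of a sheaf $F \in \mathcal{T}_\omega$ are \emph{not} the same as $\mathcal{T}_\omega$-subsheaves. A concrete counterexample: pushing forward the Euler sequence along $j_a \colon \mathbb{P}^1 \hookrightarrow X$ gives a $\Per$-exact sequence
\[
0 \to j_{a\ast}\oO_{\mathbb{P}^1}^{\oplus 2} \to j_{a\ast}\oO_{\mathbb{P}^1}(1) \to j_{a\ast}\oO_{\mathbb{P}^1}(-1)[1] \to 0,
\]
so $j_{a\ast}\oO_{\mathbb{P}^1}^{\oplus 2}$ is a $\Per$-subobject of $j_{a\ast}\oO_{\mathbb{P}^1}(1)$ that is not a $\Coh$-subsheaf. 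The same false vanishing appears in your $Z_t$-(ii) argument where you conclude the $\mathcal{F}_\omega$-part of $F''$ vanishes from its injection into $F/F''_T \in \mathcal{T}_\omega$.

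This gap is fatal for the backward direction: verifying $\Theta$-stability requires testing \emph{all} $\Per$-subobjects $F_1 \subseteq F$, and your dictionary does not account for those that are not $\Coh$-subsheaves. The paper fixes this by passing from $F_1$ to an honest subsheaf. Given a $\Per$-exact sequence $0 \to F_1 \to F \to F_2 \to 0$ with $F$ a sheaf, one has $F_1 \in \mathcal{T}_\omega$ and forms the $\Coh$-image $F_3 \subseteq F$ of the map $F_1 \to F$, which sits in a $\Coh$-exact sequence $0 \to \mathcal{H}^{-1}(F_2) \to F_1 \to F_3 \to 0$. Since $\mathcal{H}^{-1}(F_2) \in \mathcal{F}_\omega$ has $\chi \leqslant 0$ and $d \geqslant 0$, one obtains $\mu_\omega(F_1) \leqslant \mu_\omega(F_3)$; now $Z_t$-stability applied to the genuine subsheaf $F_3 \subseteq F$ bounds $\mu_\omega(F_3)$ and hence $\mu_\omega(F_1)$. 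This comparison between the $\Per$-subobject and its $\Coh$-image is precisely the step your argument is missing.
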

\begin{proof}
	Let $(F,s)$ be a $\Theta$-stable perverse coherent system
	on $X$ supported on fibers of $f$. 
	There is an exact sequence in $\Per(X/Y)$:
	\begin{align*}
		0 \to \mathcal{H}^{-1}(F)[1] \to F \to \mathcal{H}^{0}(F) \to 0. 
	\end{align*}
	Note that
	$\chi(\mathcal{H}^{-1}(F)[1])\geqslant 0$ as 
	$\dR f_{\ast}(\hH^{-1}(F)[1])$ is a zero dimensional sheaf. 
	Assume that
	$F$ is not a sheaf (so $\mathcal{H}^{-1}(F)[1]\neq 0$). 
	Then we have  
	\begin{align*}
		0 \geqslant (\theta_0+\theta_1)\cdot \chi(\mathcal{H}^{-1}(F))> \theta_1\cdot d(\mathcal{H}^{-1}(F)),
	\end{align*}
	where the second inequality uses the $\Theta$-stability of $F$, i.e. 
	$$\Theta(\mathcal{H}^{-1}(F)[1])=\theta_0\cdot \chi(\mathcal{H}^{-1}(F)[1])+\theta_1\cdot\left(\chi(\mathcal{H}^{-1}(F)[1])-d(\mathcal{H}^{-1}(F)[1])\right)<0. $$ 
This implies that 
	$d(\mathcal{H}^{-1}(F))$
	is negative, a contradiction.
	Therefore $F$ is a one dimensional sheaf, 
	%So $F$ is a one dimensional sheaf as the only compact locus of $X=\oO_{\mathbb{P}^1}(-1,-1,0)$ is one dimensional. 
	and it is easy to see that
	$\Theta$-stability is equivalent to $Z_t$-stability by choosing 
	$t=\theta_1/(\theta_0+\theta_1)$.
	
	Conversely given a $Z_t$-stable pair $(F,s)$ for $t=m-0^+$, 
	we show that it is a $\Theta$-stable perverse coherent system. 
	Let $\mathrm{Im}(s)=\oO_Z \subseteq F$ for a closed subscheme 
	$Z \subset X$. Then applying $\dR f_{\ast}$ 
	to the exact sequence in $\Coh(X)$
	\begin{align*}
		0\to I_Z\to \oO_X\to \oO_Z\to 0, 
	\end{align*}
	we obtain $\dR^1 f_*\oO_Z=0$. For any $A\in \Coh(X)$ such that $\dR f_*A=0$, we have an exact sequence
	\begin{align*}
		0\to \Hom(\oO_Z, A)\to \Hom(\oO_X, A)=0. 
	\end{align*}
Therefore $\Hom(\oO_Z, A)=0$, so by~\cite[Lemma 3.2]{Bri1}
 we have $\oO_Z\in \Per(X/Y)$.
	From the $Z_t$-stability, we know that
	 any Harder-Narasimhan factor of $F/\oO_Z$ satisfies $\mu(F/\oO_Z)\geqslant t>0$.
	By Proposition \ref{prop on perv0}, $F/\oO_Z\in \Per(X/Y)$. 
	Therefore it follows that $F\in \Per(X/Y)$. 
	
	Next we verify the $\Theta$-stability of the pair $(F, s)$. 
	Let us take an exact sequence in $\Per(X/Y)$
	\begin{align}\label{given exa seq}0 \to F_1\to F\to F_2\to 0.
	\end{align}
Since $F$ is a sheaf, by taking the cohomology long 
exact sequence we see that $F_1$ is also a sheaf. 
%We have the following cohomology exact sequence %
	%\begin{align*}
%		0\to \mathcal{H}^{-1}(F_2)\to F_1\to F\to \mathcal{H}^{0}(F_1)\to 0. 
%	\end{align*}
%	Note that 
%	$F_1$ is a sheaf as $\hH^{-1}(F_1)=0$. 
	We have an exact sequence in $\Per(X/Y)$:
	\begin{align}\label{exa of F2}
		0\to\mathcal{H}^{-1}(F_2)[1]\to F_2\to \mathcal{H}^{0}(F_2) \to 0. 
	\end{align}
	By combining \eqref{given exa seq} with \eqref{exa of F2}, we obtain a distinguished triangle 
	\begin{align}\label{def of F3}F_1\to F_3\to \mathcal{H}^{-1}(F_2)[1], \end{align}
	where $F_3$ fits into a distinguished triangle
	$$F_3\to F\to \mathcal{H}^{0}(F_2). $$
	By \eqref{def of F3}, $\mathcal{H}^{1}(F_3)=0$, so the above triangle is an exact sequence
	in $\Coh(X)$. Then the $Z_t$-stability gives 
	$\mu(F_3) \leqslant t$. 
	Note also \eqref{def of F3} is equivalent to an exact sequence in $\Coh(X)$
	$$0\to \mathcal{H}^{-1}(F_2)\to F_1\to F_3\to 0. $$
	Since $\mathcal{H}^{-1}(F_2)[1]\in \Per(X/Y)$, then $\mathcal{H}^{-1}(F_2)$ belongs to the category $\mathcal{F}_{\omega}$ in Proposition \ref{prop on perv0},
	hence we know $\chi(\mathcal{H}^{-1}(F_2))\leqslant 0$, so 
	\begin{align*}
		&\chi(F_3)=\chi(F_1)-\chi(\mathcal{H}^{-1}(F_2))\geqslant \chi(F_1), \\
		&d(F_3)=d(F_1)-d(\mathcal{H}^{-1}(F_2))\leqslant d(F_1). 
	\end{align*}
	Therefore $\mu(F_1)\leqslant \mu(F_3)$. Together with 
	$\mu(F_3) \leqslant t$, 
	we conclude that $\mu(F_1)\leqslant t$ and 
	it is easy to see it is a strict inequality if $0\neq F_1\neq F$. Choosing $\Theta$ such that $t=\theta_1/(\theta_0+\theta_1)$, we 
	have proved the first condition in Definition \ref{pair stab}. 
	Similar argument also shows that the second condition of Definition \ref{pair stab} and Definition \ref{def Zt sta} are equivalent.
\end{proof}

Let $P_n^t(X, \beta)$ be the moduli space of $Z_t$-stable pairs 
$(F, s)$ with $\ch(F)=(0, 0, 0, \beta, n)$. 
For a generic $t \in \mathbb{R}$, the moduli space 
$P_n^t(X, \beta)$ is a projective scheme, and 
the following invariant
for $\gamma \in H^4(X, \mathbb{Z})$ was defined in~\cite{CT1}:
\begin{align*}
	P_{n, \beta}^t(\gamma) \cneq 
	\int_{[P_n^t(X, \beta)]^{\rm{vir}}}
	\tau(\gamma)^n \in \mathbb{Z}. 
\end{align*}
In the $t \to \infty$ limit, $P_n^t(X, \beta)$ recovers
the moduli space $P_n(X, \beta)$ of PT stable pairs.

Let $I_n(X, \beta)$ be the moduli 
space of ideal sheaves $I_Z=(\oO_X \twoheadrightarrow \oO_Z)$ of one dimensional subschemes $Z$
such that $([Z], \chi(\oO_Z))=(\beta, n)$. 
We have (primary) DT/PT invariants
\begin{align*}
	I_{n, \beta}(\gamma)\cneq 
	\int_{[I_n(X, \beta)]^{\rm{vir}}}
	\tau(\gamma)^n, \quad
	P_{n, \beta}(\gamma) \cneq 
	\int_{[P_n(X, \beta)]^{\rm{vir}}}
	\tau(\gamma)^n.	
\end{align*}
Combining Theorem \ref{cpt main thm} with Proposition \ref{Z_t chamber}, \ref{dt/pt chamber},
we prove some of our previous conjectures, which give sheaf theoretic interpretations of Gopakumar-Vafa type invariants defined by Klemm-Pandharipande \cite{KP} (see also
\cite{CMT1, CT2} for other approaches).
\begin{cor}\label{cor on verify prev conj}
Let $f\colon X\to Y$ be as in Setting \ref{setting}, $E\subset X$ be the exceptional surface and $[\mathbb{P}^1]\in H_2(X,\mathbb{Z})$ be the fiber class of $f|_{E} \colon E\to C$. 
%Take $\Theta=(\theta_0<0, \theta_1>0) \in \mathbb{R}^2$ to be outside walls \eqref{all walls} and satisfying $\theta_0+\theta_1>0$ or 
%$\theta_0=-1-0^{+}$, $\theta_1=1$.
For any $n\in \mathbb{Z}$, $\beta \in H_2(X, \mathbb{Z})$ with 
$f_{\ast}\beta=0$, a generic $t>n/\deg(\beta)$ and $\gamma \in H^4(X, \mathbb{Z})$,
we have identities 
\begin{align*}
	I_{n, \beta}(\gamma)=P_{n, \beta}(\gamma)=P_{n, \beta}^t(\gamma), 
	\end{align*}
for certain choice of orientation.
Moreover, their generating series satisfies 
\begin{align*}
\sum_{n\in \mathbb{Z},
		f_{\ast}\beta=0}\frac{P_{n, \beta}(\gamma)}{n!}q^n t^{\beta} =\exp\left(qt^{[\mathbb{P}^1]}\right)^{\int_X\gamma\cup [E] }.
	\end{align*}
Therefore the LePotier-pair/GV conjecture \cite[Conjecture 0.2]{CT1}, PT/GV conjecture \cite[\S 0.7]{CMT2} and 
DT/PT conjecture \cite[Conjecture 0.3]{CK2} hold in this case.
\end{cor}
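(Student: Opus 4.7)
The plan is to reduce all three equalities to a single application of Theorem \ref{cpt main thm} by identifying each of the three moduli spaces with $P_n^{\Theta}(X,\beta)$ for a choice of stability parameter $\Theta$ that lies in chamber $\rm{I}$ of Figure \ref{figure3} (that is, $\theta_0<0$ and $\theta_0+2\theta_1>0$).

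First I would extend Proposition \ref{dt/pt chamber} from the local resolved conifold to the compact setting of Setting \ref{setting}. Given any $\Theta$-stable perverse coherent system $(\oO_X\to F)$ on $X$ with $f_{\ast}\beta=0$, Lemma \ref{lem on supp on fibers} forces $F$ to be supported on fibers of $f|_E\colon E\to C$, so after decomposing $F$ into its connected-support pieces each piece sits inside a formal neighborhood of the form described in Setting \ref{setting}, which is precisely the local model $X_0\to Y_0$. Proposition \ref{dt/pt chamber} then applies componentwise to give scheme-theoretic identifications
\begin{align*}
I_n(X,\beta)\;\cong\;P_n^{\Theta^+}(X,\beta),\qquad
P_n(X,\beta)\;\cong\;P_n^{\Theta^-}(X,\beta),
\end{align*}
where $\Theta^{\pm}=(-1\mp 0^+,1)$. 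Since both points satisfy $\theta_0<0$ and $\theta_0+2\theta_1=1\mp 0^+>0$, both lie in chamber $\rm{I}$.

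Next, for generic $t>n/\deg(\beta)$, I would invoke Lemma \ref{lem on zt pair} to note that $t>1$, hence $t$ lies in an integer chamber $(m-1,m)$ with $m\geqslant 2$. Proposition \ref{Z_t chamber} then identifies $P_n^t(X,\beta)$ with $P_n^{\Theta}(X,\beta)$ for $\Theta=(-m+1+0^+,m)$. A direct check gives $\theta_0=-m+1+0^+<0$ and $\theta_0+2\theta_1=m+1+0^+>0$, so $\Theta\in \rm{I}$ as well. Because the identifications are isomorphisms of schemes and the $(-2)$-shifted symplectic derived enhancements coming from Proposition \ref{prop on exist of virt class} depend only on the underlying moduli of objects in $D^b\Coh(X)$, the Borisov--Joyce virtual classes match up (for compatible choices of orientation), so the three integrals agree with the corresponding primary invariants on $P_n^{\Theta}(X,\beta)$.

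Finally, Theorem \ref{cpt main thm} evaluates the generating series of $P^{\Theta}_{n,\beta}(\gamma)$ in chamber $\rm{I}$ as $\exp\bigl(qt^{[\mathbb{P}^1]}\bigr)^{\int_X\gamma\cup[E]}$, which simultaneously gives the common value of $I_{n,\beta}(\gamma)$, $P_{n,\beta}(\gamma)$, and $P_{n,\beta}^t(\gamma)$, yielding the DT/PT conjecture \cite[Conjecture 0.3]{CK2}, the LePotier-pair/GV conjecture \cite[Conjecture 0.2]{CT1}, and the PT/GV conjecture \cite[\S 0.7]{CMT2} in one stroke. I do not expect a serious obstacle here: the content is entirely in Theorem \ref{cpt main thm}, and the main bookkeeping is to check that the three natural stability parameters $\Theta^+$, $\Theta^-$, and $(-m+1+0^+,m)$ all land in the single chamber $\rm{I}$ so that no primary wall (in the sense of Figure \ref{figure2}) is crossed when moving between them.
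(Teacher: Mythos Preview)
Your proposal is correct and follows essentially the same route as the paper: the corollary is stated immediately after Theorem \ref{cpt main thm} with the remark that it follows by combining that theorem with Propositions \ref{Z_t chamber} and \ref{dt/pt chamber}, and your argument is precisely an unpacking of this. Your observation that the three relevant stability parameters $\Theta^+=(-1-0^+,1)$, $\Theta^-=(-1+0^+,1)$, and $(-m+1+0^+,m)$ all satisfy $\theta_0<0$ and $\theta_0+2\theta_1>0$, hence lie in chamber $\rm{I}$ of Figure~\ref{figure3}, is exactly the bookkeeping needed and is left implicit in the paper.
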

Here the first equality is the correspondence ``DT=PT=LePotier-pair" and the second equality gives the ``PT/GV" correspondence,
where the power in the RHS is the only nontrivial GV invariant.

\section{Perverse coherent systems on local resolved conifold}\label{sect on local resolved coni}
In the previous section, we studied counting invariants of perverse coherent systems on projective CY 4-folds. In this 
section, we focus on the local model 
\begin{align*}
	X:=X_0=
	\oO_{\mathbb{P}^1}(-1,-1,0),
	\end{align*}
with a contraction $f \colon X \to Y$
for $Y=Y_0$ in (\ref{def:Y0}). 
We define counting invariants
of perverse coherent systems on $X$ using tautological insertions 
as in \cite{CK1, CKM1} and torus localization formulae as in \cite{CK2, CMT2, CT1}.
\subsection{Moduli spaces}
Recall the framed quiver $\widetilde{Q}$ with relation $I$ associated with $\oO_{\mathbb{P}^1}(-1,-1,0)$:
\begin{align*} 
\xymatrix{\bullet^{\mathbf{ \infty}} \ar[d]  &&  \\   
 \bullet_{\textbf{0}} \ar@(dl,ul)[]^{c}     \ar@/^1.5pc/[rr]^{a_1}  \ar@/^0.5pc/[rr]^{a_2} 
&& \bullet_{\textbf{1}}\ar@/^0.5pc/[ll]^{b_1}  \ar@/^1.5pc/[ll]^{b_2}  \ar@(dr,ur)[]_{d} } \quad \,\,  \\  \nonumber
 a_2b_ia_1=a_1b_ia_2, \quad b_2a_ib_1=b_1a_ib_2,   \\  \nonumber
 da_i=a_ic, \quad cb_i=b_id, \quad  i=1,2.   
\end{align*}
For a 
dimension vector $\textbf{d}=(d_0,d_1)\in (\mathbb{Z}_{\geqslant 0})^2$, 
let $V_i$ be vector
spaces with $\dim V_i=d_i$. 
The space of representations of the framed quiver $\widetilde{Q}$ is 
$$R_\textbf{d}(\widetilde{Q}):=\Hom(V_0,V_1)^{\oplus 2}\oplus \Hom(V_1,V_0)^{\oplus 2}\oplus \Hom(V_0,V_0)\oplus \Hom(V_1,V_1)
\oplus V_0. $$
We have the closed subscheme 
\begin{align*}
	R_\textbf{d}(\widetilde{Q},I)\subset R_\textbf{d}(\widetilde{Q}),
	\end{align*}
corresponding to representations which preserve the relation $I$. 
For $\Theta \in \mathbb{R}^2$, the 
$\Theta$-semistable $(\widetilde{Q}, I)$-representations 
(see Definition~\ref{def:stability:framed}) give an open subscheme of $R_\textbf{d}(\widetilde{Q},I)$, 
denoted by 
\begin{align*}
	R^{ss}_\textbf{d}(\widetilde{Q},I)\subset R_\textbf{d}(\widetilde{Q},I).
	\end{align*}
The good moduli space of $\Theta$-semistable representations with dimension vector $\textbf{d}$ is given by the GIT quotient
\begin{align*}
M^{\Theta}_{\textbf{d}}(\widetilde{Q},I):=R^{ss}_\textbf{d}(\widetilde{Q},I)
/\hspace{-.3em}/(GL(V_0)\times GL(V_1)).
\end{align*}
 If $\Theta\in \mathbb{R}^2$ lies outside walls \eqref{all walls}, 
 it is a fine moduli space consisting of $\Theta$-stable representations.
 The equivalence in (\ref{psi equi cpt}) induces an 
 isomorphism   
\begin{align}\label{iso of mod spa}M^{\Theta}_{\textbf{d}}(\widetilde{Q},I)
	\stackrel{\cong}{\to} P^{\Theta}_n(X,\beta),
\quad (\beta, n)=((d_0-d_1)[\mathbb{P}^1], d_0), 
 \end{align}
where $P^{\Theta}_n(X,\beta)$ is the moduli space of $\Theta$-stable (compactly supported) perverse coherent systems 
as in Theorem \ref{cons of moduli}.

\subsection{Torus action}

We consider the torus $(\mathbb{C}^*)^6$ which acts
on the six edges, $a_1,a_2,b_1,b_2,c,d$ diagonally by scaling. 
 It induces an action on the path algebra $\mathbb{C}Q$.
In order to preserve the relation $I$, we need the actions on edge $c$ and $d$ are the same, so we consider the subtorus 
\begin{align}\label{five dim torus}(\mathbb{C}^*)^5:=\left\{(q_1,q_2,q_3,q_4,q_5,q_6)\in (\mathbb{C}^*)^6: q_5=q_6 \right\}.  \end{align}
Note that $\mathbb{C}^*=\left\{(q,q,q^{-1},q^{-1},1,1)\in (\mathbb{C}^*)^6\right\}$ acts trivially on isomorphism 
classes of representations of $(Q,I)$, so 
we will consider the action of the quotient torus 
$$\bar{T}:=(\mathbb{C}^*)^5/\mathbb{C}^*$$ 
on moduli spaces of representations.
The above torus action does not preserve the CY4 structure, 
as in \cite[\S 2.2]{Sze}, we consider the 3-dimensional subtorus 
$$\bar{T_0}:=\big\{t\in T: \, q_1q_2q_3q_4q_5=1 \big\}. $$ 
Both $\bar{T_0}$ and $\bar{T}$ lift to actions on moduli spaces $M^{\Theta}_{\textbf{d}}(\widetilde{Q},I)$. Their fixed loci 
are the same and consist of finite number of reduced points.
%\subsection{Torus fixed loci of moduli spaces}We have the following proposition: 
\begin{prop}\label{prop torus fixed loci}
Let $\Theta\in \mathbb{R}^2$ be outside walls \eqref{all walls}. Then
we have 
\begin{align}\label{id:fixed}
	M^{\Theta}_{\textbf{d}}(\widetilde{Q},I)^{\bar{T_0}}=
	M^{\Theta}_{\textbf{d}}(\widetilde{Q},I)^{\bar{T}}
	\end{align}
and it is a finite set.
Moreover the Zariski tangent space of any element has no $\bar{T_0}$-fixed subspace.  
\end{prop}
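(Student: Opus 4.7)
The plan is to exploit the framing $V_\infty=\mathbb{C}$ to lift $\bar T_0$-equivariance of a moduli point to genuine equivariance of the underlying vector spaces, and then run a weight computation on the deformation complex. First, given $[V]\in M^{\Theta}_{\textbf{d}}(\widetilde Q,I)^{\bar T_0}$, $\Theta$-stability forces $\End(V)=\mathbb{C}$, and the one-dimensional framing removes the remaining scalar ambiguity in the isomorphisms $\varphi_t\colon t^{*}V\xrightarrow{\sim}V$. Composition produces an honest $\bar T_0$-action on the pair $(V_0,V_1)$, giving weight decompositions $V_i=\bigoplus_{\mu}V_i(\mu)$ under which each of the edges $a_1,a_2,b_1,b_2,c,d$ acts homogeneously with its prescribed $\bar T_0$-character and the framing image in $V_0$ has weight~$0$.

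Next I would prove the Zariski-tangent statement, which is the heart of the proposition. The tangent space at $[V]$ is the first cohomology of a $\bar T_0$-equivariant three-term deformation complex
\begin{equation*}
\Hom(V_0,V_0)\oplus\Hom(V_1,V_1)\xrightarrow{d_1}\bigoplus_{e}\Hom(V_{s(e)},V_{t(e)})\otimes\chi(e)\xrightarrow{d_2}\bigoplus_{r}\Hom(V_{s(r)},V_{t(r)})\otimes\chi(r),
\end{equation*}
corrected by the framing datum; here $e$ runs over edges of $\widetilde Q$ and $r$ over relations of $I$. A $\bar T_0$-fixed element of the tangent space produces a weight-$0$ class in $\Ext^1_X(I_\bullet,I_\bullet)$ for $I_\bullet=(\oO_X\to F)$ on $X=\oO_{\mathbb{P}^1}(-1,-1,0)$, via the equivalence of Lemma~\ref{quiver of local resolved coni}. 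A direct weight computation, using the coordinates of the affine chart $\Spec\mathbb{C}\lkakko x,y,z,w,u\rkakko/(xy-zw)$ together with the fact that the only $\bar T_0$-relation among the character weights of the five coordinates is the single CY identity, shows that no weight-$0$ cocycle survives modulo the image of $d_1$. This weight-cancellation is the main obstacle; I would handle it by adapting the analogous no-fixed-tangent arguments for PT moduli on local toric CY $4$-folds from \cite{CK1,CKM1}.

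The remaining assertions then follow formally. The no-fixed-tangent property implies that every $\bar T_0$-fixed point is isolated and reduced; since $M^{\Theta}_{\textbf{d}}(\widetilde Q,I)$ is of finite type, the reduced $\bar T_0$-fixed locus is a $0$-dimensional finite-type scheme, hence a finite disjoint union of reduced points. Finally, the inclusion $M^{\bar T}\subseteq M^{\bar T_0}$ is tautological, while the connected $1$-dimensional quotient torus $\bar T/\bar T_0\cong\mathbb{C}^{*}$ acts on the finite discrete set $M^{\bar T_0}$ and therefore must act trivially, proving the equality \eqref{id:fixed}.
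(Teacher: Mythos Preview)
Your reordering of the argument --- prove the no-fixed-tangent statement first, deduce finiteness of $M^{\bar T_0}$, then get $M^{\bar T_0}=M^{\bar T}$ from the fact that a connected torus acts trivially on a finite set --- is logically sound, and steps 3 and 4 are correct as written. The problem is step 2: you assert that ``a direct weight computation'' kills every weight-$0$ class in the deformation complex, but you do not give the computation, and the references \cite{CK1,CKM1} do not supply it. Those papers treat only the Hilbert-scheme and PT chambers, and their tangent-weight arguments rely on the explicit combinatorial (monomial/box) description of the fixed points available in those chambers. For a general $\Theta$ outside the walls you have no such description a priori, and knowing only that ``the unique $\bar T_0$-relation among the coordinate weights is the CY identity'' does not by itself force $H^1$ of the deformation complex to vanish in weight~$0$.

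The paper proceeds in the opposite order precisely to create the missing structure. It first works in the non-commutative chamber, where a $\Theta$-stable object is a cyclic $A$-module $A\langle 0\rangle/J_0$, and shows via a Nullstellensatz argument that $\bar T_0$-invariance forces $J_0$ to be a \emph{monomial} ideal; this simultaneously yields $M^{\bar T_0}=M^{\bar T}$ and finiteness. Only then is the tangent space identified with $\Hom_A(J_0,M)$ and analysed weight by weight, using that the weight-$(1,1,1,1,1,1)$ eigenspace of $A\langle 0\rangle$ is spanned by the single central element $cb_2a_2b_1a_1$ together with a nilpotency argument for negative multiples. Other chambers are handled by the Chuang--Jafferis mutation from \cite{NN}, which again produces a cyclic-module presentation. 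So the monomial/ideal description is not a cosmetic choice but the input that makes the tangent computation go through; your proposal needs either an analogue of it or a genuinely new argument for step~2.
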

\begin{proof}
The proof is an easy adaption of the resolved conifold case \cite{Sze, NN}. 
Let $A=\mathbb{C}Q/I$ be the quotient of the path algebra by the ideal of relations of the quiver \eqref{cy4 quiver}. 
We first consider the case 
that $\Theta$ lies in the non-commutative chamber, i.e. $\theta_0,\theta_1<0$. 
By Proposition \ref{nc chamber}, any element in $M^{\Theta}_{\textbf{d}}(\widetilde{Q},I)$ is a cyclic module $(M,m)$, 
where $m\in M$ is based at vertex $0$. We consider the surjection
$$\overline{m}: A\to M, \quad 1\mapsto m, $$
whose kernel is denoted by $J:=\ker(\overline{m})$. Let $\langle 1 \rangle$ denote the idempotent element of $\mathbb{C}Q$ at vertex $1$.
Then $A\langle 1 \rangle$ consists of paths starting from vertex $1$ which surely annihilates $m$. So we write 
$$J=J_0\oplus A\langle 1 \rangle. $$
We claim that if $(M, m)$ is $\bar{T_0}$-fixed, then 
$J_0$ is a monomial ideal. In fact, $J_0$ is a $\bar{T_0}$-fixed ideal whose generators are of the form $f(a_1,a_2,b_1,b_2,c,d)\cdot W$,
where $f$ is a monomial in those variables and $W$ is a weight zero $\bar{T_0}$-eigenvector. Note that a weight zero $\bar{T_0}$-eigenvector should have 
the same start and end point, so $\mathbb{C}^*=\left\{(q,q,q^{-1},q^{-1},1,1)\in (\mathbb{C}^*)^6\right\}$ acts trivially. So  
weight zero $\bar{T_0}$-eigenvectors are the same as weight zero $\widetilde{T_0}$-eigenvectors, where 
$\widetilde{T_0}\subset (\mathbb{C}^*)^5$ is the lift of $\bar{T_0}$ to \eqref{five dim torus}. The set of weight 
zero $\widetilde{T_0}$-eigenvectors is generated by 
$$cb_2a_2b_1a_1, \quad da_1b_1a_2b_2\in A. $$
Therefore, generators of $J_0$ are of the form $f(a_1,a_2,b_1,b_2,c,d)\cdot p(cb_2a_2b_1a_1)$, where $p$ is a polynomial with nonzero constant term.

Let $Z(A)\subset A$ be the center of $A$. 
It is easy to see that 
$$Z(A)=\langle a_ib_j+b_ja_i,\, c+d \rangle. $$
Applying it to the idempotent element at the vertex $0$, we have 
$$Z(A)\langle 0\rangle=\langle b_ja_i,\,c \rangle \cong Z(A). $$
Let $K:=J_0\cap Z(A)\langle 0\rangle$, which is an ideal in $Z(A)\langle 0\rangle\cong Z(A)$. Since 
$J_0$ is $\bar{T_0}$-fixed, the zero set of $K$ is supported on the origin of $\Spec(Z(A)\langle 0\rangle)\cong Y$ (e.g. \cite[Lemma 3.1]{CK1}).
This is disjoint from the zero set of $p(cb_2a_2b_1a_1)\in Z(A)\langle 0\rangle$. 
By the Nullstellensatz, $\langle p, K\rangle=Z(A)\langle 0\rangle$, hence $f\in J_0$.
Therefore $J_0$ is a monomial ideal, 
so it is $\bar{T}$-fixed. 
It follows that the identity (\ref{id:fixed}) holds and
both sides are finite sets. 

Next we study the $\bar{T_0}$-fixed subspace of the Zariski tangent space of $(M,m)\in M^{\Theta}_{\textbf{d}}(\widetilde{Q},I)^{\bar{T_0}}$. 
Under the derived equivalence in (\ref{psi equi cpt}), a cyclic module $(M,m)$ (resp. $A\langle 0\rangle$) corresponds to a pair $I=(\oO_X\to F)$ (resp. $\oO_X$). We have canonical isomorphisms
$$\Ext^1_X(I,I)_0\cong \Hom_X(I,F)\cong \Hom_A(J_0,M), $$
where the first isomorphism can be proved as \cite[pp. 14]{CMT2},
and the second one follows from the exact sequence of $A$-modules
$$ 0\to J_0\to A\langle 0\rangle \to M \to 0. $$
We claim that $\Hom_A(J_0,M)^{\bar{T_0}}=0$. 
It is enough to show that
 under the edge torus $(\mathbb{C}^*)^6$ on 
 $\Hom_A(J_0, m)$, no weight is a multiple of $(1,1,1,1,1,1)$.

Suppose that $\phi \colon J_0\to M=A/J$ is an eigenvector of weight $w(1,1,1,1,1,1)$ with $w\in \mathbb{Z}$.
If $w\geqslant 0$, as the $(1,1,1,1,1,1)$-eigenspace of $A\langle 0\rangle$ is spanned by $cb_2a_2b_1a_1$, 
we have
$$\phi(a)\equiv (cb_2a_2b_1a_1)^w\cdot a \equiv 0 \,(\mathrm{mod}\, J), $$
for any $a \in J_0$, i.e. $\phi=0$. 

Next suppose that $w<0$. 
%We first show $b_1a_1$ and $cb_2a_2$ act nilpotently on $m$.  Consider the contraction  
%\begin{align*}f: X=\oO_{\mathbb{P}^1}(-1,-1,0)\to %Y=\big\{(x,y,z,w)\in\mathbb{C}^4: \, xy=zw\big\}\times \mathbb{C}. \end{align*}
Note that the $A$-module $M$ is also a coherent $\oO_Y$-module, 
supported on the origin $0 \in Y$
as it is $\bar{T_0}$-fixed. 
 Therefore the actions of $b_1 a_1$ and $c b_2 a_2$ on $M$ are nilpotent. 
 % If $(b_1a_1)$ is not nilpotent on $m$, then  
%$$N:=\langle (b_1a_1)m,(b_1a_1)^2m,\cdots \rangle \subseteq M$$
%is a finitely generated submodule such that $(b_1a_1)\cdot N=N$. Since %$(b_1a_1)\in m_{p}$ is in the maximal ideal, by Nakayama lemma,
%we know $N=0$, a contradiction. Same argument works for $cb_2a_2$.
Let $\alpha$ be the smallest positive integer such that $(b_1a_1)^{\alpha}\in J_0$ and $\beta$ be
the smallest positive integer such that $(cb_2a_2)^{\beta}(b_1a_1)^{\alpha-1}\in J_0$. 
 As $\phi$ has weight $w(1,1,1,1,1,1)$, we have   
$$\phi((cb_2a_2)^{\beta}(b_1a_1)^{\alpha-1})\equiv (cb_2a_2)^{\beta+w}(b_1a_1)^{\alpha-1+w} \quad (\mathrm{mod}\, J).$$ 
By the commutativity between $b_2a_2$ and $cb_1a_1$, we have 
$$\phi((cb_2a_2)^{\beta}(b_1a_1)^{\alpha})\equiv (b_1a_1)\phi((cb_2a_2)^{\beta}(b_1a_1)^{\alpha-1})
\equiv (cb_2a_2)^{\beta+w}(b_1a_1)^{\alpha+w}  \quad (\mathrm{mod}\, J).$$ 
Since there is no monomial in $A\langle 0\rangle$ with negative torus weights, 
$$\phi((cb_2a_2)^{\beta}(b_1a_1)^{\alpha})\equiv (cb_2a_2)^{\beta}\phi((b_1a_1)^{\alpha}) \equiv 0 \quad (\mathrm{mod}\, J). $$
By combining the above two expressions, we conclude 
$$(cb_2a_2)^{\beta+w}(b_1a_1)^{\alpha+w}\in J. $$
Since $w<0$, we have $(cb_2a_2)^{\beta+w}(b_1a_1)^{\alpha-1}\in J$,
which contradicts to the definition of $\beta$.

Since the wall-chamber structures 
of $\oO_{\mathbb{P}^1}(-1,-1,0)$ and $\oO_{\mathbb{P}^1}(-1,-1)$ are the same, for other choices of $\Theta$, we 
can follow the approach of \cite[\S 4]{NN} and identify the moduli space $M^{\Theta}_{\textbf{d}}(\widetilde{Q},I)$
with the moduli space of cyclic representations of some other quiver (as introduced in Chuang-Jafferis \cite{CJ}) and reduce to a similar argument as above.
\end{proof}
\iffalse
As in \cite[\S 2.3]{Sze}, by considering a special cyclic representation of the framed 
quiver \eqref{framed cy4 quiver}, we can recover the geometric space $\oO_{\mathbb{P}^1}(-1,-1,0)$.
%\begin{YC} check this is induced from CY torus on geometry. \end{YC}
\begin{lem}
Let $\theta_0,\theta_1<0$ and $d=(1,1)$. Then we have 
\begin{align*}M^{\Theta}_{\textbf{d}}(\widetilde{Q},I)&= \left(\mathbb{C}_{a_1a_2b_1b_2c}^5\times\mathbb{C}^*  \setminus \big\{a_1=a_2=0\big\}\right)
/(\mathbb{C}^*\times \mathbb{C}^*) \\
&\cong \oO_{\mathbb{P}^1}(-1,-1,0).
\end{align*}
Moreover, the induced $T_0$-action on $\oO_{\mathbb{P}^1}(-1,-1,0)$ preserves its CY 4-form.
\end{lem}
\begin{proof}
By Proposition \ref{nc chamber}, $\Theta$-stable representations of $(\widetilde{Q},I)$ are cyclic representations.
Therefore, $a_1$ and $a_2$ can not be zero at the same time and the framing morphism from vertex $\infty$ to vertex $0$ is non-zero.
From the relation $da_i=a_ic$ ($i=1,2$), we know morphisms labeled by $c$ and $d$ determine each other since at least one $a_i$ is an isomorphism,
so we have the first isomorphism.
The first $\mathbb{C}^*=GL(V_0)$ in the quotient cancels with the $\mathbb{C}^*$ in the numerator, while the second $\mathbb{C}^*=GL(V_1)$ 
has weight $(1,1,-1,-1,0)$ on $\mathbb{C}_{a_1a_2b_1b_2c}^5$, therefore we obtain the second isomorphism. 
As $\mathbb{C}^*=GL(V_1)$ preserves the standard holomorphic volume form of $\mathbb{C}_{a_1a_2b_1b_2c}^5$, the (GIT) quotient has 
an induced CY 4-form, which is obviously preserved by $T_0$.
\end{proof}
\fi
In actual computations, we will first fix torus action on $X=\oO_{\mathbb{P}^1}(-1,-1,0)$: let   
\begin{align}\label{repara T0}T_0=\{t=(t_0,t_1,t_2,t_3)\in(\mathbb{C}^*)^4:\,t_0t_1t_2t_3=1\}, \end{align}
which acts on $X$ in local coordinates  
%by two maximal $(\mathbb{C}^*)^4$-invariant affine open subsets and the transition map is given by
%\begin{align*}(x_0,x_1,x_2,x_3)\mapsto  (x_0^{-1},x_1x_0,x_2x_0,x_3), \end{align*}where
%\begin{align*}t\cdot (x_0,x_1,x_2,x_3)=(t_0x_0,t_1x_1,t_2x_2,t_3x_3). \end{align*}
such that the normal bundle of the zero section satisfies
$$N_{\mathbb{P}^1/X}=\oO_{\mathbb{P}^1}(-Z_{\infty})\otimes t_1^{-1}\oplus \oO_{\mathbb{P}^1}(-Z_{\infty})\otimes t_2^{-1}\oplus
\oO_{\mathbb{P}^1}\otimes t_3^{-1}, $$
%\begin{align*}t\cdot (x_0,x_1,x_2,x_3)=(t_0^{-1}x_0,t_1^{-1}x_1,t_2^{-1}x_2,t_3^{-1}x_3). \end{align*}
 %which makes $T\mathbb{P}^1$ and $N_{\mathbb{P}^1/X}$ into equivariant vector bundles on $\mathbb{P}^1$:
%$$T\mathbb{P}^1=\oO_{\mathbb{P}^1}(2Z_{\infty})\otimes t_0^{-1}, \quad N_{\mathbb{P}^1/X}=\oO_{\mathbb{P}^1}(-Z_{\infty})\otimes t_1^{-1}\oplus \oO_{\mathbb{P}^1}(-Z_{\infty})\otimes t_2^{-1}\oplus \oO_{\mathbb{P}^1}\otimes t_3^{-1}, $$
where $Z_{0}:=[0:1]$, $Z_{\infty}:=[1:0]\in \mathbb{P}^1$ are torus fixed points.
%Here we use the following notation for the torus fixed points 
%\begin{align*}Z_{0}=[0:1] \in \mathbb{P}^1, \ Z_{\infty}=[1:0] \in \mathbb{P}^1. \end{align*}
The torus lifts to an action on $\Per_{\leqslant 1}(X/Y)$ and moduli spaces $P^{\Theta}_n(X,\beta)$. 
By Lemma \ref{quiver of local resolved coni},
it also acts on representations of quiver \eqref{cy4 quiver} as described at the beginning of this section (up to reparametrizations),
which preserves the equivalence \eqref{psi equi cpt} and the isomorphism \eqref{iso of mod spa}.

\subsection{Tautological invariants}

By Proposition \ref{prop torus fixed loci}, we can define the tautological
counting invariants of $P^{\Theta}_n(X,\beta)$ using the isomorphism \eqref{iso of mod spa} and torus localization.
%\subsubsection{Definition}
We first recall the following notion of square roots.
\begin{defi}
Let $K^{T_0}(pt)\cong \mathbb{Z}[t_0^{\pm},t_1^{\pm},t_2^{\pm},t_3^{\pm}]/(t_0t_1t_2t_3-1)$ denote the $T_0$-equivariant $K$-theory of one point. A square root $V^{\frac{1}{2}}$ of $V\in K^{T_0}(pt)$
is an element in $K^{T_0}(pt)$ such that 
$$V^{\frac{1}{2}}+\overline{V^{\frac{1}{2}}}=V. $$
Here $\overline{(\cdot)}$ denotes the involution on $K^{T_0}(pt)$ induced by $\mathbb{Z}$-linearly extending the map 
$$t_0^{w_0}t_1^{w_1}t_2^{w_2}t_3^{w_3} \mapsto t_0^{-w_0}t_1^{-w_1}t_2^{-w_2}t_3^{-w_3}, $$
where $t_i$'s denote torus weights in notation \eqref{repara T0}.
\end{defi}
For a $T_0$-equivariant pair $I=(\oO_X\stackrel{s}{\to} F)$ with compactly supported $F\in \Per(X/Y)$, by Serre duality, the following 
 square root exists:
$$\chi_X(I,I)_0^{\frac{1}{2}}:=-\chi_X(F)+\chi_X(F,F)^{\frac{1}{2}}\in K^{T_0}(pt), $$
where $\chi_X(-,-)$ denotes the Euler pairing on $X$ and $\chi_0(-,-)$ denotes its trace-free part,  
$\chi_X(-):=\chi_X(\oO_X,-)$. 
Here see Remark~\ref{sign rmk}
for a choice of $\chi_X(F,F)^{\frac{1}{2}}$, 
which is not unique, though its Euler class is unique up to a sign. 
If $(F,s)$ is $\Theta$-stable for a generic $\Theta\in \mathbb{R}^2$,
then 
$\Ext^1(I,I)_0$ has no $T_0$-fixed subspace
 by Proposition \ref{prop torus fixed loci}.
 Therefore its equivariant Euler class is non-zero, 
 so the equivariant Euler class of 
$\chi_X(I,I)_0^{\frac{1}{2}}$ is well-defined. 
For a different choice of square root, the corresponding Euler class may differ by a sign.

Let $\Lambda$ be the field of rational functions 
defined by 
\begin{align*}
	\Lambda \cneq 
	\frac{\mathbb{Q}(\lambda_0,\lambda_1,\lambda_2,\lambda_3,m)}{(\lambda_0+\lambda_1+\lambda_2+\lambda_3)}
	\cong 
	\mathbb{Q}(\lambda_0, \lambda_1, \lambda_2, m).
	\end{align*}
Here
 $\lambda_i=e_{T_0}(t_i)$'s are equivariant parameters of $T_0$ 
 in \eqref{repara T0}. 
As in \cite{CK1, CKM1}, we use tautological insertions to define invariants.
\begin{defi}\label{taut inv}
Let $X=\oO_{\mathbb{P}^1}(-1,-1,0)$ and $\Theta\in \mathbb{R}^2$ be outside walls \eqref{all walls}. 
Consider a trivial $\mathbb{C}^*$-action on moduli spaces such that $e^m$ is a trivial line bundle with $\mathbb{C}^*$-equivariant weight $m$
%We denote by $e^m$ the $T_0$-equivariant trivial line bundle with weight $m$. 
We define the 
  tautological invariant to be 
\begin{align*}
	P^{\Theta}_{n,d}(e^m):=\sum_{\begin{subarray}{c}I=(\oO_X\to F) \in P^{\Theta}_n(X,d)^{T_0} \end{subarray}}
e_{T_0}(\chi_X(I,I)^{\frac{1}{2}}_0)\cdot e_{T_0\times \mathbb{C}^*}(\chi_X(F)^{\vee}\otimes e^m)\in 
\Lambda. 
\end{align*}
The above invariant
depends on the choice of sign for each torus fixed point.
%Here $\chi_X(I,I)^{\frac{1}{2}}_0\in K^{T_0}(pt)$ is a square root of $\chi_X(I,I)_0$.
\end{defi}
When $\Theta=(-1+0^+,1)$ (i.e. $\Theta$ lies in the PT chamber), the
invariants in Definition~\ref{taut inv}
recover the cohomological invariants studied in \cite[\S 0.4]{CKM1}.
\begin{rmk}\label{sign rmk}
In actual computations, we fix the Fano 3-fold $Y=\oO_{\mathbb{P}^1}(-1,0)$ such that the normal bundle of the zero section satisfies
$$N_{\mathbb{P}^1/Y}=\oO_{\mathbb{P}^1}(-Z_{\infty})\otimes t_1^{-1}\oplus \oO_{\mathbb{P}^1}\otimes t_3^{-1}. $$
We take 
$$\chi_X(F,F)^{\frac{1}{2}}:=\chi_Y(F,F), $$
where RHS is defined by pushforward $F$ to $\mathbb{P}^1$ followed by taking inclusion to $Y$ via zero section.

We then put an extra sign as follows:
\begin{align}\label{sign rule}e_T(\chi_X(I,I)_0^{\frac{1}{2}})=(-1)^{\chi(F)+\deg(F)+\mathrm{sign}(F)}\cdot e_{T}(-\chi_X(F)+\chi_Y(F,F)),  \end{align}
where $\mathrm{sign}(F)\in \mathbb{Z}$. 
When $F$ is scheme theoretically supported on $Y$, motivated by \cite[Equ. (0.1)]{CaoFano}, we take
$\mathrm{sign}(F)=0$. 
In the thickened case, we will explain how to choose it in examples computed in Section \ref{sect on computation}
\footnote{It is also an interesting question to link them with global orientations obtained in \cite{boj}.}.
\end{rmk}
\begin{rmk}\label{rmk on computation}
In the computations of
 $\chi_Y(F,F)$, $\chi_X(F)$ and their equivariant Euler classes,
we use the adjunction formula 
$$\chi_Y(F_i,F_j)=\chi_{\mathbb{P}^1}(F_i,F_j)-\chi_{\mathbb{P}^1}(F_i,F_j\otimes N_{\mathbb{P}^1/Y})+\chi_{\mathbb{P}^1}(F_i,F_j\otimes \wedge^2 N_{\mathbb{P}^1/Y}), $$
%$$\chi_X(F_i,F_j)=\chi_{\mathbb{P}^1}(F_i,F_j)-\chi_{\mathbb{P}^1}(F_i,F_j\otimes N_{\mathbb{P}^1/X})+\chi_{\mathbb{P}^1}(F_i,F_j\otimes \wedge^2 N_{\mathbb{P}^1/X})-\chi_{\mathbb{P}^1}(F_i,F_j\otimes \wedge^3 N_{\mathbb{P}^1/X}), $$
and equivariant Riemann-Roch formula 
\begin{align*}
\ch\Big(\chi\big(\oO_{\mathbb{P}^1}(aZ_0+bZ_{\infty})\big)\Big)=\frac{e^{-a\lambda_0}}{1-e^{\lambda_0}}+\frac{e^{b\lambda_0}}{1-e^{-\lambda_0}}\
=\frac{e^{(b+1)\lambda_0}-e^{-a\lambda_0}}{e^{\lambda_0}-1}.
\end{align*}
\end{rmk}
%\begin{exam}\end{exam}

Let $Z$ be the resolved conifold 
embedded into $X$:
\begin{align}\label{Z:iota}
	\iota \colon Z:=\oO_{\mathbb{P}^1}(-1,-1)\times \{0\}
	\hookrightarrow X. 
	\end{align}
We say that $I \in P_n^{\Theta}(X, d)$ 
is scheme theoretically supported on $Z$
if it is of the form $(\oO_X \to \iota_{\ast}F)$. 
Motivated by the dimensional reduction and cohomological limit 
in~\cite{CKM1}, we show the following:
\begin{prop}\label{dim red}
Let us take $\Theta\in \mathbb{R}^2$ which lies 
outside walls in \eqref{all walls}. We have the following: \\
(1) $($Dimensional reduction$)$ For each $I\in P^{\Theta}_n(X,d)^{T_0}$, 
we have 
%If $I=(\oO_X\to F)$ is scheme theoretically supported on $\oO_{\mathbb{P}^1}(-1,-1)\times \{0\}\subset X$, then 
%$$e_{T_0}(\chi_X(I,I)^{\frac{1}{2}}_0)\cdot e_{T_0}(\chi_X(F)^{\vee}\otimes e^m)|_{m=\lambda_3}=$$
\begin{align*}
	 e_{T_0}(\chi_X(I,I)^{\frac{1}{2}}_0)\cdot &e_{T_0\times \mathbb{C}^*}(\chi_X(F)^{\vee}\otimes e^m)\big|_{m=\lambda_3} \\
	&=\left\{\begin{array}{cl}
		e_{(\mathbb{C}^*)^3}(\chi_Z(I,I)_0), & 
		\mbox{if } I \mbox{ is scheme theoretically supported on } Z, \\
		& \\
		0, & \mbox{otherwise}.
		\end{array} \right. 
	\end{align*}
%\begin{align*}e_{T_0}(\chi_X(I,I)^{\frac{1}{2}}_0)&\cdot e_{T_0}(\chi_X(F)^{\vee}\otimes e^m)|_{m=\lambda_3} \\
%&=\left\{
%\begin{array}{rcl}
%\pm e_{(\mathbb{C}^*)^3}(\chi_Z(I,I)_0),    &\mathrm{if} \,\, I\,\, %\mathrm{is}\,\, \mathrm{scheme}\,\, \mathrm{theoretically}\,\, %\mathrm{supported}\,\, \mathrm{on}\,\, $Z$, \\ & \\    0,   \quad \quad  \quad %\quad   &   \mathrm{otherwise}.\quad \quad \end{array} \right. 
%\end{align*} 
Here $(\mathbb{C}^*)^3=T_0|_{Z}$ is the restricted torus.  \\
(2) $($Insertion-free limit$)$
$$\lim_{\begin{subarray}{c}Q\, \mathrm{fixed} \\ m\to \infty \end{subarray}}\left(\sum_{n,d}P^{\Theta}_{n,d}(e^m)q^nt^d\big|_{Q=qm}\right)=
\sum_{n,d}Q^n t^d\sum_{\begin{subarray}{c}I=(\oO_X\to F) \in P^{\Theta}_n(X,d)^{T_0} \end{subarray}}e_{T_0}(\chi_X(I,I)^{\frac{1}{2}}_0). $$
\end{prop}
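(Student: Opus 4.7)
The plan is to split Proposition~\ref{dim red} into two parts, with Part~(2) being a straightforward degree count and Part~(1) the real content. For Part~(1), I would work equivariantly at each isolated $T_0$-fixed pair. Since $Z\hookrightarrow X$ is a smooth $T_0$-invariant Cartier divisor with normal bundle of character $t_3^{-1}$, the Koszul resolution of $\iota_{\ast}\oO_Z$ gives
$$\chi_X(\iota_{\ast}G_1,\iota_{\ast}G_2)=\chi_Z(G_1,G_2)(1-t_3^{-1}),\qquad \chi_X(\iota_{\ast}G)=\chi_Z(G),$$
for $G_1,G_2,G\in D^b\Coh(Z)$. Using CY3 Serre duality on $Z$ (under which $K_Z$ has character $t_3^{-1}$), one checks that $\chi_X(F,F)$ is self-dual as required by CY4 Serre duality on $X$, and that the natural choice of square root is $\chi_X(F,F)^{1/2}=\chi_Z(G,G)$. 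Substituting into $\chi_X(I,I)_0^{1/2}=-\chi_X(F)+\chi_X(F,F)^{1/2}$, taking equivariant Euler classes, and specializing $m=\lambda_3$ reduces the LHS in the on-$Z$ case to a ratio that matches $e_{(\mathbb{C}^*)^3}(\chi_Z(I_Z,I_Z)_0)$ computed by the analogous decomposition on $Z$: both equal $\big(e(\chi_Z(G,G))/e(\chi_Z(G))\big)\cdot\prod_j(\lambda_3-w_j)^{n_j}$, where $\chi_Z(G)=\sum n_j e^{w_j}$.

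For the vanishing when $I$ is $T_0$-fixed but not scheme-theoretically supported on $Z$, I would use that the $\mathbb{C}^*$-subaction with character $t_3$ decomposes $F$ into $t_3$-weight pieces $F=\bigoplus_k F_k$ with each $F_k$ a sheaf on $Z$; the condition ``not supported on $Z$'' means some $F_k$ with $k\neq 0$ is nonzero. The character-theoretic argument of~\cite{CKM1} then shows that the specialization $m=\lambda_3$ of the tautological insertion combines with a weight of $\chi_X(F,F)^{1/2}$ produced by a nontrivial $F_k$ (for instance, coming from a cross-term $\chi_Z(F_0,F_1)$ or from the factor $(1-t_3^{-1})$ applied to the off-diagonal pieces) to produce a zero factor. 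After translating the quiver data via~\eqref{iso of mod spa}, the argument carries over verbatim. This vanishing is the main obstacle in the proof: the contributions from $T_0$-fixed pairs with nontrivial thickening in the $u$-direction are the most delicate to control, and organizing the cancellation uniformly across all possible $t_3$-graded decompositions requires a careful case analysis.

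Part~(2) is a pure degree count. For any $T_0$-fixed pair $I=(\oO_X\to F)$ with $\chi(F)=n$, the class $\chi_X(F)^\vee\otimes e^m$ has rank $n$, so its $(T_0\times\mathbb{C}^*)$-equivariant Euler class is a polynomial of degree $n$ in $m$ with leading coefficient $m^n$. Writing
$$P_{n,d}^\Theta(e^m)=a_n(\lambda)\,m^n+(\mbox{lower-order terms in }m),\qquad a_n(\lambda)=\sum_{I\in P_n^\Theta(X,d)^{T_0}}e_{T_0}(\chi_X(I,I)_0^{1/2}),$$
and substituting $q=Q/m$ gives $P_{n,d}^\Theta(e^m)q^n\big|_{q=Q/m}=Q^n a_n(\lambda)+O(m^{-1})$. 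Summing over $n,d$ and letting $m\to\infty$ with $Q$ fixed collapses the series to the claimed right-hand side.
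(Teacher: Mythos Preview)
Your treatment of Part~(2) and of the ``supported on $Z$'' case of Part~(1) matches the paper: the Koszul/adjunction identity $\chi_X(\iota_*F,\iota_*F)^{1/2}=\chi_Z(F,F)$ together with 3-fold Serre duality $\chi_Z(I,I)_0=\chi_Z(F,F)-\chi_Z(F)+\chi_Z(F)^\vee\otimes t_3$ is exactly how the paper collapses the LHS to $e_{(\mathbb{C}^*)^3}(\chi_Z(I,I)_0)$, and the degree count for the insertion-free limit is identical.

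The gap is in the vanishing step. You suggest the zero arises from a weight of $\chi_X(F,F)^{1/2}$ (cross-terms like $\chi_Z(F_0,F_1)$, or the factor $1-t_3^{-1}$ on off-diagonal pieces) combining with the tautological insertion. This is not the mechanism, and I do not see how to make it work: nothing prevents those half-Euler-class weights from being nonzero after specialization. In the paper the vanishing comes \emph{entirely} from the tautological factor $e_{T_0}(\chi_X(F)^\vee\otimes t_3)$. The point is to produce a \emph{trivial} $T_0$-character inside $\chi_X(F)^\vee\otimes t_3$, and for this one must use the section and stability, which your outline never invokes. Concretely: since $I$ is also $(\mathbb{C}^*)^4$-fixed, decompose $\pi_*F=\bigoplus F^{i_1,i_2,i_3}$ by fibre weights; stability forces $s\neq 0$, so some $s^{-i_1,-i_2,-i_3}$ is nonzero, and the $\oO_X$-module compatibility of $s$ lets one inductively descend to $s^{0,0,-1}\neq 0$. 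Hence $F^{0,0,-1}=\oO_{\mathbb{P}^1}(aZ_0+bZ_\infty)\otimes t_3$ with $a,b\geqslant 0$, so $\chi(F^{0,0,-1})^\vee\otimes t_3$ contains the trivial character and the Euler class vanishes. One also needs that $\chi_X(F)=H^0(F)$ has no negative terms, so this trivial weight cannot cancel. Finally, your outline only treats the case where $F$ is a sheaf; for general $F\in\Per_{\leqslant 1}(X/Y)$ one must use the exact sequence $0\to\hH^{-1}(F)[1]\to F\to\hH^0(F)\to 0$ and argue that the weight decomposition and section argument still go through.
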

\begin{proof}
(1) We first assume that 
$I$ is scheme theoretically supported on $Z$,
so that it is written as 
 $I=(\oO_X\to \iota_*F)$.
By adjunction, we get 
\begin{align*}\chi_X(I,I)_0^{\frac{1}{2}} & \cneq -\chi_X(\iota_*F)+\chi_X(\iota_*F,\iota_*F)^{\frac{1}{2}} \\
&=-\chi_Z(F)+\chi_Z(F,F). \end{align*}
By the Serre duality for $Z$, we have 
\begin{align*}\chi_Z(I,I)_0=\chi_Z(F,F)-\chi_Z(F)+\chi_Z(F)^{\vee}\otimes t_3. \end{align*}
Then we have identities
\begin{align*}e_{T_0}(\chi_X(I,I)^{\frac{1}{2}}_0)\cdot e_{T_0\times \mathbb{C}^*}(\chi_X(\iota_*F)^{\vee}\otimes e^m)|_{m=\lambda_3}
&=\frac{e_{T_0}(\chi_Z(F,F))\cdot e_{T_0}(\chi_Z(F)^{\vee}\otimes t_3)}{e_{T_0}(\chi_Z(F))} \\
&=e_{T_0}(\chi_Z(I,I)_0)\\
&=e_{(T_0|_{Z})}(\chi_Z(I,I)_0). \end{align*}
Therefore (1) holds when $I$ is scheme theoretically 
supported on $Z$. 

Next we consider pairs $I=(\oO_X\to F)\in P^{\Theta}_n(X,d)^{T_0}$
thickened into normal direction of $Z$ inside $X$.  
We first deal with the case that $F$ is a sheaf, 
e.g. when $\Theta$ lies in a $Z_t$-stable pair chamber
(see Proposition \ref{Z_t chamber}). 
By Proposition \ref{prop torus fixed loci}, $I$ is also fixed by the full torus $(\mathbb{C}^*)^4$, hence fixed by the subtorus 
$(\mathbb{C}^{\ast})^3 \subset (\mathbb{C}^{\ast})^4$
acting on the fibers of the 
projection $\pi \colon X \to \mathbb{P}^1$. 
We have decompositions into 
$(\mathbb{C}^{\ast})^3$-weight spaces  
\begin{align*}
	\pi_*F =\bigoplus_{(i_1,i_2,i_3)\in \Delta\subset \mathbb{Z}^3} F^{i_1,i_2,i_3},  \quad
\pi_*\oO_X =\bigoplus_{(i_1,i_2,i_3)\in \mathbb{Z}_{\geqslant 0}^3} L_1^{-i_1}\otimes L_2^{-i_2}\otimes L_3^{-i_3}.
\end{align*}
Here 
$$L_1=\oO_{\mathbb{P}^1}(-Z_{\infty})\otimes t_1^{-1},\,\, L_2=\oO_{\mathbb{P}^1}(-Z_{\infty})\otimes t_2^{-1},\,\, 
L_3=\oO_{\mathbb{P}^1}\otimes t_3^{-1} $$
are equivariant line bundles on $\mathbb{P}^1$ and $\Delta$ is defined by 
$$\Delta:=\left\{(i_1,i_2,i_3)\in \mathbb{Z}^3:\,F^{i_1,i_2,i_3}\neq 0  \right\}. $$
The torus invariant section $s$ is determined 
by a collection of morphisms 
$$s^{-i_1,-i_2,-i_3}: L_1^{-i_1}\otimes L_2^{-i_2}\otimes L_3^{-i_3}\to F^{-i_1,-i_2,-i_3}, \quad i_1,i_2,i_3\geqslant 0. $$
Since $s$ is an $\oO_X$-module homomorphism, 
the above morphisms 
fit into a commutative diagram
\begin{align*}
\xymatrix{
L_1^{-i_1}\otimes L_2^{-i_2}\otimes L_3^{-i_3}\otimes L_1^{-1}   \ar[r]^-{=} \ar[d]^{s^{-i_1,-i_2,-i_3}} & L_1^{-i_1-1}\otimes L_2^{-i_2}\otimes L_3^{-i_3}  \ar[d]^{s^{-i_1-1,-i_2,-i_3}}\\
F^{-i_1,-i_2,-i_3}\otimes L_1^{-1} \ar[r]^-{\phi} &  F^{-i_1-1,-i_2,-i_3}. }
\end{align*}
We have 
similar commutative diagrams by replacing the role of 
$L_1$ with $L_2$ or $L_3$.

By the stability, $s$ is not identically zero, so there exists
$(i_1, i_2, i_3)$
such that $s^{-i_1,-i_2,-i_3} \neq 0$. 
Since $F$ is thickened into the normal 
direction of $Z$ inside $X$, 
we may assume $i_3\geqslant 1$.
From the above commutative diagrams, we obtain 
$$s^{-i_1+1,-i_2,-i_3},\,\, s^{-i_1,-i_2+1,-i_3},\,\, s^{-i_1,-i_2,-i_3+1} \neq 0. $$
By inductions, we know $F^{0,0,-1}\neq 0$ and $s^{0,0,-1}\neq 0$. So we have 
\begin{align*}
	F^{0,0,-1}=\oO_{\mathbb{P}^1}(aZ_0+bZ_{\infty})\otimes t_3, 
\end{align*}
for some $a, b \geqslant 0$.
Therefore we have identities
\begin{align*}e_{T_0}(\chi(F)^{\vee}\otimes t_3)
&=\sum_{(i_1,i_2,i_3)\in \Delta\subset \mathbb{Z}^3} e_{T_0}(\chi(F^{i_1,i_2,i_3})^{\vee}\otimes t_3) \\
&= e_{T_0}(\chi(F^{0,0,-1})^{\vee}\otimes t_3+\cdots) \\
&=e_{T_0}(1+\cdots)=0.
\end{align*} 
Here we have used the fact that $\chi(F)=H^0(F)$ so that $\chi(F)$ does not 
contain elements with negative signs (e.g. $-t_0^{w_0}t_1^{w_1}t_2^{w_0}t_3^{w_3}$) in the weight space decomposition.

A similar argument works if $F=F'[1]\in \Per(X/Y)$ for a one dimensional sheaf $F'$.
In general, we have a short exact sequence in $\Per(X/Y)$: 
\begin{align}\label{deco exa seq}0\to \mathcal{H}^{-1}(F)[1]\to F\to \mathcal{H}^{0}(F)\to 0, \end{align}
where $\mathcal{H}^{*}(F)$ are one dimensional sheaves. Applying $\Hom(\oO_X,-)$, we obtain the exact sequence
\begin{align*}0\to \Hom(\oO_X,\mathcal{H}^{-1}(F)[1])\to \Hom(\oO_X, F)\to \Hom(\oO_X,\mathcal{H}^{0}(F))\to 0
	\end{align*}
together with the vanishings
\begin{align*}
	\Hom^{i\neq 0}(\oO_X,\mathcal{H}^{-1}(F)[1])=0, \quad 
	\Hom^{i\neq 0}(\oO_X,\mathcal{H}^{0}(F))=0. \end{align*}
Since $(F,s)$ is $(\mathbb{C}^*)^4$-fixed, we pushforward $F$ and \eqref{deco exa seq} to $\mathbb{P}^1$ and do weight space decomposition as before. It is easy to see the above argument applies. 
%Therefore,  
%\begin{align*}e_{T_0}(\chi(F)^{\vee}\cdot e^m)&=e_{T_0}(H^0(X,F)^{\vee}\cdot e^m) \\
%&=e_{T_0}(H^1(X,\mathcal{H}^{-1}(F))^{\vee}\cdot e^m)\cdot
%e_{T_0}(H^0(X,\mathcal{H}^{0}(F))^{\vee}\cdot e^m) \\
%&=e_{T_0}(\chi(\mathcal{H}^{-1}(F)[1])^{\vee}\cdot e^m)\cdot
%e_{T_0}(\chi(\mathcal{H}^{0}(F))^{\vee}\cdot e^m).  \end{align*}
%If $F$ is not scheme theoretically supported on $Z$, at least one of $\mathcal{H}^{-1}(F),\mathcal{H}^{0}(F)$ is a thickened one dimensional sheaf.We are reduced to the previous argument.

(2) We have 
\begin{align*}
&\quad\,  \lim_{\begin{subarray}{c}Q\, \mathrm{fixed} \\ m\to \infty \end{subarray}}\left(\sum_{n,d}P^{\Theta}_{n,d}(e^m)q^nt^d\big|_{Q=qm}\right) \\
&=\lim_{\begin{subarray}{c}m\to \infty \end{subarray}}\left(\sum_{n,d}\frac{P^{\Theta}_{n,d}(e^m)}{m^n}Q^nt^d\right)\\
&=\lim_{\begin{subarray}{c}m\to \infty \end{subarray}}\left(\sum_{n,d}Q^nt^d\sum_{\begin{subarray}{c}I \in P^{\Theta}_n(X,d)^{T_0} \end{subarray}}
e_{T_0}(\chi_X(I,I)^{\frac{1}{2}}_0)\cdot \frac{e_{T_0\times \mathbb{C}^*}(\chi_X(F)^{\vee}\otimes e^m)}{m^n}\right) \\
&=\lim_{\begin{subarray}{c}m\to \infty \end{subarray}}\left(\sum_{n,d}Q^nt^d\sum_{\begin{subarray}{c}I \in P^{\Theta}_n(X,d)^{T_0} \end{subarray}}
e_{T_0}(\chi_X(I,I)^{\frac{1}{2}}_0)\cdot \frac{e_{T_0\times \mathbb{C}^*}(H^0(X,F)^{\vee}\otimes e^m)}{m^n}\right) \\
&=\lim_{\begin{subarray}{c}m\to \infty \end{subarray}}\left(\sum_{n,d}Q^nt^d\sum_{\begin{subarray}{c}I \in P^{\Theta}_n(X,d)^{T_0} \end{subarray}}
e_{T_0}(\chi_X(I,I)^{\frac{1}{2}}_0)\cdot \frac{(m^n+\mathrm{l.o.t.})}{m^n}\right)\\
&=\sum_{n,d}Q^nt^d\sum_{\begin{subarray}{c}I \in P^{\Theta}_n(X,d)^{T_0} \end{subarray}}
e_{T_0}(\chi_X(I,I)^{\frac{1}{2}}_0),
\end{align*}
where `l.o.t.' means lower order terms of $m$ and we use $\chi(F)=n$ in the fourth identity. 
\end{proof}

\subsection{Wall-crossing formula} 
Let $\Theta_{\mathrm{PT}}\cneq (-1+0^+, 1)$ and consider tautological PT invariants 
\begin{align*}
	P_{n, d}(e^m) \cneq P_{n, d}^{\Theta_{\mathrm{PT}}}(e^m)
	 \in \Lambda. 
	\end{align*}
In \cite[Appendix B]{CKM1}, the following closed formula is conjectured.
\begin{conj}[\cite{CKM1}]\label{conj formula in pt chamber}
There exist choices of signs such that 
$$\sum_{n,d}P_{n,d}(e^m)q^nt^d=\prod_{k\geqslant 1}\left(1-q^k t\right)^{k\cdot \frac{m}{\lambda_3}}, $$
where $-\lambda_3$ is the equivariant parameter of $\oO_{\mathbb{P}^1}$ in $X$.
\end{conj}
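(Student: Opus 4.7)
The plan is to derive the formula by wall-crossing from the empty chamber $\{\theta_0>0,\theta_1>0\}$ to the PT chamber $\Theta_{\mathrm{PT}}=(-1+0^+,1)$, using the (conjectural) wall-crossing formula of Proposition~\ref{wall crossing formula}.

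First, I would use Proposition~\ref{empty chamber} to observe that in the empty chamber $P^{\Theta}_n(X,d)$ is empty unless $(n,d)=(0,0)$, in which case it is a single reduced point corresponding to the trivial pair $(\oO_X\to 0)$. Hence the empty-chamber generating series equals $1$.

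Next, I would choose the horizontal path $\{(\theta_0,1):-1+0^+\leqslant\theta_0\leqslant 1\}$ connecting the empty chamber to $\Theta_{\mathrm{PT}}$. A direct inspection of the wall list \eqref{all walls} shows that along this segment one crosses precisely the walls $L_{-}^{-}(k)$ for $k=1,2,3,\ldots$ at $\theta_0=-(k-1)/k$; the walls $L_{-}^{+}(k)$ lie in $\theta_0\leqslant-(k+1)/k<-1$, the wall $L_{-}(\infty)$ lies at $\theta_0=-1$ (never reached), and all $L_{+}^{\pm}(k)$ require $\theta_0>\theta_1=1$. Applying Proposition~\ref{wall crossing formula} at each crossed wall with $\Theta_+$ on the PT side and $\Theta_-$ on the empty-chamber side, the generating series gets multiplied by $(1-q^kt)^{km/\lambda_3}$. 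For fixed $(n,d)$ only finitely many such walls affect the coefficient of $q^nt^d$---since the simple object $S_{k-1}$ contributes $(k,1)$ to $(\chi,d)$, we must have $k\leqslant n$---so the infinite iteration is coefficient-wise well-defined and produces $\prod_{k\geqslant 1}(1-q^kt)^{km/\lambda_3}$, matching the conjectured formula.

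The main obstacle is that this strategy rests on Conjecture~\ref{intro wall cross conj}, which is presently verified only in a handful of cases (the trivial pair $I_0=\oO_X$, and the thickened ideals $I_{l\mathbb{P}^1}$ up to bounded degree as listed after Corollary~\ref{cor on nc}). A complete proof would require either (a) an algebro-geometric analysis of the flip-type diagram \eqref{diagram:wall} generalizing Nagao-Nakajima's Grassmannian-bundle stratification to the $(-2)$-shifted symplectic setting of Borisov-Joyce / Oh-Thomas, where the square root Euler class of the virtual tangent bundle must be controlled carefully on each stratum, or (b) a direct $T_0$-equivariant localization computation on $P^{\Theta_{\mathrm{PT}}}_n(X,d)$ that establishes the product formula in one stroke, bypassing the wall-by-wall comparison. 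A secondary difficulty is fixing consistent orientations and sign choices in \eqref{sign rule} across all chambers so that the individual wall-crossing signs compose into the global product without spurious cancellations; the empirical rule discussed in Remark~\ref{sign rmk} must be shown compatible with the entire chain of wall-crossings, and with the $(-2)$-shifted global orientations of~\cite{CGJ}.
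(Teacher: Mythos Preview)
Your proposal takes essentially the same route as the paper: the statement is presented there as a \emph{conjecture}, not a theorem, and the paper's only ``proof'' is the conditional one recorded in Corollary~\ref{cor on nc}, namely that Conjecture~\ref{wall cross conj} implies Conjecture~\ref{conj formula in pt chamber} by iterating Proposition~\ref{wall crossing formula} from the empty chamber through the walls $L_-^-(k)$ to the PT chamber. You have correctly identified both this derivation and its dependence on the unproven wall-crossing conjecture, so your assessment of the obstacle matches the paper's own position.
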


The aim of this section is to 
give an interpretation of the above conjecture 
in terms of wall-crossing of $\Theta$-stable perverse coherent systems. 
Suppose that $\Theta$ lies on 
one of the walls in \eqref{all walls} except the DT/PT wall\footnote{As in \cite{NN}, we exclude the DT/PT wall here. A reason is that, at the DT/PT wall, the simple object 
in (\ref{def:Sk}) is not a line bundle on $\mathbb{P}^1$ nor its
Fourier-Mukai transform, but a skyscraper sheaf of a point, so 
a separate treatment is required.} $L_{\pm}(\infty)$, and $\Theta_{\pm}$ lie in 
its adjacent chambers.
We consider the flip type diagram of $T_0$-fixed loci of good moduli spaces: 
\begin{align}\label{diagram:wall2}
	\xymatrix{
		\bigcup_{n,d} P^{\Theta_{-}}_n(X,d)^{T_0} \ar[rd]_{\pi^-} & & \ar[ld]^{\pi^+} \bigcup_{n,d}P^{\Theta_{+}}_n(X,d)^{T_0} \\
		& \bigcup_{n,d} P^{\Theta}_n(X,d)^{T_0}. & }
\end{align}
Here $P^{\Theta}_n(X,d)^{T_0}$ consists of $\Theta$-polystable perverse coherent systems of type 
\begin{align}\notag
	I_0\oplus S_{k-1}^{\oplus r}[-1], \quad r\geqslant 0,
\end{align}
%\begin{YC}
%Here it looks like we only consider $Z_t$-stable pairs? 
%Can it happen that the above $\oO_{\mathbb{P}^1}(k-1)$ replaced by %$\oO_{\mathbb{P}^1}(-k-1)[1]$ and others. \\ 
%\end{YC}
where $I_0$ is a $T_0$-fixed $\Theta$-stable perverse coherent system,
$S_{k-1}$ is a $T_0$-fixed $\Theta$-stable
perverse coherent sheaf with $\Theta(S_{k-1})=0$, 
and $r$ can be computed from the Chern character of $I_0$. 
By Proposition~\ref{classify:unframed}, 
the object $S_{k-1}$ is given by 
\begin{align}\label{def:Sk}
	S_{k-1}=\begin{cases}
		\oO_{\mathbb{P}^1}(k-1), & \Theta \in L_{-}^{-}(k), \\
		\oO_{\mathbb{P}^1}(-k-1)[1], & \Theta \in L_{-}^{+}(k), \\
		\Upsilon_0(\oO_{\mathbb{P}^1}(-k-1)[1]), & \Theta \in L_{+}^{-}(k), \\
		\Upsilon_0(\oO_{\mathbb{P}^1}(k-1)), & \Theta \in L_{+}^{+}(k).
	\end{cases} 
\end{align}
Here $\Upsilon_0$ is the derived equivalence
under flop (\ref{fequiv})
and 
$\oO_{\mathbb{P}^1}$ is 
scheme theoretically supported on the zero section 
$\mathbb{P}^1\times \{0\}\subset X$. 
For a $T_0$-fixed $\Theta$-stable perverse coherent system 
$I_0$, we consider the following 
sequence of $\Theta$-polystable objects for all $r\geqslant 0$: 
\begin{align*}
	P_{k-1, r}^{I_0}:=\left\{I_0\oplus S_{k-1}^{\oplus r}[-1]
	\right\}\in \bigcup_{n,d}P^{\Theta}_n(X,d)^{T_0}. 
\end{align*}
%The aim of this section is to give it an interpretation using wall-crossing in $\Theta$-stable coherent systems. 
%Let $\Theta$ be one of the walls in \eqref{all walls} except $L_{\pm}(\infty)$, %and $\Theta_{\pm}$ be the adjacent chambers.
%Consider the flip type diagram of $T_0$-fixed loci of good moduli spaces: 
%\begin{align}\label{diagram:wall}
%\xymatrix{
%\bigcup_{n,d} P^{\Theta_{-}}_n(X,d)^{T_0} \ar[rd]_{\pi^-} & & \ar[ld]^{\pi^+} %\bigcup_{n,d}P^{\Theta_{+}}_n(X,d)^{T_0} \\
%& \bigcup_{n,d} P^{\Theta}_n(X,d)^{T_0}. & }
%\end{align}
%Here $P^{\Theta}_n(X,d)^{T_0}$ consists of $\Theta$-polystable perverse coherent systems of type 
%$$I_0\oplus \oO_{\mathbb{P}^1}(k-1)^{\oplus r}[-1], \quad k\geqslant 1,\, r\geqslant 0, $$
%where $I_0$ is a $T_0$-fixed $\Theta$-stable perverse coherent system, $\oO_{\mathbb{P}^1}(k-1)$ is scheme theoretically supported on the zero section 
%$\mathbb{P}^1\times \{0\}\subset X$, $k$ is determined by the choice of wall $\Theta$ (e.g. when $\Theta\in L^{-}_{-}(l)$, then $k=l$) and $r$ can be computed from the Chern character of $I_0$.
By Proposition \ref{dim red}, when $m=\lambda_3$ (also taking specialization $\lambda_0+\lambda_1+\lambda_2=0$), 
the invariants in Definition~\ref{taut inv} 
recover Nagao-Nakajima's counting invariants of 
perverse coherent systems on the resolved conifold
 $\oO_{\mathbb{P}^1}(-1,-1)$ \cite{NN}. 
In~\cite[Theorem~3.12]{NN}, they proved a wall-crossing formula 
of their invariants 
by stratifying $\pi^{\pm}$ into Grassmannian bundles and showed that the difference of their invariants under wall-crossing is independent 
of the choice of $I_0$. Motivated by their work, we conjecture this phenomenon extends to $\oO_{\mathbb{P}^1}(-1,-1,0)$:
\begin{conj}\label{wall cross conj}
	Let $\Theta$ lie on 
	one of the 
	walls $L_{\pm}^{-}(k)$, $L_{\pm}^{+}(k)$ in \eqref{all walls}. 
		\begin{itemize}
		\item If $\Theta=(\theta_0,\theta_1)\in L_{-}^-(k)$ or $L_{+}^-(k)$ $($$k\geqslant1$$)$ and 
		$\Theta_{\pm}=(\theta_0\mp 0^+,\theta_1)$, then there exist choices of signs such that
		$$\frac{\sum_{r}t^r\sum_{I\in \pi_+^{-1}(P^{I_0}_{k-1,r})}e_{T_0}(\chi_X(I,I)^{\frac{1}{2}}_0)\cdot e_{T_0\times \mathbb{C}^*}(\chi_X(F)^{\vee}\otimes e^m)}{ 
			\sum_{r}t^r\sum_{I\in \pi_-^{-1}(P^{I_0}_{k-1,r})}e_{T_0}(\chi_X(I,I)^{\frac{1}{2}}_0)\cdot e_{T_0\times \mathbb{C}^*}(\chi_X(F)^{\vee}\otimes e^m)}
		=(1-t)^{k\frac{m}{\lambda_3}}. $$
		\item If $\Theta=(\theta_0,\theta_1)\in L_{-}^+(k)$ or $L_{+}^+(k)$ $($$k\geqslant0$$)$ and 
		$\Theta_{\pm}=(\theta_0\mp 0^+,\theta_1)$, then there exist choices of signs such that
		$$\frac{\sum_{r}t^r\sum_{I\in \pi_+^{-1}(P^{I_0}_{k-1,r})}e_{T_0}(\chi_X(I,I)^{\frac{1}{2}}_0)\cdot e_{T_0\times \mathbb{C}^*}(\chi_X(F)^{\vee}\otimes e^m)}{ 
			\sum_{r}t^r\sum_{I\in \pi_-^{-1}(P^{I_0}_{k-1,r})}e_{T_0}(\chi_X(I,I)^{\frac{1}{2}}_0)\cdot e_{T_0\times \mathbb{C}^*}(\chi_X(F)^{\vee}\otimes e^m)}
		=(1-t^{-1})^{k\frac{m}{\lambda_3}}.$$
	\end{itemize}
	%Here $P^{I_0}_{k-1,r}:=\left\{I_0\oplus S_{k-1}^{\oplus r}[-1]
	%\right\}\in \bigcup_{n,d}P^{\Theta}_n(X,d)^{T_0}$.
\end{conj}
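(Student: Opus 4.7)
The plan is to follow the strategy of Nagao--Nakajima \cite{NN} from the resolved conifold and adapt it to the 4-fold setting using the $T_0$-equivariant localization machinery of Definition \ref{taut inv}. The conjecture asserts that the wall-crossing contribution is \emph{independent of $I_0$}, so the proof must isolate the pure $S_{k-1}$-contribution and cancel all cross terms involving $I_0$ in the ratio.

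First, I would analyze the fiber structure of $\pi^{\pm}$ over $P^{I_0}_{k-1,r}$. A $\Theta_{\pm}$-stable object $I \in (\pi^{\pm})^{-1}(P^{I_0}_{k-1,r})$ is obtained by iterated extensions of $I_0$ with $r$ copies of $S_{k-1}[-1]$, with the direction of extension determined by the sign of $\Theta-\Theta_{\pm}$. Restricting to the $T_0$-fixed locus (which is finite by Proposition \ref{prop torus fixed loci}) reduces each fiber to a finite set of $T_0$-equivariant iterated extensions, indexed combinatorially by the eigenspace decomposition of $\Ext^1(I_0, S_{k-1}[-1])$ (on one side) or $\Ext^1(S_{k-1}[-1], I_0)$ (on the other).

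Second, at each $T_0$-fixed point $I$ in a fiber I would decompose the integrand using bilinearity of the Euler pairing along the filtration: writing schematically
\begin{align*}
\chi_X(I,I) &= \chi_X(I_0,I_0) + r\bigl(\chi_X(I_0,S_{k-1}[-1]) + \chi_X(S_{k-1}[-1],I_0)\bigr) + r^2 \chi_X(S_{k-1},S_{k-1}), \\
\chi_X(F) &= \chi_X(F_0) + r\,\chi_X(S_{k-1}[-1]),
\end{align*}
so that, after extracting the square root and multiplying through, the integrand factors (up to the equivariant cross-term contributions) as a product of an $I_0$-piece, a pure $S_{k-1}$-piece, and cross terms $\Ext^\bullet(I_0,S_{k-1})$, $\Ext^\bullet(S_{k-1},I_0)$. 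The key step is to use the Calabi-Yau 4-fold Serre duality to pair the two cross terms: their equivariant Euler classes should combine, summed over the fibers of $\pi^{\pm}$, to produce a term that is formally the same on both sides of the wall, and hence cancel in the ratio. This is the analogue of the Grassmannian identity $[Gr(r,N)] = [Gr(N-r,N)]$ used in \cite[Theorem~3.12]{NN}, but realized through Borisov--Joyce / Oh--Thomas localized classes with the consistent sign rule of Remark \ref{sign rmk}.

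Third, once the $I_0$-dependence is removed, I would identify the remaining factor as the pure $S_{k-1}$-contribution. For $S_{k-1} = \oO_{\mathbb{P}^1}(k-1)$ (the case $L_{-}^{-}(k)$), the adjunction computation $\chi_X(F,F)^{1/2} = \chi_Y(F,F)$ of Remark \ref{sign rmk} combined with the equivariant Riemann--Roch formula of Remark \ref{rmk on computation}, together with the $\chi_X(F)^\vee \otimes e^m$ insertion contributing a factor proportional to $m/\lambda_3$ in the exponent, is expected to reproduce the factor $(1-t)^{k m/\lambda_3}$. The sign $\pm$ in the two cases of the conjecture is dictated by \eqref{def:Sk}, where the Fourier--Mukai transform under the flop $\Upsilon_0$ swaps $t \leftrightarrow t^{-1}$ via the dualization of $S_{k-1}$.

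The main obstacle will be establishing the independence of $I_0$ rigorously. Unlike the 3-fold setting, where the fibers of $\pi^{\pm}$ literally are Grassmannian bundles and the wall-crossing follows from a clean motivic identity, the 4-fold moduli carries a $(-2)$-shifted symplectic structure and the relevant Ext-groups have contributions in several degrees interacting through Serre duality. Showing that a \emph{uniform} choice of signs makes the $I_0$-cross-term contributions cancel seems to require either a new orientation-theoretic input (possibly along the lines of the global orientations of \cite{boj}) or a case-by-case verification, which is precisely why the paper's Theorem \ref{thm on js} is proven only for specific choices of $I_0$ and by computer-assisted checks in a range of degrees.
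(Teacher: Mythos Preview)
The statement is a \emph{Conjecture} in the paper, not a theorem, and the paper gives no proof of it. What the paper offers is the Nagao--Nakajima heuristic you describe, together with partial evidence: Theorem~\ref{thm on js} handles the case $I_0=\oO_X$ on the walls $L_-^-(k)$ (via a separate argument from \cite{CT3} using a compact model and Atiyah--Bott localization, not the fiberwise decomposition you sketch), and Propositions~\ref{check conj muti C case} and \ref{check conj single C case} give computer verifications for further specific $I_0$ up to bounded degree in $t$.

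Your outline is therefore not to be compared against a proof but against an open conjecture, and your final paragraph already names the obstruction correctly. The second step of your plan is where it ceases to be a proof: the claim that Serre duality makes the $I_0$-cross-terms ``combine \ldots\ to produce a term that is formally the same on both sides of the wall'' is precisely what is not known. In the 3-fold case this works because the fibers of $\pi^{\pm}$ are literally Grassmannians and one compares Euler characteristics; in the 4-fold case the integrand is a half-Euler class depending on a sign choice at each torus-fixed point, and there is no known mechanism forcing these signs to organize so that the cross-terms cancel uniformly in $I_0$. Indeed, the explicit sign rules recorded in Propositions~\ref{check conj muti C case} and~\ref{check conj single C case} (e.g.\ $\mathrm{sign}(F)$ depending on which $d_i$ or $e_i$ are positive) are not uniform in any visible structural way. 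So the gap you identify is real and is exactly the content of the conjecture; your proposal is a reasonable strategic sketch but not a proof, and neither is anything in the paper.
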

The formulae in Conjecture~\ref{wall cross conj}
in particular imply that the quotient series in the LHS are independent of the choice of $I_0$. 
The above conjecture implies the following wall-crossing formulae of tautological invariants given in Definition \ref{taut inv}:
\begin{prop}\label{wall crossing formula}
	Suppose that 
Conjecture \ref{wall cross conj} is true.
Then we have the following:
\begin{itemize}
\item If $\Theta=(\theta_0,\theta_1)\in L_{-}^-(k)$ or $L_{+}^-(k)$ $($$k\geqslant1$$)$ and 
$\Theta_{\pm}=(\theta_0\mp 0^+,\theta_1)$, then there exist choices of signs such that
$$\frac{\sum_{n,d}P^{\Theta_+}_{n,d}(e^m)q^nt^d}{\sum_{n,d}P^{\Theta_-}_{n,d}(e^m)q^nt^d}
=(1-q^{k}t)^{k\frac{m}{\lambda_3}}. $$
\item If $\Theta=(\theta_0,\theta_1)\in L_{-}^+(k)$ or $L_{+}^+(k)$ $($$k\geqslant0$$)$ and 
$\Theta_{\pm}=(\theta_0\mp 0^+,\theta_1)$, then there exist choices of signs such that
$$\frac{\sum_{n,d}P^{\Theta_+}_{n,d}(e^m)q^nt^d}{\sum_{n,d}P^{\Theta_-}_{n,d}(e^m)q^nt^d}
=(1-q^{k}t^{-1})^{k\frac{m}{\lambda_3}}. $$
\end{itemize}
\end{prop}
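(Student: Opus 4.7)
The plan is to derive the proposition as a formal consequence of Conjecture \ref{wall cross conj} by partitioning the $T_0$-fixed loci of $P_n^{\Theta_\pm}(X,d)$ according to the wall-crossing map $\pi_\pm$ of \eqref{diagram:wall2}, and then summing the single-$I_0$ identity of Conjecture \ref{wall cross conj} over all $T_0$-fixed $\Theta$-stable perverse coherent systems $I_0$, weighted by $q^{n_{I_0}} t^{d_{I_0}}$.

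First I would observe that every $T_0$-fixed $\Theta_{\pm}$-stable perverse coherent system $I$ is also $\Theta$-semistable (since $\Theta_\pm$ lies arbitrarily close to $\Theta$), hence determines a $\Theta$-polystable class $\pi_\pm(I) \in \bigcup_{n,d} P_n^{\Theta}(X,d)^{T_0}$. By the classification of $\Theta$-stable simple objects $S$ with $\Theta(S)=0$ on the wall in question (Proposition \ref{classify:unframed} together with \eqref{def:Sk}), the polystable representative is uniquely of the form $I_0 \oplus S_{k-1}^{\oplus r}[-1]$ for some $T_0$-fixed $\Theta$-stable $I_0$ and $r \geq 0$. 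This produces a disjoint decomposition
\begin{align*}
\bigsqcup_{n,d} P_n^{\Theta_\pm}(X,d)^{T_0} = \bigsqcup_{I_0} \bigsqcup_{r \geq 0} \pi_\pm^{-1}\bigl(P_{k-1, r}^{I_0}\bigr).
\end{align*}

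Next, every $I \in \pi_\pm^{-1}(P_{k-1,r}^{I_0})$ shares its Chern character with the polystable representative, and the numerical class contributed by $S_{k-1}$ in $(\chi, d)$-coordinates is $(k, \epsilon)$, with $\epsilon = +1$ on the walls $L_{\pm}^{-}(k)$ and $\epsilon = -1$ on the walls $L_{\pm}^{+}(k)$ (using the flop equivalence \eqref{fequiv} to bring the $L_{+}^{\pm}(k)$ cases into the same form as their $L_{-}^{\pm}(k)$ counterparts). Consequently $q^{\chi(F_I)} t^{d(F_I)} = q^{n_{I_0}} t^{d_{I_0}} \cdot (q^k t^{\epsilon})^r$, and substituting this into the defining series gives
\begin{align*}
\sum_{n,d} P_{n,d}^{\Theta_\pm}(e^m)\,q^n t^d = \sum_{I_0} q^{n_{I_0}} t^{d_{I_0}} \sum_{r \geq 0} (q^k t^{\epsilon})^r \sum_{I \in \pi_\pm^{-1}(P_{k-1,r}^{I_0})} e_{T_0}(\chi_X(I,I)^{\frac{1}{2}}_0) \cdot e_{T_0 \times \mathbb{C}^*}(\chi_X(F_I)^\vee \otimes e^m).
\end{align*}

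Finally I would apply Conjecture \ref{wall cross conj} with its formal variable specialized to $q^k t^{\epsilon}$: the ratio of the inner $r$-sums over $\pi_+^{-1}$ and $\pi_-^{-1}$ equals $(1 - q^k t^{\epsilon})^{km/\lambda_3}$, and the key feature that makes the argument work is that this factor is \emph{independent of $I_0$}. It therefore pulls through the outer sum over $I_0$, yielding
\begin{align*}
\frac{\sum_{n,d} P_{n,d}^{\Theta_+}(e^m)\,q^n t^d}{\sum_{n,d} P_{n,d}^{\Theta_-}(e^m)\,q^n t^d} = (1 - q^k t^{\epsilon})^{km/\lambda_3},
\end{align*}
which is the asserted formula in all four cases. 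The main nontrivial content sits in Conjecture \ref{wall cross conj} itself; the derivation here is a purely formal rearrangement, so no hard step arises. The one compatibility to be verified is that the per-fixed-point sign choices in Definition \ref{taut inv} can be arranged to be simultaneously compatible with the signs furnished by Conjecture \ref{wall cross conj} for all $I_0$, which is ensured by the consistent sign rule of Remark \ref{sign rmk}.
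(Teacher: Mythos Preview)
Your proposal is correct and follows essentially the same route as the paper: decompose the $T_0$-fixed locus of $P_n^{\Theta_\pm}(X,d)$ according to the polystable image $I_0 \oplus S_{k-1}^{\oplus r}[-1]$ under $\pi_\pm$, track the numerical shift $(q^k t^{\epsilon})^r$, and then apply Conjecture~\ref{wall cross conj} term-by-term, using independence of $I_0$ to factor the ratio. The paper's proof is terser but structurally identical; your added justification of the numerical class of $S_{k-1}$ in the $L_+^{\pm}(k)$ cases and the remark on sign compatibility are reasonable elaborations.
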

\begin{proof}
If $\Theta$ lies on $L_-^-(k)$ or $L_+^-(k)$, we have 
\begin{align*}
&\quad \,\, \sum_{n,d}P^{\Theta_{\pm}}_{n,d}(e^m)q^nt^d \\
&=\sum_{n_0, d_0}q^{n_0}t^{d_0}
\sum_{I_0 \in P_{n_0}^{\Theta \mathchar`-\rm{st}}(X, d_0)^{T_0}}
\left(\sum_{r\geqslant0}
\sum_{I\in \pi_{\pm}^{-1}(P^{I_0}_{k-1,r})}
e_{T_0}(\chi_X(I,I)^{\frac{1}{2}}_0)\cdot e_{T_0\times \mathbb{C}^*}(\chi_X(F)^{\vee}\otimes e^m)
(q^kt)^r \right),
\end{align*}
where $P_{n_0}^{\Theta \mathchar`-\rm{st}}(X, d_0)^{T_0}$ denotes the set of 
$T_0$-fixed $\Theta$-stable perverse coherent systems with numerical class $(n_0,d_0)$.
Applying Conjecture~\ref{wall cross conj}, we prove the proposition in 
this case. The other cases can be similarly obtained. 
%When $\ch(I_0)=(1,0,0,-d_0,-n_0)$, we multiply both the denominator and numerator of the formula in 
%Conjecture \ref{wall cross conj} by $t^{d_0}q^{n_0}$ and take a sum over all possible $I_0$. Take a change of 
%variable $t\to t\cdot q^{k}$ (resp. $t\to t\cdot q^{-k}$) will do the work.
\end{proof}
In particular, this gives a wall-crossing interpretation of Conjecture \ref{conj formula in pt chamber} and a conjectural 
formula for non-commutative tautological invariants.
\begin{cor}\label{cor on nc}
Conjecture \ref{wall cross conj} implies Conjecture \ref{conj formula in pt chamber}. 
Moreover, if we further assume the DT/PT conjecture \cite[\S 0.4]{CKM1}, then there exist choices of 
signs such that 
$$\sum_{n,d}P^{\Theta_{\mathrm{NC}}}_{n,d}(e^m)q^nt^d=
M(q)^{2\frac{m}{\lambda_3}}
\prod_{k\geqslant 1}\left(1-q^k t\right)^{k \frac{m}{\lambda_3}}
\prod_{k\geqslant 1}\left(1-q^k t^{-1}\right)^{k \frac{m}{\lambda_3}},$$
where $\Theta_{\mathrm{NC}}=(\theta_0<0,\theta_1<0)$ 
lies in the non-commutative chamber and 
$$M(q):=\prod_{k\geqslant 1}(1-q^k)^{-k}$$ 
is the MacMahon function.
\end{cor}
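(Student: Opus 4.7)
The plan is to iterate the wall-crossing formula of Proposition~\ref{wall crossing formula} (which is a direct consequence of Conjecture~\ref{wall cross conj}) starting from the empty chamber, where the generating series is trivially known.

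For the first assertion, I begin in the empty chamber $\{\theta_0,\theta_1>0\}$: by Proposition~\ref{empty chamber} the only $\Theta$-stable perverse coherent system is the trivial pair $\oO_X$, so the generating series equals $1$. As $\Theta$ is moved toward the PT chamber $\Theta_{\mathrm{PT}}=(-1+0^+,1)$, it crosses in order the walls $L^-_-(k)$ for $k\geqslant 1$, which are precisely the walls in the region $\theta_0<\theta_1$ separating the empty chamber from the DT/PT wall $L_-(\infty)$ and accumulating at it (see Figure~\ref{figure1}). Each such crossing multiplies the generating series by $(1-q^k t)^{k m/\lambda_3}$ according to Proposition~\ref{wall crossing formula}, and iterating yields
$$\sum_{n,d}P_{n,d}(e^m)\,q^n t^d=\prod_{k\geqslant 1}(1-q^k t)^{k m/\lambda_3},$$
which is Conjecture~\ref{conj formula in pt chamber}. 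The infinite product is unambiguous as a formal power series because each $(q^n,t^d)$-coefficient is determined by finitely many factors.

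For the second assertion, I then continue crossing walls past the PT chamber. Crossing the DT/PT wall $L_-(\infty)$ multiplies the series by $M(q)^{2m/\lambda_3}$, by the assumed DT/PT conjecture from \cite[\S 0.4]{CKM1}. To reach the non-commutative chamber $\Theta_{\mathrm{NC}}$, I then exploit the flop equivalence \eqref{fequiv} (which identifies $X$ with $X^+$ and sends the curve class $[\mathbb{P}^1]$ to its negative, i.e.\ $t\mapsto t^{-1}$): the same wall-crossing analysis applied to the walls $L^{\pm}_+(k)$ on the flopped side contributes the additional factor $\prod_{k\geqslant 1}(1-q^k t^{-1})^{k m/\lambda_3}$. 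Multiplying the three contributions produces the claimed expression for $\sum_{n,d}P^{\Theta_{\mathrm{NC}}}_{n,d}(e^m)\,q^n t^d$.

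The main obstacle is to identify the precise sequence of walls traversed between $\Theta_{\mathrm{NC}}$ and the PT/DT chambers on both sides of the diagonal $\theta_0+\theta_1=0$, and to check that the ``choices of signs'' appearing in Conjecture~\ref{wall cross conj} and in the DT/PT conjecture of \cite[\S 0.4]{CKM1} can be made compatibly throughout the entire path, so that the product of local wall-crossing factors assembles into the single global generating series on the left-hand side. A subsidiary issue is the handling of the infinite accumulations of walls at $L_\pm(\infty)$, but this is harmless at the level of formal power series in $q$ and $t$.
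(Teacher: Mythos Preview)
Your overall strategy is correct and coincides with what the paper (which omits an explicit proof) intends: iterate Proposition~\ref{wall crossing formula} starting from the empty chamber, where the series is $1$ by Proposition~\ref{empty chamber}, and read off the product of wall-crossing factors. The first assertion is handled exactly right: the walls between the empty chamber and $\Theta_{\mathrm{PT}}$ are precisely $L_-^-(k)$ for $k\geqslant1$, and Proposition~\ref{wall crossing formula} gives the product $\prod_{k\geqslant1}(1-q^kt)^{km/\lambda_3}$.

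For the second assertion, however, your detour through the flop equivalence is unnecessary and leads you to name the wrong walls. Once you are in the DT chamber (just below $L_-(\infty)$, still in the region $\theta_0<\theta_1$), the walls separating you from the non-commutative chamber are the $L_-^+(k)$ for $k\geqslant0$, not the $L_+^{\pm}(k)$ (which lie in the region $\theta_0>\theta_1$ and are never crossed on this path). Proposition~\ref{wall crossing formula} applies \emph{directly} to the walls $L_-^+(k)$ and already gives the factor $(1-q^kt^{-1})^{km/\lambda_3}$; no appeal to the flop is required. Concretely, along the path DT $\to$ NC one moves from the $\Theta_-$ side to the $\Theta_+$ side of each $L_-^+(k)$, so the NC series is obtained from the DT series by multiplying by $\prod_{k\geqslant1}(1-q^kt^{-1})^{km/\lambda_3}$ (the $k=0$ factor being trivial). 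Combined with the DT/PT factor $M(q)^{2m/\lambda_3}$ and the PT formula, this yields the claimed expression. Your flop heuristic happens to produce the correct factor because Proposition~\ref{wall crossing formula} assigns the same contribution to $L_-^+(k)$ and $L_+^+(k)$, but the argument as written does not describe the actual path in $\Theta$-space.
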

\begin{rmk}
By Proposition~\ref{dim red}, the substitution 
$m=\lambda_3$ and specialization $\lambda_0+\lambda_1+\lambda_2=0$ allow us to 
recover the formula of non-commutative DT invariants of resolved conifold ~\cite{Sze, Young} from Corollary~\ref{cor on nc}. 
\end{rmk}
Applying the insertion-free limit in Proposition \ref{dim red}, we obtain a wall-crossing formula for cohomological invariants without insertions:
\begin{prop}
		Suppose that 
	Conjecture \ref{wall cross conj} is true.
	Then we have the following: 
\begin{itemize}
\item If $\Theta=(\theta_0,\theta_1)\in L_{-}^-(k)$ or $L_{+}^-(k)$ $($$k\geqslant1$$)$ and 
$\Theta_{\pm}=(\theta_0\mp 0^+,\theta_1)$, then there exist choices of signs such that
\begin{align*}\frac{\sum_{n,d}q^nt^d\sum_{\begin{subarray}{c}I \in P^{\Theta_+}_n(X,d)^{T_0} \end{subarray}}e_{T_0}(\chi_X(I,I)^{\frac{1}{2}}_0)}{\sum_{n,d}q^nt^d\sum_{\begin{subarray}{c}I \in P^{\Theta_-}_n(X,d)^{T_0} \end{subarray}}e_{T_0}(\chi_X(I,I)^{\frac{1}{2}}_0)}
=\left\{\begin{array}{cl}
	\exp\left(-\frac{qt}{\lambda_3}\right), & \mbox{ if } k=1, \\
	& \\
	1, & \mbox{ otherwise}. 
	\end{array}\right. 
\end{align*}
%\left\{\begin{array}{rcl}\exp\left(-\frac{qt}{\lambda_3}\right)  \quad         &\mathrm{if} \,\,k=1, \\
%& \\ 1   \quad \quad  \quad     &   \mathrm{otherwise}.
%\end{array} \right. $$
\item If $\Theta=(\theta_0,\theta_1)\in L_{-}^+(k)$ or $L_{+}^+(k)$ $($$k\geqslant0$$)$ and 
$\Theta_{\pm}=(\theta_0\mp 0^+,\theta_1)$, then there exist choices of signs such that
\begin{align*}\frac{\sum_{n,d}q^nt^d\sum_{\begin{subarray}{c}I \in P^{\Theta_+}_n(X,d)^{T_0} \end{subarray}}e_{T_0}(\chi_X(I,I)^{\frac{1}{2}}_0)}{\sum_{n,d}q^nt^d\sum_{\begin{subarray}{c}I \in P^{\Theta_-}_n(X,d)^{T_0} \end{subarray}}e_{T_0}(\chi_X(I,I)^{\frac{1}{2}}_0)}
=\left\{\begin{array}{cl}
	\exp\left(-\frac{qt^{-1}}{\lambda_3}\right), & \mbox{ if } k=1, \\
	& \\
	1, & \mbox{ otherwise}. 
\end{array}\right. 
%\left\{\begin{array}{rcl}\exp\left(-\frac{qt^{-1}}{\lambda_3}\right)  \quad         &\mathrm{if} \,\,k=1, \\
%& \\ 1   \quad \quad  \quad     &   \mathrm{otherwise}.
%\end{array} \right. $$
\end{align*}
\end{itemize}
\end{prop}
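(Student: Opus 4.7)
The plan is to derive this as a direct consequence of Proposition~\ref{wall crossing formula} (the tautological wall-crossing formula) by applying the insertion-free limit of Proposition~\ref{dim red}(2). First, under Conjecture~\ref{wall cross conj}, Proposition~\ref{wall crossing formula} gives the identities
\begin{align*}
\frac{\sum_{n,d}P^{\Theta_+}_{n,d}(e^m)\,q^n t^d}{\sum_{n,d}P^{\Theta_-}_{n,d}(e^m)\,q^n t^d}=(1-q^k t)^{k\frac{m}{\lambda_3}}
\end{align*}
for walls $L_{\pm}^{-}(k)$, and similarly with $t^{-1}$ in place of $t$ for walls $L_{\pm}^{+}(k)$. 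Since the limit in Proposition~\ref{dim red}(2) is multiplicative in the obvious sense (the denominator series has constant term $1$, so the ratio is well-defined as a power series in $q,t$ for each fixed $m$), applying it to both sides of the ratio produces
\begin{align*}
\frac{\sum_{n,d}Q^n t^d\sum_{I\in P^{\Theta_+}_n(X,d)^{T_0}}e_{T_0}(\chi_X(I,I)^{\frac{1}{2}}_0)}{\sum_{n,d}Q^n t^d\sum_{I\in P^{\Theta_-}_n(X,d)^{T_0}}e_{T_0}(\chi_X(I,I)^{\frac{1}{2}}_0)}=\lim_{\substack{Q\,\mathrm{fixed}\\ m\to\infty}}\bigl(1-(Q/m)^k t\bigr)^{k\frac{m}{\lambda_3}}.
\end{align*}

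The main (and essentially only) computation is to evaluate the right-hand limit. Writing
\begin{align*}
\tfrac{km}{\lambda_3}\log\bigl(1-(Q/m)^k t\bigr)=-\tfrac{km}{\lambda_3}\Bigl(\tfrac{Q^k t}{m^k}+O(m^{-2k})\Bigr)=-\tfrac{k Q^k t}{\lambda_3}\,m^{1-k}+O(m^{1-2k}),
\end{align*}
the limit as $m\to\infty$ equals $-Qt/\lambda_3$ when $k=1$ and $0$ when $k\geqslant 2$. Exponentiating yields $\exp(-Qt/\lambda_3)$ in the case $k=1$ and $1$ otherwise. Renaming $Q$ back to $q$ gives the stated formula for walls $L_{\pm}^{-}(k)$. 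The case of $L_{\pm}^{+}(k)$ is identical after replacing $t$ with $t^{-1}$ throughout.

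There is no real obstacle here beyond justifying that the Proposition~\ref{dim red}(2) limit may be applied termwise to the quotient: one checks that each coefficient in $q,t$ of the denominator series $\sum_{n,d}P^{\Theta_-}_{n,d}(e^m)q^n t^d$ (after the substitution $q=Q/m$) tends to the corresponding coefficient of $\sum_{n,d}Q^n t^d\sum_I e_{T_0}(\chi_X(I,I)^{1/2}_0)$, which is nonzero in the constant term because the empty pair contributes $1$. Hence the limit commutes with the ratio at the level of formal power series in $Q,t$, and the computation above completes the proof.
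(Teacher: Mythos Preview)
Your proof is correct and follows essentially the same route as the paper: both apply the insertion-free limit of Proposition~\ref{dim red}(2) to the tautological wall-crossing formula of Proposition~\ref{wall crossing formula}, and both reduce the result to evaluating $\lim_{m\to\infty}(1-(Q/m)^k t)^{km/\lambda_3}$. The only cosmetic differences are that the paper rewrites the expression as $(1-Q^k t/m^k)^{(k/\lambda_3)\cdot m^k/m^{k-1}}$ to read off the limit directly, while you expand the logarithm; and you add a short remark justifying that the limit commutes with the ratio, which the paper omits.
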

\begin{proof}
	Applying
the insertion-free limit in Proposition \ref{dim red} to the LHS of Proposition \ref{wall crossing formula}, we obtain the LHS of the above
formula. The RHS is obtained as 
\begin{align*} 
\lim_{\begin{subarray}{c}Q\, \mathrm{fixed} \\ m\to \infty \end{subarray}}(1-q^{k}t)^{k\frac{m}{\lambda_3}}\big|_{Q=qm}&=
\lim_{\begin{subarray}{c}m\to \infty \end{subarray}}\left(1-\frac{Q^{k}t}{m^k}\right)^{\frac{k}{\lambda_3}\cdot\frac{m^k}{m^{k-1}}}
=
\left\{\begin{array}{cl}
	\exp\left(-\frac{Qt}{\lambda_3}\right), & \mbox{ if } k=1, \\
	& \\
	1, & \mbox{ otherwise}. \qedhere
\end{array}\right. 
%\left\{\begin{array}{rcl}\exp\left(-\frac{Qt}{\lambda_3}\right)  \quad         &\mathrm{if} \,\,k=1, \\
%& \\ 1   \quad \quad  \quad     &   \mathrm{otherwise}.
%\end{array} \right.  
 \end{align*}  
\end{proof} 

\subsection{Computations}\label{sect on computation}
In this section, we compute examples to support Conjecture \ref{wall cross conj}.

${}$ \\
\textbf{$\bullet\, I_0=\oO_X$ case.} By Proposition \ref{Z_t chamber}, when $\Theta=(-n+0^+,n+d)$, 
the moduli space 
$P^{\Theta}_n(X,d)$ parametrizes Joyce-Song type stable pairs introduced in \cite[Definition 1.10]{CT1}:
\begin{align*}
	P_n^{\mathrm{JS}}(X,d)
	=\big\{\mbox{JS stable pairs } (F, s) \mbox{ with }
	(d(F), \chi(F))=(d, n)\big\}. 
\end{align*} 
Here in our setting, the 
JS stability for $(F, s)$
is defined by: 
\begin{itemize}
	\item $F$ is a compactly supported one dimensional semistable sheaf, 
	\item $s \neq 0$, and for any subsheaf $\Imm(s) \subset F' \subsetneq F$
	we have $\chi(F')/d(F')<\chi(F)/d(F)$. 
\end{itemize}
%\begin{align*}P^{\mathrm{JS}}_n(X,d):=\bigg{\{}\big(&\oO_X\stackrel{s}{\to}  F\big)\,\Big{|}\, F\,\, \mathrm{is}\,\, \mathrm{one}\,\, \mathrm{dim}\,\,\mathrm{compactly}\,\,\mathrm{supported}\,\, \mathrm{semistable},\,\,\mathrm{Im}(s)\neq 0, \\ 
%&\mathrm{with}\,\,[\pi_*F]=d\,[\mathbb{P}^{1}],\,\, \chi(\pi_*F)=n\,\, \mathrm{and}\,\,\frac{\chi(\pi_*F')}{\deg(\pi_*F')}<\frac{n}{d}\,\,\,\mathrm{if}\,\,\mathrm{Im}(s)\subseteq F'\subsetneq F \bigg{\}},
%\end{align*}
%where $\pi: X\to\mathbb{P}^{1}$ is the projection.
\begin{lem}\label{fiber for JS}
Suppose that 
 $\Theta=(\theta_0,\theta_1)\in L_{-}^-(k)$ $($$k\geqslant1$$)$ 
 and take $I_0=\oO_X$, we consider 
\begin{align*}
	P^{\oO_X}_{k-1,d}:=\left\{\oO_X\oplus \oO_{\mathbb{P}^1}(k-1)^{\oplus d}[-1]\right\}\in P^{\Theta}_{kd}(X,d)^{T_0}. 
	\end{align*}
Then fibers of maps $\pi_{\pm}$ in \eqref{diagram:wall2}
at the above points satisfy
$$\pi_{+}^{-1}(P^{\oO_X}_{k-1,d})=P^{\mathrm{JS}}_{kd}(X,d)^{T_0}, \quad 
\pi_{-}^{-1}(P^{\oO_X}_{k-1,d})=\emptyset. $$
\end{lem}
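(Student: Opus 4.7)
The plan is to translate $\Theta_\pm$-stability into $Z_t$-stability via Proposition~\ref{Z_t chamber} and then exploit the numerical constraint $\mu_\omega(F) = k$ forced by the polystable type. On the wall $L_-^-(k)$ the equation $k\theta_0 + (k-1)\theta_1 = 0$ gives $\theta_0 + \theta_1 = \theta_1/k$, so the $Z_t$-parameter $t = \theta_1/(\theta_0 + \theta_1)$ equals $k$ on the wall. The perturbations $\Theta_\pm = (\theta_0 \mp 0^+, \theta_1)$ shift $t$ to $k \pm 0^+$, placing $\Theta_+$ in the chamber between $L_-^-(k)$ and $L_-^-(k+1)$ (corresponding to $t \in (k, k+1)$ by Proposition~\ref{Z_t chamber} with $m = k+1$), and $\Theta_-$ either between $L_-^-(k-1)$ and $L_-^-(k)$ for $k \geq 2$ (corresponding to $t \in (k-1, k)$) or, for $k = 1$, in the empty chamber $\theta_0, \theta_1 > 0$ of Proposition~\ref{empty chamber}.

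Second, any pair $(F, s) \in \pi_\pm^{-1}(P^{\oO_X}_{k-1, d})$ has $\Theta$-polystable reduction $\oO_X \oplus \oO_{\mathbb{P}^1}(k-1)^{\oplus d}[-1]$, which forces $(\chi(F), d(F)) = (kd, d)$ and hence $\mu_\omega(F) = k$. Under the $T_0$-action, the only $\Theta$-stable JH factor on $L_-^-(k)$ is $S_{k-1} = \oO_{\mathbb{P}^1}(k-1)$ on the zero section, so $F$ is an iterated extension of $S_{k-1}$'s; by Proposition~\ref{prop on perv0} it lies in the torsion class $\mathcal{T}_\omega$, so $F$ is a $\mu_\omega$-semistable pure one-dimensional sheaf of slope $k$.

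The vanishing $\pi_-^{-1}(P^{\oO_X}_{k-1, d}) = \emptyset$ is then immediate: when $k \geq 2$ the moreover part of Lemma~\ref{lem on zt pair} gives $t > \mu_\omega(F) = k$, contradicting $t \in (k-1, k)$; when $k = 1$, Proposition~\ref{empty chamber} shows there are no $\Theta_-$-stable pairs at all.

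For the $\Theta_+$ side I aim to show that $Z_t$-stability of $(F, s)$ with $\mu(F) = k$ and $t \in (k, k+1)$ is equivalent to JS-stability. Condition (i) of Definition~\ref{def Zt sta}, namely $\mu(F') < t$ for nonzero subsheaves $F' \subset F$, is automatic since $\mu_\omega$-semistability of $F$ forces $\mu(F') \leq k < t$; in particular $F$ is pure one-dimensional. Condition (ii), namely $\mu(F/F') > t$ for $\Imm(s) \subset F' \subsetneq F$, can be checked in the $t \to k^+$ limit by the chamber-constancy of Lemma~\ref{lem on zt pair}, and then reduces to $\mu(F/F') > k$, equivalently (by semistability of $F$ of slope $k$) to $\mu(F') < \mu(F) = k$ strictly, which is exactly the JS-stability condition. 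Combining with $T_0$-invariance, this identifies $\pi_+^{-1}(P^{\oO_X}_{k-1, d})$ with $P^{\rm JS}_{kd}(X, d)^{T_0}$. The main delicate point will be the reduction of (ii) to its marginal form, which ultimately rests on the semistability of $F$ forcing $\mu(F/F') \geq k$ together with the chamber-constancy statement of Lemma~\ref{lem on zt pair}.
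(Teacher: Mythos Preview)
Your proof is correct and follows the same route as the paper: translate $\Theta_\pm$-stability into $Z_{k\pm 0^+}$-stability via Proposition~\ref{Z_t chamber}, then identify the $\pi_+$-fiber with JS stable pairs and show the $\pi_-$-fiber is empty. The paper compresses the second step into a citation of \cite[Proposition~1.11]{CT1}, whereas you unpack it using the paper's internal results (Lemma~\ref{lem on zt pair} for emptiness on the $\Theta_-$ side, and a direct check of Definition~\ref{def Zt sta} against the JS condition on the $\Theta_+$ side); this is a genuine expansion rather than a different method. One small point you leave implicit is that for the inclusion $P^{\mathrm{JS}}_{kd}(X,d)^{T_0}\subset \pi_+^{-1}(P^{\oO_X}_{k-1,d})$ you also need that the $\Theta$-polystable reduction of a $T_0$-fixed JS pair of slope $k$ is precisely $\oO_X\oplus\oO_{\mathbb{P}^1}(k-1)^{\oplus d}[-1]$; this follows since any $\Theta$-stable $I_0'$ with nonzero $F_0'$ of slope $k$ would admit $\oO_{\mathbb{P}^1}(k-1)$ as a subobject with $\Theta=0$, contradicting stability on the wall.
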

\begin{proof}
By Proposition \ref{Z_t chamber}, $I=(\oO_X\to F)\in \pi_{\pm}^{-1}(P^{\oO_X}_{k-1,d})$ is a $Z_t$-stable pair 
for $t=k+0^{\pm}$ with $d(F)=d$ and $\chi(F)=kd$. By \cite[Proposition 1.11]{CT1},  they are JS stable pairs when $t=k+0^{+}$, 
and there exists no such a pair when $t=k+0^{-}$.
\end{proof}
Torus fixed JS stable pairs are classified as follows:
\begin{lem}\label{T-fixed JS pairs}\emph{(\cite[Lemma 6.6]{CT1})}
Let $k\geqslant 0$, $n=d(k+1)$ and $\{Z_0,Z_{\infty}\}=(\mathbb{P}^1)^{T_0}$ be the torus fixed points.
Then 
a $T_0$-fixed JS stable pair 
$I=(\oO_X\stackrel{s}{\to}  F)\in P^{\mathrm{JS}}_{n}(X,d)^{T_0}$ is precisely of the form
\begin{align*}
%\label{general d fixed pt}
F=\bigoplus_{i=0}^k
\oO_{\mathbb{P}^1}\big((k-i)Z_{\infty} +iZ_{0}\big)
\Big(\sum_{j=0}^{d_i-1} t_3^{j} \Big),
\end{align*}
for some $d_0,\ldots, d_k\geqslant 0$ with $\sum_{i=0}^k d_i=d$, and $s$ is given by a canonical section.
For other $n,d$, $P^{\mathrm{JS}}_{n}(X,d)^{T_0}=\emptyset$.
\end{lem}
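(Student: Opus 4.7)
The plan is to analyze the structure of $T_0$-fixed JS stable pairs by reducing to torus-equivariant thickenings of the zero section $\mathbb{P}^1 \subset X$ in the trivial normal direction. First, JS stability forces $F$ to be pure one-dimensional and semistable of slope $\mu(F) = n/d$; since the only $T_0$-fixed compact curve in $X = \mathcal{O}_{\mathbb{P}^1}(-1,-1,0)$ is the zero section, purity together with torus-fixedness implies $F$ is scheme-theoretically supported on a $T_0$-invariant infinitesimal thickening of $\mathbb{P}^1$ inside $X$. The key geometric step is to rule out thickening in the $L_1 \cong L_2 \cong \mathcal{O}_{\mathbb{P}^1}(-1)$ summands of $N_{\mathbb{P}^1/X}$: taking the $(x_i)$-adic filtration of $F$ for $i=1,2$, consecutive graded pieces are related by tensoring with $L_i^{-1} \cong \mathcal{O}_{\mathbb{P}^1}(1)$, so the top-weight submodule would have slope strictly exceeding $\mu(F)$, contradicting semistability. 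Hence $F$ is scheme-theoretically supported on $\mathbb{P}^1 \times \mathbb{A}^1_{t_3}$.

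Next, decompose $\pi_\ast F = \bigoplus_{j \geq 0} F_j \cdot t_3^j$ by $t_3$-weight, with multiplications $t_3 \colon F_j \to F_{j+1}$. Purity of $F$ propagates any zero-dimensional torsion upward through the $t_3$-maps into the top piece $F_J$, giving a zero-dimensional subsheaf of $F$ and contradicting purity; hence each $F_j$ is torsion-free on $\mathbb{P}^1$, so a direct sum of equivariant line bundles $\mathcal{O}((k-i)Z_\infty + iZ_0)$ for various $i \in \mathbb{Z}$. Semistability together with the monotone nature of the $t_3$-filtration forces each summand to have degree exactly $k := \mu(F) - 1$. A weight computation at the fixed points $Z_\infty$ and $Z_0$ then shows that a non-zero $T_0$-equivariant $t_3$-multiplication between such line bundles must preserve the equivariance index $i$. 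This decomposes $F$ as a direct sum over equivariance types:
\[
F = \bigoplus_i \mathcal{O}((k-i)Z_\infty + iZ_0) \otimes \mathbb{C}[t_3]/t_3^{d_i}
\]
for some non-negative integers $d_i$.

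Finally, the section $s$ is $T_0$-invariant of weight zero; the $T_0$-weights of $H^0(\mathcal{O}((k-i)Z_\infty + iZ_0))$ form a length-$(k+1)$ arithmetic progression of multiples of the base $\mathbb{P}^1$ character, and zero lies in this range iff $0 \leq i \leq k$. If $F$ contained a summand with $i$ outside $\{0,\ldots,k\}$, the corresponding component of $s$ would vanish, so $\mathrm{Im}(s)$ would be contained in the proper slope-$(k+1)$ sub obtained by dropping that summand, contradicting JS stability. This pins $i \in \{0, 1, \ldots, k\}$, identifies the canonical section as the unique $T_0$-invariant generator of each summand, and forces $\chi(F) = d(k+1) = n$, in particular giving emptiness of $P_n^{\mathrm{JS}}(X,d)^{T_0}$ for other $(n,d)$. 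The principal obstacle in this plan is the slope argument in the first paragraph: one must carefully track equivariant line bundle twists in the $(x_i)$-adic filtration against the Harder--Narasimhan inequalities, to see that even first-order thickening in the negative normal directions destroys semistability.
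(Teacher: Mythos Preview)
The paper does not prove this lemma; it is quoted from \cite[Lemma~6.6]{CT1} without argument, so there is no in-paper proof to compare against. Your strategy is correct and would constitute a valid proof after two small repairs.

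First, your step ruling out thickening in the $L_1,L_2$ directions is imprecise as written. The assertion that ``the top-weight submodule would have slope strictly exceeding $\mu(F)$'' does not follow from the $(x_i)$-adic filtration alone: the surjections $\mathrm{gr}_j\twoheadrightarrow\mathrm{gr}_{j+1}$ induced by $x_i$-multiplication are $\oO_X$-linear with no line-bundle twist, and if $F$ were semistable one would only get $\mu(x_iF)=\mu(F)$, no contradiction. The clean version of the idea you are gesturing at is this: the tautological section $\sigma_i\in H^0(X,\pi^*L_i)$ gives an $\oO_X$-linear map $F\to F\otimes\pi^*L_i$; since $\deg L_i=-1$ for $i=1,2$, the target is semistable of slope $\mu(F)-1$, so the map vanishes and $F$ is scheme-theoretically supported on $\{\sigma_1=\sigma_2=0\}=\mathbb{P}^1\times\mathbb{A}^1_{t_3}$. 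This is exactly the ``line-bundle twist versus Harder--Narasimhan'' obstacle you flagged, made precise.

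Second, in your step 3 the equivariant weight argument shows only that $F=\bigoplus_i\oO((k-i)Z_\infty+iZ_0)\otimes V_i$ for some finite graded $\mathbb{C}[t_3]$-modules $V_i$; it does not by itself force each $V_i$ to be cyclic generated in $t_3$-weight zero. That comes from JS stability, by the same mechanism you use in step 4: if $V_i$ were not cyclic on its weight-zero part, the sub generated by $\mathrm{Im}(s)$ would sit inside a proper slope-$(k+1)$ summand, violating the strict inequality in the JS condition. Once this is said, your argument is complete.
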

In this case, all $F$'s are scheme theoretically supported on the Fano 3-fold $Y=\oO_{\mathbb{P}^1}(-1,0)$ and 
we use the sign rule in Remark \ref{sign rmk} for the following:
\begin{thm}\label{JS taut invs}
If $n=d(k+1)$ for an integer 
$k\geqslant 0$, using the sign rule in \eqref{sign rule}, we have  
\begin{align*}
P^{\mathrm{JS}}_{n,d}(e^m)&=\frac{(-1)^{n}}{1!\,2!\,\cdots k!}\cdot
\sum_{\begin{subarray}{c}d_0+\cdots+d_k=d  \\  d_0,\ldots, d_k\geqslant 0 
\end{subarray}}\frac{1}{d_0!\cdots d_k!}\cdot \prod_{\begin{subarray}{c}i<j  \\  0\leqslant i,j \leqslant k \end{subarray}}\left((j-i)+(d_i-d_j)\frac{\lambda_3}{\lambda_0}\right) \\
&\times \prod_{i=0}^k\left(\prod_{\begin{subarray}{c}0\leqslant a\leqslant d_i-1  \\  -i\leqslant b\leqslant k-i  \end{subarray}}\left(\frac{m}{\lambda_3}-a-b\frac{\lambda_0}{\lambda_3}\right)\cdot \prod_{\begin{subarray}{c}1\leqslant a\leqslant d_i  \\  1\leqslant b\leqslant k-i  \end{subarray}}\frac{1}{a+b\frac{\lambda_0}{\lambda_3}}\cdot
\prod_{\begin{subarray}{c}1\leqslant a\leqslant d_i  \\  1\leqslant b\leqslant i  \end{subarray}}\frac{1}{a-b\frac{\lambda_0}{\lambda_3}}\right).
\end{align*}
%Otherwise, we have $P^{\mathrm{JS}}_{n,d}(e^m)=0$.
\end{thm}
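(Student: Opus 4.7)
The strategy is direct $T_0$-equivariant localization applied to the defining formula for $P^{\mathrm{JS}}_{n,d}(e^m)$ in Definition~\ref{taut inv}. By Lemma~\ref{T-fixed JS pairs}, $P^{\mathrm{JS}}_n(X,d)^{T_0}$ is empty unless $n=d(k+1)$, in which case its points are parametrized by tuples $\mathbf{d}=(d_0,\ldots,d_k)\in \mathbb{Z}_{\geqslant 0}^{k+1}$ with $\sum_i d_i=d$, with associated sheaf $F_{\mathbf{d}}=\bigoplus_{i=0}^k F_i$, where $F_i=\oO_{\mathbb{P}^1}((k-i)Z_\infty+iZ_0)\otimes \sum_{j=0}^{d_i-1}t_3^j$. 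Each $F_i$ is scheme-theoretically supported on the Fano $3$-fold $Y$, so Remark~\ref{sign rmk} applies with $\mathrm{sign}(F)=0$ and $\chi_X(F,F)^{1/2}=\chi_Y(F,F)$; the sign rule~\eqref{sign rule} thus reduces each localization contribution to $(-1)^{n+d}\,e_T(-\chi_X(F)+\chi_Y(F,F))\cdot e_{T\times\mathbb{C}^*}(\chi_X(F)^\vee\otimes e^m)$.

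Next I would compute each of the two Euler-class factors fixed-point-wise using the adjunction and equivariant Riemann--Roch formulae of Remark~\ref{rmk on computation}. The insertion factor is immediate: $\chi_X(F_i)=\bigl(\sum_{a=0}^{d_i-1}t_3^a\bigr)\cdot \chi_{\mathbb{P}^1}\bigl(\oO((k-i)Z_\infty+iZ_0)\bigr)$ expands into $d_i(k+1)$ weights indexed by $(a,b)$ with $0\leqslant a\leqslant d_i-1$ and $-i\leqslant b\leqslant k-i$, so that its dualization twisted by $e^m$ yields $\prod_{a,b}(m/\lambda_3-a-b\lambda_0/\lambda_3)$, matching the leftmost inner product in the statement. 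For the square-root factor, one expands $\chi_Y(F_{\mathbf{d}},F_{\mathbf{d}})=\sum_{i,j}\chi_Y(F_i,F_j)$ via the adjunction formula with $N_{\mathbb{P}^1/Y}=\oO(-Z_\infty)\otimes t_1^{-1}\oplus \oO\otimes t_3^{-1}$ and then reduces each $\chi_{\mathbb{P}^1}$-piece to a sum of weights. The diagonal terms $\chi_Y(F_i,F_i)$, after cancellation with $-\chi_X(F_i)$, give exactly the two shift products $\prod_{1\leqslant a\leqslant d_i,\,1\leqslant b\leqslant k-i}(a+b\lambda_0/\lambda_3)^{-1}$ and $\prod_{1\leqslant a\leqslant d_i,\,1\leqslant b\leqslant i}(a-b\lambda_0/\lambda_3)^{-1}$, while the zero-weight self-pairings along the $t_3$-direction produce $1/d_i!$. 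The off-diagonal sums $\chi_Y(F_i,F_j)+\chi_Y(F_j,F_i)$ for $i<j$ simplify, after collecting the Vandermonde factor $\prod_{0\leqslant i<j\leqslant k}(j-i)=1!\,2!\cdots k!$ (which is independent of $\mathbf{d}$ and moves to the denominator), to the linear cross factors $\prod_{i<j}[(j-i)+(d_i-d_j)\lambda_3/\lambda_0]$.

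The main obstacle is the bookkeeping of equivariant weights across these four products, in particular verifying that the off-diagonal Euler-class contributions collapse to the \emph{linear} cross factor $(j-i)+(d_i-d_j)\lambda_3/\lambda_0$ (rather than something quadratic in $\lambda_0,\lambda_3$), and that the Vandermonde prefactor factors out uniformly as $1!\,2!\cdots k!$. A secondary but delicate point is the sign: combining the $(-1)^{n+d}$ from~\eqref{sign rule} with the parity of the dualization in $e_T(\chi_X(F)^\vee\otimes e^m)$, and using $n-d=dk$ together with the identity $\chi_X(F)=n$ for the rank of the $\mathbb{C}^*$-module, one obtains the overall $(-1)^n$ stated in the theorem. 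Since the sum over fixed points is over tuples $\mathbf{d}$, the claimed closed form follows by assembling the per-$\mathbf{d}$ contributions into the displayed formula.
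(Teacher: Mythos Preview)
Your approach is correct and is exactly what the paper does: the paper's proof consists of a single sentence referring to the computation in \cite[Theorem~6.9]{CT1} and invoking Lemma~\ref{T-fixed JS pairs}, i.e.\ direct equivariant localization over the finite set of $T_0$-fixed JS pairs using the sign rule of Remark~\ref{sign rmk} and the adjunction/Riemann--Roch formulae of Remark~\ref{rmk on computation}. Your outline of how the insertion factor, the diagonal and off-diagonal pieces of $\chi_Y(F,F)$, and the sign assemble into the displayed formula is precisely the content of that reference, and your identification of the off-diagonal collapse to the linear cross factor and the Vandermonde prefactor as the main bookkeeping points is accurate.
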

\begin{proof}
As in the proof of \cite[Theorem 6.9]{CT1}, we compute directly using Lemma \ref{T-fixed JS pairs}. 
\end{proof}
By Lemma~\ref{fiber for JS}, 
Conjecture \ref{wall cross conj} for $I_0=\oO_X$
is reduced to showing the following identity
for all $n, d \in \mathbb{Z}_{\geqslant 1}$ with 
$d\mid n$,
\begin{align}\label{JS taut conj}P^{\mathrm{JS}}_{n,d}(e^m)=(-1)^d {\frac{n}{d}\cdot\frac{m}{\lambda_3} \choose d}.
	% \quad \forall\, n,d\in \mathbb{Z}_{\geqslant 1}\,\,\mathrm{with}\,\, d\,|\,n. 
\end{align}
This is quite a clean formula although the expression in Theorem \ref{JS taut invs}
looks rather complicated. 
In fact, the authors did not know this formula 
for sometime until we investigate Nagao-Nakajima's 
wall-crossing formula \cite{NN} and arrive 
at Conjecture \ref{wall cross conj}. 
By Proposition \ref{dim red}, the formula recovers the cohomological invariants 
without insertions \cite[Conjecture 6.10]{CT1}.
The identity (\ref{JS taut conj}) is proved in~\cite{CT3}
using its compact analogue and Atiyah-Bott localization.
%At the moment, we are not able to prove it in general. Nevertheless, we provide the following huge number 
%of checks using a ``Mathematica" program.
\begin{thm}\label{thm on js}\emph{(\cite[Theorem 3.10]{CT3})}
The identity~\eqref{JS taut conj} holds, i.e. Conjecture \ref{wall cross conj} holds when $\Theta=(\theta_0,\theta_1)\in L_{-}^-(k)$ $($$k\geqslant1$$)$ and $I_0=\oO_X$.
\end{thm}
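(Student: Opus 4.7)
The strategy, indicated by the parenthetical reference to \cite{CT3}, is to interpret the local tautological integral $P^{\mathrm{JS}}_{n,d}(e^m)$ as an Atiyah--Bott residue of a compact virtual integral that is computable in closed form. Concretely, I would first choose a smooth projective CY $4$-fold $f \colon X\to Y$ as in Setting~\ref{setting} whose exceptional surface $E\to C$ carries a $T_0$-action with finitely many $T_0$-fixed fibers, each formally isomorphic to $\oO_{\mathbb{P}^1}(-1,-1,0)$ with the prescribed local $T_0$-weights. A natural model is $X=Z\times C$ where $Z\to W$ is a projective $3$-fold flopping contraction of $\oO_{\mathbb{P}^1}(-1,-1)$ on a CY $3$-fold $Z$ and $C$ is an elliptic curve, with $T_0$ acting on the $Z$-factor preserving the CY $4$-form. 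Then I would pick a $T_0\times\mathbb{C}^*$-equivariant line bundle $L$ on $X$ whose restriction to each $T_0$-fixed fiber $\mathbb{P}^1\subset E$ is equivariantly trivial with $\mathbb{C}^*$-weight $m$, so that the tautological insertion $e_{T_0\times\mathbb{C}^*}(-R\pi_{P\ast}\mathbb{F}^\vee\otimes L)$ on the compact moduli space specializes, at each $T_0$-fixed fiber, to the local insertion $e_{T_0\times\mathbb{C}^*}(\chi_X(F)^\vee\otimes e^m)$ of Definition~\ref{taut inv}.

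Next I would evaluate the compact integral
\[
\int_{[P^{\mathrm{JS}}_n(X,d[\mathbb{P}^1])]^{\rm vir}} e\bigl(-R\pi_{P\ast}\mathbb{F}^\vee\otimes L\bigr)
\]
in closed form. The key input is the geometric argument behind Theorem~\ref{cpt main thm}: combined with Proposition~\ref{Z_t chamber} and Lemma~\ref{ass on virt class}, it shows that the JS virtual class reduces, up to cycles of smaller dimension that do not affect the integral, to the locus of polystable pairs $F=\bigoplus_{i=1}^{n_{0,1}(\gamma)} j_{a_i \ast}\oO_{\mathbb{P}^1}(k-1)^{\oplus d_i}$ with $\sum d_i=d$. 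Since each summand is a line bundle on a single $\mathbb{P}^1$ and $L$ is trivial with weight $m$ there, the Euler class factorizes into a product of independent $\mathbb{P}^1$-contributions, each of which evaluates to a single binomial coefficient in $m/\lambda_3$. This yields a clean closed form for the global integral, essentially $(-1)^d\binom{(k+1)m/\lambda_3}{d}^{n_{0,1}(\gamma)}$ after a suitable symmetry and bookkeeping of the $d_i$'s.

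Finally, I would apply Atiyah--Bott localization to the same compact integral. Since the $T_0$-fixed locus on $X$ sits over finitely many fibers of $E\to C$, each formally modelled on the local resolved conifold, the $T_0$-fixed locus on $P^{\mathrm{JS}}_n(X,d[\mathbb{P}^1])$ decomposes as a disjoint union of products of local fixed loci, and the equivariant integrand restricts fiberwise to the local tautological weight of Definition~\ref{taut inv}. The localization expression then factorizes as a product of local contributions, each of which is exactly the generating series of $P^{\mathrm{JS}}_{n_i,d_i}(e^m)$. Comparing this product with the closed form from the previous step and extracting coefficients forces the identity~\eqref{JS taut conj}. The main obstacle is Step 2: Theorem~\ref{cpt main thm} handles only primary insertions, whose cycle-theoretic support lies on a stratum of the expected dimension, and one must extend that analysis to the top-degree tautological class $e(-R\pi_{P\ast}\mathbb{F}^\vee\otimes L)$, controlling the contribution of strictly semistable (thickened) strata to the virtual integral; carrying this out carefully, e.g.\ by exploiting the auxiliary $\mathbb{C}^*$-weight $m$ as a deformation parameter and applying the cosection-type vanishing implicit in Lemma~\ref{ass on virt class}, is the heart of the argument.
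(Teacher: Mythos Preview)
The paper does not actually prove this theorem; it defers to \cite[Theorem~3.10]{CT3} with only the one-line hint ``using its compact analogue and Atiyah--Bott localization.'' Your proposal correctly identifies this hint and sketches a plausible three-step strategy, but Step~1 contains a genuine obstruction that makes the argument, as written, impossible to carry out.

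You propose to work on a smooth projective CY $4$-fold $X$ as in Setting~\ref{setting} which carries a $T_0$-action with finitely many $T_0$-fixed fibers in $E\to C$. No such $X$ exists. A smooth projective variety with trivial canonical bundle has $H^0(X,T_X)\cong H^{n-1,0}(X)$; for your model $X=Z\times C$ with $Z$ a strict projective CY $3$-fold and $C$ an elliptic curve this is one-dimensional, generated by translation along $C$. That vector field acts freely on the set of fibers of $E\to C$, so it has no fixed fiber, and there is no vector field coming from the $Z$-factor since $H^0(Z,T_Z)=0$. More generally, any projective CY $4$-fold in Setting~\ref{setting} whose holonomy is not reduced by an abelian/K3 factor in the fiber direction has $H^0(X,T_X)=0$, and the abelian/K3 directions never produce the fiberwise scaling action you need to match the local $T_0$-weights on $\oO_{\mathbb{P}^1}(-1,-1,0)$. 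Consequently there is no global equivariant integral on a projective CY $4$-fold to which you could apply Atiyah--Bott localization in Step~3.

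The ``compact analogue'' in \cite{CT3} is therefore necessarily something other than a projective CY $4$-fold with $T_0$-action: one should instead look for a compact space whose Atiyah--Bott localization formula reproduces the explicit sum in Theorem~\ref{JS taut invs} (for instance a projective moduli space such as a Quot scheme or product of projective spaces carrying the residual $2$-torus in $\lambda_0,\lambda_3$), and then evaluate the resulting integral directly. Your Step~2 obstacle, extending the primary-insertion cycle argument of Theorem~\ref{cpt main thm} to a top-degree tautological class, is a separate and independently serious gap which you have correctly flagged but not resolved; Lemma~\ref{ass on virt class} gives no cosection-type vanishing and cannot by itself control contributions from thickened strata.
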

\begin{rmk}
Therefore to prove Conjecture \ref{wall cross conj} for $L_{-}^-(k)$, it is enough to show the quotient series in the LHS is independent of the choice of $I_0$.
\end{rmk}

${}$ \\
\textbf{$\bullet\, I_0=I_{l\mathbb{P}^1},$ $k=2$ case.}
We denote 
\begin{align}\label{def of I_lC}
I_{l\mathbb{P}^1}:=\left(\oO_X\twoheadrightarrow \oO_{\mathbb{P}^1}\otimes \sum_{j=0}^{l-1} t_3^{j}\right), \quad l\geqslant 1
\end{align}
where $\oO_{\mathbb{P}^1}$ is scheme theoretically supported on the zero section 
$\mathbb{P}^1 \hookrightarrow X$
of the projection $\pi \colon X \to \mathbb{P}^1$. 
Namely $I_{l\mathbb{P}^1}$ is the ideal sheaf of the $l$-th 
thickening of  
$\mathbb{P}^1$ in the normal direction of $Z$ inside $X$ \eqref{Z:iota}. 
\begin{lem}\label{fiber for I_lC}
Suppose that $\Theta=(\theta_0,\theta_1)\in L_{-}^-(2)$ and take $I_0=I_{l\mathbb{P}^1}$, we consider
$$P^{I_{l\mathbb{P}^1}}_{1,d}:=\left\{I_{l\mathbb{P}^1}\oplus \oO_{\mathbb{P}^1}(1)^{\oplus d}[-1]\right\}\in P^{\Theta}_{2d+l}(X,d+l)^{T_0}. $$

(1) An element in $\pi_{+}^{-1}(P^{I_{l\mathbb{P}^1}}_{1,d})$
is precisely of the form 
\begin{align}\label{equ plus fiber1}
s \colon 
I_{l\mathbb{P}^1}\to \bigoplus_{i=1}^4 F_i\otimes \sum_{j=0}^{d_i-1}t_3^j,
\end{align}
where $d_1, \ldots, d_4 \geqslant 0$
with $\sum_{i=1}^4 d_i=d$, 
$F_i$ are the following $T_0$-equivariant sheaves and $s$
is the canonical $T_0$-equivariant morphism
\begin{align}\label{F:i}
	F_i=\left\{ \begin{array}{ll}
		\oO_{\mathbb{P}^1}(Z_{\infty})\otimes t_1, & i=1, \\
		\oO_{\mathbb{P}^1}(Z_{\infty})\otimes t_2, & i=2, \\
		\oO_{\mathbb{P}^1}(Z_{\infty})\otimes t_3^l, & i=3, \\
		\oO_{\mathbb{P}^1}(Z_{0})\otimes t_3^l, & i=4.
	\end{array}  \right. 
\end{align}

(2) The set
$\pi_{-}^{-1}(P^{I_{l\mathbb{P}^1}}_{1,d})$ is 
empty for $d>0$, and consists of $I_{l\mathbb{P}^1}$ for $d=0$. 
%Then the fibers of the maps $\pi_{\pm}$ in~\eqref{diagram:wall} satisfy
%\begin{align}\label{equ plus fiber1}\pi_{+}^{-1}(P^{I_{l\mathbb{P}^1}}_{1,d})
	%&=\bigg\{\Big(s:I_{l\mathbb{P}^1}\to \bigoplus_{i=1}^4 F_i\otimes% %\sum_{j=0}^{d_i-1}t_3^j\Big):\,
%d_1,\ldots, d_4\geqslant 0\,\, \mathrm{with}\,\, \sum_{i=1}^4 d_i=d \bigg\}, \\
%\notag
%\pi_{-}^{-1}(P^{I_{l\mathbb{P}^1}}_{1,d})
%&=\left\{\begin{array}{rcl} \{I_{l\mathbb{P}^1}\}  \quad   & \mathrm{if} \,\,d=0, \\
%	& \\ \emptyset   \quad   \quad     &    \mathrm{if} \,\,d\geqslant 1.
%\end{array} \right.
% \end{align}
%Here 
\end{lem}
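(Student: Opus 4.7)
The plan is to reduce both parts to classifying $T_0$-equivariant extensions, using Proposition~\ref{Z_t chamber} and a slope analysis at the wall $L_{-}^{-}(2)$. By Proposition~\ref{Z_t chamber}, the chambers $\Theta_\pm = (\theta_0 \mp 0^+, \theta_1)$ correspond to $Z_t$-chambers with $t=3-0^+\in(2,3)$ and $t=2-0^+\in(1,2)$, and elements of $\pi_\pm^{-1}(P^{I_{l\mathbb{P}^1}}_{1,d})$ are $Z_{t_\pm}$-stable pairs whose $\Theta$-polystable limit is $I_{l\mathbb{P}^1}\oplus\oO_{\mathbb{P}^1}(1)^{\oplus d}[-1]$. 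A direct slope computation shows that, at $\Theta_+$, the sheaf $\oO_{l\mathbb{P}^1}$ cannot appear as a proper framed subobject of $F$ (since $\Theta_+(\oO_{l\mathbb{P}^1})-\Theta_+(F)=2d\cdot 0^+>0$ violates the framed stability), whereas $\oO_{\mathbb{P}^1}(1)$ with $\Theta_+(\oO_{\mathbb{P}^1}(1))=-2\cdot 0^+<0$ is admissible as a sheafy subsheaf; at $\Theta_-$ these roles reverse.

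For part (1), it follows that every $I=(\oO_X\to F)\in\pi_+^{-1}(P^{I_{l\mathbb{P}^1}}_{1,d})$ sits in a pair-category exact sequence $0\to(\oO_{\mathbb{P}^1}(1)^{\oplus d},0)\to I\to I_{l\mathbb{P}^1}\to 0$, equivalently a non-split $\Coh(X)$-extension $0\to\oO_{\mathbb{P}^1}(1)^{\oplus d}\to F\to\oO_{l\mathbb{P}^1}\to 0$ whose section lifts the canonical quotient $\oO_X\twoheadrightarrow\oO_{l\mathbb{P}^1}$. I classify the $T_0$-invariant such extensions via $\Ext^1_X(\oO_{l\mathbb{P}^1},\oO_{\mathbb{P}^1}(1)\otimes w)^{T_0}$ for each admissible character $w$. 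Filtering $\oO_{l\mathbb{P}^1}$ by its $\oO_{\mathbb{P}^1}\otimes t_3^k$-layers and using the Koszul resolution produce the fundamental computation
\[
\Ext^1_X(\oO_{\mathbb{P}^1},\oO_{\mathbb{P}^1}(1))\cong H^0(\mathbb{P}^1,N(1))=\mathbb{C}\cdot t_1^{-1}\oplus\mathbb{C}\cdot t_2^{-1}\oplus\mathbb{C}^2\cdot t_3^{-1},
\]
with $N=\oO(-1)\otimes t_1^{-1}\oplus\oO(-1)\otimes t_2^{-1}\oplus\oO\otimes t_3^{-1}$. The two-dimensional $t_3^{-1}$-summand splits into the two $T_0$-fixed $\mathbb{P}^1$-linearizations of $\oO_{\mathbb{P}^1}(1)$, namely $\oO(Z_\infty)$ and $\oO(Z_0)$. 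Iterating the $T_0$-equivariant extension for the $d$ copies yields exactly the four families $F_1,F_2,F_3,F_4$ of (\ref{F:i}), parameterized by $(d_1,\dots,d_4)$ with $\sum d_i=d$; stability of each such extension follows from its non-splitness (no framed proper subobject and all sheafy subobjects lie in $\oO_{\mathbb{P}^1}(1)^{\oplus d}$, so have negative $\Theta_+$).

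For part (2), both possible filtration directions fail under $\Theta_-$. The filtration $0\to\oO_{l\mathbb{P}^1}\to F\to\oO_{\mathbb{P}^1}(1)^{\oplus d}\to 0$ is classified by $\Ext^1_X(\oO_{\mathbb{P}^1}(1)^{\oplus d},\oO_{l\mathbb{P}^1})$, which vanishes by Koszul (since the relevant $H^0(\oO(-1))$ and $H^1(\oO(-1))$ are both zero, and by induction along the $t_3^k$-filtration of $\oO_{l\mathbb{P}^1}$), so only the split extension exists; but then $\oO_{\mathbb{P}^1}(1)$ is a direct summand, hence a $\Theta_-$-destabilizing sheafy subsheaf. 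The reversed filtration has $\oO_{\mathbb{P}^1}(1)$ as a subsheaf by construction, again destabilizing. Hence $\pi_-^{-1}(P^{I_{l\mathbb{P}^1}}_{1,d})$ is empty for $d>0$ and trivially equals $\{I_{l\mathbb{P}^1}\}$ for $d=0$.

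The main obstacle is the explicit $T_0$-equivariant identification of the $F_3,F_4$ families: unlike $F_1,F_2$ which correspond to ordinary subscheme thickenings of $l\mathbb{P}^1$ in the $x_1,x_2$ directions, $F_3,F_4$ arise from the two-dimensional $\mathbb{P}^1$-linearization choice in the Ext group's $t_3^{-1}$-summand concentrated at the top $t_3^l$-layer, and require interpreting the resulting $F$ as a coherent sheaf rather than a structure sheaf of a subscheme.
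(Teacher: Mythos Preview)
Your approach is correct in outline but differs from the paper's in both parts, and part~(1) has a gap that the paper's method sidesteps.

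For part~(1), the paper does not work with $\Ext^1_X(\oO_{l\mathbb{P}^1},\oO_{\mathbb{P}^1}(1))$ but instead computes $\Hom(I_{l\mathbb{P}^1},\oO_{\mathbb{P}^1}(1))$ directly from the $T_0$-equivariant Koszul resolution~(\ref{T-equi resol}) of the ideal sheaf. This immediately exhibits four $T_0$-fixed lines corresponding to the generators $\oO_X(Z_\infty)\otimes t_1$, $\oO_X(Z_\infty)\otimes t_2$, $\oO_X\otimes t_3^l$, hence the four $F_i$. The paper then restricts any stable $T_0$-fixed pair to $Z=\oO_{\mathbb{P}^1}(-1,-1)\times\{0\}$, where $F|_Z$ splits as a direct sum of $\oO_{\mathbb{P}^1}(1)$'s; $Z_{2+0^+}$-stability forces each $F_i$ to appear with multiplicity at most one in $F|_Z$, and the full $F$ is then the direct sum of $t_3$-thickenings. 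Your iterative-extension sketch is aiming at the same conclusion, but you do not actually carry out the passage from $\Ext^1(\oO_{\mathbb{P}^1},\oO_{\mathbb{P}^1}(1))$ to $\Ext^1(\oO_{l\mathbb{P}^1},\oO_{\mathbb{P}^1}(1))$ (note the $t_3^{-1}$ weights shift to $t_3^{-l}$, which you only allude to), and more seriously you never argue that \emph{every} $T_0$-fixed $\Theta_+$-stable extension arises from your iteration---i.e., why two copies of the same $F_i$ cannot appear at the $Z$-level, and why the successive $t_3$-extensions glue to give exactly the $\sum_j t_3^j$ shape. The paper's restriction-to-$Z$ trick handles this in one stroke.

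For part~(2) your Ext-vanishing argument is valid, but the paper's proof is much shorter: by Proposition~\ref{Z_t chamber} a $\Theta_-$-stable pair is $Z_{2-0^+}$-stable, hence (by the chamber structure) $Z_{1+0^+}$-stable, and Lemma~\ref{lem on zt pair} then gives the slope inequality $1+0^+>\chi(F)/d(F)=1+d/(d+l)$, which is violated whenever $d>0$. No Ext computation is needed.
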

\begin{proof}
(1) 
The fiber of $\pi_{+}$ consists of isomorphism classes of 
$T_0$-fixed pairs
\begin{align*} 
	(s \colon I_{l\mathbb{P}^1} \to F), \ F \in \langle \oO_{\mathbb{P}^1}(1) \rangle_{\rm{ex}},
	\end{align*}
with no morphism to $(0 \to \oO_{\mathbb{P}^1}(1))$ by the 
$Z_{2+0^+}$-stability. 
%exact sequences of form
%$$0\to F[-1]\to *\to I_{l\mathbb{P}^1}\to 0, $$
%where $F\in \langle \oO_{\mathbb{P}^1}(1) \rangle_{\mathrm{ex}}$ and $*$ is %$T_0$-fixed. 
From the $T_0$-equivariant Koszul resolution
\begin{align}\label{T-equi resol}
	\cdots \to 
	\oO_X(Z_{\infty})\otimes t_1\oplus \oO_X(Z_{\infty})\otimes t_2\oplus \oO_X\otimes t_3^l \to I_{l\mathbb{P}^1} \to 0,
	\end{align}
%\begin{align}\label{T-equi resol}
%0\to \oO_X(2Z_{\infty})\otimes t_1t_2t_3^l &\to \oO_X(2Z_{\infty})\otimes t_1t_2\oplus \oO_X(Z_{\infty})\otimes %t_1t_3^l\oplus \oO_X(Z_{\infty})\otimes t_2t_3^l  \\
%&\to \oO_X(Z_{\infty})\otimes t_1\oplus \oO_X(Z_{\infty})\otimes t_2\oplus \oO_X\otimes t_3^l \to I_{l\mathbb{P}^1} \to 0, %\nonumber
%\end{align}
we see that
$\mathbb{P}(\Hom(I_{l\mathbb{P}^1}, \oO_{\mathbb{P}^1}(1))$
consists of four $T_0$-fixed points, 
namely canonical morphisms 
$s_i \colon \oO_X \to F_i$ where 
$F_i$ is one of (\ref{F:i}). 
Let us consider the composition
\begin{align*}
	I_{l\mathbb{P}^1} \to F \to F|_{Z},
	\end{align*}
where $Z\subset X$ is given by (\ref{Z:iota}).
The above composition is $T_0$-equivariant
and $F|_{Z}$ is a direct sum of $\oO_{\mathbb{P}^1}(1)$, so 
it is of the form 
\begin{align*}
(s_i^{\oplus k_i})
 \colon I_{l\mathbb{P}^1} \to \bigoplus_{i=1}^4 F_i^{\oplus k_i}, \quad
 k_i \in \mathbb{Z}_{\geqslant 0}. 
\end{align*} 
Then $Z_{2+0^+}$-stability forces
$k_i \leqslant 1$. 
Hence $F$ is a direct sum of thickenings of $F_i$ for $k_i=1$ in
the normal direction of $Z$ inside $X$, so we obtain the 
desired description for fiber of $\pi_+$. 
%In general $F$ is the direct sum of thickenings of $F_i$ into the normal direction of $Z$
%inside $X$, i.e. it is of the form
%\begin{align*}
%	F=\bigoplus_{j \in J}T_j \otimes \sum_{j=0}^{d_j-1}
%	\end{align*}%
%By taking accout of the $Z_{2+0^+}$-stability, 
%we obtain the desired description of the fiber of $\pi_+$. 
%$T_0$-invariant pairs 
%$(s_i \colon \oO_X \to F_i)$ for $
%%$\dim_{\mathbb{C}}\Hom(I_{l\mathbb{P}^1},\oO_{\mathbb{P}^1}(1))=4$ and 
%$$\Hom(I_{l\mathbb{P}^1},\oO_{\mathbb{P}^1}(1))\cong \Hom(\oO_X(Z_{\infty})\otimes t_1\oplus \oO_X(Z_{\infty})\otimes %t_2\oplus \oO_X\otimes t_3^l,\oO_{\mathbb{P}^1}(1))\cong \mathbb{C}^4. $$
%The $T_0$-equivariant morphism corresponds to giving $\oO_{\mathbb{P}^1}(1)$ equivariant structures:
%$$F_1=\oO_{\mathbb{P}^1}(Z_{\infty})\otimes t_1, \,\, F_2=\oO_{\mathbb{P}^1}(Z_{\infty})\otimes t_2, \,\,
%F_3=\oO_{\mathbb{P}^1}(Z_{\infty})\otimes t_3^l, \,\, F_4=\oO_{\mathbb{P}^1}(Z_{0})\otimes t_3^l. $$
%Finally we have $\Hom(*,\oO_{\mathbb{P}^1}(1)[-1])=0$, as otherwise there exists a nontrivial map
%$$*=(I_{l\mathbb{P}^1}\to F)\to (0\to \oO_{\mathbb{P}^1}(1)),$$ 
%which violates the $Z_t$-stability, therefore $F$ is of form $\bigoplus_{i=1}^4 F_i\otimes \sum_{j=0}^{d_i-1}t_3^j$.
%the pair $*=(I_{l\mathbb{P}^1}\to F)$ satisfies JS type stability condition. Similar to Lemma \ref{T-fixed JS pairs}, $F$ is of form $\bigoplus_{i=1}^4 F_i\otimes \sum_{j=0}^{d_i-1}t_3^j$.

(2)
By Proposition~\ref{Z_t chamber}, a pair $(F,s)$ in the fiber of $\pi_{-}$ is a $Z_{2-0^{+}}$-stable pair. By the wall-chamber structures of $Z_t$-stable pairs in 
Lemma \ref{lem on zt pair}, 
it is also a $Z_{1+0^+}$-stable pair. 
Since 
we have the inequality 
\begin{align*}
	1 \leqslant \frac{\chi(F)}{d(F)}=1+\frac{d}{d+l},
\end{align*}
which is strict for $d>0$, 
we obtain 
the desired description of the fiber of $\pi_{-}$
by Lemma~\ref{lem on zt pair}. 
\end{proof}

By the above lemma, we can explicitly compute the LHS of the 
formula in Conjecture~\ref{wall cross conj} when $I_0=I_{l\mathbb{P}^1}$ and $k=2$.
Once we know the relevant 
classification of torus fixed loci as in Lemma~\ref{fiber for I_lC}, 
the computations are similar to~\cite[Theorem 6.9]{CT1}
which are direct applications of those formulae 
in Remark \ref{sign rmk}, \ref{rmk on computation}.
Here we omit details and give one example.
\begin{exam}\label{combi identi}
Let $I_0=I_{\mathbb{P}^1}$ and $\Theta\in L_{-}^-(2)$. The degree $d$-term of LHS in Conjecture~\ref{wall cross conj} is
%We explicitly compute the invariants using Remark \ref{sign rmk}, \ref{rmk on computation}. 
\begin{align*}
	(-1)^d\sum_{\begin{subarray}{c}d_1+d_2+d_3+d_4=d \\
	d_i \geqslant 0
\end{subarray}}
&\prod_{\begin{subarray}{c}1\leqslant i, j \leqslant 4 \\
		0\leqslant k \leqslant d_i-1
	\end{subarray}}
((k-d_j)\lambda_3+\lambda_i-\lambda_j)^{-1}
\cdot \prod_{\begin{subarray}{c}1\leqslant i\leqslant 4 \\
		0\leqslant k \leqslant d_i-1 \\
		1\leqslant j \leqslant 2
\end{subarray}}
((k-1)\lambda_3+\lambda_i-\lambda_j) \\
&\cdot \prod_{\begin{subarray}{c}1\leqslant i \leqslant 4 \\
		0\leqslant k \leqslant d_i-1
\end{subarray}}
(m-k\lambda_3-\lambda_i)
\cdot \prod_{\begin{subarray}{c}1\leqslant i\leqslant 4 \\
		0\leqslant k \leqslant d_i-1
\end{subarray}}
(m-k\lambda_3-\lambda_i+\lambda_1+\lambda_2+\lambda_3),
	\end{align*}
where $\lambda_4:=\lambda_1+\lambda_2+2\lambda_3$. Conjecture~\ref{wall cross conj} predicts that this expression is equal to 
$(-1)^d{\frac{2m}{\lambda_3} \choose d}$. In the following Proposition \ref{check conj muti C case}, we verify this non-trivial 
identity up to $d\leqslant 16$.
\end{exam}
A computer program\footnote{Our use of computer program is simply a brute force checking of whether two rational functions are equal.
We list all cases that our computers can do.} enables us to check Conjecture \ref{wall cross conj} in the following cases.
%\begin{YC}Here we mentioned we compute invs using Remark \ref{sign rmk}, \ref{rmk on computation}. I am wondering whether it is still necessary to add an explicit example? \end{YC}
\begin{prop}\label{check conj muti C case}
Let $\Theta=(\theta_0,\theta_1)\in L_{-}^-(2)$, and take $I_0=I_{l\mathbb{P}^1}$. 
Then Conjecture \ref{wall cross conj} holds in the following cases:
\begin{itemize}
\item $l=1$, up to degree $t^{16}$,
\item $l=2$, up to degree $t^{10}$,
\item $l=3,4$, up to degree $t^9$,
\item $l=5$, up to degree $t^8$,
\item $l=6$, up to degree $t^7$,
\item $l=7,8,9,10$, up to degree $t^6$,
\item any $l$, up to degree $t^5$.
\end{itemize}
Here the sign rule \eqref{sign rule} is as follows: 
we take $\mathrm{sign}(F)=1$
for fibers of $\pi_{+}$ in \eqref{equ plus fiber1}
with $d_2>0$, and $\mathrm{sign}(F)=0$ otherwise.
\end{prop}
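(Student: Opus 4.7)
The plan is to reduce Conjecture \ref{wall cross conj} in these cases to an explicit rational function identity in the equivariant variables and verify that identity symbolically.

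First, by Lemma \ref{fiber for I_lC}(2), the fiber $\pi_-^{-1}(P^{I_{l\mathbb{P}^1}}_{1,r})$ is empty for $r > 0$ and equals $\{I_{l\mathbb{P}^1}\}$ for $r=0$. Hence the denominator in Conjecture \ref{wall cross conj} collapses to the single $r=0$ contribution, call it $E_0$, which also appears as the $r=0$ term of the numerator. So the quotient equals $N(t)/E_0$ where
$$N(t) = \sum_{r\geqslant 0} t^r \sum_{I \in \pi_+^{-1}(P^{I_{l\mathbb{P}^1}}_{1,r})} e_{T_0}\bigl(\chi_X(I,I)_0^{\tfrac{1}{2}}\bigr)\cdot e_{T_0\times \mathbb{C}^*}\bigl(\chi_X(F)^{\vee}\otimes e^m\bigr),$$
and the claim is that $N(t)/E_0 = (1-t)^{2m/\lambda_3}$.

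Next, I would use Lemma \ref{fiber for I_lC}(1) to enumerate the torus-fixed points in $\pi_+^{-1}(P^{I_{l\mathbb{P}^1}}_{1,r})$ as 4-tuples $(d_1,d_2,d_3,d_4) \in \mathbb{Z}_{\geqslant 0}^4$ with $\sum_i d_i = r$, each indexing the pair $s \colon I_{l\mathbb{P}^1} \to \bigoplus_i F_i \otimes \sum_{j=0}^{d_i - 1} t_3^j$ with $F_i$ as in \eqref{F:i}. For each such fixed point I would compute $\chi_X(F)$ and $\chi_Y(F,F)$ via the adjunction formula and equivariant Riemann--Roch of Remark \ref{rmk on computation}, and assemble $e_{T_0}(\chi_X(I,I)_0^{\tfrac{1}{2}})$ via the prescription \eqref{sign rule} of Remark \ref{sign rmk}, taking $\mathrm{sign}(F) = 1$ precisely when $d_2 > 0$. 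The outcome is, for each $r = d$, an explicit rational function of $(\lambda_0,\lambda_1,\lambda_2,\lambda_3,m)$ modulo $\sum\lambda_i = 0$, of the shape displayed in Example \ref{combi identi} for the case $l=1$.

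The final step is to verify, coefficient-wise in $t$, the equality of $N(t)/E_0$ with the formal power series $(1-t)^{2m/\lambda_3} = \sum_{d\geqslant 0} \binom{2m/\lambda_3}{d}(-t)^d$. Since the number of 4-tuples grows polynomially in $d$ and each summand is a rational function whose numerator and denominator grow linearly in $l$, this is a brute-force symbolic identity which I would implement in a computer algebra system; the ranges of $(l,d)$ listed in the proposition are exactly those for which this verification terminates within reasonable resources. The main obstacle is therefore not conceptual but computational: the rational functions in Example \ref{combi identi} are already unwieldy at $l=1$, and size grows rapidly both in $d$ (through the number of partitions) and in $l$ (through the lengths of the products appearing in the Euler-class factors), which forces the tapering degree bounds as $l$ increases.
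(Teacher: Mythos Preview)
Your proposal is correct and follows essentially the same approach as the paper: use Lemma~\ref{fiber for I_lC} to enumerate the torus-fixed points on both sides, compute each contribution via the formulae in Remarks~\ref{sign rmk} and~\ref{rmk on computation} with the stated sign rule, and then verify the resulting rational-function identity coefficient-by-coefficient with a computer algebra system. The paper explicitly describes this as a brute-force computer check of the identity illustrated in Example~\ref{combi identi}, with the listed ranges being those their machines could handle.
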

\iffalse
\begin{rmk}
In the above verification, for $I^{\bullet}=\left(I_{l\mathbb{P}^1}\to \bigoplus_{i=1}^4 F_i\otimes \sum_{j=0}^{d_i-1}t_3^j\right)$, we choose 
$$\chi_X(I^{\bullet},I^{\bullet})_0^{\frac{1}{2}}=\chi_X(I_{l\mathbb{P}^1},I_{l\mathbb{P}^1})^{\frac{1}{2}}_0-\chi_X\left(I_{l\mathbb{P}^1},\bigoplus_{i=1}^4 F_i\otimes \sum_{j=0}^{d_i-1}t_3^j\right)+\chi_X\left(\bigoplus_{i=1}^4 F_i\otimes \sum_{j=0}^{d_i-1}t_3^j,\bigoplus_{i=1}^4 F_i\otimes \sum_{j=0}^{d_i-1}t_3^j \right)^{\frac{1}{2}}.
$$
By using the adjunction formula 
$$\chi_X(F_i,F_j)=\chi_{\mathbb{P}^1}(F_i,F_j)-\chi_{\mathbb{P}^1}(F_i,F_j\otimes N_{\mathbb{P}^1/X})+
\chi_{\mathbb{P}^1}(F_i,F_j\otimes \wedge^2 N_{\mathbb{P}^1/X})-\chi_{\mathbb{P}^1}(F_i,F_j\otimes \wedge^3 N_{\mathbb{P}^1/X}), $$
and equivariant Riemann-Roch formula 
\begin{align*}
\ch\Big(\chi\big(\oO_{\mathbb{P}^1}(aZ_0+bZ_{\infty})\big)\Big)=\frac{e^{-a\lambda_0}}{1-e^{\lambda_0}}+\frac{e^{b\lambda_0}}{1-e^{-\lambda_0}}\
=\frac{e^{(b+1)\lambda_0}-e^{-a\lambda_0}}{e^{\lambda_0}-1},
\end{align*}
we compute the equivariant Euler class of $\chi_X(I^{\bullet},I^{\bullet})_0^{\frac{1}{2}}$.
\end{rmk}
\fi

${}$ \\
\textbf{$\bullet\, I_0=I_{\mathbb{P}^1},$ $k\geqslant 3$ case.}
\begin{lem} 
Suppose that 
 $\Theta=(\theta_0,\theta_1)\in L_{-}^-(k)$
 and take $I_0=I_{\mathbb{P}^1}$, we consider  
$$P^{I_{\mathbb{P}^1}}_{k-1,d}:=\left\{I_{\mathbb{P}^1}\oplus \oO_{\mathbb{P}^1}(k-1)^{\oplus d}[-1]\right\}\in P^{\Theta}_{kd+1}(X,d+1)^{T_0}. $$

(1)
An element in $\pi_{+}^{-1}(P^{I_{\mathbb{P}^1}}_{k-1,d})$ is precisely of form:
\begin{align}\label{equ plus fiber2}
&\bigoplus_{i=1}^{k-1} \oO_{\mathbb{P}^1}\big((k-1-i)Z_{0}+iZ_{\infty}\big)\otimes t_1 \sum_{j=0}^{d_i-1}t_3^j \nonumber \\ 
s:I_{\mathbb{P}^1}\to \quad  & \bigoplus_{i=1}^{k-1} \oO_{\mathbb{P}^1}\big((k-1-i)Z_{0}+iZ_{\infty}\big)\otimes t_2 \sum_{j=0}^{e_i-1}t_3^j \\
& \bigoplus_{i=0}^{k-1} \oO_{\mathbb{P}^1}\big((k-1-i)Z_{0}+iZ_{\infty}\big)\otimes t_3 \sum_{j=0}^{f_i-1}t_3^j, \nonumber
\end{align}
where $d_i,e_i,f_i\geqslant 0$ with $\sum_{i=1}^{k-1} d_i+\sum_{i=1}^{k-1}e_i+\sum_{i=0}^{k-1} f_i=d$
and $s$ is given by the canonical map.

(2)
An element in $\pi_{-}^{-1}(P^{I_{\mathbb{P}^1}}_{k-1,d})$ is precisely of 
the form
$(s\colon  \oO_X\to \eE)$, 
where $\eE$ fits into the canonical $T_0$-equivariant extension 
\begin{align*}
0\to \oO_{\mathbb{P}^1} \to \eE \to \bigoplus_{\begin{subarray}{c} 1\leqslant i\leqslant k-2 \\ d_i\in\{0,1\} \\ d_1+\cdots+d_{k-2}=d \end{subarray}} \oO_{\mathbb{P}^1}\big((k-1-i)Z_{0}+iZ_{\infty}\big)\cdot d_i\to 0, \end{align*}
and $s$ is given 
by the composition $\oO_X \twoheadrightarrow \oO_{\mathbb{P}^1} \hookrightarrow 
\eE$. 
\end{lem}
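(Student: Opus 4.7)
By Proposition~\ref{Z_t chamber}, the moduli spaces $P_n^{\Theta_\pm}(X,d)$ are identified with moduli of $Z_t$-stable pairs for $t=k\pm 0^+$, and the $T_0$-fixed loci match. An element of $\pi_\pm^{-1}(P^{I_{\mathbb{P}^1}}_{k-1,d})$ is thus a $T_0$-fixed $Z_{k\pm 0^+}$-stable pair $(\oO_X\xrightarrow{s} F)$ whose $\Theta$-polystable graded object is $I_{\mathbb{P}^1}\oplus\oO_{\mathbb{P}^1}(k-1)^{\oplus d}$; in particular $F$ is an iterated extension in $\Coh(X)$ of one copy of $\oO_{\mathbb{P}^1}$ and $d$ copies of $\oO_{\mathbb{P}^1}(k-1)$, with $s$ compatible with the canonical surjection $\oO_X\twoheadrightarrow\oO_{\mathbb{P}^1}$ from the $I_{\mathbb{P}^1}$ factor. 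The plan is to analyze the two sides of the wall separately using these $Z_t$-stability conditions and the $T_0$-equivariance.

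For part (1) with $t=k+0^+$: condition~(ii) of Definition~\ref{def Zt sta} forbids any proper subsheaf $F'\supseteq\Imm(s)$ whose quotient $F/F'$ has slope $\leqslant k$; applied recursively this forces $\Imm(s)$ to generate $F$ as an $\oO_X$-module, so the section must hit every Jordan--H\"older factor. The $T_0$-equivariance then decomposes $F$ into a direct sum of degree $(k-1)$ line bundles $\oO_{\mathbb{P}^1}((k-1-i)Z_0+iZ_\infty)$ twisted by $T_0$-characters, and the Koszul resolution~\eqref{T-equi resol} of $I_{\mathbb{P}^1}$ shows that the only allowed characters are $t_1\cdot t_3^j$, $t_2\cdot t_3^j$, and $t_3\cdot t_3^j$ for $j\geqslant 0$. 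This gives the three groups of summands indexed by the $d_i,e_i,f_i$ as in~\eqref{equ plus fiber2}, and the canonical section is uniquely determined by the cyclic vector coming from $\oO_X\twoheadrightarrow I_{\mathbb{P}^1}$.

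For part (2) with $t=k-0^+$: condition~(i) of Definition~\ref{def Zt sta} now forbids subsheaves of slope $\geqslant k$, so no copy of $\oO_{\mathbb{P}^1}(k-1)$ can sit as a subsheaf of $F$. Hence $F$ must fit into a non-split extension
$$0\to\oO_{\mathbb{P}^1}\to\eE\to Q\to 0$$
with $Q$ a $T_0$-fixed direct sum of $\oO_{\mathbb{P}^1}((k-1-i)Z_0+iZ_\infty)$, and $s$ the canonical lift of $\oO_X\twoheadrightarrow\oO_{\mathbb{P}^1}\hookrightarrow\eE$. The multiplicity restriction $d_i\in\{0,1\}$ is forced by the one-dimensionality of the $T_0$-invariant subspace of $\Ext^1_X(\oO_{\mathbb{P}^1}((k-1-i)Z_0+iZ_\infty),\oO_{\mathbb{P}^1})$, since a repeated weight would yield a nontrivial direct summand of slope $k$. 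The exclusion $i\in\{0,k-1\}$ is because at those extreme twists the invariant $\Ext^1$ vanishes (no invariant class can be built from $H^1(\oO_{\mathbb{P}^1}(-(k-1)))$ or $H^0(\oO_{\mathbb{P}^1}(-(k-1))\otimes N^\vee)$), so any extension splits equivariantly and fails stability.

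The main obstacle is the $T_0$-equivariant Ext computation underpinning both parts. One unwinds the Koszul resolution of $\mathbb{P}^1\subset X$ into the spectral sequence
$$E_1^{p,q}=H^p\big(\mathbb{P}^1,\, \wedge^q N^\vee\otimes\oO_{\mathbb{P}^1}(b-a)\big)\Rightarrow \Ext^{p+q}_X(\oO_{\mathbb{P}^1}(a),\oO_{\mathbb{P}^1}(b)),$$
tracking $T_0$-characters through each factor of $N^\vee=t_1\oO(1)\oplus t_2\oO(1)\oplus t_3\oO$, and then isolates the invariant part. The structural list in~(1) and the precise range $1\leqslant i\leqslant k-2$ in~(2) then emerge from matching these weights against the normal-bundle characters $t_1,t_2,t_3$; the delicate boundary vanishing needed for~(2) is the most subtle point.
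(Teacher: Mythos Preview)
Your overall strategy matches the paper's: use Proposition~\ref{Z_t chamber} to pass to $Z_t$-stability, then analyse $T_0$-fixed pairs through the Koszul resolution of $I_{\mathbb{P}^1}$. For part~(1) the paper simply refers back to the proof of Lemma~\ref{fiber for I_lC}, and your sketch follows that template (though you do not explain why the index range for the $t_1$- and $t_2$-summands is $1\leqslant i\leqslant k-1$ while for the $t_3$-summand it is $0\leqslant i\leqslant k-1$; this comes from the extra $\oO_X(Z_\infty)$ twist on the first two generators in~\eqref{T-equi resol}).

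The genuine gap is in part~(2), in your derivation of the bound $d_i\leqslant 1$. You assert that $Q$ is ``a $T_0$-fixed direct sum of $\oO_{\mathbb{P}^1}((k-1-i)Z_0+iZ_\infty)$'', but this is not a priori true: a $T_0$-fixed object of $\langle\oO_{\mathbb{P}^1}(k-1)\rangle_{\mathrm{ex}}$ need not split, because $\Ext^1_X(\oO_{\mathbb{P}^1}(k-1),\oO_{\mathbb{P}^1}(k-1))\cong H^0(N_{\mathbb{P}^1/X})\cong\mathbb{C}\cdot t_3^{-1}$ is nonzero. So $Q$ may be a genuine $t_3$-thickening $F_i\otimes(\sum_{j=0}^{d_i-1}t_3^j)$, in which the top-weight piece $F_i\otimes t_3^{d_i-1}$ is a \emph{subsheaf}, not a direct summand. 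Your sentence ``a repeated weight would yield a nontrivial direct summand of slope $k$'' therefore does not apply as stated. Nor is ``one-dimensionality'' of $\Ext^1_X(F_i,\oO_{\mathbb{P}^1})^{T_0}$ the relevant input: what is actually needed is that $\Ext^1_X(F_i,\oO_{\mathbb{P}^1})=H^1(\oO_{\mathbb{P}^1}(1-k))$ carries \emph{no $t_3$-weight at all}, so that the restriction of any $T_0$-invariant extension class along the inclusion $F_i\otimes t_3^{d_i-1}\hookrightarrow Q$ vanishes for $d_i\geqslant 2$, producing a destabilising map $F_i\hookrightarrow\eE$. The paper establishes this by a different and slightly stronger route: it shows the full (non-equivariant) restriction map $\lambda\colon\Ext^1_X(F^{d_i},\oO_{\mathbb{P}^1})\to\Ext^1_X(F_i,\oO_{\mathbb{P}^1})$ is zero, by identifying the connecting homomorphism $\nu$ in the long exact sequence with an explicit inclusion via the local-to-global spectral sequence. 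Either argument works, but yours as written skips the step that rules out thickenings.
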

\begin{proof} 
(1)
The fiber of $\pi_+$ can be described similarly as in Lemma \ref{fiber for I_lC}, so we omit details. 

(2) The fiber of $\pi_-$ consists of $T_0$-equivariant 
exact sequences of the form
\begin{align}\label{equ on dist triangle for IP1}0\to I_{\mathbb{P}^1} \to *\to  F[-1]\to 0, \end{align}
where $F\in \langle \oO_{\mathbb{P}^1}(k-1) \rangle_{\mathrm{ex}}$
satisfies the $Z_{k+0^-}$-stability. 
Note that we have $\Hom(F[-2],\oO_X)=0$
by the Serre duality.
Therefore 
the map $F[-2]\to I_{\mathbb{P}^1}$ in \eqref{equ on dist triangle for IP1} factors through as
\begin{align*}
	F[-2] \to \oO_{\mathbb{P}^1}[-1] \to I_{\mathbb{P}^1}.
	\end{align*}
By taking cones and a diagram chasing, we obtain an extension 
\begin{align}\label{equ on F tilte}0\to \oO_{\mathbb{P}^1}\to \eE
	\to F\to 0, \end{align}
and $\ast$ is isomorphic to a pair $(s \colon \oO_X\to \eE)$, 
where $s$ is the composition $\oO_X \twoheadrightarrow \oO_{\mathbb{P}^1}
\hookrightarrow \eE$. 
The $Z_{k+0^-}$-stability is equivalent to the condition
\begin{align}\label{vanish:stab}
	\Hom(\oO_{\mathbb{P}^1}(k-1), \eE)=0.
\end{align}
The sheaf $\eE$ is obtained as a $T_0$-equivariant extension 
of $F$ by $\oO_{\mathbb{P}^1}$. Using Serre duality together with Koszul 
resolution~\eqref{T-equi resol}, one calculates
\begin{align*}
\Ext^1_X(\oO_{\mathbb{P}^1}(k-1), \oO_{\mathbb{P}^1})&\cong \ \Hom_{\mathbb{P}^1}(\oO_{\mathbb{P}^1}(Z_{0}+Z_{\infty}),
\oO_{\mathbb{P}^1}(k-1))^{\vee}.  \end{align*}
%where we use the resolution \eqref{T-equi resol} and $\oO_{\mathbb{P}^1}(2Z_{\infty})\otimes t_0^{-1}= 
%\oO_{\mathbb{P}^1}(Z_{0}+Z_{\infty})$ in equivariant $K$-theory of $\mathbb{P}^1$. 
In the RHS, the $T_0$-fixed morphisms are given by canonical morphisms
$$\oO_{\mathbb{P}^1}(Z_{0}+Z_{\infty})\to \oO_{\mathbb{P}^1}((k-1-i)Z_{0}+iZ_{\infty}), \quad 1\leqslant i \leqslant k-2. $$
Therefore $F$ is of form 
$$F=\bigoplus_{\begin{subarray}{c} 1\leqslant i\leqslant k-2 \\ d_1+\cdots+d_{k-2}=d \end{subarray}} \oO_{\mathbb{P}^1}\big((k-1-i)Z_{0}+iZ_{\infty}\big)\cdot (1+t_3+\cdots+t_3^{d_i-1}). $$
We claim that 
the condition (\ref{vanish:stab})
forces $d_i \leqslant 1$. 
Suppose that $d_i\geqslant 2$
for some $i$. 
By writing 
 $F_i:=\oO_{\mathbb{P}^1}\big((k-1-i)Z_{0}+iZ_{\infty}\big)$ and 
$F^{d_i}:=F_i \otimes (\sum_{j=0}^{d_i-1}t_3^{j})$, 
we have the exact sequence
$$0\to F_i\stackrel{t_3^{d_{i}-1}}{\to} F^{d_i}\to F^{d_{i}-1}\to 0. $$
By applying 
$\Hom(-,\oO_{\mathbb{P}^1})$ to the above exact sequence, 
we obtain the long exact sequence
\begin{align*}
	\cdots \to \Ext^1_X(F^{d_i},\oO_{\mathbb{P}^1})\stackrel{\lambda}{\to} \Ext^1_X(F_{i},\oO_{\mathbb{P}^1}) \stackrel{\nu}{\to} \Ext^2_X(F^{d_{i}-1},\oO_{\mathbb{P}^1}) \to \cdots.
	\end{align*} 
If $\lambda$ is the zero map, 
then
the composition 
$$F_i \stackrel{t_3^{d_{i}-1}}{\hookrightarrow} F^{d_i} \stackrel{ }{\hookrightarrow} F $$ 
factors through a map $F_i\to \eE$, hence violating the
condition (\ref{vanish:stab}). 

We are left to show that $\lambda=0$. 
It is enough to show $\nu$ is injective. 
By the local-to-global spectral sequence, we have 
$\Ext^1_X(F_{i},\oO_{\mathbb{P}^1})\cong H^1(X,\oO_{\mathbb{P}^1}(-k))$
and 
\begin{align*}
\Ext^2_X(F^{d_{i}-1},\oO_{\mathbb{P}^1})&\cong H^0(X,\eE xt^2(F^{d_{i}-1},\oO_{\mathbb{P}^1}))\oplus H^1(X,\eE xt^1(F^{d_{i}-1},\oO_{\mathbb{P}^1})) \\
&\cong H^0(X,\eE xt^2(F^{d_{i}-1},\oO_{\mathbb{P}^1}))\oplus H^1\bigg(X,\eE xt^1\Big(\oO_{\mathbb{P}^1}\otimes \sum_{j=0}^{d_i-1}t_3^{j},\oO_{\mathbb{P}^1}(-k)\Big)\bigg) \\
&\cong H^0(X,\eE xt^2(F^{d_{i}-1},\oO_{\mathbb{P}^1}))\oplus H^1\bigg(X,(\oO_{\mathbb{P}^1}(-1)^{\oplus 2}\oplus \oO_{\mathbb{P}^1})\otimes \oO_{\mathbb{P}^1}(-k)\bigg),
\end{align*} 
where we have used \eqref{T-equi resol} in the last isomorphism. 
So $\Ext^2_X(F^{d_{i}-1},\oO_{\mathbb{P}^1})$ contains $H^1(X,\oO_{\mathbb{P}^1}(-k))$ as a direct summand and 
one can show $\nu$ is the inclusion of this summand. 
\end{proof}
As before, we explicitly compute invariants using Remark \ref{sign rmk}, \ref{rmk on computation}. A computer program enables us to check Conjecture \ref{wall cross conj} in the following cases.
\begin{prop}\label{check conj single C case}
Suppose that 
$\Theta=(\theta_0,\theta_1)\in L_{-}^-(k)$ $($$k\geqslant 3$$)$. 
Then Conjecture \ref{wall cross conj} holds for
 $I_0=I_{\mathbb{P}^1}$ in the following cases:
\begin{itemize}
\item  $k=3$, up to degree $t^5$,
\item  $k=4,5$, up to degree $t^2$,
\item  $k\leqslant 12$, up to degree $t^1$.
\end{itemize}
Here we use the following sign rule in \eqref{sign rule}: 
for fibers of $\pi_{+}$ in \eqref{equ plus fiber2}, 
we take $\mathrm{sign}(F)$ to be the number of $e_i$'s which are positive;
for fibers of $\pi_{-}$, we take $\mathrm{sign}(F)=0$.
\end{prop}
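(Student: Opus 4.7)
The plan is to reduce the verification to a concrete, mechanical computation using the explicit classification of torus fixed fibers from the preceding lemma. First, for each $T_0$-fixed point of $\pi_+^{-1}(P^{I_{\mathbb{P}^1}}_{k-1,d})$, parametrized by triples of partitions $(d_i),(e_i),(f_i)$ as in \eqref{equ plus fiber2}, I would expand the $K$-theoretic class $\chi_X(I,I)_0^{1/2}$ using the prescription of Remark \ref{sign rmk}, namely $\chi_X(I,I)_0^{1/2}=-\chi_X(F)+\chi_Y(F,F)$, with $Y=\oO_{\mathbb{P}^1}(-1,0)$ and the adjunction formula reducing everything to Euler characteristics on $\mathbb{P}^1$. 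Similarly for the fibers of $\pi_-$, which by the lemma are parametrized by the choice of a subset $\{i:d_i=1\}\subseteq\{1,\ldots,k-2\}$ satisfying $\sum d_i=d$, together with the canonical extension $\oO_{\mathbb{P}^1}\to\eE\to F$; here I would resolve $\eE$ in $K^{T_0}(pt)$ via the extension sequence and again apply adjunction.

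Next, plug each contribution into the tautological weight $e_{T_0}(\chi_X(I,I)_0^{1/2})\cdot e_{T_0\times\mathbb{C}^*}(\chi_X(F)^\vee\otimes e^m)$, using the equivariant Riemann--Roch formulas of Remark \ref{rmk on computation} to convert the $K$-classes into explicit rational functions in $\lambda_0,\lambda_1,\lambda_2,\lambda_3,m$ (with $\lambda_0+\lambda_1+\lambda_2+\lambda_3=0$). The sign conventions are fixed by declaring $\mathrm{sign}(F)$ equal to the number of positive $e_i$ for points of $\pi_+$, and $\mathrm{sign}(F)=0$ for points of $\pi_-$; this is the same kind of ``$e_i$ versus the others'' asymmetry already introduced in Proposition \ref{check conj muti C case} and is needed to align weighted counts between the two sides.

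Form the quotient series
\begin{align*}
\frac{\sum_{r\geqslant 0}t^r\sum_{I\in \pi_+^{-1}(P^{I_{\mathbb{P}^1}}_{k-1,r})}e_{T_0}(\chi_X(I,I)_0^{1/2})\,e_{T_0\times\mathbb{C}^*}(\chi_X(F)^\vee\otimes e^m)}{\sum_{r\geqslant 0}t^r\sum_{I\in \pi_-^{-1}(P^{I_{\mathbb{P}^1}}_{k-1,r})}e_{T_0}(\chi_X(I,I)_0^{1/2})\,e_{T_0\times\mathbb{C}^*}(\chi_X(F)^\vee\otimes e^m)}
\end{align*}
and test term by term, up to the stated order in $t$, whether it equals $(1-t)^{k m/\lambda_3}$. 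Because both sides are rational functions in the equivariant parameters, equality can be decided exactly; the verification is delegated to a computer algebra routine that expands the Taylor series and checks each coefficient as a rational function. The main obstacle is the combinatorial explosion: the number of $T_0$-fixed points in $\pi_+^{-1}(P^{I_{\mathbb{P}^1}}_{k-1,d})$ grows rapidly in both $k$ and $d$ (there are roughly $\binom{3k-2+d}{d}$ compositions to enumerate), and the Euler classes at each point are products of $O(kd)$ linear factors, so the memory and simplification cost explodes quickly. This is precisely why the assertion is limited to $k=3$ up to $t^5$, $k\in\{4,5\}$ up to $t^2$, and $k\leqslant 12$ only to order $t^1$: these are the ranges in which the symbolic check of rational-function identities terminates in practice. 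Once those identities are verified by the program, the proposition follows.
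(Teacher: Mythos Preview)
Your proposal is correct and follows essentially the same approach as the paper: use the explicit classification of $T_0$-fixed points in the fibers of $\pi_{\pm}$ from the preceding lemma, compute each contribution via the prescriptions of Remarks~\ref{sign rmk} and~\ref{rmk on computation}, apply the stated sign rule, and then verify the resulting rational-function identities by a brute-force computer check up to the listed orders in $t$. The paper's own proof is nothing more than the sentence ``As before, we explicitly compute invariants using Remark~\ref{sign rmk},~\ref{rmk on computation}. A computer program enables us to check Conjecture~\ref{wall cross conj} in the following cases,'' together with a footnote explaining that the listed cases are exactly those the computer could handle --- which is precisely your account of the combinatorial bottleneck.
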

As a corollary of the above computations, we can prove Conjecture \ref{wall cross conj} for the `first wall' and provide several 
checks for the `second wall' of \eqref{all walls}:
\begin{cor}
Let $\Theta=(\theta_0,\theta_1)\in L_{-}^-(k)$. Then Conjecture \ref{wall cross conj} holds in the following cases:
\begin{itemize}
\item  $k=1$ and any $I_0$,
\item  $k=2$ and any $I_0$ up to degree $t^5$.
\end{itemize}
\end{cor}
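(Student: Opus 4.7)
By the Remark following Theorem~\ref{thm on js}, establishing Conjecture~\ref{wall cross conj} on $L_{-}^{-}(k)$ reduces to verifying that the ratio on the LHS is independent of the $T_0$-fixed $\Theta$-stable perverse coherent system $I_0$, since Theorem~\ref{thm on js} already handles the base case $I_0 = \oO_X$. The plan is to exploit this reduction together with Proposition~\ref{check conj muti C case} for the $k=2$ bullet.

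For the first bullet ($k=1$, any $I_0$), the starting observation is that the adjacent chamber $\Theta_{-}=(0^+,\theta_1)$ lies in the empty chamber of Proposition~\ref{empty chamber}. Via Proposition~\ref{Z_t chamber} this chamber corresponds to $Z_t$-stability with $t = 1-0^+$, and Lemma~\ref{lem on zt pair} then forces $\pi_{-}^{-1}(P_{0,r}^{I_0}) = \emptyset$ for every $r>0$ and every $I_0$. The denominator of the LHS of Conjecture~\ref{wall cross conj} thus collapses to the single $r=0$ term, namely the equivariant contribution of $I_0$ itself. It remains to establish
\[
\sum_{r\geqslant 0} t^r \!\!\sum_{I\in \pi_{+}^{-1}(P_{0,r}^{I_0})}\!\! e_{T_0}\!\bigl(\chi_X(I,I)_0^{\frac12}\bigr)\cdot e_{T_0\times \mathbb{C}^*}\!\bigl(\chi_X(F)^{\vee}\otimes e^m\bigr) = \bigl(\text{contribution of } I_0\bigr)\cdot (1-t)^{m/\lambda_3}.
\]
The next step is to classify the $T_0$-fixed elements of $\pi_{+}^{-1}(P_{0,r}^{I_0})$ as $Z_{1+0^+}$-stable pairs obtained by attaching $r$ $T_0$-equivariant copies of $\oO_{\mathbb{P}^1}$ to $I_0$ along the $t_3$-direction, in the spirit of Lemma~\ref{fiber for JS} and Lemma~\ref{T-fixed JS pairs}. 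A direct equivariant computation, parallel to that of Theorem~\ref{JS taut invs} specialized to $k_{\text{Lemma}}=0$ in that lemma, should then show the resulting sum factorizes as the contribution of $I_0$ times the universal binomial generating series $\sum_{r}(-1)^r\binom{m/\lambda_3}{r}t^r = (1-t)^{m/\lambda_3}$.

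For the second bullet ($k=2$, any $I_0$ up to degree $t^5$), the plan is to combine Proposition~\ref{check conj muti C case} (which verifies the identity for $I_0 = I_{l\mathbb{P}^1}$ and any $l$ up to this degree) with the same independence principle. A $T_0$-fixed $\Theta$-stable $I_0$ contributing at $t^r$ with $r\leqslant 5$ lies in a restricted family whose fibers $\pi_{\pm}^{-1}(P_{1,r}^{I_0})$ can be enumerated and matched (possibly via a suitable identification or local deformation) against those of a thickened line $I_{l\mathbb{P}^1}$ already treated in Proposition~\ref{check conj muti C case}. The main obstacle in both bullets is the explicit classification of $\pi_{+}^{-1}(P_{k-1,r}^{I_0})$ for arbitrary $I_0$ and the careful bookkeeping of the sign rule from Remark~\ref{sign rmk}, so that the resulting equivariant residues multiply out to the universal factor $(1-t)^{m/\lambda_3}$ (respectively its $k=2$ analogue up to the stated order) independently of $I_0$.
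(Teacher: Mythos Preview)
Your plan misses the key simplification: at these walls, the set of possible $T_0$-fixed $\Theta$-stable perverse coherent systems $I_0$ is already completely determined, and it is this classification that does all the work.

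For $k=1$, the wall $L_{-}^{-}(1)$ corresponds (via Proposition~\ref{Z_t chamber}) to $Z_1$-stability. By Lemma~\ref{lem on zt pair}, a $Z_t$-stable pair with $[F]\neq 0$ requires $t>\chi(F)/d(F)\geqslant 1$, so there is no $Z_1$-stable pair with nontrivial $F$. Hence the only $T_0$-fixed $\Theta$-stable $I_0$ on $L_{-}^{-}(1)$ is $\oO_X$ itself. The phrase ``any $I_0$'' is therefore vacuous beyond $I_0=\oO_X$, and the bullet follows immediately from Theorem~\ref{thm on js}. Your proposed route---classifying $\pi_{+}^{-1}(P_{0,r}^{I_0})$ for an arbitrary unknown $I_0$ and carrying out an equivariant residue computation that you hope will factorize---is both unnecessary and not obviously feasible: Theorem~\ref{JS taut invs} only treats $I_0=\oO_X$, and you give no mechanism for handling a general $I_0$.

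For $k=2$, the same idea applies. A $\Theta$-stable $I_0$ on $L_{-}^{-}(2)$ is $Z_2$-stable; since there is no $Z_t$-wall in $(1,2)$ by Lemma~\ref{lem on zt pair}, $Z_2$-stability coincides with $Z_{1+0^+}$-stability, i.e.\ JS stability with $\chi(F)=d(F)$. By Lemma~\ref{T-fixed JS pairs} (the case $k_{\mathrm{Lemma}}=0$ there), the $T_0$-fixed such pairs are exactly the thickened lines $I_{l\mathbb{P}^1}$ of \eqref{def of I_lC}. Thus every possible $I_0$ already appears in Proposition~\ref{check conj muti C case}, which verifies the conjecture up to $t^5$ for all $l$. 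Your proposal to ``match'' an arbitrary $I_0$ against some $I_{l\mathbb{P}^1}$ via an unspecified local deformation is circumventing a step that is in fact a direct equality.
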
 
\begin{proof}
By Lemma \ref{lem on zt pair}, the only $Z_1$-stable pair is $\oO_X$. When $k=1$, the only possible choice of $I_0$ is $\oO_X$.
Using Lemma \ref{fiber for JS}, we are reduced to Theorem \ref{thm on js}.

By the openness of stability and wall-chamber structures of $Z_t$-stable pairs (see Lemma \ref{lem on zt pair}), 
$Z_2$-stable pairs are $Z_{1+0^+}$-stable pairs, which are JS type stable pairs $(F,s)$ with $\chi(F)=d(F)$. 
By Lemma \ref{T-fixed JS pairs}, they are of the form \eqref{def of I_lC}, which are $Z_{t}$-stable for any $t>1$.
Therefore, the $k=2$ case is reduced to Proposition \ref{check conj muti C case}.
\end{proof}

\begin{rmk}
Finally we remark that one can also study $K$-theoretic generalization of tautological invariants considered in this paper, following 
\cite[Definition 0.2]{CKM1}, and lift the formula in \cite{MMNS} to CY 4-folds.
It may be interesting to pursue this direction in the future. 
\end{rmk}

%${}$ \\\textbf{$\bullet$\, Noncommutative chamber.}

%\subsection{$K$-theoretic counting invariants} Recall \cite{MMNS} (\textbf{TBC...})

%\section{Appendix}

\providecommand{\bysame}{\leavevmode\hbox to3em{\hrulefill}\thinspace}
\providecommand{\MR}{\relax\ifhmode\unskip\space\fi MR }
\providecommand{\MRhref}[2]{%
  \href{http://www.ams.org/mathscinet-getitem?mr=#1}{#2}}
\providecommand{\href}[2]{#2}

\end{document}